\documentclass{article}
\usepackage[utf8]{inputenc}
\usepackage{amsmath,amssymb,mathtools,amsthm}
\usepackage{geometry}
\usepackage{enumerate}
\usepackage{graphicx}
\usepackage{fancyhdr}
\usepackage{bbm}
\usepackage[mathscr]{euscript}
\usepackage{mathrsfs}
\usepackage{caption}
\usepackage[T1]{fontenc}
\usepackage{xcolor}
\usepackage{authblk}

\definecolor{bleu_sombre}{rgb}{0,0,0.6}  \definecolor{rouge_sombre}{rgb}{0.8,0,0}\definecolor{vert_sombre}{rgb}{0,0.6,0}
\usepackage[plainpages=false,colorlinks,linkcolor=bleu_sombre,
citecolor=rouge_sombre,urlcolor=vert_sombre,breaklinks]{hyperref}

\usepackage{cleveref}

\makeatletter

\@addtoreset{equation}{section}
\makeatother

\theoremstyle{plain}
\newtheorem{definition}{Definition}[section]

\newtheorem{theorem}[definition]{Theorem}
\newtheorem{remark}[definition]{Remark}
\newtheorem{corollary}[definition]{Corollary}
\newtheorem{proposition}[definition]{Proposition}
\newtheorem{lemma}[definition]{Lemma}

\newcommand{\dd}{\mathrm d}
\newcommand{\R}{\mathbb R}
\newcommand{\N}{\mathbb N}
\newcommand{\SE}{\mathbb S_E}
\newcommand{\C}{\mathbb C}
\newcommand{\Z}{\mathbb Z}
\newcommand{\T}{\mathbb T}
\newcommand{\Op}{\mathsf{Op}_h^{B}}

\newcommand{\Opsansh}{\mathsf{Op}_1^{B}}
\newcommand{\Opgamma}{\mathsf{Op}_{\frac{h}{\sqrt{\gamma_h}}}^{B}}

\begin{document}

\author{Julien LECHAUX}

%\affil[$\star$]{Laboratoire de Math\'ematiques Jean Leray, UMR CNRS 6629, Nantes Universit\'e}

\title{Long-time dynamics for the magnetic Schrödinger equation on the torus}
\date{\today}
\maketitle
\begin{abstract}
We investigate time-dependent semiclassical measures associated with the magnetic Schr\"odinger flow on the flat two-dimensional torus in long-time regimes. Under a geometric nonvanishing assumption on an effective magnetic force, we identify two thresholds and prove a dynamical form of quantum unique ergodicity for such systems. If the time scale $\tau_h \gg h^{-1/2}$, then the configuration marginal is the normalized Lebesgue measure on $\mathbb T^2$. In addition, if $\tau_h \gg h^{-1}$, then the momentum marginal is, conditionally on each regular level set, the normalized invariant measure on that curve.
\end{abstract}

\tableofcontents

\section{Introduction}\label{s:introduction}

This paper studies long-time equidistribution phenomena for magnetic Schr\"odinger evolutions on the two-dimensional torus \(\T^2=\R^2/\Z^2\). Throughout this paper, we work within a semiclassical framework. More precisely, we consider a sequence $(h_n)_{n \in\N}$ of positive semiclassical parameters such that $h_n\to 0^+$. A \emph{magnetic field} on \(\T^2\) is a smooth real-valued closed \(2\)-form
\[
B(x)\,\dd x_1\wedge \dd x_2,
\]
which we identify with the function \(B\in \mathcal C^\infty(\T^2,\R)\). We assume that its total flux satisfies
\begin{equation}\label{e:fluxcondition}
    \widehat B_0:= \int_{\T^2}B(x)\,\dd x_1\dd x_2 \in 2\pi\Z\setminus \{0\}.
\end{equation}
We focus on magnetic Schr\"odinger-type equations of the form
\begin{equation}\label{e:intro-general-evolution}
\left\{
\begin{aligned}
ih_n\partial_t u_n(t,x)
&=
\widehat H_{h_n}\,u_n(t,x),\\[0.3em]
u_n(t=0,x)
&=
u_n^{(0)}(x), 
\end{aligned}
\right.
\qquad
(t,x)\in\R\times\T^2,
\end{equation}
where the initial data are assumed to be normalized, i.e., \(\lVert u_n^{(0)} \rVert_{L^2(\T^2,L)}=1\).
The operator \(\widehat H_{h_n}\) is a semiclassical magnetic Hamiltonian of
the form
\begin{equation}\label{e:definition-widehatH}
\widehat H_{h_n}
=
\mathsf{Op}_{h_n}^{B}
\big(
H(\xi)+h_nR_{h_n}(x,\xi)
\big),
\end{equation}
where \(\mathsf{Op}_{h_n}^{B}\) denotes the toral magnetic Weyl quantization recalled in Section \ref{s:Magnetic field and quantization}. Despite the notation, this quantization depends only on the magnetic flux \(\widehat B_0\). The precise assumptions on the Hamiltonian \(H\) and on the perturbation \(R_{h_n}\) are stated in Section~\ref{ss:magnetic-operator}. In particular, \(H\) is a real-valued elliptic Hamiltonian depending only on \(\xi\), and \(\mathscr{R}\) will denote the principal symbol of \(R_{h_n}\) as \(h_n\to0\) and it will depend both on $x$ and $\xi$. These assumptions are precisely those needed to ensure that the operator \(\widehat H_{h_n}\) has the appropriate functional-analytic properties and is self-adjoint; see Proposition~\ref{prop:esa}. Due to the magnetic field, the operator \(\widehat H_{h_n}\) acts on the magnetic Hilbert space \(L^2(\T^2,L)\) of twisted \(L^2\)-functions associated with the flux condition \eqref{e:fluxcondition} (see Section \ref{ss:Magnetic-translations and-magnetically-periodic-functions}). 

The magnetic Laplacian is the main motivating example. If \(A^{\mathrm{per}}\) is a smooth real-valued periodic vector potential and \(V\) is a smooth real-valued potential, then the magnetic Laplacian $\mathcal{L}^B$ associated with
\begin{equation}\label{e:mag-field}
B(x) = \widehat B_0+\dd A^{\mathrm{per}}(x),
\end{equation}
is included in the class of operators \eqref{e:definition-widehatH}, namely, one has
\begin{equation}\label{e:laplacien-mag}
h_n^2(\mathcal{L}^B+V) 
= 
\mathsf{Op}_{h_n}^{B}\bigg(\big|\xi-h_nA^{\mathrm{per}}(x)\big|^2+h_n^2V(x)\bigg),
\end{equation}
where $\lvert \cdot \rvert$ denotes the Euclidean norm on $\R^2$. 

A standing assumption throughout the paper is that there exists a compact
energy window \([E_1,E_2]\subset(0,+\infty)\) containing no critical value of
the principal Hamiltonian \(H\):
\begin{equation}\label{e:fenetre-energie}
\forall \xi\in
\Omega_{E_1,E_2}
:=
\{\xi\in\R^2:\ E_1\le H(\xi)\le E_2\},\qquad \nabla H(\xi)\neq 0.
\end{equation}
For every \(E\in[E_1,E_2]\), we denote by
\begin{equation}\label{e:energy-surface-intro}
\mathbb S_E
:=
\{\xi\in\R^2:\ H(\xi)=E\}
\end{equation}
the corresponding energy level. By the ellipticity of \(H\) and \eqref{e:fenetre-energie}, each \(\mathbb S_E\) is a smooth compact one-dimensional submanifold of \(\R^2\). We further assume that \(\mathbb S_E\) is connected for every \(E\in[E_1,E_2]\) so that each \(\mathbb S_E\) is diffeomorphic to the circle \(\mathbb S^1\). We also assume that the initial data are spectrally localized in this window:
\begin{equation}\label{e:intro-simplified-spectral-window}
u_n^{(0)}
=
\mathbbm{1}_{[E_1,E_2]}
\left(
\widehat H_{h_n}
\right)
u_n^{(0)}
+
o_{L^2(\T^2,L)}(1) \quad \text{as} \quad h_n\rightarrow 0^+.
\end{equation}
Our first result is stated under a simple sufficient condition.
\begin{theorem}\label{t:intro-simplified-equidistribution}
Suppose that the above assumptions hold and that the Hessian of \(H\) is positive definite on \(\Omega_{E_1,E_2}\). Then, there exists an explicit constant (see Corollary \ref{c:simplified-x-equidistribution}) $C_{H,\widehat B_0,E_1,E_2}>0$, such that, if 
\begin{equation}\label{e:intro-smallness-of-R}
    \|\partial_x \mathscr{R}\|_{L^\infty(\T^2\times\Omega_{E_1,E_2})}
\le C_{H,\widehat B_0,E_1,E_2},
\end{equation}
if \((u_n)_{n\in\N}\) is a sequence of normalized solutions to \eqref{e:intro-general-evolution} satisfying \eqref{e:intro-simplified-spectral-window}, and if \((\tau_{h_n})_{n\in\N}\) is a positive sequence satisfying
\[
\tau_{h_n}\gg h_n^{-1/2},
\]
then, for every \(\psi\in \mathcal{C}_c^\infty(\R)\) and every \(a\in \mathcal{C}^\infty(\T^2)\), one has 
\begin{equation}\label{e:intro-simplified-equi}
\lim_{n\to +\infty}
\int_\R
\psi(t)
\int_{\T^2}
a(x)\,
|u_n(t\tau_{h_n},x)|^2
\,\dd x\,\dd t
=
\left(
\int_\R \psi(t)\,\dd t
\right)
\left(
\int_{\T^2} a(x)\,\dd x
\right).
\end{equation}
\end{theorem}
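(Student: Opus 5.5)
We argue through time-dependent semiclassical (Wigner) measures in the long-time regime. Set $v_n(t):=u_n(t\tau_{h_n})$ and, for $b\in\mathcal C_c^\infty(T^*\T^2)$ and $\psi\in\mathcal C_c^\infty(\R)$, consider
\[
\int_\R\psi(t)\big\langle v_n(t),\Op(b)v_n(t)\big\rangle\,\dd t .
\]
After extraction, these quantities converge to $\int\psi(t)\int b\,\dd\mu_t\,\dd t$ for a measurable family of positive measures $\mu_t$ on $T^*\T^2$.

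The spectral localization \eqref{e:intro-simplified-spectral-window} and the ellipticity of $H$ give two properties:
\begin{itemize}
\item no mass escapes to infinity in $\xi$, so each $\mu_t$ is a probability measure;
\item $\mu_t$ is supported in $\T^2\times\Omega_{E_1,E_2}$.
\end{itemize}
Applying \eqref{e:intro-simplified-equi} to $a$ then amounts to showing that the $x$-marginal of $\mu_t$ is $\dd x$ for a.e. $t$.

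\medskip
\noindent\textbf{Step 1: invariance under the linear flow.} We use the magnetic Egorov/commutator identity
\[
\frac{ih}{\tau_h}\,\frac{\dd}{\dd t}\langle v,\Op(b)v\rangle=\langle v,[\Op(b),\widehat H_h]v\rangle .
\]
Since the magnetic commutator of $\Op(b)$ with $\Op(H)$ equals $-ih\,\Op(\xi\text{-flow derivative }\nabla H\cdot\partial_x b+\text{magnetic term }\widehat B_0\,J\nabla H\cdot\partial_\xi b)+O(h^2)$ (the Lorentz force with the constant flux), one gets the full magnetic Hamiltonian vector field. Dividing by $h\tau_h\to\infty$ shows that $\mu_t$ is invariant under the flow of $X=\nabla H(\xi)\cdot\partial_x+\widehat B_0\,J\nabla H(\xi)\cdot\partial_\xi$. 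The correction from $h\mathscr R$ is of lower order at this scale, but we keep it for Step 3.

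Decompose $b$ in Fourier modes in $x$: $b(x,\xi)=\sum_k \hat b_k(\xi)e^{2i\pi k\cdot x}$. The $k=0$ part already has the correct form, and it is the only part we need to survive.

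\medskip
\noindent\textbf{Step 2: killing the nonzero Fourier modes.} For $k\neq0$ we solve a cohomological equation. We look for $c_k(\xi)$ with
\[
X\big(c_k(\xi)e^{2i\pi k\cdot x}\big)=\hat b_k(\xi)e^{2i\pi k\cdot x},
\]
that is,
\[
2i\pi\,(k\cdot\nabla H)\,c_k+\widehat B_0\,J\nabla H\cdot\nabla_\xi c_k=\hat b_k .
\]
This is an ODE along each energy curve $\SE$, which is a circle traversed by the cyclotron motion $\dot\xi=\widehat B_0 J\nabla H$. It is solvable precisely when the \emph{effective phase} obstruction
\[
\oint_{\SE}2i\pi\, k\cdot\nabla H\,\frac{\dd s}{|\widehat B_0\nabla H|}
\]
is not in $2i\pi\Z$. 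Along the cyclotron flow one has $\nabla H\,\dd t\propto J^{-1}\dot\xi/\widehat B_0$, so the integral of $k\cdot\nabla H$ over a period is $k\cdot(\text{displacement})/\widehat B_0$-type, which vanishes over a closed orbit. The monodromy is therefore trivial, and the equation is \emph{resonant} rather than solvable.

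The classical flow of $X$ is thus periodic-like, and plain invariance does not give equidistribution. The argument must instead use the second-order (averaged) dynamics: the guiding center drift generated by averaging $\mathscr R$ and the $h^{1}$-corrections of $H$ over cyclotron orbits.

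\medskip
\noindent\textbf{Step 3: averaged dynamics and the $h^{-1/2}$ threshold (main obstacle).} The plan is a normal form. Conjugate $\widehat H_h$ by a Fourier integral operator (magnetic averaging) so that, microlocally on $\Omega_{E_1,E_2}$, it reduces to
\[
H+h\,\langle\mathscr R\rangle+\dots,
\]
where $\langle\cdot\rangle$ is the average over the cyclotron circle. The guiding-center coordinates $y=x+\widehat B_0^{-1}J\xi$ are then conjugate with a Planck constant of order $h$ times $\widehat B_0^{-1}$.

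Positive definiteness of the Hessian makes the period and the position of the orbit depend nondegenerately on $E$. Together with the smallness \eqref{e:intro-smallness-of-R}, this ensures that the averaged drift is dominated by the action term. This part is the hardest step, and I would try first to prove that the effective magnetic force (twist) $\partial_E$ of the cyclotron frequency, perturbed by $\partial_x\langle\mathscr R\rangle$, never vanishes.

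The time scale is then set as follows. The $x$-variable spreads over distances $\sim h\tau_h\cdot\text{twist}$ across an energy spread $\gtrsim h^{1/2}$, which is the minimal uncertainty width compatible with the guiding center. This spreading forces any limit of the $x$-Fourier modes to vanish exactly when $\tau_h\gg h^{-1/2}$: a nonstationary phase argument in $E$ gives decay of $\langle v,\Op(\hat b_k e^{2i\pi kx})v\rangle$ after time averaging against $\psi$. Only the $k=0$ mode survives, so the $x$-marginal is $\dd x$, which yields \eqref{e:intro-simplified-equi}.
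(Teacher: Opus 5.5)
Your preliminary reduction is fine and matches Section~3: Wigner measures, no loss of mass, support in $\T^2\times\Omega_{E_1,E_2}$, and reducing the claim to the $x$-marginal being $\dd x$. The problems start at Step~1, and Steps~2--3 are built on it.

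\textbf{Step~1 puts the flux at the wrong order.} In the calculus used here the flux enters $\Omega_{hB}$ with an extra factor $h$. By \eqref{e:commutator}, the magnetic correction to the commutator is therefore $\frac{\widehat B_0h^2}{i}(\partial_{\xi_2}a\,\partial_{\xi_1}b-\partial_{\xi_1}a\,\partial_{\xi_2}b)$, a full order below the Poisson bracket.
\begin{itemize}
\item Dividing the Heisenberg identity by $\tau_h$ gives invariance of $\nu_t^B$ only under the linear flow $\varphi_H^s(x,\xi)=(x+s\nabla H(\xi),\xi)$.
\item The momentum rotation generated by $-\widehat B_0\nabla H^\perp\cdot\partial_\xi$ appears only at $\tau_h\sim h^{-1}$ (Lemma~\ref{l:lambda-regularity}).
\item Invariance under your $X$ is actually false. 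The $\xi$-component of $X$ depends only on $\xi$, so $X$-invariance would make the momentum marginal invariant under that rotation. But for $1\ll\tau_h\ll h^{-1}$ one has $\lambda_t=\lambda_0$ (Lemma~\ref{l:xi-marginal}), and $\lambda_0$ may be a Dirac mass.
\end{itemize}
Your cyclotron/guiding-centre picture belongs to the strong-field regime $\widehat B_0\sim h^{-1}$, not to this setting.

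\textbf{Step~3 is a programme, and its mechanism cannot work.} The FIO normal form is not constructed, the ``twist'' condition is not proved, and the non-stationary phase in $E$ is not carried out. More seriously, the dephasing idea fails outright: the statement covers eigenfunctions, for which $|u_n(t\tau_{h_n})|^2$ does not depend on $t$. There is then no energy spread to dephase, so equidistribution has to come from a mechanism that also works for stationary states.

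\textbf{The missing idea is the resonance structure of the \emph{linear} flow.}
\begin{itemize}
\item Invariance under $\varphi_H^s$ gives $\dd x$ over non-resonant momenta, and leaves only $\Lambda$-modes on each $E_{\Lambda^\perp\setminus\{0\}}$.
\item The cohomological equation should be solved for the linear flow. Commuting with $e^{2i\pi k\cdot x}b(\xi)/(\nabla H\cdot\mathfrak e_\Lambda)$, cut off outside an $R\sqrt h$-neighbourhood of $\{\nabla H\cdot\mathfrak e_\Lambda=0\}$, kills the $\Lambda$-modes there (Lemma~\ref{l:noncompact-modes}).
\item What remains is mass within $\sqrt h$ of the resonant sets. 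It is resolved by the two-microlocal variable $\eta=\nabla H(\xi)\cdot\mathfrak e_\Lambda/(L_\Lambda\sqrt h)$.
\item At this scale the $h^2$ flux term and the $h\,\partial_x\mathcal I_\Lambda(\mathscr R)$ term act on $\eta$ at order one. This gives $X_\Lambda=\eta\frac{\mathfrak e_\Lambda}{L_\Lambda}\cdot\partial_x-G_{\widehat B_0,\Lambda}\partial_\eta$, and the lifted measure is invariant under $X_\Lambda$ once $\tau_h\gg h^{-1/2}$.
\item Since $|G_{\widehat B_0,\Lambda}|$ is bounded below, orbits escape to $|\eta|\to\infty$, so the finite lifted measure must vanish.
\end{itemize}

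\textbf{This is where the hypotheses enter.} On the resonant set $\nabla H^\perp$ is parallel to $\mathfrak e_\Lambda$. Hence the flux part of $G_{\widehat B_0,\Lambda}$ equals $\mp\widehat B_0|\nabla H|\,\mathrm{Hess}(H)\frac{\mathfrak e_\Lambda}{L_\Lambda}\cdot\frac{\mathfrak e_\Lambda}{L_\Lambda}$, which positive definiteness of the Hessian bounds away from zero. The smallness of $\partial_x\mathscr R$ then prevents the $\mathcal I_\Lambda(\mathscr R)$ contribution from cancelling it. Note that this is an average along periodic orbits in $x$, not over a circle in $\xi$ as your $\langle\mathscr R\rangle$ suggests.
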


Equivalently, if one introduces the positive Radon measures
\begin{equation}\label{e:distribution-intro-config-space}
\nu_{h_n}(\tau_{h_n}) := |u_n(t\tau_{h_n},x)|^2\,\dd x\,\dd t \qquad\text{on }\T^2\times\R,
\end{equation}
then Theorem~\ref{t:intro-simplified-equidistribution} says that
\begin{equation}\label{e:intro-simplified-measure}
\nu_{h_n}(\tau_{h_n})
\rightharpoonup
\dd x\otimes \dd t
\qquad
\text{in }\mathcal D'(\T^2\times\R).
\end{equation}
In particular, for almost every time \(t\), the configuration-space marginal associated with any accumulation point of \eqref{e:distribution-intro-config-space} is the uniform measure \(\dd x\). The threshold \(h_n^{-1/2}\) is essential. In general, for observation scales
\(\tau_{h_n}\lesssim h_n^{-1/2}\), one cannot expect a statement of the form \eqref{e:intro-simplified-equi} for arbitrary initial data. In those regimes, nontrivial concentration mechanisms may persist near periodic directions of the underlying classical flow. A precise description of these shorter scales is given later in Section \ref{s:periodic-orbits}. The smallness assumption on \(\partial_x\mathscr R\) is a convenient sufficient condition ensuring the validity of the effective non-vanishing condition used in the general theorem. The latter condition is geometric in nature and will be stated precisely in Theorem \ref{t:intro-main-structure}. In the case of the magnetic Laplacian, it takes a simple form and is satisfied, for instance, as soon as the magnetic field is positive, namely \(B>0\) on \(\T^2\).

\begin{theorem}[Magnetic Laplacian]\label{c:intro-nonsemiclassical-magnetic}
Assume that \(B>0\) and let \((v_n^{(0)})_{n\in\N}\) be a normalized sequence of initial data. We consider the non-semiclassical magnetic Schr\"odinger evolution
\begin{equation}\label{e:evolution-non-semiclassique}
    v_n(t):=
e^{-it(\mathcal L^B+V)}\,v_n^{(0)}.
\end{equation}
Assume that the initial data are spectrally localized in the energy window \([E_1,E_2]\) in the sense of \eqref{e:intro-simplified-spectral-window}, with $H(\xi) = \lvert \xi\lvert^2$. Then, for every
\(\psi\in \mathcal{C}_c^\infty(\R)\) and every \(a\in \mathcal{C}^\infty(\T^2)\),
\begin{equation}\label{e:intro-nonsemiclassical-equi}
\lim_{n\to+\infty}
\int_\R
\psi(t)
\int_{\T^2}
a(x)\,
\left|
e^{-it(\mathcal L^B+V)}v_n^{(0)}(x)
\right|^2
\,\dd x\,\dd t
=
\left(
\int_\R\psi(t)\,\dd t
\right)
\left(
\int_{\T^2}a(x)\,\dd x
\right).
\end{equation}
\end{theorem}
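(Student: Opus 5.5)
We reduce Theorem~\ref{c:intro-nonsemiclassical-magnetic} to the general semiclassical statement (the effective nonvanishing theorem, Theorem~\ref{t:intro-main-structure}, or its simplified version Theorem~\ref{t:intro-simplified-equidistribution}) by a rescaling of time and spectrum.

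\textbf{Rescaling.} Given the non-semiclassical data $v_n^{(0)}$, spectrally localized for $\mathcal L^B+V$ at high energy, we choose $h_n\to0^+$ so that the localization reads $v_n^{(0)}=\mathbbm 1_{[E_1,E_2]}(h_n^2(\mathcal L^B+V))v_n^{(0)}+o(1)$. This is the meaning of \eqref{e:intro-simplified-spectral-window} with $\widehat H_{h_n}=h_n^2(\mathcal L^B+V)$. By \eqref{e:laplacien-mag}, this operator has the form \eqref{e:definition-widehatH} with
\[
H(\xi)=|\xi|^2,\qquad
h_nR_{h_n}=-2h_nA^{\mathrm{per}}\cdot\xi+h_n^2\big(|A^{\mathrm{per}}|^2+V\big).
\]
Hence the principal perturbation is $\mathscr R(x,\xi)=-2A^{\mathrm{per}}(x)\cdot\xi$, and the standing assumptions of Section~\ref{ss:magnetic-operator} hold. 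We then write
\[
e^{-it(\mathcal L^B+V)}=e^{-i\frac{th_n^2}{h_n}\widehat H_{h_n}/h_n\cdot h_n}=e^{-i(th_n^{2})\widehat H_{h_n}/h_n}.
\]
Thus $v_n(t)=u_n(th_n^2)$, where $u_n$ solves \eqref{e:intro-general-evolution}. In the integral $\int\psi(t)\langle a\,v_n(t),v_n(t)\rangle\,\dd t$ the physical time $t$ has order one, so the semiclassical time is $s=th_n^2$. This is a short time, not a long one, so a naive rescaling goes the wrong way. The correct reading is that $t\in\operatorname{supp}\psi$ corresponds to semiclassical time $t\,\tau_{h_n}$ with $\tau_{h_n}=h_n^{-2}$. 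Indeed, with $\widehat H_{h_n}=h_n^2(\mathcal L^B+V)$ one gets $e^{-it(\mathcal L^B+V)}=e^{-i(th_n^{-2})\widehat H_{h_n}\cdot h_n/h_n}$, and since the semiclassical propagator is $e^{-is\widehat H_{h_n}/h_n}$, we have $s=t h_n^{-1}$. So $\tau_{h_n}=h_n^{-1}\gg h_n^{-1/2}$, which lies in the regime of Theorem~\ref{t:intro-simplified-equidistribution}.

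\textbf{Verifying the geometric condition.} The smallness condition \eqref{e:intro-smallness-of-R} may fail here, so we invoke the general Theorem~\ref{t:intro-main-structure} directly and check its effective nonvanishing assumption. For the magnetic Laplacian, averaging $\mathscr R$ along the straight-line flow on $\T^2$ and combining it with the constant flux part $\widehat B_0$ should produce an effective force. This force is the average of the full field $B=\widehat B_0+\dd A^{\mathrm{per}}$ along the relevant orbits, acting as rotation $J\xi$ on $\SE$ (a circle). Hence, as the paper indicates, the condition reduces to $B\neq0$ along those averages, and $B>0$ gives it.

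\textbf{Conclusion and main obstacle.} Theorem~\ref{t:intro-main-structure} then yields \eqref{e:intro-simplified-equi} for $u_n$ at scale $\tau_{h_n}=h_n^{-1}$, which is exactly \eqref{e:intro-nonsemiclassical-equi}. The main obstacle is the second step: identifying the averaged effective force precisely and showing that positivity of $B$ forces it to be nonvanishing on every level $\SE$, $E\in[E_1,E_2]$. This has to be uniform in the window; if uniformity is hard to obtain directly, I would first try to get it from compactness of $[E_1,E_2]$. The gauge term $\dd A^{\mathrm{per}}$ must be seen to add, not cancel, with $\widehat B_0$; pointwise positivity of $B$ handles this.
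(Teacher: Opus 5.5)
Your strategy is the paper's: set $\widehat H_{h_n}=h_n^2(\mathcal L^B+V)$, write $v_n(t)=u_n(t\tau_{h_n})$ with $\tau_{h_n}=h_n^{-1}\gg h_n^{-1/2}$, then apply Theorem~\ref{t:intro-main-structure} once $G_{\widehat B_0,\Lambda}\neq0$ is checked. The rescaling paragraph needs cleaning up. The identities $e^{-it(\mathcal L^B+V)}=e^{-i(th_n^2)\widehat H_{h_n}/h_n}$ and $\tau_{h_n}=h_n^{-2}$ are false. Only your last line is right: $e^{-it(\mathcal L^B+V)}=e^{-ith_n^{-2}\widehat H_{h_n}}=e^{-i(th_n^{-1})\widehat H_{h_n}/h_n}$.

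\textbf{The gap: $G_{\widehat B_0,\Lambda}\neq0$ is not proved.} This is the only substantive step specific to this theorem, and you replace it by a heuristic and ``as the paper indicates''. It is a short computation.
\begin{itemize}
\item Take $\mathfrak e_\Lambda^\perp$ to be $\mathfrak e_\Lambda$ rotated by $+\pi/2$. For $H=|\xi|^2$ one has $\mathrm{Hess}(H)=2\,\mathrm{Id}$ and $\nabla H(\xi)^\perp=2\xi^\perp$.
\item On $E_{\Lambda^\perp\setminus\{0\}}$ write $\xi=c\,\mathfrak e_\Lambda^\perp/L_\Lambda$ with $|c|=|\xi|$. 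Then $\frac{\mathfrak e_\Lambda}{L_\Lambda}\cdot\xi^\perp=-c$, so the flux term of $G_{\widehat B_0,\Lambda}$ equals $-4c\,\widehat B_0$.
\item Since $\mathcal I_\Lambda$ acts only in $x$, $\mathcal I_\Lambda(\mathscr R)=-2\xi\cdot\mathcal I_\Lambda(A^{\mathrm{per}})$. The second term of $G_{\widehat B_0,\Lambda}$ is then
\[
-4c\,\frac{\mathfrak e_\Lambda}{L_\Lambda}\cdot\partial_x\Big(\mathcal I_\Lambda(A^{\mathrm{per}})\cdot\frac{\mathfrak e_\Lambda^\perp}{L_\Lambda}\Big)
=-4c\,\operatorname{rot}\mathcal I_\Lambda(A^{\mathrm{per}})
=-4c\,\mathcal I_\Lambda(B-\widehat B_0).
\]
The first equality holds because $\mathcal I_\Lambda(A^{\mathrm{per}})$ has Fourier modes only in $\Lambda$. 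Its derivative along $\mathfrak e_\Lambda^\perp$ therefore vanishes, and its curl reduces to this single term. The second holds because the Fourier projector $\mathcal I_\Lambda$ commutes with $\operatorname{rot}$.
\item Adding the two terms gives $G_{\widehat B_0,\Lambda}(x,\xi)=-4c\,\mathcal I_\Lambda(B)(x)$.
\item Finally, $\mathcal I_\Lambda(B)(x)=\int_0^1B(x+s\,\mathfrak e_\Lambda^\perp)\,\dd s\ge\min_{\T^2}B>0$, and $|c|=|\xi|\ge\sqrt{E_1}$ on $\Omega_{E_1,E_2}$.
\end{itemize}
This is the precise sense in which the flux and $\dd A^{\mathrm{per}}$ ``add'': they recombine into the mean of $B$ over the closed orbit in direction $\Lambda^\perp$.

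Two further remarks.
\begin{itemize}
\item The computation gives $|G_{\widehat B_0,\Lambda}|\ge4\sqrt{E_1}\min_{\T^2}B$, uniformly in $\Lambda$ and in the window, so your uniformity worry is moot. In any case the theorem only needs pointwise non-vanishing; for each fixed $\Lambda$ the lower bound is extracted by compactness inside Lemma~\ref{l:lemme-mesure-inv-par-XLambda}.
\item Your ``rotation on $\SE$'' picture conflates two different effects. The relevant force is this transverse coefficient driving $\eta$ at the two-microlocal scale $h^{-1/2}$. The rotation $-\widehat B_0\nabla H^\perp$ along $\SE$ is the Heisenberg-scale momentum flow $\Upsilon_H^t$, which plays no role in configuration-space equidistribution.
\end{itemize}
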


This theorem corresponds to the time scale \(\tau_{h_n}=h_n^{-1}\) which is natural in this setting. Indeed, the evolution \eqref{e:evolution-non-semiclassique} solves the non-semiclassical magnetic Schr\"odinger equation
\[
i\partial_t v_n=(\mathcal L^B+V)v_n.
\]
It is related to the semiclassical equation by setting
\[
u_n(t):=v_n(h_nt)
=
e^{-ih_nt(\mathcal L^B+V)}v_n^{(0)}.
\]
Then \(u_n\) satisfies
\[
ih_n\partial_t u_n
=
h_n^2(\mathcal L^B+V)u_n.
\]
By \eqref{e:laplacien-mag}, this equation belongs to the class
\eqref{e:intro-general-evolution} and, taking \(\tau_{h_n}=h_n^{-1}\), one has 
\[
u_n(t \tau_{h_n})=v_n(t).
\]
The time scale \(\tau_{h_n}=h_n^{-1}\) is the Heisenberg scale in the semiclassical normalization used here. It lies far beyond the Ehrenfest regime, which in general corresponds to small logarithmic times of order \(|\log h_n|\) for semiclassical Egorov estimates. We shall discuss other long observation regimes, including
\[
h_n^{-1/2}\ll\tau_{h_n}\ll h_n^{-1}
\qquad\text{and}\qquad
h_n^{-1}\ll\tau_{h_n},
\]
later in the article.

\subsection{The semiclassical context}\label{ss:The-semiclassical-context}

As mentioned above, Theorem~\ref{t:intro-simplified-equidistribution} follows from a more general result, stated in terms of semiclassical measures. We now describe this framework and state the main
structure theorem of the article.

To simplify notation, we shall henceforth write \(h\) instead of \(h_n\), and \(u_h\) instead of \(u_n\). All limiting statements are understood along the fixed sequence \((h_n)_{n\in\N}\), possibly after extraction of a subsequence. Let \((\tau_h)_{h\to0^+}\) be a sequence of positive observation times. In order
to describe the long-time behavior of the solutions, we first consider the
configuration-space distributions
\begin{equation}\label{e:distribution-intro}
\nu_h(\tau_h)
:=
|u_h(t\tau_h,x)|^2\,\dd x\,\dd t
\qquad\text{on }\T^2\times\R.
\end{equation}
The compactness arguments recalled in
Section~\ref{s:semiclassicalmeasure} show that every accumulation point of
\((\nu_h(\tau_h))_{h\to0^+}\) is a positive Radon measure $\nu$ on
\(\T^2\times \R\). Such limits describe the macroscopic distribution of the
position densities along the rescaled evolution \(t\mapsto u_h(t\tau_h)\).

The semiclassical strategy consists in lifting these configuration-space
distributions to phase space. In the present magnetic setting, this is achieved
through time-dependent magnetic Wigner distributions that generalize \eqref{e:distribution-intro} and that are defined by 
\begin{equation}\label{e:Wigner-magnetic-distrib}
    W_h^B(\tau_h): a \in \mathcal{C}_c^\infty(\R \times T^*\T^2) \longmapsto \int_\R \big\langle \Op(a(t))u_h(t\tau_h), u_h(t\tau_h)\big\rangle_{L^2(\T^2,L)} \dd t, 
\end{equation}
where $(u_h)_{h\to 0^+}$ is a family of solutions to \eqref{e:intro-general-evolution}.

More precisely, Section~\ref{s:semiclassicalmeasure} shows that, after extraction of a subsequence, one obtains after taking $h \to 0^+$ a positive Radon measure
\begin{equation}\label{e:definition-WB}
W^B(\dd t,\dd x,\dd\xi)
=
\nu_t^B(\dd x,\dd\xi)\otimes \dd t
\qquad\text{on } T^*\T^2\times \R,
\end{equation}
where \((\nu_t^B)_{t\in\R}\) is a measurable family of probability measures on \(T^*\T^2\), defined for almost every \(t\). The family \((\nu_t^B)_{t\in\R}\) will be referred to as a time-dependent magnetic semiclassical measure. Moreover, under the spectral localization assumption \eqref{e:intro-simplified-spectral-window}, one has
\begin{equation}\label{e:intro-propriete-support}
\operatorname{supp}\nu_t^B
\subset
\T^2\times\Omega_{E_1,E_2}
\qquad\text{for a.e. }t\in\mathbb{R}.
\end{equation}
The corresponding configuration-space limit is recovered by projection:
\[
\nu_t:=(\pi_x)_*\nu_t^B,
\qquad
\pi_x:(x,\xi) \in T^*\T^2\mapsto x.
\]
In particular, the corresponding configuration-space distributions satisfy
\[
\nu_h(\tau_h)
\rightharpoonup
\nu_t(\dd x)\otimes\dd t
\qquad
\text{in }\mathcal D'(\T^2\times \R).
\]

When \(\tau_h\to+\infty\), the phase-space measures \(\nu_t^B\) are, for almost every \(t\), invariant under the Hamiltonian flow generated by the principal symbol \(H(\xi)\),
\begin{equation}\label{e:intro-def-flot}
\varphi_H^s(x,\xi)=(x+s\nabla H(\xi),\xi).
\end{equation}
This invariance alone does not exclude concentration near the periodic trajectories of \(\varphi_H^s\). To rule out these remaining concentration phenomena, we introduce a non-vanishing condition
along the periodic directions of the classical flow. Periodic directions are
indexed by primitive rank-one submodules
\(\Lambda\subset\Z^2\). We denote by \(\mathcal L_1\) the set of such
submodules. For \(\Lambda\in\mathcal L_1\), let
\(\mathfrak e_\Lambda\) be a primitive generator of \(\Lambda\) and set
\[
L_\Lambda:=|\mathfrak e_\Lambda|.
\]
The resonant set associated with \(\Lambda\) is
\[
E_{\Lambda^\perp\setminus\{0\}}
:=
\left\{
\xi\in\R^2:
\nabla H(\xi)\in\Lambda^\perp\setminus\{0\}
\right\}.
\]
Along these directions, the flow $\varphi_H^s$ is periodic. The two-microlocal analysis developed in Section~\ref{ss:Effective-dynamics-for-the-two-microlocal-lift} gives rise to the effective coefficient
\begin{equation}\label{e:intro-definition-G}
G_{\widehat B_0,\Lambda}(x,\xi)
:=
\operatorname{Hess}(H)(\xi)
\frac{\mathfrak e_\Lambda}{L_\Lambda}
\cdot
\left(
\widehat B_0\,\nabla H(\xi)^\perp
+
\partial_x\mathcal I_\Lambda(\mathscr{R})(x,\xi)
\right),
\end{equation}
where we use the convention $(v_1,v_2)^\perp = (-v_2,v_1)$, and where \(\mathcal I_\Lambda(\mathscr{R})\) denotes the \(\Lambda\)-average of \(\mathscr{R}\), defined in Section~\ref{s:decomposition}. Roughly speaking, it consists in keeping only the Fourier coefficients of $\mathscr{R}$ lying in $\Lambda$. For instance, in the case of the magnetic Laplacian, one has
\[
H(\xi)=|\xi|^2,
\qquad
\mathscr{R}(x,\xi)=-2\xi\cdot A^{\mathrm{per}}(x),
\]
and, on the resonant set \( E_{\Lambda^\perp\setminus\{0\}}\), the effective coefficient reduces to
\[
G_{\widehat B_0,\Lambda}(x,\xi)
=
-4
\left(
\xi\cdot\frac{\mathfrak e_\Lambda^\perp}{L_\Lambda}
\right)
\mathcal I_\Lambda(B)(x).
\]
Thus, in this particular case, the non-vanishing of
\(G_{\widehat B_0,\Lambda}\) is directly related to the non-vanishing
of the magnetic field averaged along the corresponding periodic direction. The main result of this article shows that the magnetic subprincipal structure removes any concentration phenomena under a suitable non-vanishing condition. It also yields a complete description of the limiting measures on the long time scales considered here.

\begin{theorem}\label{t:intro-main-structure}
Assume that the standing assumptions on \(H\) and \(R_h\) stated above hold. More precisely, assume that the ellipticity of \(H\), the regular energy condition  \eqref{e:fenetre-energie} and the spectral localization condition \eqref{e:intro-simplified-spectral-window} hold. Suppose also that for every $\Lambda\in\mathcal L_1$ and every \((x,\xi)\in \T^2\times
\big(
E_{\Lambda^\perp\setminus\{0\}}
\cap
\Omega_{E_1,E_2}
\big),
\)
\begin{equation}\label{e:intro-uniform-escape}
G_{\widehat B_0,\Lambda}(x,\xi)
\neq0.
\end{equation}

Let $W^B(\dd t,\dd x,\dd\xi)=\nu_t^B(\dd x,\dd\xi)\otimes\dd t$ be any time-dependent magnetic semiclassical measure associated with the rescaled evolution $(u_h(t\tau_h))_{h\to 0^+}$ as in \eqref{e:Wigner-magnetic-distrib}, and assume that $\tau_h\gg h^{-1/2}$. Then, there exists a measurable family of probability measures
\((\lambda_t)_{t\in\R}\) on \(\Omega_{E_1,E_2}\) such that
\begin{equation}\label{e:intro-product-form}
\nu_t^B(\dd x,\dd\xi)
=
\dd x\otimes\lambda_t(\dd\xi)
\qquad\text{for a.e. }t.
\end{equation}
Moreover, up to another extraction, there exists a probability measure $\lambda_0$ on $\Omega_{E_1,E_2}$ associated to the initial data in the sense that 
\begin{equation}\label{e:intro-mesure-lambda0}
    \forall a \in \mathcal{C}_c^\infty(\R^2), \qquad \left\langle \Op(a)u_h^{(0)}, u_h^{(0)}\right\rangle_{L^2(\T^2,L)} \xrightarrow[h \to 0^+]{} \int_{\R^2} a(\xi) \lambda_0(\dd \xi)
\end{equation}
and such that the following finer description of $\lambda_t$ holds.
\begin{enumerate}
\item[(i)]
\textbf{Intermediate long times.}
If $h^{-1/2}\ll\tau_h\ll h^{-1}$, then one has \(\lambda_t=\lambda_0\). More precisely, $\nu_t^B$ is of the form
\[
\nu_t^B(\dd x,\dd\xi)
=
\dd x\otimes\lambda_0(\dd\xi)
\qquad\text{for a.e. }t.
\]

\item[(ii)]
\textbf{The Heisenberg scale.}
Assume that $\tau_h = h^{-1}$. Then one has
\[
\nu_t^B(\dd x,\dd\xi)
=
\dd x\otimes (\Upsilon_H^t)_*\lambda_0(\dd \xi),
\]
where \(\Upsilon_H^t\) is the flow on momentum space generated by
\begin{equation}\label{e:intro-momentum-vector-field}
\widetilde X_H 
:=
-\widehat B_0
\nabla H(\xi)^{\perp}\cdot\partial_\xi.
\end{equation}

\item[(iii)]
\textbf{Beyond the Heisenberg scale.}
If $\tau_h\gg h^{-1}$, then letting $\sigma^B:=H_*\lambda_0$, one has 
\begin{equation}\label{e:intro-energy-disintegration}
\nu_t^B(\dd x,\dd\xi)
=
\dd x\otimes
\int_{E_1}^{E_2}
\mu_{E}(\dd\xi)\,\sigma^B(\dd E)
\qquad\text{for a.e. }t,
\end{equation}
where \(\mu_E\) denotes the unique \(\Upsilon_H^t\)-invariant probability measure on $\mathbb S_E$. Moreover, one has for every $f \in \mathcal{C}^\infty_c(\R)$ 
\[
\int_{E_1}^{E_2}f(E)\sigma^B(\dd E) = \int_{\R^2} f(H(\xi)) \lambda_0(\dd \xi) = \lim_{h \to 0^+}\left\langle f(\widehat H_h)u_h^{(0)}, u_h^{(0)}\right\rangle_{L^2(\T^2,L)}.
\]
\end{enumerate}
\end{theorem}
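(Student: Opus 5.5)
Following the Anantharaman--Macià strategy for the torus, adapted to the magnetic quantization, the plan has five steps.

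\emph{Step 1: invariance and decomposition of $\nu_t^B$.} Take an accumulation point $\nu_t^B\otimes\dd t$ of the Wigner distributions \eqref{e:Wigner-magnetic-distrib}. Since $\tau_h\to\infty$, $\nu_t^B$ is invariant under $\varphi_H^s$, by the Egorov theorem for $\Op$ applied on times $s$ of order one. The support property \eqref{e:intro-propriete-support} comes from the spectral localization \eqref{e:intro-simplified-spectral-window} and the functional calculus of Proposition~\ref{prop:esa}. Next, split momentum space according to resonances, writing $\R^2=\bigsqcup_\Lambda R_\Lambda$ with $R_\Lambda=\{\xi:\ \nabla H(\xi)\in\Lambda^\perp\}$ for $\Lambda$ of rank $0$, $1$, $2$. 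The rank-two piece $\nabla H(\xi)=0$ is excluded on $\Omega_{E_1,E_2}$ by \eqref{e:fenetre-energie}. On the nonresonant piece (rank $0$) the flow is uniquely ergodic on each torus $\T^2\times\{\xi\}$, so invariance already forces the product form $\dd x\otimes\lambda(\dd\xi)$. It therefore suffices to show that, for every $\Lambda\in\mathcal L_1$, the restriction $\nu_t^B\rvert_{\T^2\times R_\Lambda}$ is also of the form $\dd x\otimes\lambda$, i.e.\ that only the $x$-Fourier mode $k=0$ survives.

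\emph{Step 2: second microlocalization and the $h^{-1/2}$ threshold.} This is the key step and the main obstacle. Near $R_\Lambda$, introduce two-microlocal observables $a(x,\xi,\eta)$ with $\eta=\tau_h P_\Lambda\nabla H(\xi)$ (or, equivalently, $\eta=\tau_h\,\mathrm{dist}(\xi,R_\Lambda)$), homogeneous at infinity in $\eta$ and with Fourier modes in $x$ lying in $\Lambda$. Taking limits gives a part $\mu_\Lambda$ at $\eta=\infty$, which is invariant in the transverse direction and hence of the form $\dd x$-invariant by the argument of Step 1, and a compact part $\tilde\mu_\Lambda$ at finite $\eta$. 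For $\tilde\mu_\Lambda$, conjugate the equation by the averaging operator $\mathcal I_\Lambda$ of Section~\ref{s:decomposition}. By the analysis of Section~\ref{ss:Effective-dynamics-for-the-two-microlocal-lift}, the magnetic commutator produces a drift of momentum of size $h$, which is $\widehat B_0\nabla H^\perp+\partial_x\mathcal I_\Lambda\mathscr R$. The drift $\Op(G_{\widehat B_0,\Lambda})$ therefore acts on the $\eta$ variable with strength $\tau_h\cdot h\cdot\tau_h=h\tau_h^2$ over a macroscopic time, since $\eta$ is rescaled by $\tau_h$. When $\tau_h\gg h^{-1/2}$ this diverges. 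Combined with \eqref{e:intro-uniform-escape} and a compactness argument over $\Lambda$ (energy window compact, only finitely many $\Lambda$ with $L_\Lambda$ bounded relevant, and the uniform lower bound on $\lvert G\rvert$ there), this pushes all the mass of $\tilde\mu_\Lambda$ to $\eta=\infty$. Hence $\tilde\mu_\Lambda=0$ and $\nu_t^B=\dd x\otimes\lambda_t$. I would first try to prove this by a positive-commutator argument, using an escape function $\chi(\eta\cdot\mathfrak e_\Lambda/\lvert\eta\rvert\cdots)$ whose derivative along $G$ has a definite sign. The difficulty is controlling the error terms of the magnetic Moyal product uniformly in $\tau_h$.

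\emph{Step 3: evolution of $\lambda_t$.} Once $\nu_t^B=\dd x\otimes\lambda_t$ is known, test with $a=a(\xi)$ and compute $\frac{\dd}{\dd t}\langle\Op(a)u_h(t\tau_h),u_h(t\tau_h)\rangle$. This equals $\tfrac{\tau_h}{h}\langle[\widehat H_h,\Op(a)]u,u\rangle$, and the magnetic commutator of two $x$-independent symbols is $-ih\widehat B_0\{\cdot\}$, i.e.\ $ih\,\widetilde X_H a$ plus $h\,\partial_x\mathscr R\cdot\partial_\xi a+O(h^2)$. The term involving $\partial_x\mathscr R$ integrates to zero against $\dd x$ (it is a derivative in $x$), so the derivative is $\tau_h h\,\lambda_t(\widetilde X_Ha)+o(1)$, with the remaining error of size $\tau_h h^2$, which tends to $0$ by the product structure. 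The three regimes follow. If $\tau_h h\to0$, then $\lambda_t$ is constant, equal to $\lambda_0$ from \eqref{e:intro-mesure-lambda0}; this is case (i). If $\tau_h=h^{-1}$, then $\lambda_t$ solves the transport equation, so $\lambda_t=(\Upsilon^t_H)_*\lambda_0$; this is case (ii). If $\tau_h h\to\infty$, dividing by $\tau_h h$ gives $\widetilde X_H$-invariance of $\lambda_t$. Since $\widetilde X_H$ is nonvanishing and tangent to the circles $\mathbb S_E$, $\lambda_t$ disintegrates as $\int\mu_E\,\sigma_t(\dd E)$.

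\emph{Step 4: the energy marginal.} Finally, $\sigma_t=H_*\lambda_t$ is conserved, because $f(\widehat H_h)$ commutes with the flow and $f(\widehat H_h)=\Op(f\circ H)+O(h)$. It therefore equals $\sigma^B=H_*\lambda_0$, which gives case (iii).
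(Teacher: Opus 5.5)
\textbf{Verdict: there is a genuine gap in Step~2.} Steps 1, 3 and 4 follow the paper: invariance under $\varphi_H^s$ and the $\Lambda$-decomposition; testing with $a(\xi)$, dividing by $h\tau_h$ and removing the $\partial_x\mathscr R$ term with the product structure; conservation of $H_*\lambda_t$ via $f(\widehat H_h)=\Op(f\circ H)+\mathcal O(h)$. The gap comes from your two-microlocal scale $\eta=\tau_h\,\nabla H(\xi)\cdot\mathfrak e_\Lambda/L_\Lambda$.

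With this scale, killing the compact part $\tilde\mu_\Lambda$ is fine when $h^{-1/2}\ll\tau_h\ll h^{-1}$. It does not give $\nu_t^B=\dd x\otimes\lambda_t$, however, because the part at $\eta=\infty$ is no longer controlled by the transport argument.

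\emph{The part at $\eta=\infty$.} To kill its nonzero $\Lambda$-modes you must commute $\widehat H_h$ with $s=e^{2i\pi k\cdot x}b(\xi)(\nabla H\cdot\mathfrak e_\Lambda)^{-1}\bigl(1-\chi\bigr)$, cut off at $\lvert\nabla H\cdot\mathfrak e_\Lambda\rvert\ge R/\tau_h$. On that support $\partial_\xi s=\mathcal O(\tau_h^2/R^2)$. So the magnetic term of the commutator, of the form $h\widehat B_0\,\nabla H^\perp\cdot\partial_\xi s$ up to sign, contributes $\mathcal O(h\tau_h^2/R^2)$. This diverges exactly when $\tau_h\gg h^{-1/2}$; it is the error $h/(\gamma_hR^2)$ in the proof of Lemma~\ref{l:noncompact-modes} with $\gamma_h=\tau_h^{-2}$. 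Put differently, your part at $\eta=\infty$ contains the zone $\tau_h^{-1}\ll\lvert\nabla H\cdot\mathfrak e_\Lambda\rvert\lesssim\sqrt h$. There the magnetic drift and the transport along $\mathfrak e_\Lambda$ are comparable, and nothing in your argument controls its $x$-Fourier modes. The ``argument of Step~1'' does not apply there.

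\emph{The regime $\tau_h\gtrsim h^{-1}$.} Items (ii) and (iii) still need the product form, but here the scale $\tau_h^{-1}\lesssim h$ is below the uncertainty scale. After the rescaling \eqref{e:rescaling-semiclassique}, the effective semiclassical parameter is $h\tau_h\not\to0$. Neither the commutator expansion nor the positivity argument is then available.

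\textbf{The missing idea.} Localize at scale $\sqrt h$ for every $\tau_h\gtrsim h^{-1/2}$, namely $\eta=\nabla H(\xi)\cdot\mathfrak e_\Lambda/(L_\Lambda\sqrt h)$ (Definition~\ref{d:def-gamma}). Then:
\begin{itemize}
\item The effective semiclassical parameter is $\sqrt h$.
\item For the non-compact part, the errors are $\mathcal O(1/R)$ in the limit $h\to0$.
\item On the compact part, the transport term $\eta\frac{\mathfrak e_\Lambda}{L_\Lambda}\cdot\partial_x$ and the drift $G_{\widehat B_0,\Lambda}\partial_\eta$ are both of order one. The time-derivative term is $\mathcal O\bigl((\tau_h\sqrt h)^{-1}\bigr)\to0$.
\end{itemize}
This gives invariance under the full field $X_\Lambda$ (Lemma~\ref{l:three-regimes-2micro}). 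The escape Lemma~\ref{l:lemme-mesure-inv-par-XLambda} then forces a finite invariant measure to vanish: along orbits $\dot\eta=-G_{\widehat B_0,\Lambda}(x,\xi)$ with $\xi$ frozen, so $G$ keeps a constant sign and modulus at least $c_\Lambda$. No positive-commutator escape function is needed. No uniformity in $\Lambda$ is needed either: each $\Lambda$ is treated separately and the contributions are summed.

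A minor point in Step~3: after dividing by $h\tau_h$, the remainder is $\mathcal O(h)$ in every regime. The product structure is used only to remove the $\partial_x\mathscr R$ term, not to control a $\tau_h h^2$ error.
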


The spectral localization condition \eqref{e:intro-simplified-spectral-window} can be read at the level of the initial momentum measure. Namely, it implies that 
\[
\lambda_0(\Omega_{E_1,E_2})=1, \qquad \operatorname{supp}(\lambda_0)\subset \Omega_{E_1,E_2}, 
\]
and therefore the initial energy distribution $\sigma^B = H_*\lambda_0$ is a probability measure supported in $[E_1,E_2]$. Theorem~\ref{t:intro-main-structure} reveals two distinct long-time thresholds. The first threshold occurs at the scale $h^{-1/2}$: once $\tau_h\gg h^{-1/2}$, the configuration variable becomes equidistributed. At scales $\tau_h \le h^{-1/2}$, such an equidistribution statement fails. Concentration effects may persist near periodic directions of the classical flow, and the limiting measures can be described in terms of the measure arising from the initial data. This shorter-time analysis is carried out in Section \ref{s:periodic-orbits}. The second threshold occurs at the scale $h^{-1}$: once $\tau_h\gg h^{-1}$, the remaining momentum dynamics equidistributes along the regular energy curves of $H$. The non-vanishing condition \eqref{e:intro-uniform-escape} expresses an escape mechanism for the effective dynamics near each periodic direction. For each fixed \(\Lambda\), this condition yields a positive lower bound on \(|G_{\widehat B_0,\Lambda}|\) on the relevant compact resonant set. It can be viewed as a directional curvature condition: on the resonant set associated with \(\Lambda\), the leading term of \(G_{\widehat B_0,\Lambda}\) involves the Hessian of \(H\) in the direction \(\mathfrak e_\Lambda\). This is why the positive definiteness assumption on \(\operatorname{Hess}(H)\), together with the smallness assumption \eqref{e:intro-smallness-of-R} on \(\partial_x\mathscr R\), implies \eqref{e:intro-uniform-escape}: the term proportional to \(\widehat B_0\) in \eqref{e:intro-definition-G} dominates the contribution involving \(\partial_x\mathcal I_\Lambda(\mathscr R)\). This mechanism is explained more precisely in Section~\ref{s:x-regularity}. Therefore, Theorem~\ref{t:intro-simplified-equidistribution} follows from Theorem~\ref{t:intro-main-structure}. Likewise, Theorem~\ref{c:intro-nonsemiclassical-magnetic} is obtained from Theorem~\ref{t:intro-main-structure} in the magnetic Laplacian case.

\begin{corollary}[Eigenfunctions]
\label{c:intro-eigenfunctions}
Assume the standing assumptions on \(H\) and \(R_h\). Let \(E\in(0,+\infty)\) be a regular value of \(H\), and assume that, for every \(\Lambda\in\mathcal L_1\) and every
\( (x,\xi)\in \T^2\times \bigl( E_{\Lambda^\perp\setminus\{0\}} \cap \SE
\bigr),
\)
one has
\[
G_{\widehat B_0,\Lambda}(x,\xi)\neq0.
\] Let \((u_h)_{h\to0^+}\) be a normalized family of eigenfunctions of \(\widehat H_h\),
\[
\widehat H_hu_h=E_hu_h,
\qquad
E_h\longrightarrow E.
\]
Then, for every \(a\in \mathcal{C}_c^\infty(T^*\T^2)\), one has
\[
\lim_{h\to0^+}\left\langle
\mathsf{Op}_h^B(a)u_h,u_h
\right\rangle_{L^2(\T^2,L)}
=
\int_{\T^2\times \SE}
a(x,\xi)\,\dd x\,\mu_E(\dd\xi),
\]
where \(\mu_E\) is uniformly equidistributed on \(\SE\).
In particular, for every \(a\in \mathcal{C}^\infty(\T^2)\), one has
\[
\lim_{h\to0^+}
\int_{\T^2}
a(x)|u_h(x)|^2\,\dd x
=
\int_{\T^2}a(x)\,\dd x.
\]
\end{corollary}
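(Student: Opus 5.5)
The plan is to realize the eigenfunctions as stationary solutions of \eqref{e:intro-general-evolution} and apply Theorem~\ref{t:intro-main-structure} in regime (iii). Set $u_h(t):=e^{-itE_h/h}u_h$; this is a normalized solution with $u_h^{(0)}=u_h$. Since $|u_h(t\tau_h,x)|^2=|u_h(x)|^2$, and more generally $\langle \Op(a)u_h(t\tau_h),u_h(t\tau_h)\rangle=\langle\Op(a)u_h,u_h\rangle$ for every $t$ and every $\tau_h$, the time-dependent Wigner distribution \eqref{e:Wigner-magnetic-distrib} applied to $\psi(t)a(x,\xi)$ equals $\big(\int\psi\big)\langle \Op(a)u_h,u_h\rangle$. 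Any accumulation point of $\langle\Op(a)u_h,u_h\rangle$ therefore gives a time-independent measure $\nu^B$ with $\nu_t^B=\nu^B$ for a.e.\ $t$, and this holds for any choice of $\tau_h$, in particular for $\tau_h=h^{-2}\gg h^{-1}$.

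Next I would check the hypotheses. Since $E$ is a regular value and $\mathbb S_E$ is compact (by ellipticity), for small $\delta>0$ the window $[E_1,E_2]=[E-\delta,E+\delta]$ contains no critical value, so \eqref{e:fenetre-energie} holds. Connectedness of the level sets near $E$ follows from the standing assumptions (or from the fact that regular levels near $E$ are diffeomorphic to $\mathbb S_E$). Spectral localization \eqref{e:intro-simplified-spectral-window} is exact for $h$ small, because $E_h\to E$ forces $\mathbbm 1_{[E_1,E_2]}(\widehat H_h)u_h=u_h$.

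The delicate point is the non-vanishing condition. The hypothesis only gives $G_{\widehat B_0,\Lambda}\neq0$ on $\mathbb S_E$, whereas Theorem~\ref{t:intro-main-structure} asks for it on the whole window $\Omega_{E_1,E_2}$ and for all $\Lambda$. I would handle this in one of two ways. The first option is to use that the proof of the theorem only needs the condition on $\operatorname{supp}\lambda_0$. Here $\operatorname{supp}\lambda_0\subset\mathbb S_E$, since $\langle f(\widehat H_h)u_h,u_h\rangle=f(E_h)\to f(E)$ gives $H_*\lambda_0=\delta_E$. One then localizes the two-microlocal argument near $\mathbb S_E$. The second option is a continuity/compactness argument giving $G\neq0$ on a thin shell around $\mathbb S_E$, for each $\Lambda$. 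The uniformity issue in $\Lambda$ is the main obstacle. Only finitely many $\Lambda$ have resonant sets whose relevant estimates are not trivially controlled; for $L_\Lambda$ large one needs the escape mechanism to be handled by the structure of the theorem's proof, which treats each $\Lambda$ separately on a compact resonant set. I expect the first option, restricting to $\operatorname{supp}\lambda_0$, to be the cleanest.

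Finally, regime (iii) gives $\nu^B=\dd x\otimes\int\mu_{E'}\,\sigma^B(\dd E')$ with $\sigma^B=\delta_E$, so $\nu^B=\dd x\otimes\mu_E$. Here $\mu_E$ is the unique invariant measure of $\widetilde X_H=-\widehat B_0\nabla H^\perp\cdot\partial_\xi$ on $\mathbb S_E$, that is, the measure $\dd\ell/|\nabla H|$ normalized. Because the limit is independent of the subsequence, the full family converges. Taking $a=a(x)$, extended by a cutoff in $\xi$ equal to $1$ near $\mathbb S_E$ (which is harmless by the localization), yields the configuration-space statement.
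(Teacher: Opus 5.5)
Your argument is correct and follows the derivation the paper intends. The paper states the corollary without proof, as a direct consequence of Theorem~\ref{t:intro-main-structure}(iii) applied to the stationary solutions $e^{-itE_h/h}u_h$ with any $\tau_h\gg h^{-1}$, where $\sigma^B=H_*\lambda_0=\delta_E$ yields $\dd x\otimes\mu_E$.

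Your option (a) is in fact the route that has to be used, and it needs no uniformity in $\Lambda$. The hypothesis $G_{\widehat B_0,\Lambda}\neq0$ enters only through Lemma~\ref{l:lemme-mesure-inv-par-XLambda}, which is applied separately for each $\Lambda$ to $\widetilde\nu_{t,\Lambda}^B$. The $\xi$-support of $\widetilde\nu_{t,\Lambda}^B$ lies in the compact set $E_{\Lambda^\perp\setminus\{0\}}\cap\SE$, because $h_\Lambda(t)\,(\pi_{x,\xi})_*\widetilde\nu_{t,\Lambda}^B\le\nu_t^B$ and $\lambda_t=\lambda_0$ lives on $\SE$. The thickened-window route of option (b), by contrast, can genuinely fail: the admissible shell width around $\SE$ need not be bounded below uniformly in $\Lambda$.
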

This stationary consequence may be viewed as a quantum unique ergodicity-type statement in a magnetic and completely integrable setting.

\subsection{Relation to previous works}

In summary, the above results describe regularization and equidistribution phenomena for quantum systems associated with completely integrable classical dynamics, where the relevant rigidity mechanism is created by a magnetic subprincipal effect. Such results pertain to the broader field where one aims at describing the long time dynamics of Schrödinger equations depending on the underlying classical Hamiltonian dynamics. We will now briefly describe other results in that direction.

\subsubsection*{Semiclassical measures and quantum ergodicity}

The Quantum Ergodicity Theorem of \v{S}hnirelman~\cite{Shnirelman1974}, Zelditch~\cite{Zelditch1987} and Colin de Verdi\`ere~\cite{ColindeVerdiere1985} asserts that, on a compact Riemannian manifold whose geodesic flow is ergodic, a density-one subsequence of Laplace eigenfunctions equidistributes in phase space. The stronger Quantum Unique Ergodicity conjecture of Rudnick--Sarnak~\cite{RudnickSarnak1994} predicts that, in negatively curved settings, no subsequence extraction should be necessary.

Several major developments have clarified the possible structure of semiclassical measures in chaotic geometries. Lindenstrauss proved quantum unique ergodicity for Hecke eigenfunctions on arithmetic hyperbolic surfaces~\cite{Lindenstrauss2006}. In the Anosov setting, Anantharaman~\cite{Anantharaman-2008} and Anantharaman--Nonnenmacher~\cite{Anantharaman-Nonnenmacher-2007} established positive entropy bounds for semiclassical measures, excluding in particular concentration on a single closed geodesic. Hassell showed, by constructing ergodic billiards that are not quantum uniquely ergodic, that ergodicity of the classical dynamics does not imply quantum unique ergodicity in full generality~\cite{Hassell-2010}. More recently, Dyatlov and Jin proved that every semiclassical measure on a compact hyperbolic surface has full support \cite{Dyatlov-Jin-2018}, a result extended to negatively curved surfaces of variable curvature by Dyatlov, Jin and Nonnenmacher~\cite{Dyatlov-Jin-Nonnenmacher-2022}. Related higher-dimensional support results were recently obtained by Kim and Miller~\cite{Kim-Miller-2025}.

The situation considered in the present paper is of very different nature. The principal Hamiltonian depends only on the momentum variable,
\[
H=H(\xi),
\]
and therefore generates a completely integrable flow on \(T^*\T^2\). In such a setting, one cannot rely on chaoticity of the classical flow to force equidistribution. Nevertheless, even in the non-magnetic case, the flat torus exhibits remarkable regularity properties for quantum limits. Indeed, the classical estimates of Cooke~\cite{Cooke-1971} and Zygmund~\cite{Zygmund-1974} imply uniform \(L^4\)-bounds for normalized eigenfunctions of the Laplacian on \(\T^2\). As a consequence, if \((u_{\lambda_n})_{n\in\N}\) is a sequence of normalized Laplace eigenfunctions on \(\T^2\), then every accumulation point of the probability densities
\[
|u_{\lambda_n}(x)|^2\,\dd x
\]
is absolutely continuous with respect to the Lebesgue measure and has a density in \(L^2(\T^2)\). Jakobson~\cite{Jakobson-1997} refined this result by classifying the configuration-space quantum limits on \(\T^2\): their densities are trigonometric polynomials whose Fourier supports satisfy additional geometric constraints. He also obtained absolute continuity results for quantum limits on flat tori in arbitrary dimension. In a related pseudo-integrable setting, Marklof and Rudnick~\cite{MarklofRudnick} proved that, for rational polygons, a density-one subsequence of eigenfunctions becomes uniformly distributed in configuration space. These stationary results show that integrable or pseudo-integrable geometries may
already impose nontrivial regularity on configuration-space limits. The present article is concerned with a dynamical and magnetic approach to these phenomena.

\subsubsection*{Long-time semiclassical dynamics, observability and two-microlocal analysis}

A second line of work, much closer in spirit to the present article, concerns time-dependent semiclassical measures for Schr\"odinger evolutions observed on long time scales. Maci\`a initiated a systematic study of such limits on compact manifolds and on flat tori \cite{Macia-2009,Macia-2010}. In the torus case, he showed that the limiting dynamics is not only determined by the initial semiclassical measure: one must keep track of additional information concentrated near resonant frequencies. He also proved that limits of configuration-space densities for the Schr\"odinger flow are absolutely continuous with respect to Lebesgue measure. This program was further developed by Anantharaman and Maci\`a \cite{AnantharamanMacia11,Anantharaman-Macia-2014}. They obtained a detailed description of semiclassical measures for the Schr\"odinger equation on flat tori and established strong regularity properties of the associated configuration-space limits. As a by-product, their analysis also yielded observability consequences for the Schr\"odinger evolution.

These questions are also closely related to observability and control problems. The seminal work of Bardos, Lebeau and Rauch~\cite{Bardos-Lebeau-Rauch-1992} concerned the wave equation. For Schr\"odinger equations, Lebeau~\cite{Lebeau-1992} established exact controllability results in a general geometric framework while, in periodic geometries, observability phenomena already appear in the work of Jaffard~\cite{Jaffard-1990}. In a non-semiclassical formulation, Burq and Zworski~\cite{BurqZworski} studied the equation
\[
i\partial_tu=(-\Delta+V)u
\qquad\text{on }\T^2,
\]
with \(V\in \mathcal{C}^\infty(\T^2)\), and proved that any nonempty open subset controls, in the \(L^2\)-sense, both stationary and dynamical Schr\"odinger solutions. Bourgain, Burq and Zworski~\cite{Bourgain-Burq-Zworski-2013} later extended this analysis to potentials \(V\in L^2(\T^2)\) on two-dimensional flat tori, including both rational and irrational tori.

A systematic framework for long-time semiclassical dynamics in completely integrable systems on flat tori \(\T^d\) was developed by Anantharaman, Fermanian-Kammerer and Maci\`a~\cite{Anantharaman-Fermanian-Kammerer-Macia-2015}. They consider semiclassical evolutions with principal Hamiltonian \(H(\xi)\) and study time-dependent semiclassical measures associated with the rescaled evolution \(t\mapsto u_h(t\tau_h)\). Their analysis reveals a critical time scale
\[
\tau_h= h^{-1},
\]
at which two-microlocal effects become decisive. Below this threshold, concentration phenomena near resonant directions may persist while at and beyond it, one obtains stronger regularity properties in the configuration variable under suitable nondegeneracy assumptions on \(\dd^2H\). Their results also provide observability consequences in this integrable framework. The two-microlocal viewpoint underlying these works has a longer history in semiclassical analysis. Early constructions of two-microlocal semiclassical measures appear in the thesis of Fermanian-Kammerer~\cite{Fermanian-Kammerer-thesis-1995}, in the work of Nier~\cite{Nier-1996} and in Fermanian-Kammerer~\cite{Fermanian-Kammerer-2000}. Related two-microlocal methods were used by Anantharaman, L\'eautaud and Maci\`a \cite{Anantharaman-Leautaud-Macia-2016} for the Schr\"odinger equation on the disk which is another example of an integrable system but with a singularity at the boundary. Again, they obtained a detailed structural description of Wigner measures associated with the billiard dynamics and used it to derive observability results. Related long-time concentration, quantum-limit, and observability phenomena on manifolds with periodic geodesic flow were investigated by Macià and
Rivière on Zoll manifolds~\cite{Macia-Riviere-2016} and on spheres~\cite{Macia-Riviere-2019}. The work of Maci\`a and Rivi\`ere \cite{Macia-Riviere-2018} is especially close to the present paper. They study stationary and time-dependent solutions to strong semiclassical perturbations of the free Schr\"odinger equation on the two-dimensional flat torus. By performing a second microlocalization adapted to the perturbative scale, they describe refined concentration and nonconcentration mechanisms. In particular, they prove that sufficiently accurate quasimodes may concentrate only on the critical set of the geodesic average of the perturbation.

The present article belongs to this family of results on long-time dynamics in integrable geometries, but the magnetic structure produces a genuinely different effective behavior. The key new feature is the appearance of an earlier critical time scale,
\[
\tau_h= h^{-1/2},
\]
at which the magnetic contribution already enforces spatial equidistribution. At longer times, and in particular beyond the Heisenberg scale
\[
\tau_h= h^{-1},
\]
the remaining momentum distribution also becomes rigid along the regular energy curves of \(H\).

\subsubsection*{Magnetic semiclassical problems}

The magnetic effect at work in the present work is most directly related to the recent article of Morin and Rivi\`ere \cite{MorinRiviere2025}. They study high-energy eigenfunctions of magnetic Schr\"odinger operators on \(\T^2\), acting on sections of a magnetic line bundle. Under the assumption $B>0$, they prove a quantum unique ergodicity result: every sequence of high-energy eigenfunctions equidistributes in phase space, despite the complete integrability of the underlying principal dynamics.

The present article may be viewed as a dynamical counterpart to this stationary result. We study long-time Schr\"odinger evolutions on the same magnetic geometric background, but for the much broader class of semiclassical Hamiltonians
\begin{equation}\label{e:equation-schrodinger-magnetique}
    ih\partial_tu_h
=
\Op\big(H(\xi)+hR_h(x,\xi)\big)u_h.
\end{equation}

Here the operator acts on sections of a magnetic line bundle over \(\T^2\) (see Section~\ref{s:Magnetic field and quantization}), the principal Hamiltonian \(H(\xi)\) remains completely integrable, and the subprincipal perturbation \(R_h\) is not necessarily assumed to be of magnetic origin. However, in the magnetic Laplacian case introduced in \eqref{e:laplacien-mag}, this perturbation is precisely generated by the periodic part of the magnetic potential \(A^{\mathrm{per}}\), and therefore encodes the nonconstant component \(\dd A^{\mathrm{per}}\) of the magnetic field. The twisted structure of our torus nevertheless enters the effective long-time dynamics and is responsible for the equidistribution mechanisms proved below.

The non-vanishing condition appearing in Theorem~\ref{t:intro-main-structure},
\[
G_{\widehat B_0,\Lambda}(x,\xi)\neq0,
\]
is the analogue, in this more general dynamical setting, of the geometric positivity condition used by Morin and Rivi\`ere which itself already appeared in the works of Glass and Han-Kwan on Vlasov equations \cite{Glass-Han-Kwan-2012}. In the particular case of the magnetic Laplacian, where \(H(\xi)=|\xi|^2\) and the subprincipal contribution comes from the magnetic potential, this condition implies the positivity of the magnetic field averaged along periodic geodesics. Thus the mechanism used in the present work extends to general integrable Hamiltonians and general subprincipal perturbations the magnetic geometric condition underlying the stationary result of \cite{MorinRiviere2025}. See also the recent work of Le~Balc'h, Niu and Sun~\cite{LeBalc'h} for observability results on \eqref{e:equation-schrodinger-magnetique} under related geometric conditions.

Other magnetic semiclassical regimes have been studied in different geometric and asymptotic settings. In contrast with the regime considered in the present article, these works concern a strong magnetic field which, in the normalization used here, corresponds to taking \(\widehat B_0\) of order \(h^{-1}\). The magnetic effect therefore enters the principal classical dynamics rather than appearing as a subprincipal correction. For instance, magnetic Laplacians on hyperbolic surfaces with constant magnetic field are related to earlier work of Zelditch~\cite{Zelditch-1992}; more recently, Charles and Lefeuvre~\cite{Charles-Lefeuvre-2026} obtained a finer description of the associated semiclassical defect measures. Their analysis separates three regimes: a flexible low-energy regime, a quantum unique ergodicity result at the critical energy level, and a Shnirelman-type equidistribution theorem in the high-energy regime.

In the Euclidean setting, Boil and V{\~u} Ng\d{o}c~\cite{Boil-VuNgoc-2021} studied the long-time propagation of generalized coherent states for the magnetic Laplacian on \(\R^2\), again in the strong magnetic field regime. For low-energy initial states, they obtain a precise description of the Schrödinger evolution up to times of order \(h^{-1}\), without performing any time averaging. At this time scale, an initially localized coherent state may split into several coherent states, each following the averaged guiding-center dynamics with a different effective speed.

\subsection{Strategy of the proof}

We now outline the main ideas of the proof. We work with the time-dependent magnetic Wigner distributions and semiclassical measures introduced in \eqref{e:Wigner-magnetic-distrib}. For a given observation time scale $\tau_h$, we consider a subsequential limit
\[
W^B(\dd t,\dd x,\dd\xi)=\nu_t^B(\dd x,\dd\xi)\otimes \dd t,
\]
where $(\nu_t^B)_{t\in\R}$ is a measurable family of probability measures on $T^*\T^2$ defined for a.e.\ $t$.

\medskip

\noindent\textbf{Propagation constraints and dependence on the time scale.}
A first step is to derive the dynamical constraints satisfied by $\nu_t^B$ and to understand how they depend on $\tau_h$. This is obtained from a commutator identity. For any time-independent test symbol $a(x,\xi)$, one has
\begin{equation}\label{e:derivation-intro}
    \frac{\dd}{\dd t}\,\big\langle \Op(a)\,u_h(t\tau_h),u_h(t\tau_h)\big\rangle
=\tau_h\,\frac{i}{h}\,\big\langle [\widehat H_h,\Op(a)]\,u_h(t\tau_h),u_h(t\tau_h)\big\rangle.
\end{equation}
Using the symbolic commutator expansion in the magnetic calculus, one has
\begin{equation}\label{e:commutateur-intro}
    \frac{i}{h}\,[\widehat H_h,\Op(a)] =\Op(\{H,a\}) + h\,\Op(\{R_h,a\}) + \mathcal{O}(h).
\end{equation}
Hence, combining \eqref{e:derivation-intro} and \eqref{e:commutateur-intro}, one obtains
\begin{align*}
\frac{1}{\tau_h}\frac{\dd}{\dd t}\,\big\langle \Op(a)\,u_h(t\tau_h),u_h(t\tau_h)\big\rangle
=&\,\big\langle\Op(\{H,a\}) u_h(t\tau_h),u_h(t\tau_h)\big\rangle \\
&+ h\big\langle\Op(\{R_h,a\}) u_h(t\tau_h),u_h(t\tau_h)\big\rangle + \mathcal{O}(h). 
\end{align*}
The relative size of \(\tau_h\) determines which terms survive in the limit. For instance, in the long-time regime \(\tau_h\to+\infty\), the left-hand side formally vanishes at the level of semiclassical limits, while the terms of size \(h\) disappear, leading to invariance under the Hamiltonian flow $\varphi_H^s$ defined in \eqref{e:intro-def-flot}.

\medskip

\noindent\textbf{Resonant versus non-resonant directions.}
The principal symbol \(H(\xi)\) depends only on \(\xi\) and generates a completely integrable Hamiltonian flow on \(T^*\T^2\), which was defined in \eqref{e:intro-def-flot}. For fixed \(\xi\), the corresponding motion on \(\T^2\) is the straight-line flow with velocity \(\nabla H(\xi)\). Recall that we are working on regular energy levels, so that $\nabla H\neq0$. Depending on the arithmetic properties of \(\nabla H(\xi)\), this trajectory is either dense or periodic. This dichotomy is reflected at the level of semiclassical measures and is at the core of our rigidity argument.

In the non-resonant (dense) case, invariance under the Hamiltonian flow forces the measure $\nu_t^B(\dd x,\dd\xi)$ to be of the form $\dd x \otimes \lambda_t(\dd\xi)$ along these directions.

Periodic directions are parametrized by primitive rank-one submodules \(\Lambda\in\mathcal L_1\). For the long-time regimes \(\tau_h\gg h^{-1/2}\) relevant to our main theorem, the critical concentration phenomenon occurs in neighborhoods of size \(h^{1/2}\) of the corresponding resonant sets
\[
E_{\Lambda^\perp\setminus\{0\}}
=
\left\{
\xi\in\R^2:
\nabla H(\xi)\in\Lambda^\perp\setminus\{0\}
\right\}.
\]
To separate the two contributions, we introduce a cutoff \(\chi_{h,\Lambda}(\xi)\) supported in such a neighborhood and decompose an observable as
\[
a
=
a\,\chi_{h,\Lambda}
+
a\,(1-\chi_{h,\Lambda}).
\]
The second term corresponds to the region that stays away from the resonant set at the relevant \(h^{1/2}\)-scale. In this region, the propagation identities coming from the invariance under \(\varphi_H^s\) force again the corresponding contribution to be the Lebesgue measure in the $x$-variable. Hence this part does not contribute to the non-uniform component of the limiting measure in the configuration variable. In other words, only the first term can capture the possible concentration near periodic directions and requires a finer analysis.

\medskip

\noindent\textbf{Two-microlocal analysis near periodic directions and the first threshold.}
To analyze this remaining contribution, we perform a two-microlocal localization near each resonant set, in the spirit of \cite{Fermanian-Kammerer-thesis-1995,
Fermanian-Kammerer-2000,
Anantharaman-Fermanian-Kammerer-Macia-2015,
Anantharaman-Leautaud-Macia-2016,
Macia-Riviere-2018,
MorinRiviere2025}. The idea is to enrich the usual phase-space description by an additional variable \(\eta\), which measures the transverse distance to the resonant set at the critical \(h^{1/2}\)-scale. This lift separates genuine concentration along periodic directions from the part already controlled by the ordinary semiclassical measure. This new contribution is referred to as the two-microlocal measure along $\Lambda$.

The resulting two-microlocal measures satisfy different dynamical properties according to the size of the observation time \(\tau_h\). For \(\tau_h\ll h^{-1/2}\), the magnetic contribution is not yet visible at the two-microlocal scale, and nontrivial periodic concentration may persist, see Lemma~\ref{l:three-regimes-2micro}. At the critical scale
\[
\tau_h= h^{-1/2},
\]
the lifted measures are transported by an effective vector field in the extended variables \((x,\xi,\eta)\),
\[
X_\Lambda
=
\eta\,\frac{\mathfrak e_\Lambda}{L_\Lambda}\cdot\partial_x
-
G_{\widehat B_0,\Lambda}(x,\xi)\,\partial_\eta,
\]
where \(G_{\widehat B_0,\Lambda}\) is the coefficient introduced in
\eqref{e:intro-definition-G}.

In the supercritical regime
\[
\tau_h\gg h^{-1/2},
\]
this transport relation becomes an invariance property under the flow generated by \(X_\Lambda\). The non-vanishing condition imposed in Theorem~\ref{t:intro-main-structure} forces the trajectories of this effective flow to escape in the \(\eta\)-direction. Since this two-microlocal component has finite mass, such an invariance is possible only if this component vanishes. Consequently, concentration in \(h^{1/2}\)-neighborhoods of \(E_{\Lambda^\perp\setminus\{0\}}\) cannot contribute to a non-uniform configuration-space limit. Gathering all these observations, this yields
\[
(\pi_x)_*\nu_t^B=\dd x
\qquad\text{for a.e. }t,
\]
and therefore
\[
\nu_t^B(\dd x,\dd\xi)
=
\dd x\otimes\lambda_t(\dd\xi)
\qquad\text{for a.e. }t,
\]
for some measurable family \((\lambda_t)_{t\in\R}\) of probability measures on \(\R^2\).

\medskip

\noindent\textbf{Momentum dynamics and the Heisenberg threshold.}
Once the configuration marginal is determined, the remaining long-time behavior is entirely encoded in the momentum marginal
\[
\lambda_t=(\pi_\xi)_*\nu_t^B,
\qquad
\pi_\xi:(x,\xi)\in T^*\T^2\longmapsto \xi.
\]
To study this marginal, it is enough to test the commutator identity \eqref{e:commutateur-intro} against symbols depending only on the momentum variable, namely \(b=b(\xi)\). Since
\[
\{H,b\}=0,
\]
the leading classical transport disappears in \eqref{e:derivation-intro}. Expanding the commutator identity \eqref{e:commutateur-intro} to the next order reveals a magnetic contribution proportional to the flux \(\widehat B_0\). Moreover, using the spatial equidistribution obtained above allows us to describe the propagation/invariance of the momentum marginal and to see that it comes from the magnetic correction in the commutator expansion.

For intermediate time scales
\[
h^{-1/2}\ll\tau_h\ll h^{-1},
\]
this correction is still negligible in the limit. One therefore obtains that
\[
t\longmapsto
\int_{\R^2} b(\xi)\,\lambda_t(\dd\xi)
\]
is constant for every test function \(b\), which shows that \(\lambda_t\) is independent of \(t\). At the critical scale
\[
\tau_h= h^{-1},
\]
the magnetic correction contributes at leading order and induces a genuine transport dynamics in the momentum variable. More precisely, the family \((\lambda_t)_{t\in\R}\) is transported by the flow \(\Upsilon_H^t\) generated by the vector field defined in \eqref{e:intro-momentum-vector-field}
\[
\widetilde X_H
=
-\widehat B_0
\left(
\partial_{\xi_1}H\,\partial_{\xi_2}
-
\partial_{\xi_2}H\,\partial_{\xi_1}
\right).
\]
Equivalently, one has
\[
\lambda_t=(\Upsilon_H^t)_*\lambda_0.
\]

For longer time scales
\[
\tau_h\gg h^{-1},
\]
the same commutator argument yields invariance of \(\lambda_t\) under the flow \(\Upsilon_H^s\). Moreover, conservation of the energy distribution along the quantum evolution implies that the energy marginal
\[
\sigma_t^B:=H_*\lambda_t
\]
is independent of \(t\). More precisely, one has
\[
\sigma_t^B
=
H_*\lambda_t
=
H_*\lambda_0
=:\sigma^B
\qquad\text{for a.e. }t.
\]
Hence, one may disintegrate \(\lambda_t\) with respect to the energy variable
\[
\lambda_t(\dd\xi)
=
\int_{E_1}^{E_2}
\lambda_{t,E}(\dd\xi)\,\sigma^B(\dd E),
\]
where \(\lambda_{t,E}\) is a probability measure on
\[
\mathbb S_E=\{\xi\in\R^2:H(\xi)=E\}.
\]
The invariance of \(\lambda_t\) under \(\Upsilon_H^s\) implies that, for almost every \(E\), the conditional measure \(\lambda_{t,E}\) is invariant under the induced flow on \(\mathbb S_E\). 
Since each \(\mathbb S_E\) is a smooth connected closed curve and since $\nabla H\neq 0$, this invariant probability measure is unique. Denoting it by \(\mu_E\), one obtains
\[
\lambda_t(\dd\xi)
=
\int_{E_1}^{E_2}
\mu_E(\dd\xi)\,\sigma^B(\dd E).
\]
In the particular case \(H(\xi)=|\xi|^2\), the measures \(\mu_E\) coincide with the normalized arc-length measures on the circles \(\mathbb S_E\).
\medskip

\noindent\textbf{Summary.}
The proof therefore relies on two complementary mechanisms. The first one, which becomes effective at the scale
\[
\tau_h= h^{-1/2},
\]
eliminates concentration near periodic directions and enforces equidistribution in the configuration variable. The second one, which appears at the scale
\[
\tau_h= h^{-1},
\]
governs the remaining momentum dynamics and yields rigidity along the regular energy curves of \(H\). Together, these two mechanisms give the structure of the time-dependent semiclassical measures stated in the main theorem.

\subsection{Structure of the paper}

Section~\ref{s:Magnetic field and quantization} recalls the magnetic line bundle framework on \(\T^2\), the torus-adapted magnetic Weyl quantization and the class of semiclassical operators considered in the paper. Section~\ref{s:semiclassicalmeasure} constructs the time-dependent magnetic Wigner distributions and proves the first propagation properties of the associated semiclassical measures, following the work of Macià~\cite{Macia-2009,Macia-2010} that we adapt in a magnetic setting. Section 4 develops the resonant decomposition and the two-microlocal analysis near periodic directions. Section~\ref{s:x-regularity} uses this analysis to prove the configuration-space equidistribution statement \eqref{e:intro-product-form}, hence the first part of Theorem~\ref{t:intro-main-structure} and Theorem~\ref{t:intro-simplified-equidistribution}. Section~\ref{s:long-time-propagation} analyzes the momentum marginal at and beyond the time scale \(h^{-1}\), proving Theorem~\ref{t:intro-main-structure} (i)--(iii). Finally, Appendix~\ref{ss:functional-calculus} gathers the functional-calculus and self-adjointness results needed for the magnetic pseudodifferential operators, while Appendix~\ref{ss:disintegration} recalls the disintegration theorem for Radon measures used throughout the paper.

\subsection*{Acknowledgements}
The author thanks Gabriel Rivi\`ere for many helpful comments throughout this project. The author acknowledges the support of the PRC grant ADYCT (ANR-20-CE40-0017) and of the ANR project La Gabare (ANR-25-CE40-7296).
\section{Magnetic field and quantization}\label{s:Magnetic field and quantization}

This section gathers the basic geometric and pseudodifferential tools needed throughout the paper. It is essentially a review of the torus-adapted framework introduced by Morin--Rivi\`ere~\cite{MorinRiviere2025} (itself rooted in the magnetic pseudodifferential calculus of M\u{a}ntoiu--Purice \cite{Mantoiu-Purice-2004} and the geometric quantization viewpoint of Charles \cite{Charles-2016}). For the reader's convenience, we recall: (i) the flux quantization condition ensuring that magnetic translations define a consistent notion of magnetic periodicity on $\T^2$; (ii) the associated Hilbert space $L^2(\T^2,L)$; and (iii) the corresponding torus-adapted magnetic Weyl quantization $\Op$ acting on large symbol classes $\mathscr{S}^m(T^*\T^2)$. Our main results rely only on the structural properties of this calculus (boundedness, adjointness, composition and commutator expansions) proved in \cite{MorinRiviere2025} and which we will use as a black box in the analysis of time-dependent semiclassical measures in the next sections.

\subsection{Magnetic fields on $\T^2$}

A magnetic field on a configuration space is a real closed $2$-form (the curvature of a unitary connection on a complex line bundle). On the flat two-torus $\T^2=\R^2/\Z^2$, once the orientation (and volume form) $\dd x_1\wedge \dd x_2$ is fixed, any magnetic field can be identified with a smooth real-valued function
\[
B\in \mathcal{C}^\infty(\T^2;\R),\qquad \mathbf{B}:=B(x)\dd x_1\wedge \dd x_2.
\]
We denote by
\[
\widehat B_0:=\int_{\T^2} B(x)\dd x_1\wedge \dd x_2
\]
the total magnetic flux, and we impose the condition
\begin{equation}\label{e:flux}
\widehat B_0\in 2\pi\Z \setminus \{0\}.
\end{equation}
This is the usual flux quantization condition ensuring the existence of a compatible line bundle structure (equivalently, a global magnetic Schr\"odinger operator acting on twisted periodic functions); see the discussion in \cite[Section~2]{MorinRiviere2025}. On $\T^2$, the zero-mean part $B-\widehat B_0$ is exact. Thus one may choose a periodic vector potential $A^{\mathrm{per}}$ such that 
\begin{equation}\label{e:B=dA}
B(x)-\widehat B_0=  \dd A^{\mathrm{per}}(x).
\end{equation}
Following Morin--Rivi\`ere, we fix once and for all the decomposition
\begin{equation}\label{e:A-decomp}
A=A^0+A^{\mathrm{per}},
\qquad
A^0(x_1,x_2)=\frac{\widehat B_0}{2}(-x_2,x_1),
\qquad
A^{\mathrm{per}}\ \text{is $\Z^2$--periodic}. 
\end{equation}
Thus, one has
\[
B(x) = \dd A(x).
\]
The periodic part $A^{\mathrm{per}}$ is not unique and may be changed by a periodic gauge $A^{\mathrm{per}}\mapsto A^{\mathrm{per}}+\nabla\varphi$, $\varphi\in \mathcal{C}^\infty(\T^2;\R)$, yielding unitarily equivalent operators. Let us finally stress that, in the abstract semiclassical model studied in this article, only the flux $\widehat B_0$ enters the torus-adapted magnetic Weyl quantization $\Op$ and the operators $\widehat H_h$ introduced in \eqref{e:definition-widehatH}. The full magnetic field $B$ appears only when this abstract framework is specialized to the magnetic Laplacian example \eqref{e:laplacien-mag}.

\subsection{Magnetic translations and magnetically periodic functions}\label{ss:Magnetic-translations and-magnetically-periodic-functions}

A key feature of magnetic-type operators is that they commute with magnetic (rather than usual) translations defined, for $m\in\Z^2$, by
\begin{equation}\label{e:mag-trans}
(T_m^{B}u)(x) := \exp\!\left( i\frac{\widehat B_0}{2} \big(m\wedge x+m_1m_2\big) \right)u(x-m), \qquad m\wedge x:=m_1x_2-m_2x_1,
\end{equation}
as operators on $L^2_{\mathrm{loc}}(\R^2)$. They form a \emph{projective representation} of $\Z^2$ on $L^2_{\mathrm{loc}}(\R^2)$, meaning that
\[
T_m^B T_{m'}^B = \sigma(m,m')\,T_{m+m'}^B, \qquad m,m'\in\Z^2,
\]
for a phase factor $\sigma(m,m')\in\mathbb S^1$. The map $\sigma:\Z^2\times\Z^2\to\mathbb S^1$ is called the associated cocycle; see \cite{MorinRiviere2025} for details. In the present case, a direct computation gives
\[
\sigma(m,m') = \exp\left(-i\widehat B_0\,m_1'm_2\right),
\]
and therefore
\[
T_m^B\,T_{m'}^B = \exp\left(-i\widehat B_0\,m_1'm_2\right)\,T_{m+m'}^B, \qquad m,m'\in\Z^2.
\]
The corresponding commutator cocycle is given by
\[
\frac{\sigma(m,m')}{\sigma(m',m)} = \exp\left(i\widehat B_0\,m\wedge m'\right).
\]
It is trivial, equivalently the magnetic translations commute, if and only if \eqref{e:flux} holds. In particular, under \eqref{e:flux}, the notion of magnetic periodicity is consistent, meaning that the magnetic translations commute with each other (see \cite{MorinRiviere2025}). We then define the space of \emph{twisted periodic} smooth functions by
\begin{equation}\label{e:CinfTL}
\mathcal{C}^\infty(\T^2,L) :=\{u\in \mathcal{C}^\infty(\R^2,\C):\ T_m^{B}u=u\ \text{for all }m\in\Z^2\}.
\end{equation}
Although we keep using $B$ in the notation $T_m^B$, the definition of these magnetic translations, and therefore of $\mathcal C^\infty(\T^2,L)$, depends only on the flux $\widehat B_0$, not on the full magnetic field $B$. Similarly, we define $L^2(\T^2,L)$ and Sobolev spaces $\mathcal H^s(\T^2,L)$ as the subspaces of $L^2_{\mathrm{loc}}(\R^2)$ and $\mathcal H^s_{\mathrm{loc}}(\R^2)$ invariant under all $T_m^B$, namely 
\begin{equation}\label{e:espace L^2}
    L^2(\T^2, L) = \{ u \in L^2_{\mathrm{loc}}(\R^2) ~:~ T_m^Bu=u \quad \forall m \in \Z^2 \},
\end{equation}
\begin{equation}\label{e:espace H^s}
    \mathcal H^s(\T^2, L) = \{ u \in H^s_{\mathrm{loc}}(\R^2) ~:~ T_m^Bu=u \quad \forall m \in \Z^2 \}.
\end{equation}
These are the natural ``twisted'' analogs of the usual Sobolev spaces on $\T^2$. They provide the natural functional setting for the operators appearing in the introduction.

For any integer $s\in\N$, one may equivalently define $\mathcal H^s(\T^2,L)$ as the completion of $\mathcal{C}^\infty(\T^2,L)$ for the norm built out of the magnetic derivative. More precisely, with $A^0$ as in \eqref{e:A-decomp}, set $\partial^A = (\partial_{x_1} - iA_1^0, \partial_{x_2} - i A^0_2)$, and 
\begin{equation}\label{e:norme}
    \displaystyle \lVert u \rVert_{H^s} := \left(\sum_{\lvert \alpha \lvert \le s } \int_{\T^2} \lvert (\partial^A)^\alpha u(x) \lvert^2\dd x\right)^\frac{1}{2}, \quad s \in \N.
\end{equation}
For general, possibly non-integer, \(s\geq0\), we refer to Appendix~\ref{ss:functional-calculus} for another definition of \(\mathcal H^s(\T^2;L)\).

\begin{remark}
    Geometrically, $\mathcal{C}^\infty(\T^2,L)$ can be identified with smooth sections of a complex line bundle $L\to\T^2$ with curvature $\widehat B_0 \dd x_1 \wedge \dd x_2$. Yet, the reader may only keep in mind the concrete realization in \eqref{e:CinfTL} which is the point of view adopted all along the article.
\end{remark}

\subsection{Torus-adapted magnetic Weyl quantization}

In order to quantize observables while preserving the magnetic periodicity, we use the torus-adapted magnetic Weyl calculus of \cite{MorinRiviere2025}, inspired by the magnetic calculus on $\R^d$ of M\u{a}ntoiu--Purice and collaborators \cite{Mantoiu-Purice-2004,Iftimie-Mantoiu-Purice-2007,Helffer-Purice-2010,Iftimie-Mantoiu-Purice-2019}. The construction relies on the magnetic Weyl operators $\mathcal{W}_h^{B}(\eta,\zeta)$, defined for
$\eta\in 2\pi\Z^2$ and $\zeta\in\R^2$ by
\begin{equation}\label{e:WB-ops}
\mathcal{W}_h^{B}(\eta,\zeta)u(x)
:=\exp\!\Big(ih\frac{\widehat B_0}{2}\,\zeta\wedge x\Big)\,
\exp\!\Big(i\eta\cdot(x+\tfrac{h\zeta}{2})\Big)\,
u(x+h\zeta),
\qquad u\in L^2(\T^2,L),
\end{equation}
which commute with all magnetic translations $T_m^B$ and therefore act on $L^2(\T^2,L)$. They satisfy the adjoint relation $\mathcal{W}_h^{B}(\eta,\zeta)^*=\mathcal{W}_h^{B}(-\eta,-\zeta)$ and an explicit composition law (a twisted Heisenberg relation); see \cite[\S3.1]{MorinRiviere2025}.

Given a smooth symbol $a=a(x,\xi)$, periodic in $x$, we define its Fourier transform
\[
\mathcal F(a)(\eta,\zeta)
:=\int_{\T^2\times\R^2} a(x,\xi)\,e^{-i\eta\cdot x}\,e^{-i\zeta\cdot\xi}\dd x\dd\xi,
\]
and we set
\begin{equation}\label{e:OpB-def}
\Op(a)
:=\frac{1}{(2\pi)^2}\sum_{\eta\in 2\pi\Z^2}\int_{\R^2}\mathcal F(a)(\eta,\zeta)\,\mathcal{W}_h^{B}(\eta,\zeta)\dd\zeta,
\end{equation}
initially for $a\in \mathcal{C}_c^\infty(\T^2\times\R^2)$ and then extended to the symbol classes used below. 

We work with the large symbol class
\[
\mathscr{S}^m(T^*\T^2)
:=\big\{a\in \mathcal{C}^\infty(\T^2\times\R^2):\
|\partial_x^\alpha\partial_\xi^\beta a(x,\xi)|\le C_{\alpha\beta}\langle\xi\rangle^{m}\big\}, 
\]
which is stable under the operations required in this article. This is the toral analogue of the class $S(\langle \xi\rangle^m)$ from \cite[Chapter~4]{Zworski2012}. In the sequel, we shall also use \(h\)-dependent symbols \(a_h\in \mathscr{S}^m(T^*\T^2)\). Unless otherwise stated, this always means that the estimates defining \(\mathscr{S}^m(T^*\T^2)\) hold uniformly with respect to \(h\in(0,1]\). The associated magnetic calculus enjoys the usual structural properties (boundedness, adjoint, composition and commutator expansions). In particular, for $a\in \mathscr{S}^m$, the formal adjoint is given by \cite[Proposition~3.4]{MorinRiviere2025}
\begin{equation}\label{e:formal-adjoint}
    \Op(a)^*=\Op(\overline a) \quad \text{on }L^2(\T^2,L).
\end{equation}
We shall also use the following composition formula for the magnetic Weyl quantization, established in \cite[Theorem~3.5]{MorinRiviere2025}: for every $a\in \mathscr{S}^{m_1}$ and $b\in \mathscr{S}^{m_2}$,
\begin{equation}\label{e:composition-rule}
    \Op(a)\Op(b)=\Op(a\star_h b),
\end{equation}
where $a\star_h b \in \mathscr{S}^{m_1+m_2}$. Moreover, \cite[Theorem~3.5]{MorinRiviere2025} provides an explicit formula for the expansion of $a\star_h b$ at any order $N \ge 1$: 
\begin{equation}\label{e:a-star-b}
    \displaystyle a\star_h b(x,\xi) = \sum_{n=0}^{N-1}\frac{1}{n!}\left(\frac{ih}{2}\right)^n \Omega_{hB}(D_x,D_\xi,D_{x'},D_{\xi'})^na(x,\xi)b(x',\xi')\lvert_{(x,\xi)=(x',\xi')} +\mathcal{O}_{\mathscr{S}^{m_1+m_2}}(h^N),
\end{equation}
where $\Omega_{hB}(\eta,\zeta,\eta',\zeta') = \eta'\cdot \zeta - \eta \cdot \zeta' +h\widehat B_0 \zeta' \wedge \zeta$ and where we use the notation $D := -i\partial$. Applying \eqref{e:a-star-b} with $N=1$, one obtains 
\begin{equation}\label{e:premier-ordre-composition}
    a \star_h b = ab +\mathcal{O}_{\mathscr{S}^{m_1+m_2}}(h).
\end{equation}
In order to study magnetic semiclassical measures, we will use the following commutator identity:
\begin{equation}\label{e:commutator}
a \star_h b - b\star_h a = \frac{h}{i }\{a,b\} + \frac{\widehat{B}_0h^2}{i}(\partial_{\xi_2} a\partial_{\xi_1}b - \partial_{\xi_1}a\partial_{\xi_2}b) + O_{\mathscr{S}^{m_1+m_2}}(h^3).
\end{equation}
This identity is the basic input for the propagation and invariance properties of time-dependent semiclassical measures in the next section. In what follows, we will mainly use the expansion \eqref{e:commutator} at order 3, and we will use the notation $\mathcal{O}_{\mathscr{S}^k}(h^n)$ to say that the remaining term can be expressed as $h^n r_h$, where $r_h$ belongs uniformly in $h$ to the symbol class $\mathscr{S}^k(T^*\T^2)$. Throughout the paper, we shall repeatedly use the following rescaling formula. As observed in~\cite[Remark~3.8]{MorinRiviere2025}, for every $\delta>0$, one has
\begin{equation}\label{e:rescaling-semiclassique}
    \Op(a(x,\xi))= \mathsf{Op}_{\delta h}^{B}(a(x,\delta^{-1}\xi)) .
\end{equation}
In order to study the limit as the semiclassical parameter $h\to 0^+$, we shall also repeatedly use the Calderón--Vaillancourt theorem
\cite[Theorem~3.7]{MorinRiviere2025}, which ensures that the quantization of a symbol in $\mathscr{S}^0(\T^2\times\R^2)$ defines a bounded operator on $L^2(\T^2,L)$.
\begin{theorem}[Calderón--Vaillancourt]\label{t:Calderon-Vaillancourt}
There exist constants $C_0>0$ and $N_0\in \N$ such that, for every $a\in \mathscr{S}^0(\T^2\times\R^2)$, the operator $\Op(a)$ is bounded on $L^2(\T^2,L)$ and satisfies 
\begin{equation}\label{e:Calderon-Vaillancourt}
    \forall h\in (0,1], \qquad
    \|\Op(a)\|_{L^2\to L^2}
    \le
    C_0 \sum_{|\alpha|+|\beta|\le N_0}
    h^{|\beta|}
    \|\partial_x^\alpha \partial_\xi^\beta a\|_{\infty}.
\end{equation}
\end{theorem}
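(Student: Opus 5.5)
The statement is the Calderón--Vaillancourt bound for the torus-adapted magnetic quantization $\Op$. The plan is to reduce it to the classical Calderón--Vaillancourt theorem on $\R^2$ by lifting to a fundamental domain, and to treat the magnetic phases as harmless unimodular factors.

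\textbf{Kernel of $\Op(a)$.} From \eqref{e:WB-ops}--\eqref{e:OpB-def}, after summing the Fourier series in $\eta$ and integrating in $\zeta$, one expects $\Op(a)$ to act on $u\in L^2(\T^2,L)$ as the $\R^2$-operator
\[
\Op(a)u(x)=\frac{1}{(2\pi h)^2}\int_{\R^2}\int_{\R^2} e^{\frac ih (x-y)\cdot\xi}\, e^{i\frac{\widehat B_0}{2}\, y\wedge x}\, a\!\left(\tfrac{x+y}{2},\xi\right) u(y)\,\dd y\,\dd\xi ,
\]
where $a$ is extended periodically in $x$. This is an ordinary Weyl quantization on $\R^2$ composed with the phase $e^{i\widehat B_0 y\wedge x/2}$. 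The phase is exactly what makes the operator commute with $T^B_m$.

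\textbf{Restriction to one cell.} Fix the fundamental cell $D=[0,1)^2$. Write $u=\sum_m T^B_m(\mathbbm 1_D u)$ and estimate the kernel restricted to $x\in D$ and $y\in D+m$. The magnetic phase is unimodular with derivatives bounded on each pair of cells, so the standard Calderón--Vaillancourt argument applies locally. One uses the Cotlar--Stein almost-orthogonality lemma, or the Boulkhemair integration-by-parts proof, with a partition of unity in $\xi$ at scale $h$. It bounds each localized operator by finitely many seminorms $h^{|\beta|}\|\partial_x^\alpha\partial_\xi^\beta a\|_\infty$.

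\textbf{Rescaling to $h=1$.} Use \eqref{e:rescaling-semiclassique} with $\delta=h^{-1}$, so that $\Op(a)=\mathsf{Op}^B_1(a(x,h\xi))$. The $\xi$-derivatives of the rescaled symbol then produce exactly the factors $h^{|\beta|}$, and the whole proof reduces to the case $h=1$ with symbols in $\mathscr S^0$ whose seminorms are uniform in $h$. Note that $h$ also appears in the magnetic phase $h\widehat B_0\zeta\wedge x$. After rescaling it becomes a fixed phase independent of $h$, so the constants $C_0,N_0$ are uniform in $h\in(0,1]$.

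\textbf{Main obstacle: summing over cells.} For $|m|$ large, gain decay by integrating by parts in $\xi$ using $(x-y)\cdot\xi$. This gives $\langle x-y\rangle^{-N}$ at the cost of $\xi$-derivatives of $a$. Each differentiation of the phase $e^{i\widehat B_0 y\wedge x/2}$ in $x$ introduces a factor of size $|y|$. This growth is controlled by conjugating by magnetic translations: the twisted-periodicity identity lets one recentre to $y\in D$, $x\in D-m$, where the phase is bounded by $|m|$ times constants, and this is absorbed by the $\langle m\rangle^{-N}$ decay. A Schur test over $m\in\Z^2$ then concludes.

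Alternatively, one could simply invoke \cite[Theorem~3.7]{MorinRiviere2025}, where this is proved.
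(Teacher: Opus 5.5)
The paper gives no argument here: the statement is imported as a black box from \cite[Theorem~3.7]{MorinRiviere2025}, which is exactly your closing sentence. The rest of your proposal is a genuinely different, self-contained route, and its ingredients check out. Poisson summation in $\eta$ in \eqref{e:OpB-def}, followed by $y=x+h\zeta$, gives your kernel with phase $e^{i\widehat B_0\,y\wedge x/2}$. One has $u=\sum_m\mathbbm 1_{D+m}u$ with $\|u\|_{L^2(D+m)}=\|u\|_{L^2(D)}$. The rescaling \eqref{e:rescaling-semiclassique} with $\delta=h^{-1}$ is what produces the weights $h^{|\beta|}$ and the uniformity in $h$. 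Over the citation, your route buys this transparency, at the cost of doing the cell summation yourself.

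Two adjustments make it clean. First, the obstacle you single out disappears. Since $y\wedge x=(y-x)\wedge\frac{x+y}{2}$, the phase equals $e^{i(x-y)\cdot A^0(\frac{x+y}{2})}$, so as an operator on $\R^2$ one has $\Op(a)=\mathsf{Op}_h^{\mathrm W}(b)$ with $b(x,\xi)=a(x,\xi-hA^0(x))$. As $A^0$ is linear, every weighted seminorm $h^{|\beta|}\|\partial_x^\alpha\partial_\xi^\beta b\|_\infty$ is controlled by finitely many weighted seminorms of $a$. The Euclidean theorem then bounds $\Op(a)$ on $L^2(\R^2)$ globally, with no polynomial loss in $m$.

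Second, the off-diagonal decay cannot be obtained as a pointwise kernel bound by integrating by parts in $\xi$. For a general $a\in\mathscr S^0$ the integral $\int|\partial_\xi^N a|\,\dd\xi$ diverges, and the kernel need not even be a function off the diagonal. Argue at the operator level instead, using $[x_j,\Op(a)]=ih\,\Op(\partial_{\xi_j}a)$. For $|m_1|\ge\max(|m_2|,2)$, write $(x_1-y_1)^{-3}$ on $D\times(D+m)$ as $\sum_k c_k e^{i\omega_k x_1}e^{-i\omega_k y_1}$ with $\sum_k|c_k|\lesssim\langle m\rangle^{-3}$. The cells with $|m_2|\ge\max(|m_1|,2)$ are handled symmetrically in $x_2$, and the finitely many remaining cells are bounded directly. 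This gives $\|\mathbbm 1_D\Op(a)\mathbbm 1_{D+m}\|\lesssim\langle m\rangle^{-3}h^3\|\Op(\partial_{\xi_1}^3a)\|$ on $L^2(\R^2)$, and summing over $m\in\Z^2$ concludes.
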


\subsection{Magnetic operators}\label{ss:magnetic-operator}

As already described in the introduction, the rest of this article will be dedicated to the study of solutions to Schrödinger-type equations of the form 
\begin{equation}\label{e:eqS}
ih\partial_t(u_h) = \widehat H_hu_h, 
\end{equation}
where the operator $\widehat H_h$ is the magnetic quantization of specific symbols. Let $H\in \mathcal C^\infty(\R^2,\R)$ be a general Hamiltonian depending only on the $\xi$-variable, belonging to the symbol class $\mathscr{S}^m(\R^2)$ and elliptic of order \(m\). This means that there exist $M>0$ and $c_0>0$ such that
\begin{equation}\label{e:definition-elliptic}
\forall\,|\xi|\ge M,\qquad c_0\langle\xi\rangle^{m}\le H(\xi).
\end{equation}
Throughout the paper, we fix an energy window $[E_1,E_2]$, with $0<E_1<E_2$, containing no critical values of $H$. Equivalently, setting $\Omega_{E_1,E_2}:=\{\xi\in\R^2:\ E_1\leq H(\xi)\leq E_2\}$, we assume that
\begin{equation}\label{e:energy-window}
\nabla H(\xi)\neq 0,
\qquad \forall \xi\in\Omega_{E_1,E_2}.
\end{equation}
For $E\in [E_1,E_2]$, we denote by $\SE:=\{\xi\in\R^2:\ H(\xi)=E\}$ the corresponding energy layer. We also assume that each $\SE$ is compact and connected. Therefore, $\SE$ is a smooth compact connected one-dimensional submanifold of $\R^2$, hence
\begin{equation}\label{e:SE-diffeo-S1}
\SE\simeq \mathbb S^1.
\end{equation}
Let also $R_h$ belonging to $\mathscr{S}^{m}(\T^2\times \R^2)$ uniformly in $h\in(0,1]$ and real valued. More precisely, we suppose that $R_h$ is of the form 
\begin{equation}\label{e:definition-Rh}
R_h(x,\xi) = \mathscr{R}(x,\xi) + \mathcal{O}_{\mathscr{S}^{m}}(h).
\end{equation}
We will consider magnetic operators denoted by $\widehat H_h$ which are of the form
\begin{equation}\label{e:magnetic-operators}
    \widehat H_h = \Op(H(\xi)+hR_h(x,\xi)).
\end{equation}
Since it is the quantization of a real-valued symbol and its principal symbol $H$ is elliptic, the operator $\widehat H_h$ enjoys the strong properties recalled in the following proposition. 
\begin{proposition}[Self-adjointness]\label{prop:esa} Let $m>1$, assume that $H\in \mathscr{S}^m(\R^2)$ is real-valued and elliptic of order m, and that
$R_h\in \mathscr{S}^{m}(T^*\T^2)$ is real-valued and of the form \eqref{e:definition-Rh}.
Then, for $h>0$ small enough, the operator
\[
\widehat H_h:=\Op\big(H(\xi)+hR_h(x,\xi)\big)
\]
is essentially self-adjoint on $\mathcal{C}^\infty(\T^2,L)$. Its closure (still denoted
$\widehat H_h$) is self-adjoint on $L^2(\T^2,L)$ with domain $\mathcal H^m(\T^2,L)$ and has
compact resolvent.
\end{proposition}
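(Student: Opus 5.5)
The plan is to build an elliptic parametrix in the magnetic calculus, use it to show that $\widehat H_h$ with domain $\mathcal H^m(\T^2,L)$ is closed and symmetric, and then verify self-adjointness through the range criterion.

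\textbf{Symmetry.} By \eqref{e:formal-adjoint}, since $H+hR_h$ is real valued, $\widehat H_h$ is formally self-adjoint. Hence it is symmetric on $\mathcal C^\infty(\T^2,L)$, which is dense in $L^2(\T^2,L)$.

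\textbf{Parametrix.} Set $p_h:=H+hR_h\in\mathscr S^m$. By ellipticity \eqref{e:definition-elliptic}, after adding a large constant we may assume
\[
|p_h+i\mu|\ge c\langle\xi\rangle^m
\]
for $\mu\in\R$ with $|\mu|$ large. This bound is uniform for $h$ small, because $|hR_h|\le Ch\langle\xi\rangle^m$ can be absorbed once $h$ is small enough; this is where "$h$ small" is used. Put $q_0:=(p_h+i\mu)^{-1}\in\mathscr S^{-m}$. By \eqref{e:premier-ordre-composition},
\[
(p_h+i\mu)\star_h q_0=1+h r_1,\qquad r_1\in\mathscr S^{-1}.
\]
The remainder gains $\langle\xi\rangle^{-1}$ because each term of \eqref{e:a-star-b} carries at least one $\xi$-derivative of $q_0$ or of $p_h$, and the magnetic term involves $\xi$-derivatives of both. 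Iterating the usual Neumann construction gives $Q_\mu=\Op(q)$ with $q\in\mathscr S^{-m}$ such that
\[
(\widehat H_h+i\mu)Q_\mu=\mathrm{Id}+\Op(r),\qquad r\in\mathscr S^{-\infty}\ \text{or}\ \mathcal O(h^N\langle\xi\rangle^{-N}).
\]
The same construction gives a left parametrix.

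\textbf{Elliptic regularity.} Using Calderón--Vaillancourt (Theorem~\ref{t:Calderon-Vaillancourt}) together with the identification of $\mathcal H^s(\T^2,L)$ through quantizations of $\langle\xi\rangle^s$ (Appendix~\ref{ss:functional-calculus}), I would show that $\Op(\mathscr S^k)$ maps $\mathcal H^{s}$ to $\mathcal H^{s-k}$. The left parametrix then yields the estimate
\[
\|u\|_{\mathcal H^m}\le C\big(\|\widehat H_hu\|_{L^2}+\|u\|_{L^2}\big),
\]
and the right parametrix shows that any $u\in L^2$ with $\widehat H_hu\in L^2$ in the distributional sense lies in $\mathcal H^m$. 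Consequently the adjoint of $\widehat H_h|_{\mathcal C^\infty}$ has domain $\mathcal H^m$. Since $\mathcal C^\infty(\T^2,L)$ is dense in $\mathcal H^m$ and the operator is continuous $\mathcal H^m\to L^2$, the closure has domain $\mathcal H^m$ and coincides with the adjoint, which is essential self-adjointness. Equivalently, one can check that $\ker(\widehat H_h^*\pm i\mu)=\{0\}$: an element of this kernel lies in $\mathcal H^m$, and symmetry then forces it to vanish.

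\textbf{Compact resolvent.} The resolvent maps $L^2$ into $\mathcal H^m$ boundedly, and it remains to show that the embedding $\mathcal H^m(\T^2,L)\hookrightarrow L^2(\T^2,L)$ is compact. Functions in $L^2(\T^2,L)$ are determined by their restriction to a fundamental domain $[0,1]^2$, and the twisted norms are equivalent to ordinary $H^m$ norms on a slightly larger bounded domain. Rellich's theorem then gives compactness, and $m>1>0$ suffices.

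\textbf{Main obstacle.} The hardest step is the mapping property of $\Op(\mathscr S^k)$ between twisted Sobolev spaces, which is needed to close the regularity argument. I would rely on the functional-calculus appendix, defining $\mathcal H^s$ through $\Op(\langle\xi\rangle^s)$, which is self-adjoint and positive up to lower-order terms. Each mapping property then reduces to $L^2$ boundedness of a composed symbol in $\mathscr S^0$, which follows from \eqref{e:composition-rule} and Calderón--Vaillancourt.
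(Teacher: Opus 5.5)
Your overall architecture is the paper's (Appendix, Theorem~\ref{t:selfadjoint-domain} and Corollary~\ref{c:discrete-spectrum}): symmetry from the real symbol, an elliptic parametrix, identification of the self-adjoint realization with domain $\mathcal H^m(\T^2,L)$, and Rellich for compactness. The compactness part is fine. The gap is in the parametrix step.

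\textbf{Where the parametrix argument fails.} The class $\mathscr S^m$ used here is of $S(\langle\xi\rangle^m)$ type, so $\xi$-derivatives do \emph{not} improve decay. For instance, $H(\xi)=\langle\xi\rangle^m(3+\sin\xi_1+\sin\xi_2)$ is real, elliptic of order $m$ and lies in $\mathscr S^m$, yet its derivatives are of full size $\langle\xi\rangle^m$. The formula \eqref{e:premier-ordre-composition} you cite only places the remainder in $\mathscr S^{m_1+m_2}$. So you get $(p_h+i\mu)\star_h q_0=1+hr_1$ with $r_1\in\mathscr S^0$, not $\mathscr S^{-1}$. Iterating never produces a smoothing remainder, only $\mathcal O_{\mathscr S^0}(h^N)$. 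Consequently the regularity argument $u=Q_\mu(\widehat H_h+i\mu)u-\Op(r)u\in\mathcal H^m$ is unavailable. Your claim that smallness of $h$ is needed only to absorb $hR_h$ into the ellipticity bound is also incorrect.

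\textbf{The missing ingredient: semiclassical smallness.} By Calder\'on--Vaillancourt, $\|\Op(r)\|_{L^2\to L^2}\le Ch$. For $h$ small (with $\mu$ fixed), $\mathrm{Id}+\Op(r)$ is therefore invertible by a Neumann series. Then $Q_\mu(\mathrm{Id}+\Op(r))^{-1}$ is an exact right inverse of $\widehat H_h+i\mu$ from $L^2$ into $\mathcal H^m$, and likewise for $-i\mu$. Hence $A:=\widehat H_h$ with domain $\mathcal H^m$ is symmetric (by density of $\mathcal C^\infty(\T^2,L)$ in $\mathcal H^m$) and satisfies $\operatorname{Ran}(A\pm i\mu)=L^2$. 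This is exactly the range criterion the paper invokes, so $A$ is self-adjoint.

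Regularity for $\operatorname{Dom}(A_0^*)$, with $A_0:=\widehat H_h|_{\mathcal C^\infty(\T^2,L)}$, then follows by duality rather than from a smoothing parametrix. Since the graph of $A_0$ is dense in that of $A$, one has $\ker(A_0^*\mp i\mu)=\operatorname{Ran}(A_0\pm i\mu)^\perp=\operatorname{Ran}(A\pm i\mu)^\perp=\{0\}$. This gives essential self-adjointness and $\overline{A_0}=A=A_0^*$, so $\operatorname{Dom}(A_0^*)=\mathcal H^m$.

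The same Neumann-type inversion, in the $h$-dependent spaces $\mathcal H^s_h$, also underlies the Sobolev mapping properties you single out as the main obstacle.
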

We refer to Appendix~\ref{s:appendix} for a reminder of the standard arguments leading to this result in the torus magnetic setting. By Stone's theorem~\cite[Theorem VIII.7]{ReedSimonI}, since $\widehat H_h$ is
self-adjoint, it generates a strongly continuous one-parameter unitary
group
\begin{equation}\label{e:propagator}
    U_h(t):=\exp\!\Big(-\frac{i}{h}t\,\widehat H_h\Big),\qquad t\in\R,
\end{equation}
so that the Cauchy problem $ih\partial_t u_h=\widehat H_h u_h$ is globally
well-posed in $L^2(\T^2,L)$ and satisfies $\|u_h(t)\|_{L^2(\T^2,L)}=\|u_h(0)\|_{L^2(\T^2,L)}$.
The operator $\widehat H_h $ can be viewed as a twisted counterpart of the classical operator
$\mathsf{Op}_h^{\rm W}(H(\xi)+hR_h(x,\xi))$ on the (non-twisted) torus, where $\mathsf{Op}_h^{\rm W}$ denotes the usual Weyl quantization. 

Under \eqref{e:flux}, Morin--Rivi\`ere considered the magnetic Laplacian,
\begin{equation}\label{e:mag-laplacian}
\mathcal L^{B}u
:=\big(-i\partial_{x_1}-A_1(x)\big)^2u +\big(-i\partial_{x_2}-A_2(x)\big)^2u,
\qquad u\in H^2(\T^2,L),
\end{equation}
where $A(x)= (A_1(x),A_2(x))$ is defined by \eqref{e:A-decomp}. This is exactly the magnetic Laplacian for the magnetic field $B = \widehat B_0 + \dd A^{\mathrm{per}}$ and in that case, the magnetic Weyl quantization produces 
\begin{equation}\label{e:quantization-of-magnetic-laplacian}
h^2\mathcal L^{B} = \Op(\lvert \xi \lvert^2 + h \xi \cdot c_1(x) +h^2 c_0(x)),
\end{equation}
for some smooth real-valued coefficients $c_1:\T^2\to\R^2$ and $c_0:\T^2\to\R$; see \cite[\S3.3]{MorinRiviere2025}. 
\begin{remark}\label{r:laplacien-magnetique-champs-constant}
    For a constant magnetic field, i.e. $B(x) = \widehat B_0$, the magnetic quantization produces 
    \begin{equation}\label{e:def-laplacien-magnetique-champs-constant}
        h^2\mathcal{L}^{\widehat B_0} = \Op(\lVert \xi \lVert^2). 
    \end{equation}
\end{remark}
\section{Time-dependent magnetic semiclassical measures}\label{s:semiclassicalmeasure}

This section introduces the space--time objects used throughout the paper to describe the high-frequency behavior of solutions to \eqref{e:eqS} observed on a macroscopic time scale $\tau_h$. Our presentation follows the standard semiclassical framework for time-dependent Wigner distributions and their disintegration properties \cite{Macia-2009,Anantharaman-Macia-2014,Anantharaman-Fermanian-Kammerer-Macia-2015}, but all pseudodifferential objects are defined here through the torus-adapted magnetic Weyl quantization recalled in Section~\ref{s:Magnetic field and quantization}. We also record the basic propagation and invariance properties satisfied by the limiting measures, depending on the asymptotic behavior of the time scale $\tau_h$.

\subsection{Time-dependent quantum limits on configuration space}
Let $h\to0^+$ and let $(u_{h})_{h\to 0^+}\in \mathcal{C}(\R;\mathcal H^m(\T^2,L))\cap \mathcal{C}^1(\R;L^2(\T^2,L))$ solve \begin{equation}\label{e:Schrodinger-PDE} 
\left\{
\begin{aligned}
   i h \partial_t u_{h}(t,x) &= \widehat{H}_{h}\, u_{h}(t,x), \\[0.3em]
   u_{h}(t=0,x) &= u_{h}^{(0)}(x),
\end{aligned}
\right.
\end{equation} 
with the normalization
\[
\|u_{h}^{(0)}\|_{L^2(\T^2,L)}=1.
\]
By unitarity, for every $t \in \R$, one has $\lVert u_{h}(t) \lVert_{L^2(\T^2,L)}=1$. Fix a time scale $(\tau_{h})_{h\to 0^+}>0$. We say that a (positive) Radon measure $\nu$ on $\R\times\T^2$ is a \emph{time-dependent quantum limit} associated with the rescaled family $t\mapsto u_{h}(t\tau_{h})$ if for every test function $a\in \mathcal{C}_c^\infty(\R\times\T^2,\C)$,
\begin{equation}\label{e:DefQL}
\int_{\R\times\T^2} a(t,x)\,|u_{h}(t\tau_{h},x)|^2\,\dd x\,\dd t
\xrightarrow[h \to 0^+]{}
\int_{\R\times\T^2} a(t,x)\,\nu(\dd t,\dd x).
\end{equation}

Using the disintegration theorem \ref{t:desintegration} for Radon measures and the fact that the sequence $(u_h(t\tau_h))_{h \to 0^+}$ is normalized in $L^2$, there exist a positive Radon measure $\omega$ on $\R$ and a $\omega$-measurable family of probability measures $(\nu_t)_{t\in\R}$ on $\T^2$ such that
\begin{equation}\label{e:DefQL-disintegration}
\nu(\dd t,\dd x)=\nu_t(\dd x) \otimes \omega(\dd t).
\end{equation}
In particular, for all $\psi\in \mathcal{C}_c^\infty(\R)$ and $b\in \mathcal{C}^\infty(\T^2)$,
\begin{equation}\label{e:DefQL-projected}
\int_{\R\times\T^2}\psi(t)b(x)\,\nu(\dd t,\dd x)
=
\int_\R \psi(t)\left(\int_{\T^2} b(x)\,\nu_t(\dd x)\right)\omega(\dd t).
\end{equation}
Moreover, choosing $b\equiv 1$ in \eqref{e:DefQL-projected} and using the normalization $\|u_h(t\tau_h)\|_{L^2(\T^2,L)}=1$ for all $t$, we obtain for every
$\psi\in \mathcal{C}_c^\infty(\R)$
\[
\int_\R \psi(t)\,\omega(\dd t)
=
\int_{\R\times\T^2}\psi(t)\,\nu(\dd t,\dd x)
=
\lim_{h\to0^+}\int_\R \psi(t)\,\dd t,
\]
and therefore $\omega(\dd t)=\dd t$. In particular, \eqref{e:DefQL-disintegration} becomes
\begin{equation}\label{e:def-quantum-config-space-measure}
    \nu(\dd t,\dd x)=\nu_t(\dd x)\otimes \dd t.
\end{equation}

We denote by $ \mathcal{N}(\widehat H_h,\tau_h)$ the set of all such configuration-space limits arising from solutions to \eqref{e:Schrodinger-PDE}. 
\begin{remark}
    Since $\|u_h(t\tau_h)\|_{L^2(\T^2,L)}=1$ for all $t$ and since test functions are compactly supported in time, the family of measures
\[
|u_h(t\tau_h,x)|^2\,\dd x\,\dd t
\]
is bounded in the space of Radon measures on $\R\times\T^2$. In particular, $\mathcal N(\widehat H_h, \tau_h)$ is nonempty. Along any sequence $h\to0^+$, one can extract a subsequence such that \eqref{e:DefQL} and \eqref{e:def-quantum-config-space-measure} hold for some $\nu$.
\end{remark}

For later use, we also record the semiclassical objects associated with the initial data. Since $\|u_h^{(0)}\|_{L^2(\T^2,L)}=1$, one may extract a subsequence such that
\begin{equation}\label{e:initial-config-measure}
\int_{\T^2} b(x)\,|u_h^{(0)}(x)|^2\,\dd x
\longrightarrow
\int_{\T^2} b(x)\,\nu_0(\dd x),
\qquad \forall b\in \mathcal{C}^\infty(\T^2),
\end{equation}
for some probability measure $\nu_0$ on $\T^2$.

The goal of this article is to describe elements of the set $\mathcal{N}(\widehat H_h,\tau_h)$ depending on the time scale $\tau_h$ and the initial data used to generate it. In order to do this, we will now lift the measure to the phase space using the Weyl magnetic quantization introduced in \cite{MorinRiviere2025} and recalled in Section \ref{s:Magnetic field and quantization}. 

\subsection{Magnetic Wigner distributions and phase-space lifts}\label{s:lift}

To lift the previous limits to phase space, we use the magnetic Weyl quantization. For $a\in \mathcal{C}_c^\infty(\R\times T^*\T^2)$, we define the \emph{time-dependent magnetic Wigner distribution} by
\begin{equation}\label{e:WB-def}
\langle W_h^B(\tau_h),a\rangle
:=
\int_\R \big\langle \Op(a(t))\,u_h(t \tau_h),\,u_h(t \tau_h)\big\rangle_{L^2(\T^2,L)}\,\dd t.
\end{equation}
The next lemma collects basic compactness and positivity properties.

\begin{lemma}\label{l:magnetic-wigner}
Let $(u_h)_{h\to0^+}$ solve \eqref{e:Schrodinger-PDE}. Then, the following holds: 
\begin{enumerate}
\item the family $(W_h^B(\tau_h))_{h \to 0^+}$ is bounded in $\mathcal D'(\R\times T^*\T^2)$;
\item any accumulation point $W^B$ of $(W_h^B(\tau_h))_{h \to 0^+}$ is a nonnegative Radon measure on
$\R\times T^*\T^2$.
\end{enumerate}
\end{lemma}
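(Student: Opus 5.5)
For item 1, I will bound $|\langle W_h^B(\tau_h),a\rangle|$ by a finite seminorm of $a$, uniformly in $h$. Take $a\in\mathcal C_c^\infty(\R\times T^*\T^2)$ supported in $[-T,T]\times K$ with $K$ compact. For each fixed $t$, the symbol $a(t)$ lies in $\mathscr S^0(T^*\T^2)$, with seminorms bounded uniformly in $t$. Apply the Calderón--Vaillancourt theorem (Theorem \ref{t:Calderon-Vaillancourt}) for $h\in(0,1]$, and use the unitarity $\|u_h(t\tau_h)\|_{L^2(\T^2,L)}=1$. This gives
\[
|\langle W_h^B(\tau_h),a\rangle|\le 2T\,C_0\sup_{t}\sum_{|\alpha|+|\beta|\le N_0}\|\partial_x^\alpha\partial_\xi^\beta a(t)\|_\infty ,
\]
which is a $\mathcal C^{N_0}$ seminorm of $a$ on the fixed compact support. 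Hence the family is bounded in $\mathcal D'$, and in fact in the dual of $\mathcal C^{N_0}_c$. By a diagonal extraction over a countable dense set of test functions, we then get a subsequence converging to some distribution $W^B$ of order at most $N_0$.

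For item 2, I will show that $\langle W^B,a\rangle\ge0$ whenever $a\ge0$. A positive distribution is automatically a nonnegative Radon measure (Riesz--Schwartz), so this suffices. The tool is a sharp Gårding-type inequality, which I will derive from the calculus recalled above using the standard square-root trick. Fix $a\ge0$ and $\varepsilon>0$. Choose a cutoff $\chi\in\mathcal C_c^\infty$ with $\chi=1$ on the support of $a$, and set
\[
b_\varepsilon(t):=\chi\,\sqrt{a(t)+\varepsilon}.
\]
This symbol is smooth, compactly supported, and real-valued. Since $b_\varepsilon^2=a+\varepsilon\chi^2$, the composition rule \eqref{e:composition-rule}, the first-order expansion \eqref{e:premier-ordre-composition}, and the adjoint formula \eqref{e:formal-adjoint} give
\[
\Op(a(t))+\varepsilon\Op(\chi^2)=\Op(b_\varepsilon(t))^*\Op(b_\varepsilon(t))+h\,\Op(r_{h,\varepsilon}(t)),
\]
where $r_{h,\varepsilon}$ is bounded in $\mathscr S^0$ uniformly in $h$ and $t$ (for fixed $\varepsilon$). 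Pairing with $u_h(t\tau_h)$ and integrating in $t$, the first term on the right is nonnegative. The remainder contributes $O_\varepsilon(h)$ by Calderón--Vaillancourt, and the term $\varepsilon\Op(\chi^2)$ contributes $O(\varepsilon)$ uniformly in $h$. Letting $h\to0$ along the subsequence and then $\varepsilon\to0$ gives $\langle W^B,a\rangle\ge0$.

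The main obstacle is the composition step. I need the remainder in $b_\varepsilon\star_h b_\varepsilon=b_\varepsilon^2+O(h)$ to be controlled in $\mathscr S^0$ uniformly in $t$ within this magnetic calculus, and the $\varepsilon$-dependent seminorms of $b_\varepsilon$ must stay finite for each fixed $\varepsilon$. Both hold because $b_\varepsilon$ is smooth with compact support. I also need the term $h\widehat B_0\,\zeta'\wedge\zeta$ in $\Omega_{hB}$ to contribute only at order $h^2$, which is consistent with \eqref{e:a-star-b}.
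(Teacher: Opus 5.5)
Your proof is correct and follows the paper's argument. Item 1 is the same Calderón--Vaillancourt plus unitarity bound. Item 2 is the same Gårding square-root trick, using the adjoint formula and the first-order composition formula. The one difference is your cutoff $\chi$: it makes $b_\varepsilon$ compactly supported at the price of an extra $\mathcal{O}(\varepsilon)$ term $\varepsilon\,\Op(\chi^2)$, whereas the paper works directly with $\sqrt{a+\varepsilon}\in\mathscr{S}^0$ and implicitly uses $\Op(\varepsilon)=\varepsilon\,\mathrm{Id}$.
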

\begin{proof}
Boundedness follows from the Calder\'on--Vaillancourt Theorem \ref{t:Calderon-Vaillancourt} in the torus-adapted magnetic calculus; see \cite[\S3.4]{MorinRiviere2025}. Indeed, for $a$ supported in $t$ on a compact set $[t_1, t_2 ] \subset\R$, this theorem implies that
\[
|\langle W_h^B(\tau_h),a\rangle|
\le C_0\,\lvert t_2-t_1 \lvert\,\sum_{|\alpha|+|\beta|\le N_0}\sup_{t\in [t_1, t_2 ]}\|\partial_x^\alpha\partial_\xi^\beta a(t)\|_{\infty},
\]
uniformly for $h$ small and where $C_0$ and $N_0$ are constants independent of the symbol $a$. Positivity is obtained from the easy Gårding's inequality: let $a\in \mathcal{C}_c^\infty(\R\times T^*\T^2)$ be nonnegative and let $[t_1,t_2]\subset \R$ be a compact interval containing the support of $a$ in the time variable. For $\varepsilon>0$, define for each $t\in [t_1,t_2]$
\[
a_\varepsilon(t,x,\xi):=\sqrt{a(t,x,\xi)+\varepsilon}.
\]
Then $(a_\varepsilon(t))_{t\in [t_1,t_2]}$ is a bounded family in $\mathscr{S}^0$. Using the adjointness \eqref{e:formal-adjoint} and composition \eqref{e:composition-rule} rules in the magnetic calculus,
\[
\displaystyle \|\Op(a_\varepsilon)u_h(t \tau_h)\|_{L^2}^2
= \langle \Op(a_\varepsilon)^*\Op(a_\varepsilon)u_h(t \tau_h),u_h(t \tau_h) \rangle = \langle \Op(a+\varepsilon)u_h(t \tau_h),u_h(t \tau_h)\rangle+\mathcal O_\varepsilon(h),
\]
uniformly for $t\in [t_1,t_2]$. Hence, for every nonnegative $a \in \mathcal{C}_c^\infty(\R \times T^*\T^2)$ and for every $\varepsilon>0$, $\langle W^B,a\rangle\ge -\lvert t_2 - t_1 \lvert \varepsilon$, which allows us to deduce that the limit distribution $W^B$ is nonnegative, hence a measure.
\end{proof}
The previous lemma only yields the existence of a nonnegative phase-space measure. Under the spectral localization assumption on the initial data, one can further localize its support in the energy variable.
\begin{lemma}[Support of the limit measure]\label{l:support-de-W^B}
    Suppose that the initial data $(u_h^{(0)})_{h\to 0^+}$ satisfy the following spectral localization assumption: 
\begin{equation}\label{e:spectral-projection}
    \displaystyle \lim_{h \to 0^+} \lVert \mathbbm{1}_{[E_1,E_2]}(\widehat H_h)u_h^{(0)} - u_h^{(0)} \lVert_{L^2} = 0. 
\end{equation} 
Then, the measure $W^B$ is supported in $ \R \times \T^2 \times \Omega_{E_1,E_2}$. 
\end{lemma}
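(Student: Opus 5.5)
The plan is to show that $\langle W^B,a\rangle=0$ for every $a\in\mathcal C_c^\infty(\R\times T^*\T^2)$ whose support avoids $\R\times\T^2\times\Omega_{E_1,E_2}$. Since $\Omega_{E_1,E_2}$ is compact, such an $a$ can be split, using a partition of unity in the energy variable, into a piece supported where $H(\xi)<E_1-\delta$ and a piece supported where $H(\xi)>E_2+\delta$, for some $\delta>0$. It is therefore enough to produce $\chi\in\mathcal C_c^\infty(\R)$ with $\chi\equiv1$ on $[E_1-\delta/2,E_2+\delta/2]$ and $\operatorname{supp}\chi\subset(E_1-\delta,E_2+\delta)$, which ensures $a(t,x,\xi)\,\chi(H(\xi))\equiv0$.

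\textbf{Step 1: spectral localization is preserved in time.} Write $u_h^{(0)}=\mathbbm 1_{[E_1,E_2]}(\widehat H_h)u_h^{(0)}+r_h$ with $\|r_h\|_{L^2}\to0$. Since $U_h(t)$ commutes with the spectral projector and is unitary,
\[
u_h(t\tau_h)=\chi(\widehat H_h)\,u_h(t\tau_h)+\widetilde r_h(t),\qquad \sup_{t\in\R}\|\widetilde r_h(t)\|_{L^2}\le C\|r_h\|_{L^2}\to0 .
\]
Here $\chi(\widehat H_h)$ acts as the identity on the range of $\mathbbm 1_{[E_1,E_2]}(\widehat H_h)$, and $\|\chi(\widehat H_h)\|\le\sup|\chi|$.

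\textbf{Step 2: inserting the cutoff.} We obtain
\[
\big\langle\Op(a(t))u_h(t\tau_h),u_h(t\tau_h)\big\rangle=\big\langle\Op(a(t))\chi(\widehat H_h)u_h(t\tau_h),u_h(t\tau_h)\big\rangle+o(1),
\]
uniformly for $t$ in the compact time support of $a$, by the Calder\'on--Vaillancourt Theorem~\ref{t:Calderon-Vaillancourt}.

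\textbf{Step 3: functional calculus (main obstacle).} The key input is that $\chi(\widehat H_h)$ is a magnetic pseudodifferential operator,
\[
\chi(\widehat H_h)=\Op\big(\chi(H(\xi))\big)+\mathcal O_{L^2\to L^2}(h).
\]
This should follow from the Helffer--Sj\"ostrand formula
\[
\chi(\widehat H_h)=-\frac1\pi\int_\C\bar\partial\tilde\chi(z)\,(z-\widehat H_h)^{-1}\,\dd m(z),
\]
combined with a parametrix construction for $(z-\widehat H_h)^{-1}$ in the torus magnetic calculus. The construction uses ellipticity of $H$, the composition rule \eqref{e:composition-rule}–\eqref{e:premier-ordre-composition}, and self-adjointness from Proposition~\ref{prop:esa}. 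The errors carry powers of $|\operatorname{Im}z|^{-1}$, which are absorbed by the almost-analytic extension $\tilde\chi$. I expect this to be the technical core; it is presumably the content of Appendix~\ref{ss:functional-calculus}, which I would invoke.

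\textbf{Step 4: conclusion.} By \eqref{e:premier-ordre-composition},
\[
\Op(a(t))\Op\big(\chi\circ H\big)=\Op\big(a(t)\,\chi(H)\big)+\mathcal O(h)=\mathcal O(h),
\]
because $a(t)\,\chi(H)\equiv0$. Integrating in $t$ over the compact support then gives $\langle W_h^B(\tau_h),a\rangle\to0$, hence $\langle W^B,a\rangle=0$. Therefore $\operatorname{supp}W^B\subset\R\times\T^2\times\Omega_{E_1,E_2}$.
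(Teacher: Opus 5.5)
Your proof is correct and follows essentially the paper's argument. Both use a smooth spectral cutoff that is propagated by unitarity, the functional calculus of Corollary~\ref{c:functional-calculus} to replace it by $\Op$ of the cutoff composed with $H$ up to $\mathcal O(h)$, and the first-order composition rule applied to disjointly supported symbols. The only cosmetic difference is that the paper inserts $\varphi(\widehat H_h)$, equal to $1$ on the energy support of $a$, and uses $\varphi\kappa\equiv0$ to get $\varphi(\widehat H_h)u_h(t)\to0$, whereas you insert the complementary cutoff $\chi(\widehat H_h)$ directly. Two small remarks: your splitting of $a$ into two pieces is unnecessary, and the gap $\delta>0$ comes from compactness of $\operatorname{supp}a$, not of $\Omega_{E_1,E_2}$.
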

\begin{proof}
Let $(u_h)_{h \to 0^+}$ be a solution to \eqref{e:Schrodinger-PDE}, i.e.
\[
u_h(t)=e^{-it\widehat H_h/h}u_h^{(0)}.
\]
Since the pseudodifferential calculus developed in Section~\ref{s:Magnetic field and quantization} is only available for smooth functions, we first need to show that the spectral projection assumption~\eqref{e:spectral-projection} can be written in terms of smooth functions. This will then allow us to apply the functional calculus developed in the Appendix~\ref{s:appendix}, in particular Corollary~\ref{c:functional-calculus}. We first show that for $\kappa \in \mathcal{C}_c^\infty(\R)$ satisfying $0\le\kappa \le 1$ and $\kappa\equiv 1$ on $[E_1,E_2]$ assumption \eqref{e:spectral-projection} implies that
\begin{equation}\label{e:smooth-spectral-projection}
    \big\|(\mathrm{Id}-\kappa(\widehat H_h))u_h^{(0)}\big\|_{L^2}\xrightarrow[h\to0^+]{}0.
\end{equation}
Indeed, since $\kappa\equiv 1$ on $[E_1,E_2]$, one has $(1-\kappa)\,\mathbbm{1}_{[E_1,E_2]}=0$, hence, by the spectral theorem applied to the self-adjoint operator \(\widehat H_h\), one has
\begin{equation*}
    (\mathrm{Id}-\kappa(\widehat H_h))\mathbbm{1}_{[E_1,E_2]}(\widehat H_h)=0.
\end{equation*}
Therefore, for any $u$, one gets 
\[
(\mathrm{Id}-\kappa(\widehat H_h))u = (\mathrm{Id}-\kappa(\widehat H_h))(\mathrm{Id}-\mathbbm{1}_{[E_1,E_2]}(\widehat H_h))u.
\]
Moreover, $0\le \kappa\le 1$ implies $\|\mathrm{Id}-\kappa(\widehat H_h)\|_{L^2\to L^2}\le 1$. Hence using \eqref{e:spectral-projection} leads to \eqref{e:smooth-spectral-projection}. By unitarity of the propagator, the following also holds
\begin{equation}\label{e:unitarité-de-la-solution}
    \big\|(\mathrm{Id}-\kappa(\widehat H_h))u_h(t)\big\|_{L^2}
=\big\|(\mathrm{Id}-\kappa(\widehat H_h))u_h^{(0)}\big\|_{L^2}.
\end{equation}
Let $a \in \mathcal{C}_c^\infty(\T^2\times \R^2)$ such that $\operatorname{supp}(a) \subset \T^2 \times \{\xi \in \R^2 ~\lvert~H(\xi) \notin [E_1,E_2] ~\}$ and set $\mathcal{K}:=\big\{\,H(\xi)\,:\  (x,\xi)\in \operatorname{supp} (a)\,\big\}.$ To prove the lemma, it is enough to show that
\begin{equation}\label{e:preuve-support-mesure}
\big\langle \Op(a)\,u_h(t),u_h(t)\big\rangle \xrightarrow[h \to 0^+]{} 0
\end{equation}
uniformly in $t\in \R$. Indeed, for every $\psi\in \mathcal{C}_c^\infty(\R)$, one then has
\[
\langle W_h^B(\tau_h),\psi(t)a(x,\xi)\rangle
=
\int_\R \psi(t)\big\langle \Op(a)\,u_h(t\tau_h),u_h(t\tau_h)\big\rangle\dd t
\xrightarrow[h \to 0^+]{} 0.
\]
As this is valid for any $a$ with the above support properties, it implies that $W^B$ does not charge the region
\[
\R\times \T^2\times \{\xi\in\R^2:\ H(\xi)\notin [E_1,E_2]\}.
\]
We now prove \eqref{e:preuve-support-mesure}. Since $\operatorname{supp}(a)$ is compact and $H$ is continuous, $\mathcal{K}$ is a compact subset of $\R\setminus [E_1,E_2]$.
Moreover, by the support assumption on $a$, one has $\mathcal{K}\cap [E_1,E_2]=\emptyset$. Hence, the distance is positive:
\[
d \ :=\ \text{dist}\big(\mathcal{K},[E_1,E_2]\big)\ >\ 0.
\]
Choose $\kappa\in \mathcal{C}_c^\infty(\R)$ such that
\begin{equation*}\label{eq:chi-choice}
0\le \kappa\le 1,\qquad \kappa\equiv 1 \text{ on }[E_1,E_2],
\qquad \operatorname{supp}(\kappa) \subset (E_1-\tfrac d3,\ E_2+\tfrac d3).
\end{equation*}
Next choose $\varphi\in \mathcal{C}_c^\infty(\R)$ such that
\begin{equation*}\label{eq:phi-choice}
\varphi\equiv 1 \text{ on a neighbourhood of }\mathcal{K},
\qquad
\operatorname{supp}(\varphi)\subset (-\infty,E_1-\tfrac d2]\ \cup\ [E_2+\tfrac d2,\infty).
\end{equation*}
Since $\operatorname{supp}(\varphi) \cap \operatorname{supp}(\kappa)=\varnothing$, one has $\varphi\kappa\equiv 0$. Therefore, by the spectral theorem,
\[
\varphi(\widehat H_h)\kappa(\widehat H_h)=0.
\]
Moreover, since $\varphi\equiv 1$ near $\mathcal{K}$, we have the pointwise identity
\begin{equation}\label{eq:a-factor}
a(x,\xi)=a(x,\xi)\,\varphi(H(\xi))\qquad\text{for all }\xi \in \R^2.
\end{equation}
We also have
\[
\varphi(\widehat H_h)u_h(t)
=\varphi(\widehat H_h)\bigl(\mathrm{Id}-\kappa(\widehat H_h)\bigr)u_h(t).
\]
Since \eqref{e:unitarité-de-la-solution} holds and $\varphi(\widehat H_h)$ is bounded on $L^2$, one has
\begin{equation}\label{eq:phiPh-small}
\|\varphi(\widehat H_h)u_h(t)\|_{L^2}
\ \le\ \|\varphi(\widehat H_h)\|_{\mathcal L(L^2)}\,\|(\mathrm{Id}-\kappa(\widehat H_h))u_h(t)\|_{L^2}
\ =\mathcal{O}\!\left(\|(\mathrm{Id}-\kappa(\widehat H_h))u_h^{(0)}\|_{L^2}\right),
\end{equation}
uniformly in $t\in \R$. By \eqref{e:smooth-spectral-projection}, the right-hand side tends to $0$ as $h\to0^+$. One gets
\begin{equation}\label{eq:phiPh-small-limit}
\|\varphi(\widehat H_h)u_h(t)\|_{L^2}\xrightarrow[h\to0]{}0.
\end{equation}
Using \eqref{eq:a-factor}, the symbolic composition in the magnetic calculus \eqref{e:premier-ordre-composition} and the Calderón--Vaillancourt Theorem \ref{t:Calderon-Vaillancourt}, one has
\begin{equation}\label{eq:composition-a-phi}
\Op(a)=\Op(a)\,\Op(\varphi\circ H)+\mathcal{O}_{L^2 \to L^2}(h).
\end{equation}
On the other hand, applying Corollary \ref{c:functional-calculus}, one gets 
\begin{equation}\label{eq:replace-phi}
\Op(\varphi\circ H)=\varphi(\widehat H_h)+\mathcal{O}_{L^2 \to L^2}(h).
\end{equation}
Combining \eqref{eq:composition-a-phi}--\eqref{eq:replace-phi} gives
\[
\big\langle \Op(a)u_h(t),u_h(t)\big\rangle
=
\big\langle \Op(a)\,\varphi(\widehat H_h)u_h(t),u_h(t)\big\rangle + \mathcal{O}(h).
\]
By Calderón--Vaillancourt, $\Op(a)$ is uniformly bounded on $L^2$ (because $a\in \mathscr{S}^0$ due to its compact support). We infer by Cauchy--Schwarz that 
\[
\big|\big\langle \Op(a)u_h(t),u_h(t)\big\rangle\big|
\ \le\ C\,\|\varphi(\widehat H_h)u_h(t)\|_{L^2}+\mathcal{O}(h)
\ \xrightarrow[h\to0]{}\ 0
\]
thanks to \eqref{eq:phiPh-small-limit}. This proves \eqref{e:preuve-support-mesure} and concludes the proof of the lemma.
\end{proof}

Now that the phase-space distribution $W^B$ has been constructed, we make explicit its link with the configuration-space limits $\nu_t$ introduced above. In particular, the next lemma shows that $W^B$ admits a disintegration with respect to time,
\[
W^B(\dd t,\dd x,\dd\xi)=\nu_t^B(\dd x,\dd\xi)\otimes \dd t,
\]
where $(\nu_t^B)_{t\in\R}$ is a measurable family of probability measures on $T^*\T^2$. We denote by 
\begin{equation}\label{e:def-projection}
    \pi_x : (x,\xi) \in \T^2 \times \R^2 \mapsto x, 
\end{equation}
the canonical projection onto the configuration space. Then, the following lemma also identifies the projection of $\nu_t^B$ onto $\T^2$ with the original time-dependent quantum limit $\nu_t$.

\begin{lemma}[Time disintegration and projection]\label{l:Lifting-QL}
Let $(u_h)_{h \to 0^+}$ solve \eqref{e:Schrodinger-PDE} such that the initial data are spectrally localized with respect to the magnetic operator $\widehat H_h$, in the sense of \eqref{e:spectral-projection}. 
Then any accumulation point $W^B$ in $\mathcal{D}'(\R\times T^*\T^2)$ of $W_h^B(\tau_h)$ is of the form
\begin{equation}\label{e: W-proba}
    W^B(\dd t,\dd x,\dd\xi)=\nu_t^B(\dd x,\dd\xi)\otimes\dd t,
\end{equation}
where $(\nu_t^B)_{t\in\R}$ is a measurable family of probability measures on $\T^2 \times \Omega_{E_1,E_2}$. Moreover, for almost every $t\in\R$, the projection onto the base equals the configuration-space limit:
\begin{equation*}
    (\pi_x)_*\nu_t^B=\nu_t.
\end{equation*}
\end{lemma}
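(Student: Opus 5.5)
Let $W^B$ be an accumulation point of $W_h^B(\tau_h)$ along a subsequence. By Lemma~\ref{l:magnetic-wigner} and Lemma~\ref{l:support-de-W^B}, $W^B$ is a nonnegative Radon measure on $\R\times T^*\T^2$ supported in $\R\times\T^2\times\Omega_{E_1,E_2}$. Since $\Omega_{E_1,E_2}$ is compact (ellipticity of $H$), I will apply the disintegration Theorem~\ref{t:desintegration} to $W^B$ with respect to the projection $(t,x,\xi)\mapsto t$. This yields a positive Radon measure $\omega$ on $\R$ and an $\omega$-measurable family $(\nu_t^B)$ of probability measures on $\T^2\times\Omega_{E_1,E_2}$ such that $W^B=\nu_t^B\otimes\omega$. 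It then remains to show that $\omega=\dd t$ and that $(\pi_x)_*\nu_t^B=\nu_t$.

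\textbf{Identifying $\omega$.} I test $W^B$ against $\psi(t)\chi(\xi)$ with $\psi\in\mathcal C_c^\infty(\R)$ and $\chi\in\mathcal C_c^\infty(\R^2)$, $0\le\chi\le1$, $\chi\equiv1$ on a neighbourhood of $\Omega_{E_1,E_2}$. Since $W^B$ is supported where $\chi=1$, the limit is $\int\psi\,\dd\omega$. On the quantum side I need $\langle\Op(\chi)u_h(t),u_h(t)\rangle\to1$ uniformly in $t$. Choose $\kappa\in\mathcal C_c^\infty(\R)$ with $\kappa\equiv1$ on $[E_1,E_2]$ and support so small that $\chi\cdot(\kappa\circ H)=\kappa\circ H$. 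Then:
\begin{itemize}
\item by \eqref{e:smooth-spectral-projection} and unitarity, $u_h(t)=\kappa(\widehat H_h)u_h(t)+o(1)$ uniformly in $t$;
\item by Corollary~\ref{c:functional-calculus}, $\kappa(\widehat H_h)=\Op(\kappa\circ H)+\mathcal O(h)$;
\item by the composition rule \eqref{e:premier-ordre-composition}, $\Op(\chi)\Op(\kappa\circ H)=\Op(\kappa\circ H)+\mathcal O(h)$.
\end{itemize}
Combining these with Calderón–Vaillancourt gives $\Op(\chi)u_h(t)=u_h(t)+o(1)$, hence $\langle\Op(\chi)u_h,u_h\rangle=1+o(1)$ uniformly in $t$. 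Integrating against $\psi$ gives $\int\psi\,\dd\omega=\int\psi\,\dd t$, so $\omega=\dd t$.

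\textbf{Projection onto $\T^2$.} For $b\in\mathcal C^\infty(\T^2)$, I take the symbol $a=\psi(t)b(x)\chi(\xi)$, which is compactly supported. The main point is to compare $\langle\Op(b\chi)u_h,u_h\rangle$ with $\int b|u_h|^2\,\dd x$. I expect $\Op(b(x))$ to be multiplication by $b$. Indeed, $\mathcal F(b)$ is supported at $\zeta=0$ in \eqref{e:OpB-def}, and $\mathcal W_h^B(\eta,0)$ is multiplication by $e^{i\eta\cdot x}$. This works as an extension to $\mathscr S^0$ symbols independent of $\xi$, justified by approximation or by citing \cite{MorinRiviere2025}. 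Then $\Op(b\chi)=\Op(b)\Op(\chi)+\mathcal O(h)$ by \eqref{e:premier-ordre-composition}. By the previous step this gives $\langle\Op(b\chi)u_h,u_h\rangle=\int b|u_h|^2\,\dd x+o(1)$ uniformly in $t$. Passing to the limit along the subsequence where both $W^B$ and $\nu$ are attained, I obtain
\[
\int\psi(t)\int b\,\dd(\pi_x)_*\nu_t^B\,\dd t=\int\psi(t)\int b\,\dd\nu_t\,\dd t .
\]
A countable dense family of $\psi$ and $b$ then yields $(\pi_x)_*\nu_t^B=\nu_t$ for a.e.\ $t$.

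The main obstacle is the uniform-in-time replacement $\Op(\chi)u_h(t)\approx u_h(t)$. This rests on the functional calculus of Corollary~\ref{c:functional-calculus} and on conservation of spectral localization under the flow. A secondary technical point is identifying $\Op(b(x))$ with multiplication for non-compactly supported symbols.
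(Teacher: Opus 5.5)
Your proposal is correct and follows essentially the same route as the paper. You disintegrate with respect to the time projection, using compactness of $\T^2\times\Omega_{E_1,E_2}$ for local finiteness of the time marginal. You then identify that marginal with $\dd t$ via Corollary~\ref{c:functional-calculus} and the propagated spectral localization, and test against $b(x)$ times an energy cutoff to recover $\nu_t$. The only difference is cosmetic: you show $\Op(\chi)u_h\approx u_h$ directly with a cutoff $\chi\equiv 1$ near $\Omega_{E_1,E_2}$, whereas the paper proves $\langle\Op(1-\zeta_R)u_h,u_h\rangle=o(1)$ and tests the projection against $b(x)\kappa(H(\xi))$.
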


\begin{proof} 
Let
\begin{equation}\label{e:time-projection}
    \tilde{\pi}_t : (t,x,\xi) \in \R \times T^*\T^2 \mapsto t
\end{equation}
be the projection onto the time variable and define $\omega^B$ the time marginal of $W^B$ as follows 
\begin{equation}\label{e:time-marginal-of-W^B}
    \omega^B = (\tilde{\pi}_t)_*W^B.
\end{equation} 
From the ellipticity property \eqref{e:definition-elliptic} of the Hamiltonian $H$, the set $\T^2 \times H^{-1}([E_1,E_2])$ is compact. Since $W^B$ is a Radon measure, hence a $\sigma$-finite measure, for every $I\subset \R$ compact, using Lemma \ref{l:support-de-W^B} one has
\begin{equation}\label{e:omega-est-sigma-finie}
    \omega^B(I) = W^B(I\times \T^2 \times H^{-1}([E_1,E_2])) <\infty. 
\end{equation}
Hence, the marginal $\omega^B$ is a $\sigma$-finite measure and applying the disintegration Theorem \ref{t:desintegration} with respect to $\omega^B$ leads to $W^B(\dd t,\dd x ,\dd \xi) =\nu_t^B(\dd x ,\dd \xi)\,\omega^B(\dd t)$ with $(\nu_t^B)_t$ a $\omega^B$-measurable family of probability measures on $\T^2 \times \Omega_{E_1,E_2}$. We now identify $\omega^B$ and rule out a loss of mass at infinity in
the momentum variable. Fix
$\kappa\in\mathcal C_c^\infty(\R)$ such that
$\kappa\equiv1$ on a neighborhood of $[E_1,E_2]$. By the functional
calculus in the magnetic pseudodifferential calculus
(see Corollary~\ref{c:functional-calculus}), one has
\begin{equation}\label{e:formule-composition-pseudo}
\Op(\kappa(H))
=
\kappa\!\left(\widehat H_h\right)+\mathcal O(h)
\qquad\text{in }\mathcal L(L^2).
\end{equation} 
Moreover, the spectral localization assumption
\eqref{e:smooth-spectral-projection} implies that
\[
\kappa\!\left(\widehat H_h\right)u_h^{(0)}
=
u_h^{(0)}+o_{L^2}(1).
\]
Since \(\kappa(\widehat H_h)\) commutes with the propagator, it follows that
\begin{equation}\label{e:spectral-localization-all-times}
\kappa\!\left(\widehat H_h\right)u_h(t\tau_h)
=
u_h(t\tau_h)+o_{L^2}(1),
\end{equation}
uniformly with respect to \(t\in\R\). Let \(\zeta\in\mathcal C_c^\infty(\R)\) satisfy
\[
0\leq\zeta\leq1,
\qquad
\zeta \equiv1\ \text{on }[-1,1],
\qquad
\zeta\equiv0\ \text{on }\R\setminus[-2,2],
\]
and, for \(R>0\), set
\[
\zeta_R(\xi)
:=
\zeta\left(\frac{\lvert\xi\rvert}{R}\right).
\]
Since \(H\) is elliptic and \(\kappa\) is compactly supported, the symbol \(\kappa(H(\xi))\) is compactly supported in \(\xi\). Hence, there exists
\(R_0>0\) such that, for every \(R\geq R_0\),
\begin{equation}\label{e:cutoff-product-zero}
\bigl(1-\zeta_R(\xi)\bigr)\kappa(H(\xi))=0.
\end{equation}
Using \eqref{e:formule-composition-pseudo}, the composition formula \eqref{e:premier-ordre-composition}, and the Calderón--Vaillancourt
Theorem, we obtain, for every fixed \(R\geq R_0\),
\begin{align*}
\Op\bigl(1-\zeta_R\bigr)\kappa\bigl(\widehat H_h\bigr)
&=
\Op\bigl(1-\zeta_R\bigr)\Op\bigl(\kappa(H)\bigr)
+
\mathcal O_{L^2\to L^2}(h)
\\
&=
\Op\Bigl(\bigl(1-\zeta_R\bigr)\kappa(H)\Bigr)
+
\mathcal O_{L^2\to L^2}(h)
\\
&=
\mathcal O_{L^2\to L^2}(h).
\end{align*}
Together with \eqref{e:spectral-localization-all-times}, this yields
\begin{equation}\label{e:no-escape-momentum}
\sup_{t\in\R}
\left|
\left\langle
\Op\bigl(1-\zeta_R\bigr)u_h(t\tau_h),
u_h(t\tau_h)
\right\rangle_{L^2}
\right|
=o(1)
\end{equation}
as \(h\to0^+\), for every fixed \(R\geq R_0\). Now fix \(\psi\in\mathcal C_c^\infty(\R)\). Since
\[
1=\zeta_R(\xi)+\bigl(1-\zeta_R(\xi)\bigr),
\]
the normalization of \(u_h(t\tau_h)\) and
\eqref{e:no-escape-momentum} imply that
\begin{align*}
\left\langle
W_h^B(\tau_h),
\psi\times \zeta_R
\right\rangle
&=
\int_\R
\psi(t)
\left\langle
\Op\bigl(\zeta_R\bigr)u_h(t\tau_h),
u_h(t\tau_h)
\right\rangle_{L^2}
\dd t
\\
&=
\int_\R\psi(t)\,\dd t+o(1).
\end{align*}
Passing to the limit along the subsequence defining \(W^B\), we obtain
\begin{equation}\label{e:time-marginal-cutoff}
\int_{\R\times T^*\T^2}
\psi(t)\zeta_R(\xi)\,
W^B(\dd t,\dd x,\dd\xi)
=
\int_\R\psi(t)\,\dd t.
\end{equation}
For \(R\) sufficiently large, \(\zeta_R\equiv1\) on
\(\Omega_{E_1,E_2}\). Since \(W^B\) is supported in
\(\R\times\T^2\times\Omega_{E_1,E_2}\), the left-hand side of
\eqref{e:time-marginal-cutoff} is equal to
\[
\int_\R\psi(t)\,\omega^B(\dd t).
\]
Therefore,
\[
\forall \psi \in \mathcal{C}_c^\infty(\R), \qquad \int_\R\psi(t)\,\omega^B(\dd t)
=
\int_\R\psi(t)\,\dd t,
\]
which leads to
\[
\omega^B=\dd t.
\]
Since \(\nu_t^B\) is a probability measure for \(\omega^B\)-almost every \(t\), and since \(\omega^B=\dd t\), we finally obtain
\begin{equation}\label{e:W^B=nu_t dt}
W^B(\dd t,\dd x,\dd\xi)
=
\nu_t^B(\dd x,\dd\xi)\otimes \dd t ,
\end{equation}
where \((\nu_t^B)_{t\in\R}\) is a measurable family of probability
measures on \(\T^2\times\Omega_{E_1,E_2}\), defined for almost every
\(t\in\R\).

To prove that $(\pi_x)_*\nu_t^B=\nu_t$, fix $\psi \in \mathcal{C}_c^\infty(\R)$ and $b \in \mathcal{C}^\infty(\T^2)$. Then one has 
\begin{align*}
\int_\R \psi(t)\,\langle (\pi_x)_*\nu_t^B, b \rangle\,\dd t = \int_\R \psi(t)\left[\int_{T^*\T^2} b(x)\,\nu_t^B(\dd x,\dd\xi)\right]\dd t .
\end{align*}
Insert the energy cut-off $\kappa(H(\xi))$, which is equal to $1$ on the support of $\nu_t^B$:
\begin{align*}
\int_\R \psi(t)\Big[\int_{T^*\T^2} b(x)\,\nu_t^B(\dd x,\dd\xi)\Big]\dd t
&= \int_\R \psi(t)
\left[\int_{T^*\T^2} b(x)\kappa\!\left(H(\xi)\right)
\nu_t^B(\dd x,\dd\xi)\right]\dd t\\
&= \lim_{h\to0^+}
\int_\R \psi(t)\,
\big\langle \Op\big(b(x)\kappa\!\left(H(\xi)\right)\big)
u_h(t\tau_h),u_h(t\tau_h)\big\rangle_{L^2}\,\dd t.
\end{align*}
Using \eqref{e:formule-composition-pseudo} and the composition formula at first order \eqref{e:premier-ordre-composition} together with the Calderón-Vaillancourt Theorem, one has
\begin{align*}
&\int_\R \psi(t)\,
\big\langle \Op\big(b(x)\kappa\!\left(H(\xi)\right)\big)
u_h(t\tau_h),u_h(t\tau_h)\big\rangle_{L^2}\,\dd t\\
&\quad = \int_\R \psi(t)\,
\big\langle b(x)\kappa\!\left(\widehat H_h\right)
e^{-i\frac{t\tau_h}{h}\widehat H_h}u_h^{(0)},
e^{-i\frac{t\tau_h}{h}\widehat H_h}u_h^{(0)}\big\rangle_{L^2}\,\dd t
+\mathcal O(h).
\end{align*}
By unitarity of the propagator \eqref{e:propagator} and the property \eqref{e:smooth-spectral-projection} on the spectral localization, we can replace $\kappa(\widehat H_h)u_h^{(0)}$ by $u_h^{(0)}$ in the limit as $h\to0^+$. Thus,
\[
\int_\R \psi(t)\left[\int_{T^*\T^2} b(x)\,\nu_t^B(\dd x,\dd\xi)\right]\dd t
= \lim_{h\to0^+}\int_\R \psi(t)\,
\langle b(x)u_h(t\tau_h),u_h(t\tau_h)\rangle_{L^2}\,\dd t.
\]
By the definition of the time-dependent quantum limits $(\nu_t)_{t\in\R}$, the right-hand side is equal to
\[
\int_\R \psi(t)\left[\int_{\T^2} b(x)\,\nu_t(\dd x)\right]\dd t.
\]
Since this holds for all $\psi$ and $b$, we conclude that
\begin{equation}\label{e:proj-wigner=quantum}
    (\pi_x)_*\nu_t^B=\nu_t
\qquad \text{for almost every } t\in\R.
\end{equation}
\end{proof}

\begin{remark}\label{rem:spectral-localization}
In contrast with the $h$--oscillation assumptions commonly used in long-time semiclassical analysis, we impose here the stronger spectral localization condition
\[
\lim_{h\to0^+}
\Big\|
\mathbbm{1}_{[E_1,E_2]}(\widehat H_h)u_h^{(0)}-u_h^{(0)}
\Big\|_{L^2(\T^2,L)}=0.
\]
Under our assumptions on the energy window $[E_1,E_2]$, this guarantees that any semiclassical measure associated with the initial data is supported in
\[
\Omega_{E_1,E_2} = H^{-1}([E_1,E_2]).
\]
Moreover, the energy window $[E_1,E_2]$ is chosen so that no critical point of $H$ lies in $\Omega_{E_1,E_2}$. Therefore, the measure $\nu_t^B$ does not charge the critical set $\{\nabla H=0\}$. This restriction is convenient for our analysis, since it excludes a possible component of the limiting measures concentrated on $\{\nabla H=0\}$, for which no general description would be directly available through the upcoming arguments if one assumes only $h$--oscillation. In this sense, the spectral localization, together with the assumption $\nabla H\neq 0$ allows us to restrict the analysis from the beginning to the regular part of the energy surface.
\end{remark}

In order to formulate the propagation properties of $\nu_t^B$ in terms of an initial phase-space measure, we also introduce the auxiliary space--time distribution associated with the initial data: for $a\in \mathcal{C}_c^\infty(T^*\T^2)$,
\begin{equation}\label{e:WB-def-donnee-initiale}
\langle W_{h,0}^B,a\rangle
:=
\big\langle \Op(a)u_h^{(0)},u_h^{(0)}\big\rangle_{L^2(\T^2,L)}.
\end{equation}
Possibly after extracting a further subsequence and using the same arguments as in Lemmas \ref{l:magnetic-wigner} and \ref{l:Lifting-QL}, $(W_{h,0}^B)_{h \to 0^+}$ converges to a probability measure $\nu_0^B$ on $\T^2 \times \Omega_{E_1,E_2}$ satisfying
\begin{equation}\label{e:projection-nu_0}
(\pi_x)_*\nu_0^B=\nu_0.
\end{equation}
Before turning to the propagation properties of the family $(\nu_t^B)_{t\in \R}$, we record a complementary rigidity property of its $\xi$-marginal. Since the principal Hamiltonian $H$ depends only on $\xi$, the $\xi$-distribution can only be affected by the subprincipal perturbation $hR_h$, which becomes visible at the scale $1/h$.

\begin{lemma}[Description of the $\xi$-marginal]\label{l:xi-marginal}
Let $\pi_\xi : (x,\xi)\in T^*\T^2 \longmapsto \xi$ be the canonical projection onto the frequency variable. For almost every $t\in\R$, define
\begin{equation}\label{e:definition-measure-lambda}
\lambda_t := (\pi_\xi)_* \nu_t^B \qquad \text{and}\qquad \lambda_0 := (\pi_\xi)_* \nu_0^B.
\end{equation}
Then for almost every $t\in\R$, the following holds: 
\begin{enumerate}
\item There exists a $\lambda_t$-measurable family $(\nu_{t,\xi}^B)_\xi$ of probability measures on $\T^2$ such that 
\begin{equation}\label{e:desintegration-par-rapport-a-lambda}
\nu_t^B(\dd x,\dd \xi)=\nu_{t,\xi}^B(\dd x)\otimes\lambda_t(\dd \xi).
\end{equation}

\item There exist a probability measure $\sigma^B$ on $[E_1,E_2]$ and a $\sigma^B-$measurable family $(\lambda_{t,E})_{E\in [E_1,E_2]}$ of probability measures supported on $\SE$, such that 

\begin{equation}\label{e:desintegration-lambda-via-sigma}
    \lambda_t(\dd \xi) = \int_{E_1}^{E_2}\lambda_{t,E}(\dd \xi) \sigma^B(\dd E).
\end{equation}
Moreover, the measure $\sigma^B$ is given by 
\begin{equation}\label{e:sigma-indep-du-temps}
    \sigma^B = H_*\lambda_t = H_*\lambda_0 \qquad\text{for a.e. }t\in\R.
\end{equation}
\item If the time scale satisfies $\tau_h \ll h^{-1}$, then one has 
\begin{equation}\label{e:lambda-indep-du-temps}
    \lambda_t=\lambda_0:= (\pi_\xi)_* \nu_0^B \qquad\text{for a.e. }t\in\R.
\end{equation}
\end{enumerate}
\end{lemma}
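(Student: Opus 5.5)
Items 1 and 2 are disintegrations; the analytic content lies in the identity $H_*\lambda_t=H_*\lambda_0$ and in item 3.

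\textbf{Item 1.} By Lemma~\ref{l:Lifting-QL}, for a.e.\ $t$ the measure $\nu_t^B$ is a probability measure on the compact set $\T^2\times\Omega_{E_1,E_2}$. The disintegration Theorem~\ref{t:desintegration}, applied with respect to the projection $\pi_\xi$, gives the family $(\nu^B_{t,\xi})_\xi$ in \eqref{e:desintegration-par-rapport-a-lambda}.

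\textbf{Item 2.} Applying the same theorem to $\lambda_t$ and the map $H:\Omega_{E_1,E_2}\to[E_1,E_2]$ gives $\lambda_t=\int\lambda_{t,E}\,(H_*\lambda_t)(\dd E)$, with each $\lambda_{t,E}$ carried by $\SE$. It therefore suffices to prove $H_*\lambda_t=H_*\lambda_0$ for a.e.\ $t$. Fix $f\in\mathcal C_c^\infty(\R)$ and $\psi\in\mathcal C_c^\infty(\R)$. By Corollary~\ref{c:functional-calculus}, $\Op(f\circ H)=f(\widehat H_h)+\mathcal O(h)$. Since $f(\widehat H_h)$ commutes with the propagator,
\[
\langle f(\widehat H_h)u_h(t\tau_h),u_h(t\tau_h)\rangle=\langle f(\widehat H_h)u_h^{(0)},u_h^{(0)}\rangle
\]
exactly, for every $t$. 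Inserting the cutoff $\kappa(H)$ as in the proof of Lemma~\ref{l:Lifting-QL}, we test $W^B$ against $\psi(t)f(H(\xi))\kappa(H(\xi))$ and $\nu_0^B$ against $f(H)\kappa(H)$. This gives
\[
\int\psi(t)\int f\,\dd(H_*\lambda_t)\,\dd t=\Big(\int\psi\Big)\int f\,\dd(H_*\lambda_0).
\]
Letting $f$ range over a countable dense family yields \eqref{e:sigma-indep-du-temps}. The same computation gives the formula $\int f\,\dd\sigma^B=\lim\langle f(\widehat H_h)u_h^{(0)},u_h^{(0)}\rangle$.

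\textbf{Item 3.} Take $b\in\mathcal C_c^\infty(\R^2)$ depending only on $\xi$, and differentiate $\langle\Op(b)u_h(t\tau_h),u_h(t\tau_h)\rangle$ in $t$, as in \eqref{e:derivation-intro}. By \eqref{e:commutator}, $\{H,b\}=0$, and the magnetic $h^2$-term for $H$ and $b$ is $\mathcal O(h^2)$. The $hR_h$ part of the symbol contributes $h\cdot h\{R_h,b\}/i$ plus higher order terms. Hence
\[
\frac{i}{h}[\widehat H_h,\Op(b)]=\mathcal O_{L^2\to L^2}(h),
\]
with the bound given by Calderón--Vaillancourt, since $b$ has compact support. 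The derivative is therefore $\mathcal O(\tau_h h)=o(1)$, uniformly in $t$ on compact sets. Integrating from $0$ to $t$, and inserting the energy cutoff to handle possible non-compactness, we get
\[
\langle\Op(b)u_h(t\tau_h),u_h(t\tau_h)\rangle=\langle\Op(b)u_h^{(0)},u_h^{(0)}\rangle+o(1)
\]
locally uniformly in $t$. Integrating against $\psi(t)$ and passing to the limit gives $\int\psi\int b\,\dd\lambda_t\,\dd t=\int\psi\,\dd t\int b\,\dd\lambda_0$. Separability of $\mathcal C_c(\R^2)$ then gives $\lambda_t=\lambda_0$ for a.e.\ $t$.

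\textbf{Main obstacle.} The delicate point is to justify that $\frac{i}{h}[\widehat H_h,\Op(b)]$ is $\mathcal O(h)$ in operator norm even though $H\in\mathscr S^m$ is unbounded. We use the expansion \eqref{e:commutator}, which holds for symbols in $\mathscr S^{m_1}$ and $\mathscr S^{m_2}$ with remainder in $\mathscr S^{m_1+m_2}$. Because $b$ is compactly supported, every term in the expansion carries $\xi$-derivatives of $b$, so the remainder symbols are either compactly supported in $\xi$ or $\mathcal O(h^\infty)$ there. This makes the operator-norm bound legitimate.
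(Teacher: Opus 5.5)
Your proposal is correct and follows essentially the same route as the paper. Items 1 and 2 are disintegrations, with $H_*\lambda_t=H_*\lambda_0$ obtained from $\Op(f\circ H)=f(\widehat H_h)+\mathcal O(h)$ and the commutation of $f(\widehat H_h)$ with the propagator; item 3 uses the bound $\frac{i}{h}[\widehat H_h,\Op(b)]=\mathcal O_{L^2\to L^2}(h)$ integrated over $[0,t]$. Your extra precautions (a countable dense family of $f$, the energy cutoff) are harmless. Your ``main obstacle'' is settled more directly by noting that a compactly supported $b$ lies in $\mathscr S^{-m}$, so the remainder in \eqref{e:commutator} is in $\mathscr S^{0}$ and Calder\'on--Vaillancourt applies.
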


\begin{remark} 
Equivalently, combining items 1 and 2 for $a \in \mathcal{C}_c^\infty(T^*\T^2)$, leads, for almost every $t \in \R$, to 
\[
\int_{T^*\T^2} a(x,\xi)\,\nu_t^B(\dd x,\dd \xi)
=
\int_{E_1}^{E_2}\int_{\mathbb S_E}\int_{\T^2}
a(x,\xi)\,\nu_{t,\xi}^B(\dd x)\,\lambda_{t,E}(\dd \xi)\,\sigma^B(\dd E).
\]
The remainder of the analysis will consist in identifying these conditional measures on sufficiently long time scales: first $\nu_{t,\xi}^B$ in the configuration variable, and then $\lambda_{t,E}$ along the energy levels.
\end{remark}

\begin{proof}
Since $\nu_t^B$ is a probability measure on $T^*\T^2$, one can apply the disintegration Theorem~\ref{t:desintegration}, with respect to $\lambda_t$. This gives a $\lambda_t$-measurable family $(\nu_{t,\xi}^B)_\xi$ of probability measures on $\T^2$ such that 
\begin{equation}\label{e:desintegration-par-rapport-a-lambda-t}
    \nu_t^B(\dd x , \dd \xi) = \nu_{t,\xi}^B(\dd x) \otimes \lambda_t(\dd \xi).
\end{equation}
We now prove the second item. Let $\psi,f\in\mathcal C_c^\infty(\R).$
By the semiclassical functional calculus, as in the proof of
Lemma~\ref{l:Lifting-QL}, one has
\begin{equation}\label{e:functional-calculus-energy-marginal}
\int_\R
\psi(t)
\left\langle
\Op(f\circ H)u_h(t\tau_h),
u_h(t\tau_h)
\right\rangle_{L^2}
\dd t =
\int_\R
\psi(t)
\left\langle
f(\widehat H_h)u_h(t\tau_h),
u_h(t\tau_h)
\right\rangle_{L^2}
\dd t
+
\mathcal O(h).
\end{equation}
Using the unitarity of the propagator and the fact that \(f(\widehat H_h)\) commutes with the propagator, one can write \eqref{e:functional-calculus-energy-marginal} as follows
\begin{align*}
\int_\R
\psi(t)
\left\langle
\Op(f\circ H)u_h(t\tau_h),
u_h(t\tau_h)
\right\rangle_{L^2}
\dd t
=
\left(\int_\R\psi(t)\,\dd t\right)
\left\langle
\Op(f\circ H)u_h^{(0)},
u_h^{(0)}
\right\rangle_{L^2}
+
\mathcal O(h).
\end{align*}
Passing to the limit along the subsequence defining \(\lambda_t\) and \(\lambda_0\), one gets
\begin{equation}\label{e:energy-marginal-weak-conservation}
\int_\R
\psi(t)
\left(
\int_{\R^2}
f(H(\xi))\,\lambda_t(\dd\xi)
\right)
\dd t
=
\left(\int_\R\psi(t)\,\dd t\right)
\int_{\R^2}
f(H(\xi))\,\lambda_0(\dd\xi).
\end{equation}
For almost every \(t\in\R\), set
\[
\sigma_t^B:=H_*\lambda_t,
\qquad
\sigma_0^B:=H_*\lambda_0.
\]
Since \(\lambda_t\) and \(\lambda_0\) are probability measures supported in
\(\Omega_{E_1,E_2}\), the measures \(\sigma_t^B\) and \(\sigma_0^B\)
are probability measures supported in \([E_1,E_2]\). Hence, 
\eqref{e:energy-marginal-weak-conservation} becomes
\[
\int_\R
\psi(t)
\left(
\int_{E_1}^{E_2}
f(E)\,\sigma_t^B(\dd E)
\right)
\dd t
=
\left(\int_\R\psi(t)\,\dd t\right)
\int_{E_1}^{E_2}
f(E)\,\sigma_0^B(\dd E).
\]
Hence, for every fixed \(f\in\mathcal C_c^\infty(\R)\), one has 
\[
\int_{E_1}^{E_2}
f(E)\,\sigma_t^B(\dd E)
=
\int_{E_1}^{E_2}
f(E)\,\sigma_0^B(\dd E)
\]
for almost every \(t\in\R\). 
Since this holds for every \(f\in\mathcal C_c^\infty(\R)\), it follows that
\[
\sigma_t^B=\sigma_0^B
\qquad
\text{for almost every }t\in\R.
\]
Setting
\[
\sigma^B:=\sigma_0^B=H_*\lambda_0,
\]
we have proved that
\begin{equation}\label{e:sigma-indep-du-temps-proof}
H_*\lambda_t
=
H_*\lambda_0
=
\sigma^B
\qquad
\text{for almost every }t\in\R.
\end{equation}

For such a \(t\), apply the disintegration
Theorem~\ref{t:desintegration} to the probability measure \(\lambda_t\)
and to the map
\[
H:\Omega_{E_1,E_2}\longrightarrow [E_1,E_2].
\]
Since \(H_*\lambda_t=\sigma^B\), there exists a
\(\sigma^B\)-measurable family
\((\lambda_{t,E})_{E\in[E_1,E_2]}\) of probability measures supported on $\SE$ such that \eqref{e:desintegration-lambda-via-sigma} holds.

To prove the third item, fix $a\in \mathcal{C}_c^\infty(\R^2)$ and set 
\[
F_h(t) := \langle \Op(a)u_h(t \tau_h),u_h(t \tau_h)\rangle.
\]
Using the fact that $(u_h)_{h \to 0^+}$ is a solution to \eqref{e:Schrodinger-PDE}, one obtains 
\begin{equation}\label{e:derivative-Fh}
\frac{\dd}{\dd t}F_h(t)
=
-\tau_h
\Big\langle
\frac{i}{h}[\Op(a),\widehat H_h]
u_h(t\tau_h),
u_h(t\tau_h)
\Big\rangle_{L^2}.
\end{equation}
Using the commutator expansion \eqref{e:commutator}, the Calderón--Vaillancourt Theorem \ref{t:Calderon-Vaillancourt} and the fact that $a$ and $H$ depend only on the $\xi$-variable, one has
\begin{equation}\label{e:ordre-1-commutateur}
    \frac{i}{h}\,[\Op(a),\widehat H_h]
=h\,\Op(\{a,R_h\})+\mathcal O_{L^2 \to L^2}(h).
\end{equation}
Since \(a\) is compactly supported, the symbol \(\{a,R_h\}\) belongs to \(\mathscr S^0\) uniformly in \(h\).  Hence, combining \eqref{e:derivative-Fh} and \eqref{e:ordre-1-commutateur} with the Calderón--Vaillancourt Theorem and the unitarity of the propagator, one gets
\begin{equation}\label{e:estimate-derivative-Fh}
\left| \frac{\dd}{\dd t}F_h(t) \right| \leq C_a h\tau_h,
\end{equation}
uniformly in \(t\). Integrating \eqref{e:estimate-derivative-Fh} between \(0\) and \(t\) yields
\begin{equation}\label{e:Fh-close-initial}
|F_h(t)-F_h(0)|
\leq C_a |t|\,h\tau_h.
\end{equation}
Let now \(\psi\in\mathcal C_c^\infty(\R)\). It follows from
\eqref{e:Fh-close-initial} that
\[
\left| \int_\R \psi(t)\big(F_h(t)-F_h(0)\big)\,\dd t \right| \leq C_a h\tau_h \int_\R |t\psi(t)|\,\dd t.
\]
Since \(\tau_h\ll h^{-1}\), the right-hand side tends to \(0\) as
\(h\to0^+\). Moreover, by the definition of the initial momentum marginal \(\lambda_0\),
\[
F_h(0)
=
\big\langle
\Op(a)u_h^{(0)},u_h^{(0)}
\big\rangle_{L^2}
\longrightarrow
\int_{\R^2}a(\xi)\,\lambda_0(\dd\xi).
\]
Consequently, one has
\[
\int_\R \psi(t)F_h(t)\,\dd t \longrightarrow \left(\int_\R\psi(t)\,\dd t\right) \left( \int_{\R^2}a(\xi)\,\lambda_0(\dd\xi) \right).
\]
On the other hand, by the definition of the time-dependent magnetic semiclassical measure, one has
\[
\int_\R \psi(t)F_h(t)\,\dd t \longrightarrow \int_\R \psi(t) \left(
\int_{\R^2}a(\xi)\,\lambda_t(\dd\xi) \right)
\dd t.
\]
Hence, one obtains
\[
\int_\R
\psi(t)
\left(
\int_{\R^2}a(\xi)\,\lambda_t(\dd\xi)
-
\int_{\R^2}a(\xi)\,\lambda_0(\dd\xi)
\right)
\dd t
=0.
\]
Since this holds for every
\(\psi\in\mathcal C_c^\infty(\R)\) and for every
\(a\in\mathcal C_c^\infty(\R^2)\), it follows that
\[
\lambda_t=\lambda_0
\qquad
\text{for almost every }t\in\R,
\]
which proves \eqref{e:lambda-indep-du-temps}.
\end{proof}

The previous lemma shows that the energy marginal of the limiting measure is preserved along the evolution and below the scale $h^{-1}$, the $\xi$-marginal of the lifted measure is entirely determined by the initial data. We also proved that the disintegration of $\nu_t^B$ with respect to the energy variable involves a measure on $\R$ which is completely determined by the initial data.

In order to describe $\mathcal{N}(\widehat H_h, \tau_h)$, it is sufficient to describe the $x$-marginal of $\nu_t^B$ which is easier to work with. Having identified the lifted family of phase-space measures $(\nu_t^B)_{t\in\R}$ and its link with the configuration-space limits $(\nu_t)_{t\in\R}$, we now describe the dynamical constraints satisfied by $\nu_t^B$. More precisely, we study how these constraints depend on the observation scale $\tau_h$. This will clarify in which regimes the limiting measures are transported by the Hamiltonian flow of the principal symbol $H$, and in which regimes they satisfy an invariance property under this flow. It will also illustrate the kind of arguments that will have to be refined later on.

\begin{lemma}[Propagation and invariance]\label{l:Dynamical-regimes}
Let $(\nu_t^B)_{t\in\R}$ be an accumulation point of $(W_h^B(\tau_h))_{h \to 0^+}$ in the sense of \eqref{e:W^B=nu_t dt} and let $\varphi_H^t$ be the Hamiltonian flow of $H(\xi)$ on $T^*\T^2$, namely 
\[ 
\varphi_H^t(x,\xi)=(x+t\nabla H(\xi),\xi).
\]
Then, the following holds: 
\begin{enumerate}
\item If $\tau_h \ll 1$, then $\nu_t^B$ is independent of $t$, i.e., $\nu_t^B = \nu_0^B$.
\item If $\tau_h=1$, then $\nu_t^B$ is transported by the Hamiltonian flow of $H$, i.e. $\nu_t^B=(\varphi_H^t)_*\nu_{0}^B$.
\item If $\tau_h \gg 1$, then for a.e.\ $t$, the measure $\nu_t^B$ is invariant under the flow of $H$, i.e. for all $s\in\R$,
\[
(\varphi_H^s)_*\nu_t^B=\nu_t^B .
\]
\end{enumerate}
\end{lemma}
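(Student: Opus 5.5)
The plan is to run the same Heisenberg/commutator argument used for item 3 of Lemma~\ref{l:xi-marginal}, now with general test symbols $a\in\mathcal C_c^\infty(T^*\T^2)$ (time-independent). Set
\[
F_h(t):=\big\langle \Op(a)u_h(t\tau_h),u_h(t\tau_h)\big\rangle_{L^2(\T^2,L)} .
\]
As in \eqref{e:derivative-Fh}, one has
\[
F_h'(t)=-\tau_h\big\langle \tfrac{i}{h}[\Op(a),\widehat H_h]u_h(t\tau_h),u_h(t\tau_h)\big\rangle .
\]
By the commutator expansion \eqref{e:commutator} applied to $a$ and $H+hR_h$, we get $\frac{i}{h}[\Op(a),\widehat H_h]=-\Op(\{H,a\})+\mathcal O_{L^2\to L^2}(h)$; here we use that $a$ has compact support, so all remainders lie in $\mathscr S^0$ and Theorem~\ref{t:Calderon-Vaillancourt} applies. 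Sign conventions aside, this gives the key identity
\[
\frac{1}{\tau_h}F_h'(t)=\big\langle \Op(\{H,a\})u_h(t\tau_h),u_h(t\tau_h)\big\rangle+\mathcal O(h),
\]
uniformly in $t$. Here $\{H,a\}=\nabla H(\xi)\cdot\partial_x a=\frac{\dd}{\dd s}(a\circ\varphi_H^s)|_{s=0}$.

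\textbf{Case $\tau_h\ll1$.} The identity gives $|F_h'(t)|\le C_a\tau_h\to0$ uniformly, so $|F_h(t)-F_h(0)|\le C_a|t|\tau_h$. Integrating against $\psi\in\mathcal C_c^\infty(\R)$ and passing to the limit, exactly as in the proof of Lemma~\ref{l:xi-marginal}, yields $\int\psi(t)\langle\nu_t^B,a\rangle\dd t=\big(\int\psi\big)\langle\nu_0^B,a\rangle$. Hence $\nu_t^B=\nu_0^B$ for a.e.\ $t$, using a countable dense family of test symbols $a$ to handle the exceptional null sets.

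\textbf{Case $\tau_h=1$.} Apply the identity to the time-dependent observable $a_s:=a\circ\varphi_H^{-s}$ evaluated at time $s$. Define $G_h(s):=\langle\Op(a\circ\varphi_H^{-s})u_h(s),u_h(s)\rangle$. Since $\partial_s(a\circ\varphi_H^{-s})=-\{H,a\circ\varphi_H^{-s}\}$, the two principal contributions cancel, and $G_h'(s)=\mathcal O(h)$ uniformly for $s$ in compact sets. For this, note that $a\circ\varphi_H^{-s}$ stays compactly supported in $\xi$ with seminorms bounded locally uniformly in $s$, since $\varphi_H$ acts only in $x$. Therefore $G_h(t)=G_h(0)+\mathcal O(|t|h)$. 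Testing against $\psi(t)$ and taking limits gives
\[
\int\psi(t)\langle\nu_t^B,a\circ\varphi_H^{-t}\rangle\dd t=\Big(\int\psi\Big)\langle\nu_0^B,a\rangle .
\]
Replacing $a$ by $a\circ\varphi_H^{t}$ is awkward because of the $t$-dependence, so I would instead use symbols $\psi(t)a(t,x,\xi)$ directly in the Wigner distribution. This identifies $\nu_t^B=(\varphi_H^t)_*\nu_0^B$ for a.e.\ $t$, again via a countable dense family of test symbols.

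\textbf{Case $\tau_h\gg1$.} Integrate the identity against $\psi(t)$ and integrate by parts in time:
\[
\int\psi(t)\big\langle\Op(\{H,a\})u_h(t\tau_h),u_h(t\tau_h)\big\rangle\dd t=-\frac{1}{\tau_h}\int\psi'(t)F_h(t)\dd t+\mathcal O(h).
\]
Since $|F_h|\le C$, the right-hand side tends to $0$. Hence $\int\psi(t)\langle\nu_t^B,\{H,a\}\rangle\dd t=0$ for all $\psi$ and $a$, so $\langle\nu_t^B,\{H,a\}\rangle=0$ for a.e.\ $t$, simultaneously for a countable dense family of $a$. Then for such $t$ we have $\frac{\dd}{\dd s}\langle\nu_t^B,a\circ\varphi_H^s\rangle=\langle\nu_t^B,\{H,a\circ\varphi_H^s\}\rangle=0$, after extending by density to $a\circ\varphi_H^s$ (which is still in $\mathcal C_c^\infty$), and this gives invariance.

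The main obstacle is purely measure-theoretic bookkeeping. One must make the a.e.\ statements in $t$ hold simultaneously for all test functions, choosing a countable family dense in $\mathcal C_c(T^*\T^2)$ and using that the $\nu_t^B$ are probability measures supported in the fixed compact set $\T^2\times\Omega_{E_1,E_2}$. In case 2, the same bookkeeping is needed to handle the time-dependent symbol $a\circ\varphi_H^{-t}$.
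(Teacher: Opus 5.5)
Your proof is correct and follows essentially the paper's argument: the Heisenberg identity $F_h'(t)=\tau_h\langle\Op(\{H,a\})u_h(t\tau_h),u_h(t\tau_h)\rangle+\mathcal O(h\tau_h)$ from the magnetic commutator expansion, then the three regimes of $\tau_h$, with integration by parts against $\psi'$ for $\tau_h\gg1$ exactly as in the paper. In the first two cases you compare directly with $t=0$, via $|F_h(t)-F_h(0)|\le C|t|\tau_h$ and the transported observable $a\circ\varphi_H^{-s}$, which is slightly more complete than the paper's weak-in-time equations since those leave the identification with the initial measure $\nu_0^B$ implicit; just note that in case 3 your countable family should be dense in a $\mathcal C^1$ topology on a compact neighborhood of $\T^2\times\Omega_{E_1,E_2}$, because $a\mapsto\langle\nu_t^B,\{H,a\}\rangle$ involves $\partial_x a$.
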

\begin{remark}
    Note that the relations given in the first and second point of Lemma \ref{l:Dynamical-regimes} can be written in terms of Fourier coefficients for the measure $\nu_t^B$. In the sense of distributions, one can decompose $\nu_t^B$ as a Fourier series:
    \begin{equation}\label{e:nu-t^b=Fourier}
        \displaystyle \nu_t^B(x,\xi) = \sum_{k \in \Z^2} \widehat \nu_t^B(k,\xi) e^{2i\pi k\cdot x}. 
    \end{equation}
    Hence, the first relation of the previous lemma gives us that for almost every $t \in \R$, one can identify the $k$-th Fourier coefficient of $\nu_t^B$ with its initial datum $\nu_0^B$. The second one leads us to the following equality, for almost every $t \in \R$, one has
    \begin{center}
        $\displaystyle \widehat\nu_t^B(k,\xi) = e^{-2i\pi tk\cdot \nabla H(\xi)}\widehat\nu_0^B(k,\xi)$, 
    \end{center}
    in particular, \eqref{e:nu-t^b=Fourier} becomes
    \begin{equation*}
        \displaystyle \nu_t^B(x,\xi) = \sum_{k \in \Z^2} e^{-2i\pi tk\cdot \nabla H(\xi)}\widehat\nu_0^B(k,\xi) e^{2i\pi k\cdot x}.
    \end{equation*}
\end{remark}
\begin{proof}
Fix $\psi\in \mathcal{C}_c^\infty(\R)$ and $b\in \mathcal{C}_c^\infty(T^*\T^2)$. Differentiate
$t\mapsto \langle \Op(b)u_h(t \tau_h),u_h(t \tau_h)\rangle$ using the fact that $(u_h)_{h \to 0^+}$ is a solution to \eqref{e:Schrodinger-PDE} and integrate by parts:
\[
\int_\R \psi'(t)\,\langle \Op(b)u_h(t \tau_h),u_h(t \tau_h)\rangle\,\dd t
=
\tau_h\int_\R \psi(t)\Big\langle \frac{i}{h}\,[\Op(b),\widehat H_h]\,u_h(t \tau_h),
u_h(t \tau_h)\Big\rangle\,\dd t.
\]
Arguing as in \eqref{e:ordre-1-commutateur}, one has 
\begin{equation}\label{e:commutator-identity}
\begin{aligned}
\int_\R \psi'(t)\,\big\langle \Op(b)\,u_h(t\tau_h),u_h(t\tau_h)\big\rangle\,\dd t 
&= \tau_h \int_\R \psi(t)\,\big\langle \Op(\{b,H\})\,u_h(t\tau_h),u_h(t\tau_h)\big\rangle\,\dd t \\
&\quad + h\tau_h \int_\R \psi(t)\,\big\langle \Op(\{b,R_h\})\,u_h(t\tau_h),u_h(t\tau_h)\big\rangle\,\dd t + \mathcal{O}(h\tau_h).
\end{aligned}
\end{equation}

Since $b \in \mathcal{C}_c^\infty(\T^*\T^2)$, the symbols $\{b,H\}$ and $\{b,R_h\}$ are both in $\mathscr{S}^0(\T^2 \times \R^2)$ so that one can use the Calderón--Vaillancourt Theorem \ref{t:Calderon-Vaillancourt}. Then, if $\tau_h\to0$ we obtain $\int \psi'(t)\langle\nu_t^B,b\rangle\,\dd t=0$ for all $\psi$, hence $\nu_t^B$ is constant in $t$. If $\tau_h= 1$, we obtain the weak transport equation $\partial_t \nu_t^B= \{\nu_t^B,H\}$, which means precisely that the family $(\nu_t^B)_{t\in\R}$ is the image of the initial measure $\nu_0^B$ under the Hamiltonian flow, namely $\nu_t^B = (\varphi_H^t)_* \nu_0^B.$ Finally, if $\tau_h\to+\infty$, one can divide by $\tau_h$ in \eqref{e:commutator-identity} and let $h$ go to $0$ to obtain $\{\nu_t^B,H\}=0$ for a.e.\ $t$ i.e., invariance under the Hamiltonian flow $\varphi_H^s$.
\end{proof}

\subsection{Decomposition of invariant measures}\label{s:decomposition}
The long-time regime $\tau_h\to+\infty$ is the main focus of this paper: in that case, $\nu_t^B$ is invariant under $\varphi_H^s$ and can be analyzed further by decomposing phase space according to resonant directions of the integrable flow. This is the starting point of the next section. This invariant setting provides a natural and robust framework for further analysis. In the next part, we follow the strategy developed by Macià, Anantharaman-Macià and Anantharaman-Fermanian-Macià in \cite{Macia-2009, AnantharamanMacia11, Anantharaman-Macia-2014, Anantharaman-Fermanian-Kammerer-Macia-2015} to refine the description of the invariant measures $\nu_t^B$, by introducing finer time scales and studying how additional asymptotic regimes influence their structure.

On $\T^2\times \R^2$, the trajectories of the Hamiltonian flow
\[
\varphi_H^s(x,\xi)=(x+s\nabla H(\xi),\xi)
\]
are either dense, periodic or fixed depending on arithmetic properties of the direction $\nabla H(\xi)$. Following the approach of~\cite{Macia-2010,Anantharaman-Fermanian-Kammerer-Macia-2015}, we decompose the invariant measure according to these dynamical regimes.

\medskip

We denote by $\mathcal L_1$ the collection of primitive rank-one submodules $\Lambda\subset\Z^2$, i.e.\ such that $\dim\langle\Lambda\rangle=1$ and $\langle\Lambda\rangle\cap\Z^2=\Lambda$. For every $\Lambda\in\mathcal L_1$, we choose $\mathfrak e_\Lambda\in\Z^2\setminus\{0\}$ such that $\Lambda=\Z\mathfrak e_\Lambda$, we set $L_\Lambda:=|\mathfrak e_\Lambda|$, and we denote by $\Lambda^\perp:=\R\mathfrak e_\Lambda^\perp$ the orthogonal line (where $\mathfrak e_\Lambda^\perp$ is a fixed orthogonal vector of length $L_\Lambda$). We introduce the resonant sets 
\begin{equation}\label{e:definition-resonant-direction}
E_{\Lambda^\perp\setminus\{0\}}:=\{\xi\in\R^2:\ \nabla H(\xi)\in \Lambda^\perp\setminus\{0\}\},
\end{equation}
and the non-resonant set 
\begin{equation}\label{e:definition-nonresonant-direction}
\Omega_{\mathrm{nr}}
:=\Big\{\xi\in\R^2:\ k\cdot\nabla H(\xi)\neq0\ \text{for all }k\in\Z^2\setminus\{0\}\Big\}.
\end{equation}
For $\xi\in \Omega_{\mathrm{nr}}$, the orbit $x+s\nabla H(\xi)$ is dense in $\T^2$, whereas $\xi\in E_{\Lambda^\perp}$ imposes a rational constraint on the direction of motion and leads to periodic (linear) trajectories.

To make this dichotomy quantitative, let $a\in \mathcal{C}_c^\infty(T^*\T^2)$ and denote by
\[
\widehat a_k(\xi):=\int_{\T^2} a(x,\xi)\,e^{-2\pi i k\cdot x}\,\dd x,\qquad k\in\Z^2,
\]
its Fourier coefficients in the $x$-variable. For $\Lambda\in\mathcal L_1$, we define the $\Lambda$--averaging operator by
\begin{equation}\label{e:Ilambda}
\mathcal I_\Lambda(a)(x,\xi) :=\sum_{k\in\Lambda}\widehat a_k(\xi)\,e^{2\pi i k\cdot x}.
\end{equation}
Equivalently, $\mathcal I_\Lambda(a)$ keeps only the Fourier modes belonging to the submodule $\Lambda$, and it may be interpreted as the time average of $a$ along the linear flow when $\xi \in \Lambda^{\perp}$. More precisely, on the one hand, one has 
\begin{equation}\label{e:moyenne-le-long-des-orbites-periodiques}
    \displaystyle \forall (x,\xi)\in\T^2\times E_{\Lambda^\perp\setminus\{0\}},\quad  \lim_{T\rightarrow+\infty} 
\frac{1}{T}\int_0^Ta(x+t\nabla H(\xi),\xi)\dd t=\mathcal{I}_\Lambda(a)(x,\xi), 
\end{equation}
and on the other hand, one finds 
\begin{equation}\label{e:moyenne-le-long-des-orbites-denses}
    \displaystyle \forall (x,\xi) \in\T^2 \times \Omega_{\mathrm{nr}},\quad \lim_{T\rightarrow+\infty} 
\frac{1}{T}\int_0^Ta(x+t\nabla H(\xi),\xi)\dd t=\int_{\T^2}a(y,\xi)\dd y.
\end{equation}

\medskip
\begin{remark}
For $\tau_h \gg 1$, Lemma \ref{l:Dynamical-regimes} tells us that the semiclassical measure $\nu_t^B$ is invariant under the geodesic flow $\varphi_H^s$. Hence, the measure $\nu_t^B$ is a probability measure on $T^*\T^2$ that can be decomposed as follows: 
\begin{equation}\label{e:decomposition-de-nu_t^B-avec-point-critique}
    \displaystyle \nu_t^B =  \nu_t^B\lvert_{\T^2 \times \Omega_{\mathrm{nr}}} + \sum_{\Lambda\in\mathcal{L}_1}\nu_t^B|_{\T^2\times E_{\Lambda^\perp \setminus \{0\}}}+\nu_t^B\lvert_{\T^2 \times \{ \nabla H(\xi) =0\}}.
\end{equation}
Since we made the hypothesis \eqref{e:fenetre-energie} and the spectral localization \eqref{e:spectral-projection}, one knows from Lemma \ref{l:Lifting-QL} that the measure $\nu_t^B$ carries no mass on the set of the critical points of the Hamiltonian $H$ and therefore $\nu_t^B$ can be decomposed as follows
\begin{equation}\label{e:decomposition-de-nu_t^B-sans-point-critique}
    \displaystyle \nu_t^B =  \nu_t^B\lvert_{\T^2 \times \Omega_{\mathrm{nr}}} + \sum_{\Lambda\in\mathcal{L}_1}\nu_t^B|_{\T^2\times E_{\Lambda^\perp \setminus \{0\}}}.
\end{equation}
In what follows, we will mainly focus on the analysis of the regularity of each measure $\nu_t^B|_{\T^2\times E_{\Lambda^\perp \setminus \{0\}}}$. Our assumptions on the energy layers $H\in[E_1,E_2]$ are precisely made to avoid the fixed points of the Hamiltonian that would require a finer analysis.
\end{remark}

The next lemma summarizes the dynamical decomposition of $\varphi_H^s$--invariant measures on $T^*\T^2$ (see for instance~\cite[\S2]{Anantharaman-Fermanian-Kammerer-Macia-2015}).

\begin{lemma}\label{l:anantharamanmacia} Let $\mu$ be a probability measure on $T^*\T^2$ that is invariant under the Hamiltonian flow $\varphi_H^t$. Then, the following holds:
\begin{enumerate}
 \item For any $\Lambda$ in $\mathcal{L}_1$, $\mathcal{I}_\Lambda(\mu)$ is a finite nonnegative Radon measure on $T^*\T^2$.
 \item For any $\Lambda$ in $\mathcal{L}_1$, we have $\mu|_{\T^2\times E_{\Lambda^\perp\setminus\{0\}}}=\mathcal{I}_\Lambda(\mu)|_{\T^2\times E_{\Lambda^\perp\setminus\{0\}}}$.
 \item $\widehat{\mu}_0$ is a finite nonnegative Radon measure on $T^*\T^2$ and $\mu|_{\T^2\times \Omega_{\mathrm{nr}}}=\widehat{\mu}_0|_{\T^2\times  \Omega_{\mathrm{nr}}}$.
\end{enumerate}
\end{lemma}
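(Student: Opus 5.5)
The argument works in Fourier series in the $x$-variable. Since $\mu$ is a finite measure on $T^*\T^2$, for each $k\in\Z^2$ define the complex Radon measure $\widehat\mu_k$ on $\R^2_\xi$ by $\int b(\xi)\,\widehat\mu_k(\dd\xi):=\int b(\xi)e^{-2i\pi k\cdot x}\mu(\dd x,\dd\xi)$. Then $\mathcal I_\Lambda(\mu)=\sum_{k\in\Lambda}\widehat\mu_k(\dd\xi)e^{2i\pi k\cdot x}$ as a distribution, and the tested quantity is $\langle\mathcal I_\Lambda(\mu),a\rangle=\langle\mu,\mathcal I_\Lambda(a)\rangle$.

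\textbf{Step 1: invariance in Fourier terms.} Invariance under $\varphi_H^s$, tested against $b(\xi)e^{2i\pi k\cdot x}$, gives $e^{2i\pi s k\cdot\nabla H(\xi)}\widehat\mu_k=\widehat\mu_k$ for all $s$. Hence $\widehat\mu_k$ is supported in $\{k\cdot\nabla H(\xi)=0\}$.

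\textbf{Step 2: positivity (items 1 and 3).} Write $\mathcal I_\Lambda(a)$ as an average over a subgroup of translations. Put $\mathfrak f=\mathfrak e_\Lambda^\perp/L_\Lambda^2$. For $k\in\Z^2$, the phase $e^{2i\pi k\cdot\theta\mathfrak f}$ depends on $k$ only through $k\cdot\mathfrak e_\Lambda^\perp$, which is an integer multiple of $1$ and vanishes exactly on $\Lambda$, because $\Lambda$ is primitive. After rescaling $\theta$ so that the period is $1$, we get
\[
\mathcal I_\Lambda(a)(x,\xi)=\int_0^1 a(x+\theta v_\Lambda,\xi)\,\dd\theta
\]
for a suitable vector $v_\Lambda\in\R\mathfrak e_\Lambda^\perp$ with $v_\Lambda\cdot k\in\Z$ for all $k$ and $v_\Lambda\cdot k\ne 0$ for $k\notin\Lambda$. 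Concretely, take $v_\Lambda=\mathfrak e_\Lambda^\perp/L_\Lambda^2\cdot c$, with $c$ chosen so that $\{k\cdot v_\Lambda\}=\Z$. For the average to kill all non-$\Lambda$ modes we need $k\cdot v_\Lambda\in\Z\setminus\{0\}$ for $k\notin\Lambda$; this holds since $\mathfrak e_\Lambda^\perp\in\Z^2$ is primitive. Consequently $a\ge0$ implies $\mathcal I_\Lambda(a)\ge0$, and $\mathcal I_\Lambda(\mu)$ is a positive functional bounded by $\mu(T^*\T^2)$, hence a finite nonnegative Radon measure. Likewise $\widehat\mu_0\otimes\dd x$ is the pushforward of the full torus average, $\langle\widehat\mu_0,a\rangle=\langle\mu,\int a(y,\cdot)\dd y\rangle$, and is therefore nonnegative.

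\textbf{Step 3: identification of restrictions (items 2 and 3).} Test $\mu|_{\T^2\times E_{\Lambda^\perp\setminus\{0\}}}$ against $a=\sum_k\widehat a_k(\xi)e^{2i\pi k\cdot x}$, a smooth function whose Fourier series converges rapidly. On $E_{\Lambda^\perp\setminus\{0\}}$ the condition $k\cdot\nabla H(\xi)=0$ holds exactly when $k\in\Lambda$: $\nabla H$ is a nonzero multiple of $\mathfrak e_\Lambda^\perp$, so $k\perp\mathfrak e_\Lambda^\perp$ forces $k\in\R\mathfrak e_\Lambda\cap\Z^2=\Lambda$. By Step 1, $\mathbbm 1_{E_{\Lambda^\perp\setminus\{0\}}}\widehat\mu_k=0$ for $k\notin\Lambda$, and so
\[
\langle\mu|_{E},a\rangle=\sum_{k\in\Lambda}\langle\mathbbm 1_E\widehat\mu_k,\widehat a_{-k}\rangle=\langle\mathcal I_\Lambda(\mu)|_E,a\rangle.
\]
On $\Omega_{\mathrm{nr}}$ only $k=0$ survives, which gives item 3.

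\textbf{Main obstacle.} The delicate point is justifying these termwise manipulations. Restricting a measure to a Borel set does not commute with convergence of Fourier series in distribution sense. I would handle this by working with the finite measures $\mu_E:=\mu|_{\T^2\times E}$, which are themselves invariant because $E$ is a $\varphi_H$-invariant set in $\xi$. Their Fourier coefficients are the finite measures $\mathbbm 1_E\widehat\mu_k$, and a measure is determined by its Fourier coefficients against smooth $a$, the series converging absolutely since $|\widehat a_k|$ decays rapidly uniformly in $\xi$ on compact sets.
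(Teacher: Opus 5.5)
Your argument is correct and is the standard one the paper relies on by citing \cite[\S2]{Anantharaman-Fermanian-Kammerer-Macia-2015}; the paper gives no proof of its own. The structure is: invariance concentrates the $x$-Fourier coefficients $\widehat\mu_k$ on $\{k\cdot\nabla H=0\}$; $\mathcal I_\Lambda(\mu)$ is an average of $x$-translates of $\mu$ along $\mathfrak e_\Lambda^\perp$, hence positive; and on $E_{\Lambda^\perp\setminus\{0\}}$ only the modes $k\in\R\mathfrak e_\Lambda\cap\Z^2=\Lambda$ survive, where restricting to $\T^2\times E_{\Lambda^\perp\setminus\{0\}}$ commutes with taking $x$-Fourier coefficients because the indicator depends only on $\xi$. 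One cosmetic point: you can simply take $v_\Lambda=\mathfrak e_\Lambda^\perp\in\Z^2$, and the fact that $k\cdot v_\Lambda\neq0$ for $k\notin\Lambda$ comes from the primitivity of $\Lambda$, not of $\mathfrak e_\Lambda^\perp$.
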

Since $\tau_h \gg 1$, one can apply this lemma to the semiclassical measures $\nu_t^B$ given by \eqref{e:decomposition-de-nu_t^B-sans-point-critique} and decompose them accordingly:
\begin{equation}\label{e:nu_t^B=decomposition-direction-du-flot}
    \nu_t^B(x,\xi) =\widehat{\nu}_{t}^B(0,\xi)|_{\T^2\times \Omega_{\mathrm{nr}}}+\sum_{\Lambda\in\mathcal{L}_1}\mathcal{I}_\Lambda(\nu_t^B)|_{\T^2\times E_{\Lambda^\perp \setminus \{0\}}}.
\end{equation}

Therefore, to derive regularity properties for $\nu_t$ along the $x$ variable, it is enough to control how $\nu_t^B$ behaves in the $x$ variable on each resonant layer, i.e.\ for each $\Lambda\in\mathcal L_1$, to analyze $\mathcal I_\Lambda(\nu_t^B)\big|_{\T^2\times E_{\Lambda^\perp\setminus\{0\}}}$. This will be the purpose of the next section.

\section{A two-microlocal procedure to analyze periodic orbits}\label{s:periodic-orbits}
Throughout the section, we work under the spectral localization assumption \eqref{e:spectral-projection} and assume that the conditions \eqref{e:energy-window} and \eqref{e:SE-diffeo-S1} are satisfied. Let $W^B$ be an accumulation point of the space--time magnetic Wigner distributions $W_h^B(\tau_h)$ introduced in \eqref{e:WB-def}. In this section, we analyze the information carried by $W^B$ (more precisely $\nu^B$) along a fixed rank-one primitive sublattice $\Lambda\in\mathcal L_1$, namely along the periodic directions of the Hamiltonian flow. More precisely, we aim at describing the component $\mathcal{I}_\Lambda(\nu_t^B)|_{\T^2\times E_{\Lambda^\perp \setminus \{0\}}}$ in \eqref{e:nu_t^B=decomposition-direction-du-flot}. We first perform a microlocal splitting between a neighborhood of the resonant set $\{\nabla H(\xi)\in\Lambda^\perp\setminus\{0\}\}$ and its complement, and prove that both pieces define nonnegative Radon measures with absolutely continuous time marginals and the expected propagation or invariance properties (Lemmas~\ref{l:split-W}, \ref{l:RN-split} and \ref{l:dyn-split}). We then study the weakly resonant component and show, through a commutator argument based on the symbol introduced in Lemma~\ref{l:symbol-s-gamma-R}, that all its nonzero $\Lambda$-Fourier modes vanish (Lemma~\ref{l:noncompact-modes}). Next, we turn to the strong resonant component, introduce a two-microlocal lift, and establish its compactness, positivity, projection and time-disintegration properties (Lemma~\ref{l:2micro}), together with the invariance of the lifted measure under the lifted geodesic flow (Lemma~\ref{l:RN-2micro}). Finally, after fixing the zoom scaling in Definition~\ref{d:def-gamma}, we identify the effective dynamics of the lifted measure in the three regimes $\tau_h\ll h^{-1/2}$, $\tau_h= h^{-1/2}$, and $\tau_h\gg h^{-1/2}$ (Lemma~\ref{l:three-regimes-2micro}). These properties will then be used in Sections \ref{s:x-regularity} and \ref{s:long-time-propagation} to describe the measures appearing in the decomposition \eqref{e:nu_t^B=decomposition-direction-du-flot} of the time-dependent Wigner distribution $W^B$.

\subsection{A microlocal splitting around $\Lambda^\perp$}

Fix $\Lambda\in\mathcal L_1$ and let $\mathfrak e_\Lambda$ be a generator of $\Lambda$. Let $\chi\in \mathcal{C}_c^\infty(\R)$ be such that $0\leq\chi\leq1$, $\chi\equiv 1$ on $[-1,1]$ and $\chi\equiv 0$ outside $[-2,2]$. We fix a sequence $R_m\to+\infty$ (we simply write $R\to+\infty$).

Let $(\gamma_h)_{h\to0^+}$ be a positive scale (to be chosen later depending on how fast $\tau_h$ grows) satisfying 
\begin{equation}\label{e:regime-pour-gamma}
    \displaystyle  \gamma_h \xrightarrow[h\to 0^+]{} 0,  \quad \text{and }\quad \displaystyle   \gamma_h \ge h^{2-2\delta} \quad \text{for a fixed  }\delta>0.
\end{equation} 
In the applications considered below, we will in fact choose \(\delta\ge \frac12\). It represents the size of the zoom around $\Lambda^\perp$. Define the frequency cut-off
\begin{equation}\label{e:chi-resonant}
\chi_{R\sqrt{\gamma_h},\Lambda}(\xi)
:=\chi\!\left(\frac{\nabla H(\xi)\cdot \mathfrak e_\Lambda}{R\sqrt{\gamma_h}\,L_\Lambda}\right).
\end{equation}
For $a\in \mathcal{C}_c^\infty(\R\times T^*\T^2)$, we split
\[
\langle W_h^B(\tau_h),a\rangle
=\langle W_h^B(\tau_h),a\,\chi_{R\sqrt{\gamma_h},\Lambda}\rangle
+\langle W_h^B(\tau_h),a\,(1-\chi_{R\sqrt{\gamma_h},\Lambda})\rangle .
\]
This motivates the definition of the following two distributions:
\begin{align}
W_{h,R,\Lambda}^B &: a\mapsto \langle W_h^B(\tau_h),a\,\chi_{R\sqrt{\gamma_h},\Lambda}\rangle,
\label{e:def-W-compact}\\
W_{h,R}^{B,\Lambda} &: a\mapsto \langle W_h^B(\tau_h),a\,(1-\chi_{R\sqrt{\gamma_h},\Lambda})\rangle,
\label{e:def-W-noncompact}
\end{align}
for which the Calderón--Vaillancourt Theorem \ref{t:Calderon-Vaillancourt} ensures that these expressions are well defined as distributions.
Whenever we write $(\cdot)_{h\to0^+,\;R\to+\infty}$, we mean the \emph{iterated limit} $h\to0^+$ followed by $R\to+\infty$.

\begin{lemma}[Boundedness, positivity, and decomposition]\label{l:split-W}
Assume $(u_h)_{h\to0^+}$ solves the magnetic Schr\"odinger equation \eqref{e:Schrodinger-PDE} and admits a semiclassical measure $W^B$. Then the following holds: 
\begin{enumerate}
\item The families $(W_{h,R,\Lambda}^B)_{h\to0^+,\;R\to+\infty}$ and
$(W_{h,R}^{B,\Lambda})_{h\to0^+,\;R\to+\infty}$ are bounded in
$\mathcal D'(\R\times T^*\T^2)$.
\item Any accumulation points $W_\Lambda^B$ and $W^{B,\Lambda}$ of these families are nonnegative Radon measures on $\R\times \T^2 \times \Omega_{E_1,E_2}$. 
\item Recalling that \(W^B\) denotes an accumulation point of the magnetic Wigner distributions \(W_h^B(\tau_h)\) defined in \eqref{e:Wigner-magnetic-distrib}, as in Lemma~\ref{l:Lifting-QL}, one has the measure identity
\begin{equation}\label{e:W-somme-de-deux-mesures}
W^B = W_\Lambda^B + W^{B,\Lambda}.
\end{equation}
\end{enumerate}
\end{lemma}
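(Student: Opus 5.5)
The plan is to reduce each item to facts about the symbols $a\chi_{R\sqrt{\gamma_h},\Lambda}$ and $a(1-\chi_{R\sqrt{\gamma_h},\Lambda})$ as $h$-dependent symbols, and then to rerun the arguments of Lemmas~\ref{l:magnetic-wigner}, \ref{l:support-de-W^B} and \ref{l:Lifting-QL}.

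\textbf{Symbol estimates.} The cut-off $\chi_{R\sqrt{\gamma_h},\Lambda}$ is not a uniform symbol: each $\xi$-derivative costs a factor $(R\sqrt{\gamma_h})^{-1}$. I will check that, for $a\in\mathcal C_c^\infty(\R\times T^*\T^2)$, the product $b_h:=a\chi_{R\sqrt{\gamma_h},\Lambda}$ satisfies
\[
|\partial_x^\alpha\partial_\xi^\beta b_h|\le C_{\alpha\beta}(R\sqrt{\gamma_h})^{-|\beta|}.
\]
This uses the chain rule and the fact that $\nabla H$ is smooth on the compact support of $a$. Since $\gamma_h\ge h^{2-2\delta}$, we get $h^{|\beta|}(R\sqrt{\gamma_h})^{-|\beta|}\le (h^{\delta}/R)^{|\beta|}\le 1$. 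Hence the $h$-weighted Calderón--Vaillancourt bound \eqref{e:Calderon-Vaillancourt} gives
\[
\|\Op(b_h)\|\le C\sum_{|\alpha|+|\beta|\le N_0}\|\partial_x^\alpha\partial_\xi^\beta a\|_\infty
\]
uniformly in $h$ and $R\ge1$. The same holds for $1-\chi$ times $a$. Integrating in $t$ over the compact support of $a$ gives item 1, exactly as in Lemma~\ref{l:magnetic-wigner}. If one prefers to avoid the $h$-weighted form, the rescaling \eqref{e:rescaling-semiclassique} with $\delta=\sqrt{\gamma_h}$ turns $b_h$ into a uniform symbol quantized at the effective parameter $h/\sqrt{\gamma_h}\le h^{\delta}\to0$.

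\textbf{Positivity (the main obstacle).} Here the composition remainder must be small even though the symbols are exotic. For $a\ge0$, I set $c_\varepsilon:=\sqrt{a\chi_{R\sqrt{\gamma_h},\Lambda}+\varepsilon\,\tilde\chi}$. Here $\tilde\chi$ is a fixed cut-off equal to $1$ near $\operatorname{supp}a$, chosen so that the square root is smooth; alternatively one can write $a\chi=(\sqrt a\sqrt\chi)^2$ with $\sqrt\chi$ chosen smooth, which is possible since $\chi$ may be taken as the square of a smooth function. I then need
\[
\Op(c)^*\Op(c)=\Op(c^2)+o(1),
\]
which is the sharp point. Each term of the expansion \eqref{e:a-star-b} pairs an $x$-derivative with a $\xi$-derivative and carries a factor $h$. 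The first-order remainder is therefore of size $h(R\sqrt{\gamma_h})^{-1}\le h^{\delta}/R$, plus an $h^2\widehat B_0$ term that is even smaller. Via the rescaling above this is a standard $\mathcal O(h/\sqrt{\gamma_h})$ remainder. Consequently
\[
\langle W_{h,R,\Lambda}^B,a\rangle\ge-C\varepsilon-o(1),
\]
and the iterated limit $h\to0^+$ then $R\to\infty$ is nonnegative. The same argument applies to $1-\chi$. The support statement in $\Omega_{E_1,E_2}$ follows from Lemma~\ref{l:support-de-W^B}. Indeed, if $\operatorname{supp}a$ lies where $H\notin[E_1,E_2]$, then $a\chi$ has the same support property, and the proof there goes through with $\Op(a\chi)=\Op(a\chi)\Op(\varphi\circ H)+o(1)$, again via the rescaled calculus.

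\textbf{Decomposition.} Item 3 is immediate from linearity: for every $h$ and $R$,
\[
\langle W_h^B(\tau_h),a\rangle=\langle W_{h,R,\Lambda}^B,a\rangle+\langle W_{h,R}^{B,\Lambda},a\rangle.
\]
I will extract, by diagonal arguments, a common subsequence along which all three quantities converge, first as $h\to0^+$ and then as $R\to\infty$. The left side does not depend on $R$ and converges to $W^B$, which yields \eqref{e:W-somme-de-deux-mesures}.
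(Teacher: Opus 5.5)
Your proposal is correct and follows essentially the paper's proof. Both use uniform $\mathscr S^0$ bounds after rescaling to the $h/\sqrt{\gamma_h}$-calculus via \eqref{e:rescaling-semiclassique} (your direct $h$-weighted Calderón--Vaillancourt estimate is an equivalent shortcut for item 1), the $\varepsilon$-regularized square-root positivity argument with an $\mathcal O(h/\sqrt{\gamma_h})$ composition remainder, and linearity plus a common diagonal subsequence for \eqref{e:W-somme-de-deux-mesures}. The only difference is the support property. The paper does not rerun the functional-calculus argument for $a\chi$: since both limits are nonnegative and sum to $W^B$, which vanishes off $\R\times\T^2\times\Omega_{E_1,E_2}$ by Lemma~\ref{l:support-de-W^B}, each of them vanishes there too.
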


\begin{proof} 
For items 1 and 2, the techniques involved in the proof are the same for $(W_{h,R,\Lambda}^B)_{h\to0^+,\;R\to+\infty}$ and
$(W_{h,R}^{B,\Lambda})_{h\to0^+,\;R\to+\infty}$. We give each time the proof only for one of them. 
Let $a\in \mathcal{C}_c^\infty(\R\times T^*\T^2)$. Using the rescaling formula \eqref{e:rescaling-semiclassique}, one can
rewrite
\[
\displaystyle \langle W_{h,R,\Lambda}^B,a\rangle
=\int_\R \big\langle \Opgamma \big(a(t,x,\sqrt{\gamma_h}\xi)\,
\chi(\tfrac{\nabla H(\sqrt{\gamma_h}\xi)\cdot\mathfrak e_\Lambda}{R\sqrt{\gamma_h}L_\Lambda})\big)
\,u_h(t \tau_h ),u_h(t \tau_h )\big\rangle\,\dd t.
\]
For $\gamma_h\in(0,1)$ satisfying \eqref{e:regime-pour-gamma} and $R\geq 1$, the symbols
\[
a(t,x,\sqrt{\gamma_h}\xi)\,
\chi\!\left(\frac{\nabla H(\sqrt{\gamma_h}\xi)\cdot\mathfrak e_\Lambda}
{R\sqrt{\gamma_h}L_\Lambda}\right)
\quad\text{and}\quad
a(t,x,\sqrt{\gamma_h}\xi)\,
\left(1-\chi\!\left(\frac{\nabla H(\sqrt{\gamma_h}\xi)\cdot\mathfrak e_\Lambda}
{R\sqrt{\gamma_h}L_\Lambda}\right)\right)
\]
belong to $\mathscr{S}^0$ uniformly in $h$ and $R$. Indeed, on the support of
$a(t,x,\sqrt{\gamma_h}\xi)$, the rescaled variable $\sqrt{\gamma_h}\xi$ remains in a fixed compact set. Hence all derivatives of $H$ are evaluated on a compact set, and the chain rule gives uniform bounds for all derivatives of $\chi_{R\sqrt{\gamma_h},\Lambda}$. Hence, the Calder\'on--Vaillancourt Theorem yields uniform boundedness, proving item 1 of the lemma.

For item 2, we repeat the positivity argument used in the proof of Lemma \ref{l:magnetic-wigner}. The assumption \eqref{e:regime-pour-gamma} allows us to apply the same argument in the rescaled calculus, since the corresponding semiclassical parameter \(h/\sqrt{\gamma_h}\) tends to \(0\). Fix $a\ge 0$ in $\mathcal{C}_c^\infty(\R\times T^*\T^2)$ and let $[t_1,t_2]\subset \R$ be a compact interval containing the support of $a$ in the time variable. For $\varepsilon>0$, define for each $t\in [t_1,t_2]$
\[
a_{\varepsilon,h}(t,x,\xi):=\sqrt{a(t,x,\sqrt{\gamma_h}\xi)\,\chi_{R\sqrt{\gamma_h},\Lambda}(\sqrt{\gamma_h}\xi)+\varepsilon}.
\]
Then $(a_{\varepsilon,h}(t))_{t\in [t_1,t_2]}$ is a bounded family in $\mathscr{S}^0$. Using the symbolic composition \eqref{e:composition-rule} and adjoint formula \eqref{e:formal-adjoint} in the magnetic calculus, for each \(t\in[t_1,t_2]\), one has
\[
\Opgamma(a_{\varepsilon,h}(t))^*\Opgamma(a_{\varepsilon,h}(t))
=
\Opgamma\!\big(a(t,x,\sqrt{\gamma_h}\xi)\,\chi_{R\sqrt{\gamma_h},\Lambda}(\sqrt{\gamma_h}\xi)+\varepsilon\big)
+\mathcal O_{L^2 \to L^2}\!\left(\frac{h}{\sqrt{\gamma_h}}\right),
\]
which implies positivity of the limiting functional as $\varepsilon\to0$. This shows that any accumulation point defines a nonnegative distribution, hence a finite nonnegative Radon measure on $\R\times T^*\T^2$.

Up to extracting a common subsequence, we may assume both components converge, and then the identity $W_h^B(\tau_h) = W_{h,R,\Lambda}^B + W_{h,R}^{B,\Lambda}$ passes to the limit. 

The support property of item 2 follows by Lemma~\ref{l:support-de-W^B} together with the nonnegativity of \(W_\Lambda^B\) and \(W^{B,\Lambda}\) and the identity \eqref{e:W-somme-de-deux-mesures}.
\end{proof}

\begin{remark}
The proof of Lemma~\ref{l:split-W} uses the fact that $(u_h)_{h\to0^+}$ solves the Schrödinger equation \eqref{e:Schrodinger-PDE} only to obtain the support property. The boundedness, positivity, and algebraic decomposition arguments rely only on the definition of the distributions involved and on the uniform boundedness properties of the corresponding pseudodifferential operators.
\end{remark}

Now that the limiting distributions have been identified as measures, we may perform on \(W_\Lambda^B\) and \(W^{B,\Lambda}\) the same time disintegration with respect to their time marginal as in \eqref{e:time-marginal-of-W^B}. Hence, we can write
\begin{equation}\label{e:disint-split}
W_\Lambda^B(\dd t ,\dd x,\dd \xi)=\nu_{t,\Lambda}^B(\dd x,\dd \xi)\otimes\omega_\Lambda^B(\dd t ),
\qquad
W^{B,\Lambda}(\dd t ,\dd x,\dd \xi)=\nu_t^{B,\Lambda}(\dd x,\dd \xi)\otimes \omega^{B,\Lambda}(\dd t ),
\end{equation}
where $\nu_{t,\Lambda}^B$ and $\nu_t^{B,\Lambda}$ are measurable families of probability measures on $\T^2 \times \Omega_{E_1,E_2}$, and $\omega_\Lambda^B$, $\omega^{B,\Lambda}$ are nonnegative Radon measures on $\R$. Since one can decompose $W^B$ as we did in Lemma~\ref{l:Lifting-QL} as $W^B(\dd t ,\dd x,\dd \xi)=\nu_t^B(\dd x,\dd \xi)\otimes \dd t $ with $\nu_t^B$ a probability measure for a.e. $t$, we can identify the time marginals of $\omega_\Lambda^B$ and $\omega^{B,\Lambda}$.

\begin{lemma}[Absolute continuity of the time marginals]\label{l:RN-split}
There exist $h_\Lambda,h^\Lambda\in L^1_{\mathrm{loc}}(\R)$ such that
\[
\omega_\Lambda^B(\dd t )=h_\Lambda(t)\,\dd t ,\qquad \omega^{B,\Lambda}(\dd t )=h^\Lambda(t)\,\dd t ,
\]
and
\begin{equation}\label{e:mass-de-la-somme}
h_\Lambda(t)+h^\Lambda(t)=1
\quad\text{for a.e.\ }t.
\end{equation}
\end{lemma}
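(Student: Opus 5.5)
The argument works entirely at the level of the time marginals and uses the measure identity \eqref{e:W-somme-de-deux-mesures} from Lemma~\ref{l:split-W}. Let \(\tilde\pi_t\) be the time projection \eqref{e:time-projection}. We push forward both sides of \(W^B=W_\Lambda^B+W^{B,\Lambda}\) by \(\tilde\pi_t\). This requires the three measures to have locally finite time marginals. By Lemma~\ref{l:split-W}, \(W_\Lambda^B\) and \(W^{B,\Lambda}\) are nonnegative and supported in \(\R\times\T^2\times\Omega_{E_1,E_2}\). This set is compact in \((x,\xi)\), by ellipticity of \(H\), so for every compact \(I\subset\R\) one has \(\omega_\Lambda^B(I)=W_\Lambda^B(I\times\T^2\times\Omega_{E_1,E_2})<\infty\), exactly as in \eqref{e:omega-est-sigma-finie}, and likewise for \(\omega^{B,\Lambda}\). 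Lemma~\ref{l:Lifting-QL} gives \((\tilde\pi_t)_*W^B=\dd t\). Hence
\[
\omega_\Lambda^B+\omega^{B,\Lambda}=\dd t
\]
as Radon measures on \(\R\).

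To justify this identity against test functions, we fix \(\psi\in\mathcal C_c^\infty(\R)\) and a cutoff \(\zeta_R(\xi)\) equal to \(1\) on a neighborhood of \(\Omega_{E_1,E_2}\). We test the identity on \(\psi(t)\zeta_R(\xi)\in\mathcal C_c^\infty(\R\times T^*\T^2)\). The support property then shows that each side computes \(\int\psi\,\dd\omega\) for the corresponding marginal. This is the step where the support information from Lemma~\ref{l:split-W} is used: without it the marginals would only be defined against compactly supported \(\xi\)-symbols.

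Since \(\omega_\Lambda^B\) and \(\omega^{B,\Lambda}\) are nonnegative, the identity gives \(0\le\omega_\Lambda^B(A)\le \dd t(A)\) for every Borel set \(A\), and the same for \(\omega^{B,\Lambda}\). Both measures are therefore absolutely continuous with respect to Lebesgue measure. The Radon--Nikodym theorem, which applies since the measures are \(\sigma\)-finite, yields densities \(h_\Lambda,h^\Lambda\in L^1_{\mathrm{loc}}(\R)\). These densities satisfy \(0\le h_\Lambda,h^\Lambda\le1\) almost everywhere. Finally, \(h_\Lambda+h^\Lambda\) is the density of \(\dd t\), so by uniqueness of Radon--Nikodym derivatives, \eqref{e:mass-de-la-somme} holds almost everywhere.

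There is no real obstacle. The only point needing care is that the disintegrations \eqref{e:disint-split} are taken with respect to the actual time marginals, so that \(\omega_\Lambda^B=(\tilde\pi_t)_*W_\Lambda^B\). A related subtlety is that \(W_\Lambda^B\) and \(W^{B,\Lambda}\) arise from an iterated limit in \(h\) and then \(R\). However, only the limiting identity \eqref{e:W-somme-de-deux-mesures} and the nonnegativity are used above, both of which Lemma~\ref{l:split-W} already provides.
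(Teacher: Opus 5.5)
Your proof is correct and is essentially the paper's argument. Both rely on the identity \(W^B=W_\Lambda^B+W^{B,\Lambda}\), on \((\tilde\pi_t)_*W^B=\dd t\), and on nonnegativity of the two pieces. The paper takes Lebesgue decompositions, argues that the singular parts must vanish, and then compares total masses, whereas you push forward first and use the domination \(\omega_\Lambda^B,\omega^{B,\Lambda}\le \dd t\), which is slightly more direct and also yields \(0\le h_\Lambda,h^\Lambda\le 1\).
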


\begin{proof}
Applying the Radon--Nikodym decomposition to $\omega_\Lambda^B$ and $\omega^{B,\Lambda}$ with respect to $\dd t $ leads to the existence of $h_\Lambda$ and $h^\Lambda$ both in $ L^1_{\mathrm{loc}}(\R)$ such that 
\begin{equation}\label{e:Radon-Nikodym-mesures-sigma}
    \omega_\Lambda^B(\dd t ) = h_\Lambda(t)\dd t + \omega_{\Lambda,\perp}^B(\dd t ), \qquad \omega^{B,\Lambda}(\dd t )=h^\Lambda(t)\,\dd t + \omega_\perp^{B,\Lambda}(\dd t), 
\end{equation}
where $\omega_\perp^{B,\Lambda}$ and $\omega_{\Lambda,\perp}^B$ are singular with respect to the Lebesgue measure $\dd t$. One can combine the decomposition \eqref{e:disint-split} and \eqref{e:Radon-Nikodym-mesures-sigma} with \eqref{e:W-somme-de-deux-mesures} to get 
\begin{align}\label{e:somme-radon-nikodym}
    \nu_t^B(\dd x ,\dd \xi)\otimes \dd t = &\left[\nu_{t,\Lambda}^B(\dd x,\dd \xi)h_\Lambda(t) + \nu_t^{B,\Lambda}(\dd x,\dd \xi)h^\Lambda(t) \right] \otimes \dd t \\
    &+ \nu_{t,\Lambda}^B(\dd x,\dd \xi)\otimes \omega_{\Lambda,\perp}^B(\dd t ) +\nu_t^{B,\Lambda}(\dd x,\dd \xi) \otimes  \omega_\perp^{B,\Lambda}(\dd t). \nonumber
\end{align}
Since the left-hand side of \eqref{e:somme-radon-nikodym} is absolutely continuous with respect to the Lebesgue measure \(\dd t\), the singular parts must vanish. Hence \eqref{e:somme-radon-nikodym} becomes
\begin{equation}\label{e:some-radon-nikodym-sans-singpart}
    \nu_t^B(\dd x ,\dd \xi)\otimes \dd t = \left[\nu_{t,\Lambda}^B(\dd x,\dd \xi)h_\Lambda(t) + \nu_t^{B,\Lambda}(\dd x,\dd \xi)h^\Lambda(t) \right] \otimes \dd t. 
\end{equation}
Taking total masses in \eqref{e:some-radon-nikodym-sans-singpart} and using that
\(\nu_t^B\), \(\nu_{t,\Lambda}^B\) and \(\nu_t^{B,\Lambda}\) are probability measures for almost every \(t\), we obtain
\eqref{e:mass-de-la-somme}. 
\end{proof}

The dynamics of $\nu_{t,\Lambda}^B$ and $\nu_t^{B,\Lambda}$ are governed by the same time-scale trichotomy as in Lemma~\ref{l:Dynamical-regimes}. We record the statement for later use.

\begin{lemma}[Propagation and invariance for the split components]\label{l:dyn-split}
Let $\varphi_H^s(x,\xi)=(x+s\nabla H(\xi),\xi)$ be the Hamiltonian flow of $H$.
\begin{enumerate}
\item If $\tau_h\to0$, then $\nu_{t,\Lambda}^B$ and $\nu_t^{B,\Lambda}$ are
time-independent.
\item If $\tau_h=1$, then they are transported by $\varphi_H^s$.
\item If $\tau_h\to+\infty$, then for a.e.\ $t$ they are invariant under
$\varphi_H^s$ for every $s\in\R$.
\end{enumerate}
\end{lemma}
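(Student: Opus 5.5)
The plan is to repeat the commutator argument of Lemma~\ref{l:Dynamical-regimes}, now with test symbols of the form $b\,\chi_{R\sqrt{\gamma_h},\Lambda}$ and $b\,(1-\chi_{R\sqrt{\gamma_h},\Lambda})$, with $b\in\mathcal C_c^\infty(T^*\T^2)$. The key observation is that the cutoff $\chi_{R\sqrt{\gamma_h},\Lambda}$ depends only on $\xi$. Hence $\{H,\chi_{R\sqrt{\gamma_h},\Lambda}\}=0$, and therefore
\[
\{b\,\chi_{R\sqrt{\gamma_h},\Lambda},H\}=\{b,H\}\,\chi_{R\sqrt{\gamma_h},\Lambda}.
\]
Fix $\psi\in\mathcal C_c^\infty(\R)$. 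Differentiate $t\mapsto\langle\Op(b\chi_{R\sqrt{\gamma_h},\Lambda})u_h(t\tau_h),u_h(t\tau_h)\rangle$ and integrate by parts in time, exactly as in the derivation of \eqref{e:commutator-identity}. This gives
\[
\int_\R\psi'(t)\langle W^B_{h,R,\Lambda}(t),b\rangle\,\dd t=\tau_h\int_\R\psi(t)\,\big\langle \Op(\{b,H\}\chi_{R\sqrt{\gamma_h},\Lambda})u_h,u_h\big\rangle\,\dd t+\tau_h\,\mathcal E_h ,
\]
and the same identity holds with $1-\chi_{R\sqrt{\gamma_h},\Lambda}$ in place of $\chi_{R\sqrt{\gamma_h},\Lambda}$.

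The main technical step, and the one I expect to be delicate, is to bound the remainder $\mathcal E_h$. The symbol $\chi_{R\sqrt{\gamma_h},\Lambda}$ is not in a fixed class: each $\xi$-derivative costs a factor $(R\sqrt{\gamma_h})^{-1}$. So the commutator expansion \eqref{e:commutator} cannot be applied naively. I would instead work in the rescaled quantization $\Opgamma$ via \eqref{e:rescaling-semiclassique}, as in the proof of Lemma~\ref{l:split-W}, where the symbols lie in $\mathscr S^0$ uniformly and the effective parameter is $h/\sqrt{\gamma_h}\to0$.

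In that calculus I would then estimate each piece of the commutator.
\begin{itemize}
\item The $H$-contribution reduces to the Poisson bracket above, since the cutoff commutes with $H$ at the symbolic level.
\item The magnetic $h^2$ term in \eqref{e:commutator} involves only $\xi$-derivatives of both symbols. It contributes $\mathcal O(h\,(R\sqrt{\gamma_h})^{-1})$ after division by $h$.
\item The $hR_h$ term contributes $h\cdot\mathcal O((R\sqrt{\gamma_h})^{-1})$.
\item The higher-order remainders are controlled by powers of $h/\sqrt{\gamma_h}$. This is where the condition $\gamma_h\ge h^{2-2\delta}$ from \eqref{e:regime-pour-gamma} enters.
\end{itemize}
Putting these together, after the iterated limit $h\to0$ then $R\to\infty$ I get $\mathcal E_h=o(1)$, together with $\mathcal E_h=\mathcal O(1)$ uniformly.

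With this bound the three regimes follow.

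\textbf{Case $\tau_h\to0$.} Both terms on the right vanish in the limit. This gives $\int\psi'(t)\,\langle\nu^B_{t,\Lambda},b\rangle\,h_\Lambda(t)\,\dd t=0$, using Lemma~\ref{l:RN-split}. So $h_\Lambda\nu^B_{t,\Lambda}$ is constant in time, and hence so is the normalized probability measure wherever $h_\Lambda\neq0$.

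\textbf{Case $\tau_h\gg1$.} Divide by $\tau_h$. The left side tends to $0$, so the limit of the right side gives $\int\psi(t)\,\langle h_\Lambda(t)\nu^B_{t,\Lambda},\{b,H\}\rangle\,\dd t=0$. This is invariance under $\varphi_H^s$ for a.e.\ $t$.

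\textbf{Case $\tau_h=1$.} Here I need $\mathcal E_h\to0$, not merely boundedness, which the estimates above provide. Then $\partial_t(h_\Lambda\nu^B_{t,\Lambda})=\{h_\Lambda\nu^B_{t,\Lambda},H\}$ weakly, which is transport by $\varphi_H^t$.

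The statements for $\nu^B_{t,\Lambda}$ and $\nu_t^{B,\Lambda}$ themselves are understood on the set where $h_\Lambda$, respectively $h^\Lambda$, is positive. Alternatively, I would get the complementary component for free by subtracting from the result of Lemma~\ref{l:Dynamical-regimes}, using \eqref{e:W-somme-de-deux-mesures}.
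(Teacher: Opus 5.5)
Your proposal is correct and follows essentially the same route as the paper. You rescale to the $h/\sqrt{\gamma_h}$ calculus, use that the cutoff depends only on $\xi$ so the principal term is $-\nabla H\cdot\partial_x a\,\chi$, bound every other commutator contribution by $\mathcal O(h/\sqrt{\gamma_h})$, and read off the three regimes; your explicit tracking of the weight $h_\Lambda(t)$, and the option of getting the complementary component by subtraction via \eqref{e:W-somme-de-deux-mesures}, are if anything slightly more careful than the paper.

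The one step to make precise is that the rescaled Hamiltonian symbol $H(\sqrt{\gamma_h}\xi)+hR_h(x,\sqrt{\gamma_h}\xi)$ only lies in $\mathscr S^m$, not $\mathscr S^0$. The paper handles this by multiplying the test symbol by $\gamma_h^{m/2}$, so that it lies in $\mathscr S^{-m}$ and the composition remainder is an $\mathscr S^0$ symbol. The lower bound $\gamma_h\ge h^{2-2\delta}$ from \eqref{e:regime-pour-gamma} is then exactly what absorbs the resulting $\gamma_h^{-m/2}$ loss.
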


\begin{proof}
We only treat the component \(W_\Lambda^B\), the proof for \(W^{B,\Lambda}\) is identical replacing \(\chi\) by \(1-\chi\). Let \(\psi\in \mathcal{C}_c^\infty(\R)\), let \(a\in \mathcal{C}_c^\infty(T^*\T^2)\), and fix \(R>1\). Define
\[
a_{h,R}(x,\xi):=
a(x,\xi)\,
\chi\!\left(
\frac{\nabla H(\xi)\cdot \mathfrak e_\Lambda}{R\sqrt{\gamma_h}L_\Lambda}
\right) \quad \text{and} \qquad \widetilde a_{h,R}(x,\xi) = a_{h,R}(x,\sqrt{\gamma_h}\xi).
\]
By the rescaling identity \eqref{e:rescaling-semiclassique}, one has
\[
\Op(a_{h,R})=\Opgamma(\widetilde a_{h,R}) \quad \text{and} \qquad \widehat H_h=\Opgamma(\widetilde p_h),
\]
where $ \widetilde p_h(x,\xi):= H(\sqrt{\gamma_h}\xi)+hR_h(x,\sqrt{\gamma_h}\xi)$. Since \((u_h)_{h\to0^+}\) solves \eqref{e:Schrodinger-PDE}, integration by parts in time yields
\begin{equation}\label{e:heisenberg-split-microlocalisation}
\frac1{\tau_h}\int_\R \psi'(t)\,
\big\langle
\Opgamma(\widetilde a_{h,R})u_h(t \tau_h),
u_h(t \tau_h)
\big\rangle \dd t
=
\int_\R \psi(t)\,
\left\langle
\frac{i}{h}
\big[
\Opgamma(\widetilde a_{h,R}),\widehat H_h
\big]
u_h(t \tau_h),u_h(t \tau_h)
\right\rangle \dd t.
\end{equation}
By the same argument as in Lemma~\ref{l:split-W}, the family \((\widetilde a_{h,R})_h\) is bounded in \(\mathscr{S}^0(T^*\T^2)\), uniformly in \(h\) for fixed \(R\). In particular, by Calder\'on--Vaillancourt, 
\begin{equation}\label{e:calderon-sur-atilde} \left\| \Opgamma(\widetilde a_{h,R}) \right\|_{L^2\to L^2} \le C_{a,R}. \end{equation} 
Moreover, since \(a\) is compactly supported in the \(\xi\)-variable, there exists \(C_a>0\) such that 
\[ \operatorname{supp}_\xi(\widetilde a_{h,R}) \subset \left\{ \xi\in\R^2:\ |\xi|\leq C_a\gamma_h^{-1/2} \right\}. 
\] 
Consequently, on the support of \(\widetilde a_{h,R}\), one has 
\[ 
\gamma_h^{m/2}\langle\xi\rangle^m\leq C_{a,m}. 
\] 
Since all derivatives of \(\widetilde a_{h,R}\) satisfy the same support condition, the uniform \(\mathscr S^0\)-estimates imply that for fixed $R$, $\gamma_h^{m/2}\widetilde a_{h,R}$ is uniformly bounded in $\mathscr{S}^{-m}$. We now compute the commutator. Applying \eqref{e:a-star-b} at the semiclassical parameter $h/\sqrt{\gamma_h}$,
we obtain
\begin{equation}\label{e:commutator-split-detailed-clean}
\big[
\Opgamma(\gamma_h^{m/2}\widetilde a_{h,R}),
\Opgamma(\widetilde p_h)
\big]
=
\frac{h\gamma_h^{(m-1)/2}}{i}
\Opgamma\!\big(
\{\widetilde a_{h,R},\widetilde p_h\}
\big)
+
\Opgamma(\widetilde r_{h,R}).
\end{equation}
We claim that
\begin{equation}\label{e:rhohR-estimate-final}
\widetilde r_{h,R}
=
\mathcal O_{\mathscr S^0}
\left(
h^2\gamma_h^{(m-1)/2}
\right).
\end{equation}
To prove \eqref{e:rhohR-estimate-final}, we use the precise structure of
the composition formula \eqref{e:a-star-b}. Consider first a term of
order \(k\ge2\) involving \(H(\sqrt{\gamma_h}\xi)\). Since \(H\) is
independent of \(x\), every derivative falling on this factor is a
\(\xi\)-derivative. Therefore, one has 
\[
\partial_\xi^\beta H(\sqrt{\gamma_h}\xi)
=
\gamma_h^{|\beta|/2}
(\partial^\beta H)(\sqrt{\gamma_h}\xi),
\qquad |\beta|=k.
\]
Since the derivatives of
\(\gamma_h^{m/2}\widetilde a_{h,R}\) are
\(\mathcal O_{\mathscr S^0}(\gamma_h^{m/2})\), every such contribution
is bounded by
\[
\gamma_h^{m/2}
\left(\frac{h}{\sqrt{\gamma_h}}\right)^k
\gamma_h^{k/2}
=
\gamma_h^{m/2}h^k
=
\mathcal O_{\mathscr S^0}
\left(
h^2\gamma_h^{(m-1)/2}
\right),
\]
since \(k\ge2\).

For the terms involving \(hR_h(x,\sqrt{\gamma_h}\xi)\), some
derivatives may fall on the \(x\)-variable and therefore produce no
factor \(\sqrt{\gamma_h}\). Nevertheless, these terms always carry the
additional prefactor \(h\). The worst contribution is therefore
\[
\gamma_h^{m/2}h
\left(\frac{h}{\sqrt{\gamma_h}}\right)^2
=
h^3\gamma_h^{(m-2)/2}
=
h^2\gamma_h^{(m-1)/2}
\frac{h}{\sqrt{\gamma_h}}
=
o\!\left(
h^2\gamma_h^{(m-1)/2}
\right),
\]
because \(h/\sqrt{\gamma_h}\to0\).

It remains to control the final remainder in \eqref{e:a-star-b}. Since \(\gamma_h^{m/2}\widetilde a_{h,R}\) and \(\widetilde p_h\) are uniformly bounded in \(\mathscr S^{-m}\) and \(\mathscr S^m\), respectively, the remainder of order \(N\) is bounded in \(\mathscr S^0\) by
\[
C_{N,R}
\left(\frac{h}{\sqrt{\gamma_h}}\right)^N.
\]
The assumption \eqref{e:regime-pour-gamma} implies
\[
\frac{h}{\sqrt{\gamma_h}}\leq h^\delta.
\]
Choosing \(N\) sufficiently large shows that this remainder is also of size
\[
\mathcal{O}_{\mathscr S^0}
\left(
h^2\gamma_h^{(m-1)/2}
\right).
\]
This proves \eqref{e:rhohR-estimate-final}. Hence, dividing \eqref{e:commutator-split-detailed-clean} by \(\gamma_h^{m/2}\) yields
\begin{equation}\label{e:commutator-split-unrenormalized}
\big[
\Opgamma(\widetilde a_{h,R}),
\Opgamma(\widetilde p_h)
\big]
=
\frac{h}{i\sqrt{\gamma_h}}
\Opgamma\!\big(
\{\widetilde a_{h,R},\widetilde p_h\}
\big)
+
\Opgamma(r_{h,R}),
\end{equation}
where
\begin{equation}\label{e:rhR-estimate-final1}
r_{h,R}
:=
\gamma_h^{-m/2}\widetilde r_{h,R}
=
\mathcal O_{\mathscr S^0}
\left(
\frac{h^2}{\sqrt{\gamma_h}}
\right).
\end{equation}
By the Calderón--Vaillancourt
Theorem~\ref{t:Calderon-Vaillancourt}, one obtains
\[
\left\|
\Opgamma(r_{h,R})
\right\|_{L^2\to L^2}
\leq
C_R\frac{h^2}{\sqrt{\gamma_h}}.
\]
Consequently,
\begin{equation}\label{e:commutator-divided}
\frac{i}{h}
\big[
\Opgamma(\widetilde a_{h,R}),
\widehat H_h
\big]
=
\Opgamma\!\left(
\frac{1}{\sqrt{\gamma_h}}
\{\widetilde a_{h,R},\widetilde p_h\}
\right)
+
\mathcal O_{L^2\to L^2}
\left(
\frac{h}{\sqrt{\gamma_h}}
\right).
\end{equation}
We next split the Poisson bracket:
\[
\frac1{\sqrt{\gamma_h}}\{\widetilde a_{h,R},\widetilde p_h\}
=
\frac1{\sqrt{\gamma_h}}\{\widetilde a_{h,R},H(\sqrt{\gamma_h}\xi)\}
+
\frac{h}{\sqrt{\gamma_h}}
\{\widetilde a_{h,R},R_h(x,\sqrt{\gamma_h}\xi)\}.
\]
For the second term, note that \((\widetilde a_{h,R})_h\) is uniformly bounded in
\(\mathscr{S}^0\), while every positive-order \(\xi\)-derivative of
\(R_h(x,\sqrt{\gamma_h}\xi)\) is \(\mathcal O(\sqrt{\gamma_h})\) on the support of
\(\widetilde a_{h,R}\). It follows that
\[
\frac{h}{\sqrt{\gamma_h}}
\{\widetilde a_{h,R},R_h(x,\sqrt{\gamma_h}\xi)\}
=
\mathcal{O}_{\mathscr{S}^0}\!\left(\frac{h}{\sqrt{\gamma_h}}\right).
\]
Hence, thanks to the Calderón-Vaillancourt Theorem \eqref{e:commutator-divided} becomes
\begin{equation}\label{e:commutator-divided-avec-H}
\frac{i}{h}
\big[
\Opgamma\!\big(\widetilde a_{h,R}\big),\widehat H_h
\big]
=
\Opgamma\!\left(
\frac1{\sqrt{\gamma_h}}
\big\{\widetilde a_{h,R},H(\sqrt{\gamma_h}\xi )\big\}
\right)
+
\mathcal O_{L^2\to L^2}\!\left(\frac{h}{\sqrt{\gamma_h}}\right).
\end{equation}
Computing explicitly the Poisson bracket, one gets
\begin{equation}\label{e:poisson-bracket-atilde-H}
    \frac1{\sqrt{\gamma_h}}\{\widetilde a_{h,R},H(\sqrt{\gamma_h}\xi)\}
=
-\nabla H(\sqrt{\gamma_h}\xi)\cdot \partial_x a(x,\sqrt{\gamma_h}\xi)\,
\chi\!\left(
\frac{\nabla H(\sqrt{\gamma_h}\xi)\cdot \mathfrak e_\Lambda}
{R\sqrt{\gamma_h}L_\Lambda}
\right).
\end{equation}
Finally, inserting \eqref{e:poisson-bracket-atilde-H} in \eqref{e:heisenberg-split-microlocalisation} leads to 
\begin{align}\label{e:3dynamical-regimes}
\frac1{\tau_h}&\int_\R \psi'(t)\,
\big\langle
\Opgamma(\widetilde a_{h,R})u_h(t \tau_h),
u_h(t \tau_h)
\big\rangle \dd t
\\
&=\nonumber
-\int_\R \psi(t)\,
\left\langle
\Opgamma\left(\nabla H(\sqrt{\gamma_h}\xi)\cdot \partial_x a(x,\sqrt{\gamma_h}\xi)\,
\chi\!\left(
\frac{\nabla H(\sqrt{\gamma_h}\xi)\cdot \mathfrak e_\Lambda}
{R\sqrt{\gamma_h}L_\Lambda}
\right)\right)
u_h(t \tau_h),u_h(t \tau_h)
\right\rangle \dd t\\
&+\nonumber
\mathcal O\!\left(\frac{h}{\sqrt{\gamma_h}}\right).
\end{align}
From \eqref{e:regime-pour-gamma}, one can infer that $h/\sqrt{\gamma_h} \to 0$ as $h \to 0^+$, and from \eqref{e:calderon-sur-atilde} the left-hand side of \eqref{e:3dynamical-regimes} passes to the limit. We now distinguish the three regimes.

If \(\tau_h\to0\), multiplying \eqref{e:3dynamical-regimes} by \(\tau_h\) and passing to the limit \(h\to0^+\), then \(R\to+\infty\), yields
\[
\int_\R \psi'(t)\,\langle a,\nu_{t,\Lambda}^B\rangle\,h_\Lambda(t)\,\dd t=0.
\]
Hence \(\nu_{t,\Lambda}^B\) is independent of \(t\) for \(h_\Lambda(t)\,\dd t \)-almost every \(t\).

If \(\tau_h=1\), passing to the limit in \eqref{e:3dynamical-regimes} and using Theorem \ref{t:Calderon-Vaillancourt}, together with \(h/\sqrt{\gamma_h}\to0\), we obtain
\[
\int_\R \psi'(t)\,\langle a,\nu_{t,\Lambda}^B\rangle\,h_\Lambda(t)\,\dd t
=
-\int_\R \psi(t)\,
\langle \nabla H(\xi)\cdot \partial_x a,\nu_{t,\Lambda}^B\rangle\,
h_\Lambda(t)\,\dd t.
\]
This is the weak form of the transport equation
\[
\partial_t \nu_{t,\Lambda}^B+\nabla H(\xi)\cdot \partial_x \nu_{t,\Lambda}^B=0.
\]
Thus \(\nu_{t,\Lambda}^B\) is transported by \(\varphi_H^s\), in the weak sense, for \(h_\Lambda(t)\,\dd t \)-almost every \(t\).

If \(\tau_h\to+\infty\), the left-hand side of \eqref{e:3dynamical-regimes} tends to \(0\), and the same limit procedure yields
\[
\int_\R \psi(t)\,
\langle \nabla H(\xi)\cdot \partial_x a,\nu_{t,\Lambda}^B\rangle\,
h_\Lambda(t)\,\dd t=0.
\]
Therefore, for \(h_\Lambda(t)\,\dd t\)-almost every \(t\),
\[
\langle \nabla H(\xi)\cdot \partial_x a,\nu_{t,\Lambda}^B\rangle=0,
\]
which is exactly the infinitesimal formulation of the invariance of
\(\nu_{t,\Lambda}^B\) under \(\varphi_H^s\).
\end{proof}

We shall use this splitting together with the standard Fourier structure of invariant measures under the completely integrable flow generated by \(H\). Indeed, in the long-time regime \(\tau_h\to+\infty\), Lemma~\ref{l:dyn-split} implies that both split components are invariant under \(\varphi_H^s\), for almost every time with respect to their time marginals. Applying Lemma~\ref{l:anantharamanmacia} to each component gives the following consequence.
\begin{corollary}\label{l:split-invariant-fourier-structure}
Assume that \(\tau_h\to+\infty\). Then, for every \(\Lambda\in\mathcal L_1\), the measures $\mathcal I_\Lambda(\nu_{t,\Lambda}^B)$ and $\mathcal I_\Lambda(\nu_t^{B,\Lambda})$ are finite nonnegative Radon measures such that 
\begin{equation}\label{e:lemme-anantharaman-applique-aux-microlocalisations}
\nu_{t,\Lambda}^B\big|_{\T^2\times E_{\Lambda^\perp\setminus\{0\}}}
=
\mathcal I_\Lambda(\nu_{t,\Lambda}^B)\big|_{\T^2\times E_{\Lambda^\perp\setminus\{0\}}}, \qquad
\nu_t^{B,\Lambda}\big|_{\T^2\times E_{\Lambda^\perp\setminus\{0\}}}
=
\mathcal I_\Lambda(\nu_t^{B,\Lambda})\big|_{\T^2\times E_{\Lambda^\perp\setminus\{0\}}},
\end{equation}
and
\begin{equation}\label{e:lemme-anantharaman-applique-aux-microlocalisations-coeff0}
\nu_{t,\Lambda}^B(x,\xi)|_{\T^2\times \Omega_{\mathrm{nr}}}
=
\widehat\nu_{t,\Lambda}^B(0,\xi)|_{\T^2\times \Omega_{\mathrm{nr}}},
\qquad
\nu_t^{B,\Lambda}(x,\xi)|_{\T^2\times \Omega_{\mathrm{nr}}}
=
\widehat\nu_t^{B,\Lambda}(0,\xi)|_{\T^2\times \Omega_{\mathrm{nr}}}.
\end{equation}
\end{corollary}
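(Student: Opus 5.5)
The plan is to apply Lemma~\ref{l:anantharamanmacia} separately to the two normalized components $\nu_{t,\Lambda}^B$ and $\nu_t^{B,\Lambda}$, after checking that its hypotheses hold for almost every $t$ with respect to the relevant time marginal. Since $\tau_h\to+\infty$, item~3 of Lemma~\ref{l:dyn-split} shows that $\nu_{t,\Lambda}^B$ is invariant under $\varphi_H^s$ for every $s\in\R$, for $h_\Lambda(t)\,\dd t$-almost every $t$. Likewise, $\nu_t^{B,\Lambda}$ is invariant for $h^\Lambda(t)\,\dd t$-almost every $t$. The time marginals are given by Lemma~\ref{l:RN-split}.

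The first point to settle is that the invariance in Lemma~\ref{l:dyn-split} is only infinitesimal. It reads $\langle \nabla H(\xi)\cdot\partial_x a,\mu\rangle=0$ for all $a\in\mathcal C_c^\infty(T^*\T^2)$, and we need to upgrade it to invariance of $\mu$ under the flow. Set $a_s:=a\circ\varphi_H^s$. Then
\[
\tfrac{\dd}{\dd s}\langle a_s,\mu\rangle=\langle \nabla H\cdot\partial_x a_s,\mu\rangle=0 .
\]
This uses that $\varphi_H^s$ preserves compact support in $\xi$, so $a_s$ is again an admissible test function. The identity for all $a$ holds for almost every $t$ outside a single null set. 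To obtain this, I will test against a countable dense family of test functions and use continuity of both sides in the $\mathcal C^1$ norm. This gives $(\varphi_H^s)_*\mu=\mu$.

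Next I check the remaining hypotheses of Lemma~\ref{l:anantharamanmacia}. The measures $\nu_{t,\Lambda}^B$ and $\nu_t^{B,\Lambda}$ are probability measures on $\T^2\times\Omega_{E_1,E_2}$ by \eqref{e:disint-split} and Lemma~\ref{l:split-W}. So for each such $t$, items 1--3 of that lemma apply. Item~1 gives that $\mathcal I_\Lambda$ of each measure is a finite nonnegative Radon measure. Item~2 gives the restriction identities \eqref{e:lemme-anantharaman-applique-aux-microlocalisations} on $\T^2\times E_{\Lambda^\perp\setminus\{0\}}$. Item~3 gives \eqref{e:lemme-anantharaman-applique-aux-microlocalisations-coeff0} on $\T^2\times\Omega_{\mathrm{nr}}$.

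On the set where $h_\Lambda(t)=0$ (resp.\ $h^\Lambda(t)=0$), the corresponding component does not contribute to $W_\Lambda^B$ (resp.\ $W^{B,\Lambda}$). There the conditional measure may be chosen arbitrarily, for instance invariant, so the statement is understood for almost every $t$ with respect to the time marginal. No further obstacle arises; the only delicate step is the passage from the weak infinitesimal identity to genuine flow invariance with a common exceptional null set, which the countable density argument above handles.
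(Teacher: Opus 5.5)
Your proposal is correct and follows the paper's proof: item~3 of Lemma~\ref{l:dyn-split} gives $\varphi_H^s$-invariance of $\nu_{t,\Lambda}^B$ (resp.\ $\nu_t^{B,\Lambda}$) for $\omega_\Lambda^B$-a.e.\ (resp.\ $\omega^{B,\Lambda}$-a.e.) $t$, and Lemma~\ref{l:anantharamanmacia} is then applied to these probability measures. Your additional steps are correct and fill in details the paper leaves implicit: upgrading the infinitesimal identity to flow invariance via $a\circ\varphi_H^s$, and extracting a common null set from a countable $\mathcal C^1$-dense family of test functions.
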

\begin{proof}
By Lemma~\ref{l:dyn-split}, for \(\omega_\Lambda^B\)-almost every \(t\), the measure \(\nu_{t,\Lambda}^B\) is invariant under \(\varphi_H^s\), and for \(\omega^{B,\Lambda}\)-almost every \(t\), the measure \(\nu_t^{B,\Lambda}\) is invariant under \(\varphi_H^s\). Since both measures are probability measures, Lemma~\ref{l:anantharamanmacia} can be applied to each of them.
\end{proof}

\subsection{The weakly-resonant part: vanishing of $\Lambda$-modes}
From now on, we assume that $ \tau_h\longrightarrow+\infty$, so that the decomposition of Corollary \ref{l:split-invariant-fourier-structure} holds. We now turn to the weakly resonant component $\nu_t^{B,\Lambda}\big|_{\T^2\times E_{\Lambda^\perp\setminus\{0\}}}$ and prove that its nonzero $\Lambda$-Fourier modes vanish. Let \(k\in\Lambda\setminus\{0\}\) and \(b\in \mathcal C_c^\infty(\R^2)\).
The idea is to test the \(k\)-th Fourier coefficient away from the resonant set
associated with \(\Lambda\), namely the set where
\[
\nabla H(\xi)\cdot \mathfrak e_\Lambda=0.
\]
For this purpose, we introduce a cutoff which removes a neighborhood of this
set at scale \(R\sqrt{\gamma_h}\). We define, for \(\gamma_h>0\) and \(R>0\),
\begin{equation}\label{e:definition-s-gammah-R}
s_{\gamma_h,R}(x,\xi)
=
e^{2i\pi k \cdot x}\,
\frac{b(\sqrt{\gamma_h}\xi)}
{\nabla H(\sqrt{\gamma_h}\xi)\cdot\mathfrak e_\Lambda}
\left(
1-\chi\left(
\frac{\nabla H(\sqrt{\gamma_h}\xi)\cdot\mathfrak e_\Lambda}
{R L_\Lambda\sqrt{\gamma_h}}
\right)
\right).
\end{equation}

The role of this symbol is to make the desired Fourier mode appear as the principal symbol in the commutator with the Hamiltonian part. More precisely, when taking the commutator between the symbol \(s_{\gamma_h,R}\) and the Hamiltonian \(H(\sqrt{\gamma_h}\xi)\), the first-order term in the symbolic expansion
\eqref{e:a-star-b} is given by the Poisson bracket
\[
\left\{
s_{\gamma_h,R},
H(\sqrt{\gamma_h}\xi)
\right\}.
\]
By construction of \(s_{\gamma_h,R}\), this Poisson bracket is, up to an explicit nonzero constant and a power of \(\gamma_h\), precisely the truncated Fourier mode. More precisely,
\begin{equation}\label{e:poisson-bracket-s-et-H-intro-sec}
\left\{s_{\gamma_h,R},\,H(\sqrt{\gamma_h}\xi)\right\}
=
-2i\pi \,\frac{k \cdot \frak{e}_\Lambda}{L_\Lambda^2}\,
\sqrt{\gamma_h}\,
e^{2i\pi k\cdot x}\,
b(\sqrt{\gamma_h}\xi)
\left(
1-\chi\!\left(
\frac{\nabla H(\sqrt{\gamma_h}\xi)\cdot \frak{e}_\Lambda}{R L_\Lambda\sqrt{\gamma_h}}
\right)
\right).
\end{equation}
Thus, after the appropriate normalization, the commutator recovers the \(k\)-th Fourier coefficient that we want to prove vanishes.

The next lemma gives the symbol estimates needed to use
\(s_{\gamma_h,R}\) in the commutator argument. In particular,
\(\sqrt{\gamma_h}s_{\gamma_h,R}\) is an admissible test symbol in the
\(h/\sqrt{\gamma_h}\)-semiclassical calculus, uniformly with respect to \(h\)
for fixed \(R\).

\begin{lemma}\label{l:symbol-s-gamma-R}
Let $k\in \Lambda\setminus\{0\}$ and $b\in \mathcal{C}_c^\infty(\R^2)$. For $(\gamma_h)_{h \to 0^+}$ satisfying \eqref{e:regime-pour-gamma} and $R>0$, the following holds: 
\begin{enumerate}
\item $\sqrt{\gamma_h}\,s_{\gamma_h,R}\in \mathscr{S}^0(\T^2\times\R^2)$, uniformly in $h$.
\item For every $m\in\N^*$, $\gamma_h^{m/2}\,s_{\gamma_h,R}\in \mathscr{S}^{1-m}(\T^2\times\R^2)$, uniformly in $h$.
\end{enumerate}
Moreover, for \(R\ge1\), the following estimates hold uniformly with respect to
\(h\):
\begin{equation}\label{e:estimation-s-gamma-h-en-un-sur-R}
    \sqrt{\gamma_h}\,s_{\gamma_h,R} =\mathcal O_{\mathscr S^0}\!\left(\frac1R\right).
\end{equation}
For every \(j\in\{1,2\}\) and every \(m\in\N^*\), one also has
\begin{equation}\label{e:estimation-partialx-de-sgammah}
\partial_{x_j}\!\left(\gamma_h^{m/2}s_{\gamma_h,R}\right)
=
\mathcal O_{\mathscr S^0}\!\left(\frac{\gamma_h^{(m-1)/2}}{R}\right)=
\mathcal O_{\mathscr S^{1-m}}\!\left(\frac1R\right),
\end{equation}
and
\begin{equation}\label{e:estimation-partialxi-de-sgammah}
\partial_{\xi_j}\!\left(\gamma_h^{m/2}s_{\gamma_h,R}\right)
=
\mathcal O_{\mathscr S^0}\!\left(\frac{\gamma_h^{m/2}}{R}\right)
+
\mathcal O_{\mathscr S^0}\!\left(\frac{\gamma_h^{(m-1)/2}}{R^2}\right)
=\mathcal O_{\mathscr S^{1-m}}\!\left(\frac1R\right).
\end{equation}
\end{lemma}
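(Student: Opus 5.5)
The plan is to write the symbol in the factorized form
\[
\gamma_h^{m/2}s_{\gamma_h,R}(x,\xi)=\gamma_h^{(m-1)/2}\,e^{2i\pi k\cdot x}\,F_R(\sqrt{\gamma_h}\xi),
\]
where
\[
F_R(\zeta):=\frac{\sqrt{\gamma_h}\,b(\zeta)}{g(\zeta)}\Bigl(1-\chi\bigl(\tfrac{g(\zeta)}{RL_\Lambda\sqrt{\gamma_h}}\bigr)\Bigr),\qquad g(\zeta):=\nabla H(\zeta)\cdot\mathfrak e_\Lambda .
\]
Setting $\varepsilon:=RL_\Lambda\sqrt{\gamma_h}$ and $\theta(s):=(1-\chi(s))/s$, which is smooth with $\theta(s)=O(\langle s\rangle^{-1})$ and all derivatives bounded, one has
\[
F_R(\zeta)=\frac{\sqrt{\gamma_h}}{\varepsilon}\,b(\zeta)\,\theta\bigl(g(\zeta)/\varepsilon\bigr)=\frac{1}{RL_\Lambda}\,b(\zeta)\,\theta\bigl(g(\zeta)/\varepsilon\bigr).
\]
This identity is the key point. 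It shows directly that $\sup|F_R|\le C/R$, which gives the $x$-part and the size part of \eqref{e:estimation-s-gamma-h-en-un-sur-R} for $m=1$.

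For the $\mathscr S^0$ bounds of item 1, I will estimate $\partial_x^\alpha\partial_\xi^\beta$ of $\sqrt{\gamma_h}s_{\gamma_h,R}$. The $x$-derivatives only produce factors $(2i\pi k)^\alpha$. Each $\xi$-derivative brings a factor $\sqrt{\gamma_h}$ from the chain rule, and then falls either on $b$ (bounded) or on $\theta(g/\varepsilon)$. In the latter case it produces $\theta'(g/\varepsilon)\,\nabla g/\varepsilon$, and the net factor is $\sqrt{\gamma_h}/\varepsilon=1/(RL_\Lambda)$. By induction (Faà di Bruno), every term of $\partial_\xi^\beta$ is a product of at most $|\beta|$ factors, each of the form $\sqrt{\gamma_h}$ or $\sqrt{\gamma_h}/\varepsilon\lesssim 1/R$, times bounded functions supported in $\operatorname{supp} b$. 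Since $\zeta=\sqrt{\gamma_h}\xi$ ranges over the compact set $\operatorname{supp} b$, all derivatives of $g$ there are bounded. This gives item 1 uniformly in $h$, and the $1/R$ gain of \eqref{e:estimation-s-gamma-h-en-un-sur-R} for $R\ge1$.

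For item 2 and the $\mathscr S^{1-m}$ statements, I will use the support property: on the support, $|\xi|\le C\gamma_h^{-1/2}$, so $\gamma_h^{(m-1)/2}\le C_m\langle\xi\rangle^{1-m}$. Combined with the $\mathscr S^0$ bounds above, this converts the prefactor $\gamma_h^{(m-1)/2}$ into $\mathscr S^{1-m}$ bounds, exactly as was done for $\widetilde a_{h,R}$ in the proof of Lemma~\ref{l:dyn-split}. The derivative estimates are then read off the same factorization. For \eqref{e:estimation-partialx-de-sgammah}, $\partial_{x_j}$ only multiplies by $2i\pi k_j$, so the size is $\gamma_h^{(m-1)/2}\cdot O(1/R)$. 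For \eqref{e:estimation-partialxi-de-sgammah}, the derivative falling on $b$ gives $\gamma_h^{(m-1)/2}\sqrt{\gamma_h}\,O(1/R)=O(\gamma_h^{m/2}/R)$. The derivative falling on $\theta$ gives
\[
\gamma_h^{(m-1)/2}\cdot\frac{\sqrt{\gamma_h}}{\varepsilon}\cdot\frac{1}{RL_\Lambda}=O\bigl(\gamma_h^{(m-1)/2}/R^2\bigr).
\]
Higher derivatives of these expressions keep the same gains. Both pieces are then $O_{\mathscr S^{1-m}}(1/R)$ by the support argument.

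The main obstacle is the uniform control of the singular-looking factor $1/g$ near the resonant set. This is handled entirely by the observation that the cutoff $1-\chi$ makes $\theta$ smooth, and that each $\xi$-derivative of $\theta(g/\varepsilon)$ costs $1/\varepsilon$ but is compensated by the chain-rule factor $\sqrt{\gamma_h}$. The count therefore has to be done carefully so that no net negative power of $\gamma_h$ appears. The condition $\gamma_h\to0$ from \eqref{e:regime-pour-gamma} is used only to ensure $\sqrt{\gamma_h}\le1$.
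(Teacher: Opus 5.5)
Your proof is correct and follows essentially the same route as the paper. Your rewriting $\sqrt{\gamma_h}\,s_{\gamma_h,R}=\frac{1}{RL_\Lambda}\,e^{2i\pi k\cdot x}\,b(\sqrt{\gamma_h}\xi)\,\theta\bigl(g(\sqrt{\gamma_h}\xi)/\varepsilon\bigr)$ is exactly the paper's factorization $e^{2i\pi k\cdot x}\,b(\sqrt{\gamma_h}\xi)\,F_R(G_{\gamma_h}(\xi))$, which rests on $F_R^{(N)}=\mathcal O(R^{-N-1})$ and $\partial_\xi^\alpha G_{\gamma_h}=\mathcal O(\gamma_h^{(|\alpha|-1)/2})$, and your passage to $\mathscr S^{1-m}$ via $|\xi|\le C\gamma_h^{-1/2}$ on the support is the same argument the paper uses.
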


\begin{proof}
Fix $R>0$. Write
\[
\sqrt{\gamma_h}\,s_{\gamma_h,R}(x,\xi)=e^{2i\pi k \cdot x }\,b(\sqrt{\gamma_h}\xi)\,
F_R\!\left(G_{\gamma_h}(\xi)\right),
\qquad
G_{\gamma_h}(\xi):=\frac{\nabla H(\sqrt{\gamma_h}\xi)\cdot\mathfrak e_\Lambda}{\sqrt{\gamma_h}},
\]
with
\[
F_R(\eta):=\frac{1-\chi(\eta/(RL_\Lambda))}{\eta}.
\]
Since $1-\chi$ vanishes near $0$ and is identically equal to $1$ near infinity, $F_R\in \mathcal{C}^\infty(\R)$ and one has for every $N\in\N$,
\begin{equation}\label{eq:FR-bdd}
\sup_{\eta\in\R}|F_R^{(N)}(\eta)|\le \frac{C_N}{R^{N+1}}.
\end{equation}
Moreover, for any multi-index $\alpha\neq 0$,
\begin{equation}\label{eq:G-deriv-short}
\partial_\xi^\alpha G_{\gamma_h}(\xi)
=\gamma_h^{(|\alpha|-1)/2}\,
\bigl(\partial^\alpha(\nabla H\cdot\mathfrak e_\Lambda)\bigr)(\sqrt{\gamma_h}\xi),
\end{equation}
hence $|\partial_\xi^\alpha G_{\gamma_h}(\xi)|\le C_\alpha\,\gamma_h^{(|\alpha|-1)/2}$ on $\operatorname{supp}(b(\sqrt{\gamma_h}\cdot))$ as $b$ is compactly supported.
Using Leibniz’ rule and repeated chain rule, \eqref{eq:FR-bdd}--\eqref{eq:G-deriv-short} yield, for all $\alpha,\beta$,
\[
\sup_{x,\xi}\bigl|\partial_x^\alpha\partial_\xi^\beta\bigl(\sqrt{\gamma_h}s_{\gamma_h,R}\bigr)(x,\xi)\bigr|
\le \frac{C_{\alpha,\beta}}{R},
\]
which proves both item 1 and \eqref{e:estimation-s-gamma-h-en-un-sur-R}. Item 2 follows from item 1. Indeed, for $m\geq 1$, one has
\[
\gamma_h^{m/2}s_{\gamma_h,R}
=
\gamma_h^{(m-1)/2}\,\sqrt{\gamma_h}\,s_{\gamma_h,R}.
\]
By item 1, the family $\sqrt{\gamma_h}\,s_{\gamma_h,R}$ is bounded in $\mathscr{S}^0$, uniformly in $h$ (for each fixed $R>0$). Moreover, since $b$ is compactly supported, there exists $M>0$ such that
\[
|\xi|\leq M\gamma_h^{-1/2}
\qquad\text{on }\operatorname{supp}(s_{\gamma_h,R}).
\]
It follows that, on this support,
\begin{equation}\label{e:gammah-bornee-par-xi}
\gamma_h^{(m-1)/2}\leq C_m\langle\xi\rangle^{1-m}.
\end{equation}
Therefore, for all multi-indices $\alpha,\beta$,
\[
\bigl|\partial_x^\alpha\partial_\xi^\beta(\gamma_h^{m/2}s_{\gamma_h,R})(x,\xi)\bigr|
\leq C_{\alpha,\beta,R}\,\gamma_h^{(m-1)/2}
\leq C_{\alpha,\beta,m,R}\,\langle\xi\rangle^{1-m},
\]
which proves that $\gamma_h^{m/2}s_{\gamma_h,R}\in \mathscr{S}^{1-m}$ uniformly in $h$. It remains to prove the refined estimates \eqref{e:estimation-partialx-de-sgammah} and \eqref{e:estimation-partialxi-de-sgammah}. For the $x$-derivatives, the only dependence on $x$ is the factor $e^{2i\pi k \cdot x }$. Hence, for $j \in \{1,2\}$ one has 
\[
\partial_{x_j}\left( \gamma_h^\frac{m}{2}s_{\gamma_h,R}\right)= 2i\pi k_j \gamma_h^\frac{m-1}{2} \left(\sqrt{\gamma_h}s_{\gamma_h,R} \right). 
\]
By \eqref{e:estimation-s-gamma-h-en-un-sur-R}, this gives
\[
\partial_{x_j}\left(\gamma_h^{m/2}s_{\gamma_h,R}\right)
=
\mathcal O_{\mathscr S^0}\!\left(
\frac{\gamma_h^{(m-1)/2}}{R}
\right).
\]
Using again the fact that \eqref{e:gammah-bornee-par-xi} holds on $\operatorname{supp}(s_{\gamma_h,R})$, we obtain
\[
\partial_{x_j}\left(\gamma_h^{m/2}s_{\gamma_h,R}\right)
=
\mathcal O_{\mathscr S^0}\!\left(
\frac{\gamma_h^{(m-1)/2}}{R}
\right)
=
\mathcal O_{\mathscr S^{1-m}}\!\left(\frac1R\right).
\]
Then, \eqref{e:estimation-partialx-de-sgammah} follows from \eqref{e:estimation-s-gamma-h-en-un-sur-R}. We finally estimate the \(\xi\)-derivatives. Differentiating
\[
\sqrt{\gamma_h}s_{\gamma_h,R}
=
e^{2i\pi k\cdot x}
b(\sqrt{\gamma_h}\xi)
F_R(G_{\gamma_h}(\xi))
\]
with respect to \(\xi_j\), we get
\begin{equation}\label{e:derive-xi-racinegammah-s-gammah}
\partial_{\xi_j}
\left(
\sqrt{\gamma_h}s_{\gamma_h,R}
\right)
=
e^{2i\pi k\cdot x}
\sqrt{\gamma_h}
(\partial_{\xi_j}b)(\sqrt{\gamma_h}\xi)
F_R(G_{\gamma_h}(\xi))
+
e^{2i\pi k\cdot x}
b(\sqrt{\gamma_h}\xi)
F_R'(G_{\gamma_h}(\xi))
\partial_{\xi_j}G_{\gamma_h}(\xi).
\end{equation}
The first term in \eqref{e:derive-xi-racinegammah-s-gammah} is 
\[
\mathcal O_{\mathscr S^0}\!\left(\frac{\sqrt{\gamma_h}}{R}\right).
\]
Indeed, derivatives falling on \(b(\sqrt{\gamma_h}\xi)\) only produce
nonnegative powers of \(\sqrt{\gamma_h}\), while \eqref{eq:FR-bdd},
\eqref{eq:G-deriv-short}, and the repeated chain rule give
\[
F_R(G_{\gamma_h})
=
\mathcal O_{\mathscr S^0}\!\left(\frac1R\right).
\]
Similarly, using again \eqref{eq:FR-bdd}, \eqref{eq:G-deriv-short}, Leibniz'
rule, and the repeated chain rule, one obtains
\[
F_R'(G_{\gamma_h})\,\partial_{\xi_j}G_{\gamma_h}
=
\mathcal O_{\mathscr S^0}\!\left(\frac1{R^2}\right).
\]
Therefore
\[
\partial_{\xi_j}
\left(
\sqrt{\gamma_h}s_{\gamma_h,R}
\right)
=
\mathcal O_{\mathscr S^0}\!\left(\frac{\sqrt{\gamma_h}}{R}\right)
+
\mathcal O_{\mathscr S^0}\!\left(\frac1{R^2}\right)
\]
which by multiplying by \(\gamma_h^{(m-1)/2}\), leads to
\[
\partial_{\xi_j}
\left(
\gamma_h^{m/2}s_{\gamma_h,R}
\right)
=
\mathcal O_{\mathscr S^0}\!\left(\frac{\gamma_h^{m/2}}{R}\right)
+
\mathcal O_{\mathscr S^0}\!\left(\frac{\gamma_h^{(m-1)/2}}{R^2}\right).
\]
Since \(0<\gamma_h\leq1\) for \(h\) small enough and \(R\geq1\), we have
\begin{equation}\label{e:estime-somme-gammah}
    \frac{\gamma_h^{m/2}}{R}\leq \frac{\gamma_h^{(m-1)/2}}{R},
\qquad
\frac{\gamma_h^{(m-1)/2}}{R^2} \leq \frac{\gamma_h^{(m-1)/2}}{R}.
\end{equation}
Consequently, combining \eqref{e:estime-somme-gammah} with \eqref{e:gammah-bornee-par-xi}, we finally obtain
\[
\partial_{\xi_j} \left( \gamma_h^{m/2}s_{\gamma_h,R} \right) = \mathcal O_{\mathscr S^0}\!\left( \frac{\gamma_h^{(m-1)/2}}{R} \right) = \mathcal O_{\mathscr S^{1-m}}\!\left(\frac1R\right),
\]
which proves \eqref{e:estimation-partialxi-de-sgammah}.

\end{proof}
So far, the scaling parameter $(\gamma_h)_{h\to0^+}$ was only assumed to satisfy \eqref{e:regime-pour-gamma}. We now make a definite choice for $(\gamma_h)_{h\to0^+}$, which will be used throughout the remainder of this article.

\begin{definition}\label{d:def-gamma}
The scaling parameter $(\gamma_h)_{h\to0^+}$ is defined by
\begin{equation}\label{e:definition-gammah}
\gamma_h=
\begin{cases}
\dfrac{1}{\tau_h^2}, & \text{if } \tau_h \ll \frac{1}{\sqrt{h}},\\[0.3em]
h, & \text{if } \tau_h \gtrsim \frac{1}{\sqrt{h}}.
\end{cases}
\end{equation}
\end{definition}

\begin{remark}
This choice may seem somewhat ad hoc at first. However, the proof of the next lemma shows that this dichotomy is naturally dictated by the asymptotic behavior of the time scale $(\tau_h)_{h\to0^+}$, and the corresponding computations justify the definition above.
\end{remark}

\begin{lemma}[Vanishing of $\Lambda$-Fourier modes]\label{l:noncompact-modes}
Let $k\in\Lambda\setminus\{0\}$, $(\gamma_h)_{h\to 0^+}$ satisfying \eqref{e:definition-gammah} and let $(\psi,b)\in \mathcal{C}_c^\infty(\R)\times \mathcal{C}_c^\infty(\R^2)$. Then, one has 
\begin{equation}
    \int_\R \psi(t)\left[\int_{T^*\T^2} e^{2i\pi k\cdot x}\,b(\xi)\,\nu_t^{B,\Lambda}(\dd x,\dd \xi)\right]
h^\Lambda(t)\,\dd t =0.
\end{equation}
In particular, $\mathcal I_\Lambda(\nu_t^{B,\Lambda})$ is independent of $x$ (for a.e.\ $t$).
\end{lemma}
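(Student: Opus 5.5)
The plan is the standard commutator argument with the symbol $s_{\gamma_h,R}$ of \eqref{e:definition-s-gammah-R}, run in the rescaled calculus $\Opgamma$. By \eqref{e:rescaling-semiclassique}, $\Op(a(x,\xi))=\Opgamma(a(x,\sqrt{\gamma_h}\xi))$. Accordingly I set
\[
F_h(t):=\big\langle \Opgamma(\sqrt{\gamma_h}\,s_{\gamma_h,R})u_h(t\tau_h),u_h(t\tau_h)\big\rangle .
\]
By Lemma~\ref{l:symbol-s-gamma-R} and Calder\'on--Vaillancourt this is bounded uniformly in $h$ and $t$, and in fact it is $\mathcal O(1/R)$.

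Differentiating in $t$ and integrating against $\psi'$, exactly as in \eqref{e:heisenberg-split-microlocalisation}, gives
\[
\frac{1}{\tau_h}\int_\R\psi'(t)F_h(t)\,\dd t=\int_\R\psi(t)\Big\langle \tfrac{i}{h}\big[\Opgamma(\sqrt{\gamma_h}s_{\gamma_h,R}),\widehat H_h\big]u_h(t\tau_h),u_h(t\tau_h)\Big\rangle\dd t .
\]
I then expand the commutator with \eqref{e:a-star-b} at parameter $h/\sqrt{\gamma_h}$, with $\widehat H_h=\Opgamma(\widetilde p_h)$.

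The principal term is $\frac{1}{\sqrt{\gamma_h}}\{\sqrt{\gamma_h}s_{\gamma_h,R},H(\sqrt{\gamma_h}\xi)\}$. By \eqref{e:poisson-bracket-s-et-H-intro-sec} it equals
\[
-2i\pi\frac{k\cdot\mathfrak e_\Lambda}{L_\Lambda^2}\sqrt{\gamma_h}\,e^{2i\pi k\cdot x}b(\sqrt{\gamma_h}\xi)\big(1-\chi_{R\sqrt{\gamma_h},\Lambda}(\sqrt{\gamma_h}\xi)\big).
\]
Its quantization tested against $u_h$ is exactly $\sqrt{\gamma_h}$ times a constant times the quantity
\[
\big\langle W^{B,\Lambda}_{h,R},\psi\,e^{2i\pi k\cdot x}b\big\rangle .
\]
The constant is nonzero, since $k\in\Lambda\setminus\{0\}$ gives $k\cdot\mathfrak e_\Lambda\neq0$.

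So, after dividing the identity by $\sqrt{\gamma_h}$, three contributions have to be controlled.
\begin{enumerate}
\item[(a)] The left side is $\mathcal O\big(1/(\tau_h\sqrt{\gamma_h})\big)$. By Definition~\ref{d:def-gamma} this is $\mathcal O(1)$ if $\tau_h\ll h^{-1/2}$, and $o(1)$ or $\mathcal O(1)$ if $\tau_h\gtrsim h^{-1/2}$.
\item[(b)] The subprincipal contribution $h\{\sqrt{\gamma_h}s,R_h(x,\sqrt{\gamma_h}\xi)\}$ arises because $x$-derivatives of $s$ and of $R_h$ produce no $\sqrt{\gamma_h}$ gain. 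Using \eqref{e:estimation-partialx-de-sgammah} and \eqref{e:estimation-partialxi-de-sgammah} with $m=1$, I expect a bound $\mathcal O(h/(\sqrt{\gamma_h}R))$ in operator norm. After division by $\sqrt{\gamma_h}$ this becomes $\mathcal O(h/(\gamma_hR))$, which is $\mathcal O(1/R)$ when $\gamma_h=h$. When $\gamma_h=\tau_h^{-2}$ it is $o(1)$, since then $\gamma_h\gg h$.
\item[(c)] The magnetic second-order term $\widehat B_0h^2\,\partial_\xi\wedge\partial_\xi$ and the higher terms of \eqref{e:a-star-b} are handled as in the proof of Lemma~\ref{l:dyn-split}: each $\xi$-derivative of $H(\sqrt{\gamma_h}\xi)$ compensates the $(h/\sqrt{\gamma_h})$ factor. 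The $\xi$-derivatives of $s$ cost at most $\gamma_h^{-1/2}/R$ by Lemma~\ref{l:symbol-s-gamma-R}. This should leave errors of size $o(1)+\mathcal O(1/R)$ after division.
\end{enumerate}

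Letting $h\to0^+$ then yields
\[
\Big|\int\psi\,\langle e^{2i\pi k\cdot x}b,\nu_t^{B,\Lambda}\rangle h^\Lambda\,\dd t\Big|\le C/R+\limsup_h\frac{C}{R\,\tau_h\sqrt{\gamma_h}}.
\]
Letting $R\to\infty$ (the iterated limit defining $W^{B,\Lambda}$) gives the claim. For the final statement, the Fourier coefficients of $\mathcal I_\Lambda(\nu_t^{B,\Lambda})$ at $k\in\Lambda\setminus\{0\}$ vanish for a.e.\ $t$ by density of the test functions $b$ and $\psi$, so only the $k=0$ mode survives.

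\textbf{Main obstacle.} The delicate point is the bookkeeping of the remainders uniformly in $R$, especially in the regime $\tau_h\gtrsim h^{-1/2}$ with $\gamma_h=h$. There the left side and the subprincipal term are only $\mathcal O(1)$ in $h$, and one must check that their prefactor really carries the $1/R$ from \eqref{e:estimation-s-gamma-h-en-un-sur-R} rather than merely being bounded. Everything else, including the choice of $\gamma_h$ in Definition~\ref{d:def-gamma}, is designed to make these two quantities comparable.
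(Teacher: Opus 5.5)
Your proposal is correct and follows essentially the same route as the paper: the same test symbol $s_{\gamma_h,R}$ in the $h/\sqrt{\gamma_h}$-calculus, and the same principal Poisson bracket with nonzero constant $k\cdot\mathfrak e_\Lambda/L_\Lambda^2$. It also uses the same error bookkeeping, with the time-derivative term $\mathcal O\big(1/(R\tau_h\sqrt{\gamma_h})\big)$ and the subprincipal and magnetic terms $\mathcal O\big(h/(\gamma_h R)\big)$, made $\mathcal O(1/R)$ by Definition~\ref{d:def-gamma}, before letting $R\to+\infty$. The only cosmetic difference is that the paper multiplies the test symbol by the extra scalar $\gamma_h^{m/2}$, so that all composition remainders against $H\in\mathscr S^m$ lie in $\mathscr S^0$; you import this implicitly by deferring the higher-order terms to the proof of Lemma~\ref{l:dyn-split}.
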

\begin{proof}
The goal is to prove that the following limit vanishes: 
\begin{equation}\label{e:symbol-voulu}
    \displaystyle \lim_{R\to \infty} \lim_{h \to 0^+} \int_\R \psi(t) \left\langle \Op\left( e^{2i\pi k\cdot x} b(\xi) \left(1-\chi\left(\frac{\nabla H(\xi)\cdot \frak{e}_\Lambda}{RL_\Lambda \sqrt{\gamma_h}}\right)\right)\right) u_h(t\tau_h),u_h(t\tau_h)\right\rangle \dd t. 
\end{equation}

To do so, we have already seen that $\sqrt{\gamma_h}s_{\gamma_h,R}\in \mathscr{S}^0$ and, more generally, $\gamma_h^{(m+1)/2}s_{\gamma_h,R}\in \mathscr{S}^{-m}$ by Lemma \ref{l:symbol-s-gamma-R}. This symbol was introduced to analyze the following commutator expression:
\begin{equation}\label{e:non-compact-commutator}
    \displaystyle  \int_\R \psi(t) \left\langle \left[ \Opgamma\left( \gamma_h^{\frac{m+1}{2}}s_{\gamma_h,R}\right) , \Op(H+h R_h) \right] u_h(t\tau_h),u_h(t\tau_h)\right\rangle \dd t. 
\end{equation}
Using that $(u_h)_{h\to 0^+}$ solves \eqref{e:Schrodinger-PDE}, we obtain that \eqref{e:non-compact-commutator} is equal to 
\begin{equation}\label{e:calcul-crochet-de-lie-et-uh-solution}
\displaystyle -\frac{ih \gamma_h^\frac{m}{2}}{\tau_h} \int_\R \psi'(t) \left\langle \Opgamma\left(\sqrt{\gamma_h}s_{\gamma_h,R}\right)u_h(t\tau_h) , u_h(t\tau_h) \right\rangle \dd t .
\end{equation}
By Lemma~\ref{l:symbol-s-gamma-R} and the Calder\'on--Vaillancourt theorem, the quantity in \eqref{e:calcul-crochet-de-lie-et-uh-solution} is $ \mathcal O\!\left(\frac{h\gamma_h^{m/2}}{\tau_h R}\right)$.
As we shall see below, the leading Hamiltonian contribution to the commutator is of order \(h\gamma_h^{(m+1)/2}\) and produces precisely the quantity appearing in \eqref{e:symbol-voulu}. Therefore, after normalization by \(h\gamma_h^{(m+1)/2}\), the contribution of
\eqref{e:calcul-crochet-de-lie-et-uh-solution} is
\[
\mathcal O\!\left(\frac{1}{R\tau_h\sqrt{\gamma_h}}\right) = \mathcal O\!\left(\frac1R\right),
\]
where the last estimate follows from \eqref{e:definition-gammah}, since \(\tau_h\sqrt{\gamma_h}\geq1\). Thus, this term vanishes as \(R\to+\infty\). On the other hand, using the rescaling formula \eqref{e:rescaling-semiclassique} in \eqref{e:non-compact-commutator} to write $\widehat H_h$ in the $\frac{h}{\sqrt{\gamma_h}}$-quantization, we are left with the following commutator, which we split into its Hamiltonian and perturbative parts:
\begin{equation}\label{e:split-commutateur}
\left[ \Opgamma\left( \gamma_h^{\frac{m+1}{2}}s_{\gamma_h,R}\right) ,\Opgamma(H(\sqrt{\gamma_h}\xi)+h R_h(x,\sqrt{\gamma_h}\xi)) \right] = \mathcal C_h^{\rm Ham}+h\mathcal C_h^{\rm pert}.
\end{equation}

Since $\gamma_h^{\frac{m+1}{2}}s_{\gamma_h,R}$ is in the class $\mathscr{S}^{-m}$ (uniformly in $h$) and since the Hamiltonian $H$ belongs to $\mathscr{S}^m$ (and the perturbation $R_h \in \mathscr{S}^{m}$ with $m>1$), all symbols arising in the commutator expansion are uniformly bounded in $\mathscr{S}^0$.

\textbf{1. Hamiltonian contribution.} We first analyze the Hamiltonian contribution using the formula \eqref{e:commutator} to see that the principal symbol arising from the Poisson bracket is exactly the one in \eqref{e:symbol-voulu}.
We now compute the commutator
\begin{equation}\label{e:commutateur-s-et-H}
\mathcal C_h^{\rm Ham} = \left[\Opgamma\!\left(\gamma_h^{\frac{m+1}{2}}s_{\gamma_h,R}\right),
\Opgamma\!\left(H(\sqrt{\gamma_h}\xi)\right)\right].
\end{equation}
Applying the composition formula \eqref{e:a-star-b} in the rescaled \(h/\sqrt{\gamma_h}\) quantization, we obtain for every integer
\(N\ge 4\)
\begin{equation}\label{e:full-odd-expansion-final}
\left[\Opgamma\!\left(\gamma_h^{\frac{m+1}{2}}s_{\gamma_h,R}\right),
\Opgamma\!\left(H(\sqrt{\gamma_h}\xi)\right)\right] = \Opgamma\!\Big( \mathcal{H}_{h,N}\Big) + \Opgamma(\rho_{N,h,R}),
\end{equation}
with
\begin{equation}\label{e:definition_H_N}
\mathcal{H}_{h,N} =\sum_{p=0}^{\lfloor (N-2)/2\rfloor}
\frac{2}{(2p+1)!}
\left(\frac{ih}{2\sqrt{\gamma_h}}\right)^{2p+1}
\mathrm{L}_{h}^{2p+1}
\Big[
\gamma_h^{\frac{m+1}{2}}s_{\gamma_h,R}(x,\xi)\,
H(\sqrt{\gamma_h}\xi')
\Big]
\Big|_{(x,\xi)=(x',\xi')},
\end{equation}
where
\[
\mathrm{L}_{h}
=
\Omega_{\frac{h}{\sqrt{\gamma_h}}B}
(D_x,D_\xi,D_{x'},D_{\xi'})
\]
and, for each fixed \(R>0\),
\begin{equation}\label{e:reste-partie-Hamiltonienne}
\rho_{N,h,R}
=
\mathcal{O}_{\mathscr{S}^0,R}\!\left(
\left(\frac{h}{\sqrt{\gamma_h}}\right)^N
\right)
=
\mathcal{O}_{\mathscr{S}^0,R}\!\left(h^{N/2}\right),
\end{equation}
because Definition~\ref{d:def-gamma} implies \(\gamma_h\ge h\). We now isolate the contribution \(p=0\) from the remaining odd terms. Making the term $p=0$ explicit gives
\begin{align}
&\left[\Opgamma\!\left(\gamma_h^{\frac{m+1}{2}}s_{\gamma_h,R}\right),
\Opgamma\!\left(H(\sqrt{\gamma_h}\xi)\right)\right]
\notag\\
&=
\frac{h}{i\sqrt{\gamma_h}}
\Opgamma\!\left(
\left\{\gamma_h^{\frac{m+1}{2}}s_{\gamma_h,R},\,H(\sqrt{\gamma_h}\xi)\right\}
\right)
\label{e:ordre1-commutateur-s-et-H}\\
&+\frac{\widehat B_0 h^2}{i\gamma_h}
\Opgamma\!\left(
\partial_{\xi_2}\!\left(\gamma_h^{\frac{m+1}{2}}s_{\gamma_h,R}\right)
\partial_{\xi_1}\bigl(H(\sqrt{\gamma_h}\xi)\bigr)
-\partial_{\xi_1}\!\left(\gamma_h^{\frac{m+1}{2}}s_{\gamma_h,R}\right)
\partial_{\xi_2}\bigl(H(\sqrt{\gamma_h}\xi)\bigr)
\right)\label{e:terme-reste-commutateur-s-et-H}\\
&+ \Opgamma(\widetilde{\mathcal{H}}_{N,h,R}),
\label{e:reste-commutateur-s-et-H}
\end{align}
where \(\widetilde{\mathcal{H}}_{N,h,R}\) collects the terms with \(p\ge 1\) in \eqref{e:definition_H_N} together with the symbolic remainder \(\rho_{N,h,R}\). Using \eqref{e:poisson-bracket-s-et-H-intro-sec}, and since
\(\gamma_h^{\frac{m+1}{2}}\) is independent of \((x,\xi)\), one directly obtains
\begin{equation}\label{e:poisson-bracket-s-et-H}
\left\{
\gamma_h^{\frac{m+1}{2}}s_{\gamma_h,R},
H(\sqrt{\gamma_h}\xi)
\right\}
=
-2i\pi \,\frac{k \cdot \frak{e}_\Lambda}{L_\Lambda^2}\,
\gamma_h^{\frac{m+2}{2}}
e^{2i\pi k\cdot x}\,
b(\sqrt{\gamma_h}\xi)
\left(
1-\chi\!\left(
\frac{\nabla H(\sqrt{\gamma_h}\xi)\cdot \frak{e}_\Lambda}
{R L_\Lambda\sqrt{\gamma_h}}
\right)
\right).
\end{equation}
Up to the factor $\gamma_h^{\frac{m+2}{2}}$, this is exactly the quantity in which we are interested in to compute the $k$-Fourier coefficient of our measure in \eqref{e:symbol-voulu}. 

We next turn to the magnetic contribution of order \(h^2/\gamma_h\) coming from the symbol in \eqref{e:terme-reste-commutateur-s-et-H}. By the estimate \eqref{e:estimation-partialxi-de-sgammah} in
Lemma~\ref{l:symbol-s-gamma-R}, for \(j=1,2\), one has
\begin{equation}\label{e:estimation-derivée-gamma-s}
\partial_{\xi_j}\!\left(\gamma_h^{\frac{m+1}{2}}s_{\gamma_h,R}\right)
=
\mathcal{O}_{\mathscr{S}^0}\!\left(\frac{\gamma_h^{\frac{m+1}{2}}}{R}\right)
+
\mathcal{O}_{\mathscr{S}^0}\!\left(\frac{\gamma_h^{m/2}}{R^2}\right).
\end{equation}
Inserting \eqref{e:estimation-derivée-gamma-s} into
\eqref{e:terme-reste-commutateur-s-et-H}, and using that
\(\partial_{\xi_j}(H(\sqrt{\gamma_h}\xi))=\mathcal{O}_{\mathscr{S}^0}(\sqrt{\gamma_h})\) on the support of
\(b(\sqrt{\gamma_h}\xi)\), yields
\begin{equation}\label{e:estimation-terme-reste-commutateur-s-et-H}
\partial_{\xi_2}\!\left(\gamma_h^{\frac{m+1}{2}}s_{\gamma_h,R}\right)
\partial_{\xi_1}\bigl(H(\sqrt{\gamma_h}\xi)\bigr)
-
\partial_{\xi_1}\!\left(\gamma_h^{\frac{m+1}{2}}s_{\gamma_h,R}\right)
\partial_{\xi_2}\bigl(H(\sqrt{\gamma_h}\xi)\bigr)
=
\mathcal{O}_{\mathscr{S}^0}\!\left(\frac{\gamma_h^{\frac{m+2}{2}}}{R}\right)
+
\mathcal{O}_{\mathscr{S}^0}\!\left(\frac{\gamma_h^{\frac{m+1}{2}}}{R^2}\!\right)\!.
\end{equation}
Using \eqref{e:estimation-terme-reste-commutateur-s-et-H} and the Calder\'on--Vaillancourt theorem, it follows that
\begin{align}\label{e:estimation-terme-magnetique-hamiltonien}
   \frac{\widehat B_0 h^2}{i\gamma_h}&\Opgamma\!\left(
\partial_{\xi_2}\!\left(\gamma_h^{\frac{m+1}{2}}s_{\gamma_h,R}\right)
\partial_{\xi_1}\bigl(H(\sqrt{\gamma_h}\xi)\bigr)
-\partial_{\xi_1}\!\left(\gamma_h^{\frac{m+1}{2}}s_{\gamma_h,R}\right)
\partial_{\xi_2}\bigl(H(\sqrt{\gamma_h}\xi)\bigr)\right) \nonumber \\
&= \mathcal{O}_{L^2 \to L^2}\left(\frac{h^2\gamma_h^{\frac{m}{2}}}{R} \right)+\mathcal{O}_{L^2 \to L^2}\left( \frac{h^2\gamma_h^{\frac{m-1}{2}}}{R^2}\right).
\end{align}
It remains to control the higher-order odd terms in \eqref{e:reste-commutateur-s-et-H}. Each term with \(p\ge 1\) carries at least the factor \((h/\sqrt{\gamma_h})^3\), since \(2p+1\ge 3\). Moreover, \(\displaystyle \Omega_{\frac{h}{\sqrt{\gamma_h}}B}^{\,2p+1}\) involves only finitely many derivatives of the two symbols. The derivatives of \(H(\sqrt{\gamma_h}\xi)\) are uniformly bounded on the support of \(b(\sqrt{\gamma_h}\xi)\). The estimate \eqref{e:estimation-s-gamma-h-en-un-sur-R} of Lemma \ref{l:symbol-s-gamma-R} implies that, for every pair of multi-indices \(\alpha,\beta\), there exists a constant \(C_{\alpha,\beta}>0\), independent of \(h\) and \(R\), such that
\[
\bigl|\partial_x^\alpha\partial_\xi^\beta \gamma_h^\frac{m+1}{2}s_{\gamma_h,R}(x,\xi)\bigr|=\gamma_h^\frac{m}{2}\bigl|\partial_x^\alpha\partial_\xi^\beta \sqrt{\gamma_h}s_{\gamma_h,R}(x,\xi)\bigr|
\le \frac{\gamma_h^\frac{m}{2}C_{\alpha,\beta}}{R},
\]
where \(C_{\alpha,\beta}\) depends on finitely many derivatives of \(H\), \(b\), and \(\chi\). Therefore, for each fixed \(N\), all the terms with \(p\ge 1\) in \eqref{e:full-odd-expansion-final}, together with the estimate \eqref{e:reste-partie-Hamiltonienne} of the symbolic remainder \(\rho_{N,h,R}\), can be absorbed into
\begin{equation}\label{e:remainder-symbol-final}
\widetilde{\mathcal{H}}_{N,h,R}
=
\gamma_h^{\frac{m+1}{2}}
\,\mathcal{O}_{\mathscr{S}^0, R}\!\left(
\frac{h^3}{\sqrt{\gamma_h}}
\right)+\mathcal{O}_{\mathscr{S}^0,R}\!\left(h^{N/2}\right)=\mathcal{O}_{\mathscr{S}^0,R}\!\left(
h^3\gamma_h^{\frac m2}
\right)+\mathcal{O}_{\mathscr{S}^0,R}\!\left(h^{N/2}\right),
\end{equation}
where the constant entering into the remainder depends on finitely many derivatives of \(H\), \(b\), and \(\chi\), and has at most polynomial growth in \(R^{-1}\). Since \(\gamma_h\ge h\), choosing \(N\) large enough allows us to absorb the second term into the first one. Hence, one has 
\[
\widetilde{\mathcal H}_{N,h,R}
= \mathcal O_{\mathscr S^0,R}\!\left( h^3\gamma_h^{\frac m2}
\right).
\]
By the Calder\'on--Vaillancourt theorem, it follows that
\begin{equation}\label{e:remainder-operator-final}
\Opgamma(\widetilde{\mathcal H}_{N,h,R})
=
\mathcal O_{L^2\to L^2,R}\!\left(
h^3\gamma_h^{\frac m2}
\right).
\end{equation}
Combining \eqref{e:poisson-bracket-s-et-H}, \eqref{e:estimation-terme-magnetique-hamiltonien}, and \eqref{e:remainder-operator-final}, we finally obtain the decomposition
\begin{align}
&\left[\Opgamma\!\left(\gamma_h^{\frac{m+1}{2}}s_{\gamma_h,R}\right),
\Opgamma\!\left(H(\sqrt{\gamma_h}\xi)\right)\right]
\notag\\
&\qquad =
-\frac{2\pi (k\cdot \frak{e}_\Lambda) }{L_\Lambda^2}\,
h\,\Opgamma\!\left(
\gamma_h^{\frac{m+1}{2}}
e^{2i\pi k\cdot x}\,
b(\sqrt{\gamma_h}\xi)
\left(
1-\chi\!\left(
\frac{\nabla H(\sqrt{\gamma_h}\xi)\cdot \frak{e}_\Lambda}{R L_\Lambda\sqrt{\gamma_h}}
\right)
\right)
\right)
\label{e:final-commutator-decomposition}\\
&\qquad\quad
+\mathcal{O}_{L^2 \to L^2}\left(\frac{h^2\gamma_h^{\frac{m}{2}}}{R} \right)+\mathcal{O}_{L^2 \to L^2}\left( \frac{h^2\gamma_h^{\frac{m-1}{2}}}{R^2}\right)
+\mathcal{O}_{L^2\to L^2,R}\!\left(h^3\gamma_h^{\frac{m}{2}}\right).
\notag
\end{align}
\textbf{2. Perturbative contribution.}
Now that we have analyzed the Hamiltonian contribution, we show that the perturbative contribution is also negligible in the limit as for \eqref{e:remainder-symbol-final}. The key point is that the perturbation carries an additional factor \(h\), which makes the whole contribution negligible in the limit. Applying \eqref{e:commutator} in the \(h/\sqrt{\gamma_h}\)-quantization, we obtain
\begin{align}
\mathcal{C}_h^{\rm pert}&=\left[\Opgamma\!\left(\gamma_h^{\frac{m+1}{2}}s_{\gamma_h,R}\right),\Opgamma(R_h(x,\sqrt{\gamma_h}\xi))\right]\\
&= \frac{h}{i\sqrt{\gamma_h}}
\Opgamma\!\left(
\left\{\gamma_h^{\frac{m+1}{2}}s_{\gamma_h,R},R_h(x,\sqrt{\gamma_h}\xi)\right\}
\right)
\label{e:crochet-poisson-s-et-R}\\
&+
\frac{\widehat B_0 h^2}{i\gamma_h}
\Opgamma\!\left(
\partial_{\xi_2}\!\left(\gamma_h^{\frac{m+1}{2}}s_{\gamma_h,R}\right)\partial_{\xi_1}(R_h(x,\sqrt{\gamma_h}\xi))
-
\partial_{\xi_1}\!\left(\gamma_h^{\frac{m+1}{2}}s_{\gamma_h,R}\right)\partial_{\xi_2}(R_h(x,\sqrt{\gamma_h}\xi))
\right)
\label{e:reste-commutateur-s-et-R}\\
&+\Opgamma(\widetilde{\mathcal R}_{N,h,R}), \notag
\end{align}
where \(\widetilde{\mathcal R}_{N,h,R}\) denotes the remainder obtained by collecting all the terms of order \(2p+1\ge 3\) in the odd symbolic expansion of the commutator, together with the symbolic remainder, exactly as in the Hamiltonian contribution.

On the support of \(b(\sqrt{\gamma_h}\xi)\), one has for $j=1,2$
\[
\partial_{x_j} (R_h(x,\sqrt{\gamma_h}\xi)) = \mathcal{O}_{\mathscr{S}^0}(1),
\qquad
\partial_{\xi_j} (R_h(x,\sqrt{\gamma_h}\xi))= \mathcal{O}_{\mathscr{S}^0}(\sqrt{\gamma_h}).
\]
Moreover, from Lemma \ref{l:symbol-s-gamma-R}, one can use \eqref{e:estimation-partialx-de-sgammah} to have $\partial_{x_j} (\gamma_h^\frac{m+1}{2}s_{\gamma_h,R})=\mathcal{O}_{\mathscr{S}^0}\!\left(\frac{\gamma_h^\frac{m}2{}}{R}\right)$. Hence, using \eqref{e:estimation-derivée-gamma-s} one gets
\begin{equation}\label{e:estimation-crochet-de-poissonb-s-et-R}
\left\{\gamma_h^{\frac{m+1}{2}}s_{\gamma_h,R},R_h(x,\sqrt{\gamma_h}\xi)\right\}
=
\mathcal{O}_{\mathscr{S}^0}\!\left(\frac{\gamma_h^{\frac{m+1}{2}}}{R}\right)
+
\mathcal{O}_{\mathscr{S}^0}\!\left(\frac{\gamma_h^{m/2}}{R^2}\right).
\end{equation}
Similarly, using again \eqref{e:estimation-derivée-gamma-s}, we obtain
\begin{align}\label{e:estimation-ordre-2-commutateur-s-et-R}
\partial_{\xi_2}\!\left(\gamma_h^{\frac{m+1}{2}}s_{\gamma_h,R}\right)\partial_{\xi_1}(R_h(x,\sqrt{\gamma_h}\xi))
&- \partial_{\xi_1}\!\left(\gamma_h^{\frac{m+1}{2}}s_{\gamma_h,R}\right)\partial_{\xi_2}(R_h(x,\sqrt{\gamma_h}\xi))\nonumber\\
&= \mathcal{O}_{\mathscr{S}^0}\!\left(\frac{\gamma_h^{\frac{m+2}{2}}}{R}\right)+\mathcal{O}_{\mathscr{S}^0}\!\left(\frac{\gamma_h^{\frac{m+1}{2}}}{R^2}\right).
\end{align}
Finally, the higher odd terms in the full symbolic expansion are treated as in
the Hamiltonian contribution. Using the estimates above, together with the
uniform symbol bounds on \(R_h(x,\sqrt{\gamma_h}\xi)\), the terms of order
\(2p+1\ge3\) are bounded, for fixed \(R>0\), by
\[
\mathcal O_{\mathscr S^0,R}\!\left(
h^3\gamma_h^{\frac m2}
\right).
\]
It remains to control the symbolic remainder in the composition formula. Since
\(\gamma_h\ge h\), the same argument as in \eqref{e:reste-partie-Hamiltonienne}
gives
\[
\rho_{N,h,R}^{\rm pert}
=
\mathcal O_{\mathscr S^0,R}\!\left(
\left(\frac{h}{\sqrt{\gamma_h}}\right)^N
\right)
=
\mathcal O_{\mathscr S^0,R}\!\left(h^{N/2}\right).
\]
Thus the full perturbative remainder satisfies
\[
\widetilde{\mathcal R}_{N,h,R}
=
\mathcal O_{\mathscr S^0,R}\!\left(
h^3\gamma_h^{\frac m2}
\right)
+
\mathcal O_{\mathscr S^0,R}\!\left(h^{N/2}\right).
\]
Choosing \(N\) large enough allows us to absorb the second term into the first one. Hence, one has 
\begin{equation}\label{e:terme-de-reste-s-et-R}
\widetilde{\mathcal R}_{N,h,R}
=
\mathcal O_{\mathscr S^0,R}\!\left(
h^3\gamma_h^{\frac m2}
\right).
\end{equation}
The constants implicit in these estimates depend on finitely many seminorms of
\(R_h\), uniformly with respect to \(h\), and on finitely many derivatives of
\(H\), \(b\), and \(\chi\). They may depend on \(R\), but are independent of
\(h\). Combining \eqref{e:estimation-crochet-de-poissonb-s-et-R},
\eqref{e:estimation-ordre-2-commutateur-s-et-R} and
\eqref{e:terme-de-reste-s-et-R} and using the Calderón-Vaillancourt theorem, we obtain
\begin{align}
h\mathcal{C}_h^{\rm pert}&=\Opgamma\!\left(
\mathcal{O}_{\mathscr{S}^0}\!\left(\frac{h^2\,\gamma_h^{m/2}}{R}\right)
+
\mathcal{O}_{\mathscr{S}^0}\!\left(\frac{h^2\,\gamma_h^{\frac{m-1}{2}}}{R^2}\right) +
\mathcal{O}_{\mathscr{S}^0}\!\left(\frac{h^3\gamma_h^{m/2}}{R}\right)
+
\mathcal{O}_{\mathscr{S}^0}\!\left(\frac{h^3\gamma_h^{\frac{m-1}{2}}}{R^2}\right)
\right)\label{e:estimation-commutateur-perturbation}\\
&+
\mathcal{O}_{L^2\to L^2,R}\!\left(h^4\gamma_h^{\frac{m}{2}}
\right).
\notag
\end{align}
Finally, combining \eqref{e:final-commutator-decomposition} with
\eqref{e:estimation-commutateur-perturbation} in
\eqref{e:split-commutateur}, and applying the Calder\'on--Vaillancourt theorem
to the symbolic remainders, we obtain
\begin{align}
\mathcal C_h^{\rm Ham}+h\mathcal C_h^{\rm pert}
=&
-\frac{2\pi (k\cdot \frak{e}_\Lambda)}{L_\Lambda^2}\,
h\gamma_h^{\frac{m+1}{2}}\,
\Opgamma\!\left(
e^{2i\pi k\cdot x}\,
b(\sqrt{\gamma_h}\xi)
\left(
1-\chi\!\left(
\frac{\nabla H(\sqrt{\gamma_h}\xi)\cdot \frak e_\Lambda}
{R L_\Lambda\sqrt{\gamma_h}}
\right)
\right)
\right) \nonumber\\
& +\mathcal{O}_{L^2\to L^2,R}\left( \frac{h^2\gamma_h^{m/2}}{R}
+
\frac{h^2\gamma_h^{\frac{m-1}{2}}}{R^2}
+
h^3\gamma_h^{\frac{m}{2}} \right).
\label{e:final-operator-splitting}
\end{align}
We now conclude the proof. Inserting the decomposition \eqref{e:final-operator-splitting} into the commutator expression \eqref{e:non-compact-commutator}, and comparing the resulting identity with \eqref{e:calcul-crochet-de-lie-et-uh-solution}, we obtain, after using the rescaling formula \eqref{e:rescaling-semiclassique}, that for every fixed \(R>1\),
\begin{align}
&2\pi \frac{k \cdot \frak{e}_\Lambda}{L_\Lambda^2}
\int_\R \psi(t)
\Big\langle
\Op\!\left(
e^{2i\pi k\cdot x}
b(\xi)
\left(
1-\chi\!\left(
\frac{\nabla H(\xi)\cdot \frak e_\Lambda}
{R L_\Lambda\sqrt{\gamma_h}}
\right)
\right)
\right)
u_h(t \tau_h),u_h(t \tau_h)
\Big\rangle\,\dd t 
\label{e:identity-before-conclusion}\\
&\qquad=
\frac{i}{\tau_h\sqrt{\gamma_h}}
\int_\R \psi'(t)
\Big\langle
\Opgamma\!\left(\sqrt{\gamma_h}s_{\gamma_h,R}\right)
u_h(t \tau_h),u_h(t \tau_h)
\Big\rangle\,\dd t  \nonumber\\
&\qquad\quad
+\mathcal{O}_{R}\!\left(
\frac{h}{R\sqrt{\gamma_h}}
+\frac{h}{\gamma_hR^2}
+\frac{h^2}{\sqrt{\gamma_h}}
\right).
\nonumber
\end{align}
Here the last term follows from \eqref{e:final-operator-splitting}, after pairing the operator remainder with \(u_h(t\tau_h)\), integrating against \(\psi(t)\), and using the conservation of the \(L^2\)-norm. Moreover, by the estimate \eqref{e:estimation-s-gamma-h-en-un-sur-R} of Lemma~\ref{l:symbol-s-gamma-R}, one has
\[
\sqrt{\gamma_h}s_{\gamma_h,R}
=
\mathcal{O}_{\mathscr{S}^0}\!\left(\frac1R\right),
\]
uniformly in \(h\). Therefore, by the Calder\'on--Vaillancourt theorem and the conservation of the \(L^2\)-norm, one has
\[
\int_\R \psi'(t)
\Big\langle
\Opgamma\!\left(\sqrt{\gamma_h}s_{\gamma_h,R}\right)
u_h(t \tau_h),u_h(t \tau_h)
\Big\rangle\,\dd t 
=
\mathcal O\!\left(\frac1R\right),
\]
uniformly in \(h\). Hence \eqref{e:identity-before-conclusion} yields
\begin{align}
2\pi \frac{k \cdot \frak{e}_\Lambda}{L_\Lambda^2}
\int_\R &\psi(t)
\Big\langle
\Op\!\left(
e^{2i\pi k\cdot x}
b(\xi)
\left(
1-\chi\!\left(
\frac{\nabla H(\xi)\cdot \frak e_\Lambda}
{R L_\Lambda\sqrt{\gamma_h}}
\right)
\right)
\right)
u_h(t \tau_h),u_h(t \tau_h)
\Big\rangle\,\dd t 
\label{e:identite-finale}\\
&=
\mathcal{O}_{R}\!\left(
\frac{1}{\tau_h\sqrt{\gamma_h}R}
+\frac{h}{R\sqrt{\gamma_h}}
+\frac{h}{\gamma_hR^2}
+\frac{h^2}{\sqrt{\gamma_h}}
\right).
\nonumber
\end{align}
By the property of \((\gamma_h)_{h\to0^+}\) in Definition~\ref{d:def-gamma}, one gets
\[
\frac{1}{\tau_h\sqrt{\gamma_h}}=\mathcal{O}_{h \to 0^+}(1),\qquad
\frac{h}{\sqrt{\gamma_h}}\to0,\qquad
\frac{h}{\gamma_h}=\mathcal{O}_{h \to 0^+}(1),\qquad
\frac{h^2}{\sqrt{\gamma_h}}\to0.
\]
Therefore, for every fixed \(R>1\), one has 
\[
\limsup_{h\to0^+}
\left|
\int_\R \psi(t)
\Big\langle
\Op\!\left(
e^{2i\pi k\cdot x}
b(\xi)
\left(
1-\chi\!\left(
\frac{\nabla H(\xi)\cdot \frak e_\Lambda}
{R L_\Lambda\sqrt{\gamma_h}}
\right)
\right)
\right)
u_h(t \tau_h),u_h(t \tau_h)
\Big\rangle\,\dd t 
\right| = \mathcal{O}\left(\frac1R+\frac1{R^2}\right).
\]
Letting \(R\to+\infty\) proves that the limit in \eqref{e:symbol-voulu} vanishes.
\end{proof}

\subsection{The resonant part: a two-microlocal lift}

To capture the fine structure near $\nabla H(\xi)\cdot\mathfrak e_\Lambda=0$, we introduce a two-microlocal variable
\begin{equation}\label{e:definition-eta-gamma-h}
\eta_{\gamma_h}(\xi):=\frac{\nabla H(\xi)\cdot\mathfrak e_\Lambda}{L_\Lambda\sqrt{\gamma_h}},
\end{equation}
where $(\gamma_h)_{h\to 0^+}$ is defined as in \eqref{e:definition-gammah}. The cutoff $\chi\!\left(\frac{\nabla H(\xi)\cdot\mathfrak e_\Lambda}{R L_\Lambda\sqrt{\gamma_h}}\right)$ localizes to a $R\sqrt{\gamma_h}$-neighborhood of the resonant set $\{\nabla H(\xi)\cdot\mathfrak e_\Lambda=0\}$, and the variable $\eta$ remains of order $R$ in this region.

\begin{definition}[Two-microlocal distribution near $\Lambda$]\label{def:2micro}
For $a\in \mathcal{C}_c^\infty(\R\times T^*\T^2\times\R)$, set
\begin{equation}\label{e:definition-distrib-liftée}
\langle \widetilde W_{h,R,\Lambda}^B,a\rangle
:=\int_\R \Big\langle \Op\!\left( a\!\left(t,x,\xi,\eta_{\gamma_h}(\xi)\right) \chi\!\left(\frac{\eta_{\gamma_h}(\xi)}{R}\right)
\right)u_h(t\tau_h),u_h(t\tau_h)\Big\rangle\,\dd t  .
\end{equation}
\end{definition}
This two-microlocal object is a refined version of the previous Wigner distributions, with test symbols depending on the additional variable $\eta$. We first establish its compactness, positivity, projection and time-disintegration properties in Lemma~\ref{l:2micro}. We then prove its invariance under the lifted geodesic flow in Lemma \ref{l:RN-2micro}, before deriving the effective dynamics of its accumulation points in Lemma~\ref{l:three-regimes-2micro}.

\begin{lemma}[Compactness and projection]\label{l:2micro} Let $(u_h)_{h\to 0^+}$ solve \eqref{e:Schrodinger-PDE} and be spectrally localized in the sense of \eqref{e:spectral-projection}. The two-microlocal distribution introduced above satisfies the following: 
\begin{enumerate}
\item $(\widetilde W_{h,R,\Lambda}^B)_{h\to0^+,\;R\to+\infty}$ is bounded in
$\mathcal D'(\R\times T^*\T^2\times\R)$.
\item Any accumulation point $\widetilde W_\Lambda^B$ is a nonnegative Radon measure supported on $\R\times \T^2\times \Omega_{E_1,E_2} \times\R$.
\item The marginal in $\eta$ recovers the resonant part:
\begin{equation}\label{e:eta-marginal}
W_\Lambda^B(t , x, \xi)=\int_\R \widetilde W_\Lambda^B( t , x, \xi,\dd\eta).
\end{equation}
\item There exists a measurable family $(\widetilde\nu_{t,\Lambda}^B)$ of probability measures on $\T^2 \times \Omega_{E_1,E_2} \times\R$ such that 
\begin{equation}\label{e:2micro-disint}
    \widetilde W_\Lambda^B(\dd t ,\dd x,\dd \xi,\dd\eta)
=\widetilde\nu_{t,\Lambda}^B(\dd x,\dd \xi,\dd\eta)\otimes h_\Lambda(t) \dd t, 
\end{equation}
where $h_\Lambda \in L_{\mathrm{loc}}^1(\R)$ is given by Lemma \ref{l:RN-split}. Moreover, for a.e $t\in \R$, one has
\begin{equation}\label{e:projection-x-xi-du-lift-2micro}
\nu_{t,\Lambda}^B(\dd x,\dd\xi)
=
\int_\R
\widetilde\nu_{t,\Lambda}^B(\dd x,\dd\xi,\dd\eta).
\end{equation}
Equivalently,
\[
\left( \pi_{x,\xi}\right)_*\widetilde\nu_{t,\Lambda}^B
=
\nu_{t,\Lambda}^B,
\]
where $\pi_{x,\xi}$ denotes the canonical projection defined by $\pi_{x,\xi} : (x,\xi,\eta) \in \T^2 \times \R^2 \times \R \mapsto (x,\xi)$.
\end{enumerate}
\end{lemma}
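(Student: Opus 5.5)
We follow the proofs of Lemmas~\ref{l:magnetic-wigner}, \ref{l:split-W} and \ref{l:RN-split}, working throughout in the rescaled quantization $\Opgamma$ via \eqref{e:rescaling-semiclassique}. Then
\[
\langle \widetilde W_{h,R,\Lambda}^B,a\rangle=\int_\R\Big\langle \Opgamma\big(\widetilde a_{h,R}(t)\big)u_h(t\tau_h),u_h(t\tau_h)\Big\rangle\dd t,
\qquad
\widetilde a_{h,R}(t,x,\xi):=a\big(t,x,\sqrt{\gamma_h}\xi,G_{\gamma_h}(\xi)/L_\Lambda\big)\,\chi\big(G_{\gamma_h}(\xi)/(RL_\Lambda)\big),
\]
with $G_{\gamma_h}$ as in the proof of Lemma~\ref{l:symbol-s-gamma-R}.

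\textbf{Items 1 and 2.} By \eqref{eq:G-deriv-short}, every $\xi$-derivative of $G_{\gamma_h}$ on the support of $a(\cdot,\sqrt{\gamma_h}\xi,\cdot)$ costs at most $\gamma_h^{(|\alpha|-1)/2}\le C$. Since $a$ is compactly supported in $\eta$, the chain rule shows that $\widetilde a_{h,R}$ is bounded in $\mathscr S^0$, uniformly in $h$, in $R\ge1$, and in $t$ in a compact set. The bound is in terms of finitely many seminorms of $a$ in $(t,x,\xi,\eta)$, which is actually independent of $R$ once $R$ exceeds the $\eta$-support. Calder\'on--Vaillancourt then gives item~1. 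For positivity we apply the square-root trick of Lemma~\ref{l:split-W} to $\sqrt{\widetilde a_{h,R}+\varepsilon}$ (taking $a\ge0$). The errors are $\mathcal O_\varepsilon(h/\sqrt{\gamma_h})\to0$, since $\gamma_h\ge h$ forces $h/\sqrt{\gamma_h}\le \sqrt h$. For the support, we first note that $\widetilde W_\Lambda^B(\{|\eta|>2R'\})=0$ is not needed. What is needed is the support in $\xi$: testing against symbols supported in $\{H\notin[E_1,E_2]\}$, the uniform-in-$t$ estimate \eqref{e:preuve-support-mesure} still applies, because its proof only used $\Op(a)=\Op(a)\Op(\varphi\circ H)+o(1)$. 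That composition identity remains valid here in the rescaled calculus, with error $\mathcal O(h/\sqrt{\gamma_h})$.

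\textbf{Item 3.} This is the main point. For $a=a(t,x,\xi)$ independent of $\eta$, take a cutoff $\chi(\eta/R')$ with $R'\gg R$ and compare
\[
\langle\widetilde W^B_{h,R,\Lambda},a\,\chi(\cdot/R')\rangle=\langle\widetilde W^B_{h,R,\Lambda},a\rangle=\langle W^B_{h,R,\Lambda},a\rangle.
\]
These coincide exactly, because $\chi(\eta/R)\chi(\eta/R')=\chi(\eta/R)$ once $R'\ge 2R$. Passing to the iterated limit $h\to0^+$, $R\to+\infty$ along a common subsequence, we get $\int \widetilde W_\Lambda^B(\cdot,\dd\eta)\,\chi(\eta/R')\to W_\Lambda^B$. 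The delicate issue is mass escaping to $|\eta|=\infty$: accumulation points are only tested against compactly supported symbols in $\eta$. We handle it by monotone convergence in $R'$ for nonnegative $a$. This gives $\int\widetilde W_\Lambda^B(\cdot,\dd\eta)\le W_\Lambda^B$ immediately. For the reverse inequality, we must show that no mass sits at $|\eta|\gtrsim R'$ uniformly in $R$. The plan is to do this by a diagonal choice of the subsequence $R\to\infty$ (defining $\widetilde W_\Lambda^B$ as the limit of $\widetilde W_{h,R,\Lambda}^B$ tested against symbols that are merely bounded in $\eta$, i.e.\ extending the test class to $a\in \mathcal C_c^\infty(\R\times T^*\T^2; \mathcal C_b^\infty(\R_\eta))$ with a limit in $\eta$ at infinity). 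We would compactify $\R_\eta$ to $\overline{\R}$, obtain a measure on $\overline{\R}$ whose marginal is exactly $W_\Lambda^B$, and then either prove that the part at $\eta=\pm\infty$ vanishes or include it in the statement. This is the step we expect to be the obstacle. We would first try the compactification; if needed, we would define the marginal identity with $\overline{\R}$ and observe that the component at infinity is treated separately in the invariance lemmas.

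\textbf{Item 4.} We reproduce Lemma~\ref{l:RN-split}. The time marginal of $\widetilde W_\Lambda^B$ equals, by item~3, the time marginal $\omega_\Lambda^B=h_\Lambda\,\dd t$ of $W_\Lambda^B$. Disintegrating with respect to it via Theorem~\ref{t:desintegration} yields probability measures $\widetilde\nu^B_{t,\Lambda}$. Pushing item~3 forward by $\pi_{x,\xi}$ and using uniqueness of disintegration gives \eqref{e:projection-x-xi-du-lift-2micro} for $h_\Lambda\,\dd t$-a.e.\ $t$.
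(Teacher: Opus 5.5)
Items~1 and~2, and the disintegration mechanism in item~4, are fine. Item~3 has a genuine gap, and item~4 inherits it, because you identify the time marginal of $\widetilde W_\Lambda^B$ with $h_\Lambda(t)\,\dd t$ through item~3.

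The problem is the order of the cutoffs. The identity $\langle \widetilde W^B_{h,R,\Lambda}, a\,\chi(\cdot/R')\rangle=\langle W^B_{h,R,\Lambda},a\rangle$ requires $R'\ge 2R$. So you cannot keep $R'$ fixed while sending $R\to+\infty$, and your asserted convergence of $\int a\,\chi(\eta/R')\,\widetilde W_\Lambda^B$ to $\langle W_\Lambda^B,a\rangle$ is not justified. What survives is only the inequality $\int\widetilde W^B_\Lambda(\cdot,\dd\eta)\le W^B_\Lambda$. You then leave the reverse inequality open and even consider changing the statement by adding a component at $\eta=\pm\infty$. That does not prove the lemma.

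The missing idea is to put the \emph{fixed} cutoff on the test side and the large one on the defining side. For fixed $R_1$ and every $R\ge 2R_1$ one has $\chi(\eta/R)\chi(\eta/R_1)=\chi(\eta/R_1)$, hence
\[
\langle \widetilde W^B_{h,R,\Lambda}, a\,\chi(\cdot/R_1)\rangle=\langle W^B_h(\tau_h), a\,\chi_{R_1\sqrt{\gamma_h},\Lambda}\rangle=\langle W^B_{h,R_1,\Lambda},a\rangle ,
\]
which is independent of $R$. Take the iterated limit: $h\to0^+$ along a diagonal subsequence on which all $W^B_{h,R_m,\Lambda}$ converge, then $R\to+\infty$. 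This gives $\int a\,\chi(\eta/R_1)\,\widetilde W^B_\Lambda=\lim_{h}\langle W^B_{h,R_1,\Lambda},a\rangle$.

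Now let $R_1\to+\infty$. The right-hand side tends to $\langle W^B_\Lambda,a\rangle$ by the very definition of $W^B_\Lambda$ as an iterated limit. The left-hand side tends to $\int a\,\widetilde W^B_\Lambda$, by Fatou and then dominated convergence for $a\ge0$. This is the paper's argument, and it gives equality directly.

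There is no escape of mass to control. Whatever leaves every bounded $\eta$-region as $R\to\infty$ is, by construction, assigned to the non-compact part $W^{B,\Lambda}$ (cutoff $1-\chi$), not to $W^B_\Lambda$. The same identity also settles your compactification fallback. Testing with $a\,(1-\chi(\eta/R_1))$ gives $\langle W^B_\Lambda,a\rangle-\lim_h\langle W^B_{h,R_1,\Lambda},a\rangle\to0$, so the compactified measure carries no mass at $\eta=\pm\infty$ and no modification of the statement is needed.
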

\begin{proof}
Using the same argument as in Lemma~\ref{l:magnetic-wigner}, after the rescaling $\xi\mapsto \sqrt{\gamma_h}\xi$ the symbol is uniformly bounded in $\mathscr{S}^0$ uniformly in time, so Theorem \ref{t:Calderon-Vaillancourt} gives the boundedness of $(\widetilde W_{h,R,\Lambda}^B)$ in $\mathcal D'(\R\times T^*\T^2\times\R)$. Positivity follows by the usual regularization argument: for $a\geq 0$, set $a_\varepsilon=\sqrt{a+\varepsilon}$ and use the magnetic adjoint \eqref{e:formal-adjoint} and composition
formulas \eqref{e:composition-rule} exactly as in the proof of Lemma~\ref{l:magnetic-wigner}. Hence any accumulation point $\widetilde W_\Lambda^B$ is a nonnegative Radon measure on $\R\times T^*\T^2\times\R$. The support property in the $\xi$ variable is proved exactly as for $W^B$; see Lemma \ref{l:support-de-W^B}. The marginal identity is obtained by testing against $a(t,x,\xi)\chi(\eta/R_1)$, then letting $R_1\to\infty$ and comparing with the definition of $W_\Lambda^B$. More precisely, one has
\begin{align*}
    \displaystyle &\int_{\R \times T^*\T^2 \times \R} a(t,x,\xi)\chi\left(\frac{\eta}{R_1}\right) \widetilde{W}_\Lambda^B(\dd t,\dd x,\dd\xi,\dd\eta) \\
    &=\lim_{R \to \infty}\lim_{h\to 0^+} \int_\R \left\langle \Op\left[ a(t,x,\xi)\chi\left( \frac{\nabla H(\xi) \cdot \frak{e}_\Lambda}{R_1 L_\Lambda \sqrt{\gamma_h}}\right)\chi\left(\frac{\nabla H(\xi) \cdot \frak{e}_\Lambda}{RL_\Lambda \sqrt{\gamma_h}}\right)\right] u_h(t\tau_h) , u_h(t\tau_h) \right\rangle \dd t.
\end{align*}
Since $\chi\equiv 1$ near $0$, for every fixed $R_1>0$ there exists $C_{R_1}>0$ such that, for every $R\geq C_{R_1}$,
\begin{center}
    $\displaystyle \chi\left( \frac{\nabla H(\xi) \cdot \frak{e}_\Lambda}{ RL_\Lambda \sqrt{\gamma_h}}\right)\chi\left(\frac{\nabla H(\xi) \cdot \frak{e}_\Lambda}{R_1L_\Lambda \sqrt{\gamma_h}}\right) =\chi\left(\frac{\nabla H(\xi) \cdot \frak{e}_\Lambda}{R_1L_\Lambda \sqrt{\gamma_h}}\right).$
\end{center}
Hence, for every fixed $R_1>0$, one gets
\begin{align*}
\int_{\R \times T^*\T^2 \times \R}
a(t,x,\xi)&\chi\left(\frac{\eta}{R_1}\right)
\widetilde{W}_\Lambda^B(\dd t,\dd x,\dd\xi,\dd\eta)\\
&=\lim_{h\to 0^+} \int_\R \left\langle
\Op\left[
a(t,x,\xi)\chi\left(
\frac{\nabla H(\xi) \cdot \frak{e}_\Lambda}{R_1L_\Lambda\sqrt{\gamma_h}}
\right)\right]u_h(t\tau_h),u_h(t\tau_h)
\right\rangle \dd t.
\end{align*}
Letting $R_1\to\infty$ and using the definition of $W_\Lambda^B$, we obtain
\[
\lim_{R_1\to\infty}
\int_{\R \times T^*\T^2 \times \R}
a(t,x,\xi)\chi\left(\frac{\eta}{R_1}\right)
\widetilde{W}_\Lambda^B(\dd t,\dd x,\dd\xi,\dd\eta)
=
\int_{\R\times T^*\T^2}
a(t,x,\xi)\,W_\Lambda^B(\dd t,\dd x,\dd\xi).
\]
This proves that the marginal of $\widetilde W_\Lambda^B$ in the $\eta$ variable is $W_\Lambda^B$, namely
\[
W_\Lambda^B(t,x,\xi)=\int_\R \widetilde W_\Lambda^B(t,x,\xi,\dd\eta).
\]
In order to prove the time disintegration of $\widetilde W_\Lambda^B$, define
\begin{equation}\label{e:projection}
\begin{aligned}
\pi_{(t,x,\xi)} &: \R \times \T^2 \times \R^2 \times \R \longrightarrow \R \times \T^2 \times \R^2,
& (t,x,\xi,\eta) &\longmapsto (t,x,\xi), \\
\pi_t &: \R \times \T^2 \times \R^2 \longrightarrow \R,
& (t,x,\xi) &\longmapsto t, \\
\widetilde\pi_t &:= \pi_t \circ \pi_{(t,x,\xi)}.
\end{aligned}
\end{equation}
Then the marginal identity \eqref{e:eta-marginal} can be rewritten as
\begin{equation}\label{e:eta-marginal-projection}
(\pi_{(t,x,\xi)})_* \widetilde W_\Lambda^B = W_\Lambda^B.
\end{equation}
Applying the push-forward by $\pi_t$ to both sides of
\eqref{e:eta-marginal-projection}, we obtain
\begin{equation}\label{e:double-push-forward}
(\pi_t)_*\big[(\pi_{(t,x,\xi)})_* \widetilde W_\Lambda^B\big]
=
(\pi_t)_* W_\Lambda^B.
\end{equation}
By functoriality of push-forwards, the left-hand side is
\[
(\pi_t\circ \pi_{(t,x,\xi)})_* \widetilde W_\Lambda^B
=
(\widetilde\pi_t)_* \widetilde W_\Lambda^B.
\]
On the other hand, by \eqref{e:disint-split} and Lemma~\ref{l:RN-split}, one has
\[
(\pi_t)_* W_\Lambda^B = \omega_\Lambda^B(\dd t)=h_\Lambda(t)\,\dd t.
\]
Therefore, one gets
\[
(\widetilde\pi_t)_* \widetilde W_\Lambda^B = h_\Lambda(t)\,\dd t.
\]
Since $h_\Lambda\in L^1_{\mathrm{loc}}(\R)$, this measure is $\sigma$-finite. The disintegration Theorem \ref{t:desintegration} may therefore be applied to $\widetilde W_\Lambda^B$ with respect to the time marginal, yielding a measurable family $(\widetilde\nu_{t,\Lambda}^B)_{t\in\R}$ of probability measures such that
\begin{equation}\label{e:desintegration-du-lift-2-microlocal}
\widetilde W_\Lambda^B(\dd t,\dd x,\dd\xi,\dd\eta)
=
\widetilde\nu_{t,\Lambda}^B(\dd x,\dd\xi,\dd\eta)\otimes h_\Lambda(t)\,\dd t.
\end{equation}
The identity \((\pi_{x,\xi})_*\widetilde\nu_{t,\Lambda}^B=\nu_{t,\Lambda}^B\) is then a direct consequence of the uniqueness of disintegration. Indeed, since
\[
(\pi_{t,x,\xi})_*\widetilde W_\Lambda^B=W_\Lambda^B,
\]
pushing forward the above disintegration \eqref{e:desintegration-du-lift-2-microlocal} gives
\[
W_\Lambda^B
=
\bigl((\pi_{x,\xi})_*\widetilde\nu_{t,\Lambda}^B\bigr)
\otimes h_\Lambda(t)\dd t.
\]
Comparing with the disintegration $W_\Lambda^B=\nu_{t,\Lambda}^B\otimes h_\Lambda(t)\dd t$ yields
\[
(\pi_{x,\xi})_*\widetilde\nu_{t,\Lambda}^B=\nu_{t,\Lambda}^B
\]
for \(h_\Lambda(t)\dd t\)-almost every \(t\).
\end{proof}

In order to analyze the properties of the measure \(\widetilde\nu_{t,\Lambda}^B\), we first introduce a rescaled symbol adapted to the \(h/\sqrt{\gamma_h}\)-calculus. The purpose of the next lemma is to ensure that these symbols belong to the appropriate symbol classes uniformly in \(h\), so that the same commutator strategy as in Lemma~\ref{l:dyn-split} can be applied.
\begin{lemma}\label{l:classe-de-s-tilde}
Let $a \in \mathcal{C}_c^\infty(T^*\T^2\times \R)$, $\chi \in \mathcal{C}_c^\infty(\R)$ and $(\gamma_h)_{h\to 0^+}$ satisfying \eqref{e:regime-pour-gamma}. The symbol $\widetilde{s}_{\gamma_h,R}$ defined by 
\begin{equation}\label{e:definition-s-tilde}
    \displaystyle \widetilde{s}_{\gamma_h,R} : (x,\xi) \in T^*\T^2 \mapsto a\left(x,\sqrt{\gamma_h}\xi, \eta_{\gamma_h}(\sqrt{\gamma_h}\xi)\right)\chi\left(\frac{\eta_{\gamma_h}(\sqrt{\gamma_h}\xi)}{R}\right), 
\end{equation}
belongs to the class $\displaystyle \mathscr{S}^0(T^*\T^2)$ uniformly in $h$ (with seminorms depending on $R$). Moreover, one has $\displaystyle \gamma_h^{m/2}\widetilde{s}_{\gamma_h,R} \in \mathscr{S}^{-m}(T^*\T^2)$ uniformly in $h$ for any $m \ge 0$. Finally, for every $j\in\{1,2\}$, one has in $\mathscr S^0(T^*\T^2)$, uniformly with respect to $h$,
\begin{equation}\label{e:refined-xi-derivative-s-tilde}
\begin{aligned}
\partial_{\xi_j}\widetilde s_{\gamma_h,R}
&=
\left(
\mathrm{Hess}(H)(\sqrt{\gamma_h}\xi)
\frac{\mathfrak e_\Lambda}{L_\Lambda}
\cdot e_j
\right)
\partial_\eta a\left(x,\sqrt{\gamma_h}\xi,\eta_{\gamma_h}(\sqrt{\gamma_h}\xi)\right)
\chi\left(\frac{\eta_{\gamma_h}(\sqrt{\gamma_h}\xi)}{R}\right)
\\
&\quad
+\mathcal O_{\mathscr S^0,R}(\sqrt{\gamma_h})
+
\mathcal O_{\mathscr S^0,R}\!\left(\frac1R\right),
\end{aligned}
\end{equation}
where $(e_1,e_2)$ denotes the canonical basis of $\R^2$.
\end{lemma}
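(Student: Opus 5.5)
The argument mirrors the proof of Lemma~\ref{l:symbol-s-gamma-R}. It rests on one observation: the two-microlocal variable, once composed with the rescaling, becomes
\[
G_{\gamma_h}(\xi):=\eta_{\gamma_h}(\sqrt{\gamma_h}\xi)=\frac{\nabla H(\sqrt{\gamma_h}\xi)\cdot\mathfrak e_\Lambda}{L_\Lambda\sqrt{\gamma_h}}.
\]
Its $\xi$-derivatives satisfy, for $\alpha\neq0$,
\[
\partial_\xi^\alpha G_{\gamma_h}(\xi)=\frac{\gamma_h^{(|\alpha|-1)/2}}{L_\Lambda}\bigl(\partial^\alpha(\nabla H\cdot\mathfrak e_\Lambda)\bigr)(\sqrt{\gamma_h}\xi),
\]
exactly as in \eqref{eq:G-deriv-short}. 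In particular the first derivative is $\mathrm{Hess}(H)(\sqrt{\gamma_h}\xi)\mathfrak e_\Lambda\cdot e_j/L_\Lambda$, which is of order one. Every higher derivative carries a nonnegative power of $\sqrt{\gamma_h}$.

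Since $a$ is compactly supported in $(\xi,\eta)$, the symbol $\widetilde s_{\gamma_h,R}$ vanishes unless $\sqrt{\gamma_h}\xi$ lies in a fixed compact set $K$ and $|G_{\gamma_h}(\xi)|\le 2R$. On that support, all derivatives of $H$ evaluated at $\sqrt{\gamma_h}\xi$ are bounded by constants depending only on $K$. Writing $\widetilde s_{\gamma_h,R}=F\bigl(x,\sqrt{\gamma_h}\xi,G_{\gamma_h}(\xi)\bigr)$ with $F(x,\zeta,\eta)=a(x,\zeta,\eta)\chi(\eta/R)$ smooth and compactly supported, the Faà di Bruno formula gives a finite sum for $\partial_x^\alpha\partial_\xi^\beta\widetilde s_{\gamma_h,R}$. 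Each term is a derivative of $F$ (bounded, with $R$-dependent constants coming from $\chi(\cdot/R)$) times products of $\sqrt{\gamma_h}$ and derivatives of $G_{\gamma_h}$. All of these factors are uniformly bounded for $\gamma_h\le1$. This proves that $\widetilde s_{\gamma_h,R}$ is bounded in $\mathscr S^0$, uniformly in $h$.

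For the second claim, the support condition $|\xi|\le C\gamma_h^{-1/2}$ gives $\gamma_h^{m/2}\le C_m\langle\xi\rangle^{-m}$ on the support, as in \eqref{e:gammah-bornee-par-xi}. The same bound holds on the supports of all derivatives, which are contained in the support of the symbol. Hence $\gamma_h^{m/2}\widetilde s_{\gamma_h,R}$ is bounded in $\mathscr S^{-m}$.

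For the refined identity \eqref{e:refined-xi-derivative-s-tilde}, I would differentiate once by the chain rule, which produces three terms.
\begin{itemize}
\item[(i)] $\sqrt{\gamma_h}\,(\partial_{\zeta_j}a)\,\chi(G/R)$. This is $\mathcal O_{\mathscr S^0,R}(\sqrt{\gamma_h})$, because its further derivatives are controlled exactly as in the first step.
\item[(ii)] $(\partial_\eta a)\,\chi(G/R)\,\partial_{\xi_j}G_{\gamma_h}$. This is precisely the displayed main term, since $\partial_{\xi_j}G_{\gamma_h}$ is the Hessian expression above.
\item[(iii)] $a\,R^{-1}\chi'(G/R)\,\partial_{\xi_j}G_{\gamma_h}$. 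Each $\xi$-derivative of $\chi'(G/R)$ brings an extra factor $R^{-1}$ times a bounded derivative of $G_{\gamma_h}$, so all seminorms of this term stay $\mathcal O(1/R)$. The constants depend only on $a$, $H$ and $\chi$.
\end{itemize}
The only delicate point is bookkeeping: the remainders must be controlled in every $\mathscr S^0$ seminorm, not just in sup norm. This follows because every differentiation either hits $\sqrt{\gamma_h}\xi$ (gaining a factor $\sqrt{\gamma_h}$) or hits $G_{\gamma_h}$ (producing bounded factors), and never generates negative powers of $\gamma_h$.
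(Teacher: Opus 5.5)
Your proposal is correct and follows essentially the same route as the paper. You use the boundedness of $\sqrt{\gamma_h}\xi$ on the support, the formula showing that every derivative of $\eta_{\gamma_h}(\sqrt{\gamma_h}\xi)$ carries a nonnegative power of $\gamma_h$, the support bound $\gamma_h^{m/2}\lesssim\langle\xi\rangle^{-m}$, and the three-term chain-rule expansion of $\partial_{\xi_j}\widetilde s_{\gamma_h,R}$. You also observe that the constants in the two remainder terms can be taken independent of $R$ (for $R\ge1$); this slightly sharpens the paper's $\mathcal O_{\mathscr S^0,R}(1/R)$ notation and is what actually gives the $1/R$ factor its meaning.
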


\begin{proof}
Fix $R>1$. From the definition \eqref{e:definition-eta-gamma-h}, one can write 
\[
\widetilde{s}_{\gamma_h,R}(x,\xi)
=
a\bigl(x,\sqrt{\gamma_h}\xi,\eta_{\gamma_h}(\sqrt{\gamma_h}\xi)\bigr)\,
\chi\!\left(\frac{\eta_{\gamma_h}(\sqrt{\gamma_h}\xi)}{R}\right).
\]
Since $a\in \mathcal{C}_c^\infty(T^*\T^2\times\R)$, there exists a compact set $K\subset \R^2\times \R$ such that $\operatorname{supp} a \subset \T^2\times K$. Let $K_1\subset \R^2$ be the projection of $K$ onto the $\R^2$ variable. Then, on $\operatorname{supp}(\widetilde{s}_{\gamma_h,R})$, one necessarily has $\sqrt{\gamma_h}\xi$ bounded. Moreover, for every $|\beta|\ge 2$, one has $\partial_\xi^\beta (\sqrt{\gamma_h}\xi)=0.$ In particular, all derivatives of \(\sqrt{\gamma_h}\xi\) are uniformly bounded. One also has that, for every $|\beta|\geq 1$,
\[
\partial_\xi^\beta \left(\eta_{\gamma_h}(\sqrt{\gamma_h}\xi)\right)
=
\frac{\gamma_h^{(|\beta|-1)/2}}{L_\Lambda}\,
\bigl(\partial^\beta(\nabla H\cdot \mathfrak e_\Lambda)\bigr)(\sqrt{\gamma_h}\xi).
\]
Since $(\gamma_h)_{h \to 0^+}$ satisfies \eqref{e:regime-pour-gamma}, in particular $\gamma_h\leq 1$ for $h$ small, and since $\sqrt{\gamma_h}\xi$ stays in the fixed compact set $K_1$ on the support of $\widetilde{s}_{\gamma_h,R}$, one has uniformly in $h$, 
\[
|\partial_\xi^\alpha (\sqrt{\gamma_h}\xi)|\leq C_\alpha,
\qquad
|\partial_\xi^\alpha \eta_{\gamma_h}(\sqrt{\gamma_h}\xi)|\leq C_\alpha \qquad \text{for } |\alpha|\geq 1.
\]
We do not need any uniform bound on $\eta_{\gamma_h}(\sqrt{\gamma_h}\xi)$ itself due to the fact that when differentiating the composition above, the chain rule only involves derivatives of $\eta_{\gamma_h}(\sqrt{\gamma_h}\xi)$ of order at least one, while the factors involving $a$, $\chi$ and their derivatives are uniformly bounded. Since $a$ and $\chi$ are smooth with compact support, all their derivatives are bounded. Therefore, Leibniz' rule and repeated chain rule imply that for every multi-index $\alpha,\beta$,
\[
\sup_{x,\xi}
\bigl|
\partial_x^\alpha\partial_\xi^\beta \widetilde{s}_{\gamma_h,R}(x,\xi)
\bigr|
\leq C_{\alpha,\beta,R},
\]
which proves that $\widetilde{s}_{\gamma_h,R}\in \mathscr{S}^0(T^*\T^2)$ uniformly in $h$.

Moreover, since $a$ is compactly supported in the $\xi$ variable, there exists $M>0$ such that
\[
|\xi|\leq M\gamma_h^{-1/2}
\qquad\text{on }\operatorname{supp} \widetilde{s}_{\gamma_h,R}.
\]
Therefore, on this support, one has
\[
\gamma_h^{m/2}\leq C_m\langle \xi\rangle^{-m}.
\]
Since $\widetilde{s}_{\gamma_h,R}$ is uniformly bounded in $\mathscr{S}^0$, we obtain for all multi-indices $\alpha,\beta$,
\[
\bigl|
\partial_x^\alpha\partial_\xi^\beta
\bigl(\gamma_h^{m/2}\widetilde{s}_{\gamma_h,R}\bigr)(x,\xi)
\bigr|
\leq
C_{\alpha,\beta,R}\,\gamma_h^{m/2}
\leq
C_{\alpha,\beta,m,R}\,\langle \xi\rangle^{-m},
\]
which proves the claim. In order to prove the estimate \eqref{e:refined-xi-derivative-s-tilde}, one has by differentiating $\widetilde s_{\gamma_h,R}$ with respect to $\xi_j$
\begin{align}
\partial_{\xi_j}\widetilde s_{\gamma_h,R}
&=
\sqrt{\gamma_h}\,
\partial_{\xi_j}a\!\left(x,\sqrt{\gamma_h}\xi,\eta_{\gamma_h}(\sqrt{\gamma_h}\xi)\right)
\chi\left(\frac{\nabla H(\sqrt{\gamma_h}\xi)\cdot \frak{e}_\Lambda}{RL_\Lambda\sqrt{\gamma_h}}\right)
\label{e:derivative-symbol-xi-stilde}
\\
&\quad
+\partial_\eta a\!\left(x,\sqrt{\gamma_h}\xi,\eta_{\gamma_h}(\sqrt{\gamma_h}\xi)\right)
\partial_{\xi_j}\!\bigl(\eta_{\gamma_h}(\sqrt{\gamma_h}\xi)\bigr)
\chi\left(\frac{\nabla H(\sqrt{\gamma_h}\xi)\cdot \frak{e}_\Lambda}{RL_\Lambda\sqrt{\gamma_h}}\right)
\nonumber\\
&\quad
+\frac1R\,
a\!\left(x,\sqrt{\gamma_h}\xi,\eta_{\gamma_h}(\sqrt{\gamma_h}\xi)\right)
\chi'\left(\frac{\nabla H(\sqrt{\gamma_h}\xi)\cdot \frak{e}_\Lambda}{RL_\Lambda\sqrt{\gamma_h}}\right)
\partial_{\xi_j}\!\bigl(\eta_{\gamma_h}(\sqrt{\gamma_h}\xi)\bigr).
\nonumber
\end{align}
The first term in \eqref{e:derivative-symbol-xi-stilde} is $\mathcal O_{\mathscr S^0}(\sqrt{\gamma_h})$. Indeed, further derivatives only fall on smooth compactly supported factors composed with $\sqrt{\gamma_h}\xi$ and $\eta_{\gamma_h}(\sqrt{\gamma_h}\xi)$ and derivatives of $\eta_{\gamma_h}(\sqrt{\gamma_h}\xi)$ of order at least one are uniformly bounded while the prefactor $\sqrt{\gamma_h}$ remains. For the second term, using 
\[ 
\partial_{\xi_j}\eta_{\gamma_h}(\sqrt{\gamma_h}\xi) = \mathrm{Hess}(H)(\sqrt{\gamma_h}\xi) \frac{\mathfrak e_\Lambda}{L_\Lambda}\cdot e_j, 
\] we obtain the leading contribution. Finally, the third term is $\mathcal O_{\mathscr S^0}(1/R)$. The factor $1/R$ is explicit, and further derivatives produce uniformly bounded factors, possibly with additional powers of $1/R$ from derivatives of the cutoff. Therefore, one has \begin{align}\label{e:estimation-derivee-s-tilde} 
\partial_{\xi_j}\widetilde s_{\gamma_h,R} 
&= \mathrm{Hess}(H)(\sqrt{\gamma_h}\xi) \frac{\mathfrak e_\Lambda}{L_\Lambda} \cdot e_j\, \partial_\eta a\left(x,\sqrt{\gamma_h}\xi,\eta_{\gamma_h}(\sqrt{\gamma_h}\xi)\right) \chi\left(\frac{\eta_{\gamma_h}(\sqrt{\gamma_h}\xi)}{R}\right) \\ 
&\quad +\mathcal{O}_{\mathscr{S}^0,R}(\sqrt{\gamma_h}) +\mathcal{O}_{\mathscr{S}^0,R}\left( \frac{1}{R}\right). \nonumber 
\end{align} 
\end{proof}

\begin{lemma}\label{l:RN-2micro}
The measure $\widetilde\nu_{t,\Lambda}^B$ is invariant under the lifted geodesic flow \[
\widetilde\varphi_H^s(x,\xi,\eta):=(x+s\nabla H(\xi),\xi,\eta).
\]
\end{lemma}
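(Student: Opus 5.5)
We will mimic the proof of Lemma~\ref{l:dyn-split}, case $\tau_h\to+\infty$, but with test symbols that depend on the extra variable $\eta$. Invariance under $\widetilde\varphi_H^s$ is equivalent to the infinitesimal identity
\[
\int_\R \psi(t)\,\big\langle \nabla H(\xi)\cdot\partial_x a,\ \widetilde\nu_{t,\Lambda}^B\big\rangle\, h_\Lambda(t)\,\dd t=0
\]
for every $\psi\in\mathcal C_c^\infty(\R)$ and every $a\in\mathcal C_c^\infty(T^*\T^2\times\R)$. The reason is that $\widetilde\varphi_H^s$ does not move $\xi$ or $\eta$, so its generator is $\nabla H(\xi)\cdot\partial_x$. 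We first fix $a$, $R$ and $\psi$. Using the rescaling formula \eqref{e:rescaling-semiclassique}, we write the test operator as $\Opgamma(\widetilde s_{\gamma_h,R})$ with $\widetilde s_{\gamma_h,R}$ as in \eqref{e:definition-s-tilde}. Lemma~\ref{l:classe-de-s-tilde} says that this symbol lies in $\mathscr S^0$ and that $\gamma_h^{m/2}\widetilde s_{\gamma_h,R}\in\mathscr S^{-m}$, both uniformly in $h$. Since $u_h$ solves \eqref{e:Schrodinger-PDE}, integrating by parts in time gives the analogue of \eqref{e:heisenberg-split-microlocalisation}:
\[
\frac1{\tau_h}\int\psi'\langle\Opgamma(\widetilde s_{\gamma_h,R})u_h,u_h\rangle\,\dd t
=\int\psi\Big\langle\frac ih[\Opgamma(\widetilde s_{\gamma_h,R}),\widehat H_h]u_h,u_h\Big\rangle\,\dd t .
\]
Because of Calder\'on--Vaillancourt, the left-hand side is $\mathcal O_R(\tau_h^{-1})$, and this tends to $0$.

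Next we expand the commutator in the $h/\sqrt{\gamma_h}$-calculus. We follow exactly the bookkeeping of \eqref{e:commutator-split-detailed-clean}--\eqref{e:commutator-divided-avec-H}. We commute $\gamma_h^{m/2}\widetilde s_{\gamma_h,R}$ against $\widetilde p_h$, so that every term of the expansion \eqref{e:a-star-b} lands in $\mathscr S^0$. The terms of order $k\ge2$ coming from $H(\sqrt{\gamma_h}\xi)$ only involve $\xi$-derivatives of $H$, each of which carries a factor $\sqrt{\gamma_h}$; they are therefore $\mathcal O(h^2\gamma_h^{(m-1)/2})$. The terms coming from $hR_h$ carry an extra factor $h$. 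The final remainder is $\mathcal O((h/\sqrt{\gamma_h})^N)$, and $h/\sqrt{\gamma_h}\le h^\delta$ with $\gamma_h\ge h$ by Definition~\ref{d:def-gamma}. After dividing by $\gamma_h^{m/2}$ we obtain
\[
\frac ih[\Opgamma(\widetilde s_{\gamma_h,R}),\widehat H_h]
=\Opgamma\Big(\tfrac1{\sqrt{\gamma_h}}\{\widetilde s_{\gamma_h,R},H(\sqrt{\gamma_h}\xi)\}\Big)
+\mathcal O_R\Big(\tfrac h{\sqrt{\gamma_h}}\Big).
\]
The key observation is that $H(\sqrt{\gamma_h}\xi)$ depends only on $\xi$. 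The Poisson bracket therefore only hits the $x$-dependence of $\widetilde s_{\gamma_h,R}$. In particular, the $\eta$-variable $\eta_{\gamma_h}(\sqrt{\gamma_h}\xi)$ is never differentiated, even though its $\xi$-derivatives are only of size $\mathcal O(1)$ (see \eqref{e:refined-xi-derivative-s-tilde}), not $\mathcal O(\sqrt{\gamma_h})$. We get exactly
\[
-\nabla H(\sqrt{\gamma_h}\xi)\cdot\partial_x a\big(x,\sqrt{\gamma_h}\xi,\eta_{\gamma_h}(\sqrt{\gamma_h}\xi)\big)\,\chi\big(\eta_{\gamma_h}(\sqrt{\gamma_h}\xi)/R\big).
\]
Undoing the rescaling, this is the symbol of Definition~\ref{def:2micro} associated with the test function $\nabla H(\xi)\cdot\partial_x a$.

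Finally, we pass to the limit, first as $h\to0^+$ and then as $R\to\infty$, along the subsequence defining $\widetilde W_\Lambda^B$. We then use the disintegration \eqref{e:2micro-disint}. This yields the infinitesimal identity, and by the usual argument it gives invariance for $h_\Lambda(t)\,\dd t$-almost every $t$. (For $t$ with $h_\Lambda(t)=0$, the measure $\widetilde\nu_{t,\Lambda}^B$ is irrelevant and may be chosen invariant.)

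The main obstacle is the second step. We must check carefully that the magnetic second-order term in \eqref{e:commutator} and the higher odd terms remain negligible after dividing by $h$. The concern is that $\partial_\xi\widetilde s_{\gamma_h,R}$ is only $\mathcal O_R(1)$, and not $\mathcal O(\sqrt{\gamma_h})$. We plan to pair each derivative landing on $\widetilde s_{\gamma_h,R}$ with a derivative landing on $H(\sqrt{\gamma_h}\xi)$, which supplies a factor $\sqrt{\gamma_h}$. For example, the magnetic term has size $\frac{h^2}{\gamma_h}\cdot\mathcal O(1)\cdot\sqrt{\gamma_h}$, which after dividing by $h$ becomes $h/\sqrt{\gamma_h}\to0$. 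The same pairing controls the higher odd terms.
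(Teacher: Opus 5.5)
Your proposal is correct and follows essentially the same route as the paper. You use the same rescaled test symbol $\widetilde s_{\gamma_h,R}$ weighted by $\gamma_h^{m/2}$, an integration by parts in time whose left-hand side is $\mathcal O_R(\tau_h^{-1})$, and a commutator expansion in which the Poisson bracket with $H(\sqrt{\gamma_h}\xi)$ only sees $\partial_x\widetilde s_{\gamma_h,R}$, all other terms being $\mathcal O_R(h/\sqrt{\gamma_h})$. Your explicit pairing argument is exactly the bookkeeping the paper relies on to obtain $r_{h,R}=\mathcal O(h^2\gamma_h^{(m-1)/2})$: each derivative on $\widetilde s_{\gamma_h,R}$ (only $\mathcal O_R(1)$) is matched with a $\xi$-derivative on $H(\sqrt{\gamma_h}\xi)$ that supplies $\sqrt{\gamma_h}$, and the $R_h$-terms carry an extra factor $h$.
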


\begin{proof}
Let $\psi\in \mathcal{C}_c^\infty(\R)$ and let
$a\in \mathcal{C}_c^\infty(T^*\T^2\times \R)$.
For $R>1$, define
\[
\mathfrak{s}_{h,R}(x,\xi)
:=
a\!\left(
x,\xi,\frac{\nabla H(\xi)\cdot \mathfrak e_\Lambda}{L_\Lambda\sqrt{\gamma_h}}
\right)
\chi\!\left(
\frac{\nabla H(\xi)\cdot \mathfrak e_\Lambda}{R L_\Lambda\sqrt{\gamma_h}}
\right).
\]
By the rescaling identity \eqref{e:rescaling-semiclassique}, in the $h/\sqrt{\gamma_h}$ quantization one has
\[
\Op(\mathfrak{s}_{h,R})
=
\Opgamma\!\big(\widetilde s_{\gamma_h,R}\big),
\]
where $\widetilde s_{\gamma_h,R}$ is defined as in \eqref{e:definition-s-tilde} and also
\begin{equation}\label{e:definition-p-tilde}
    \widehat H_h= \Opgamma(\widetilde p_h),
\qquad
\widetilde p_h(x,\xi)
:=
H(\sqrt{\gamma_h}\xi)+h\,R_h(x,\sqrt{\gamma_h}\xi).
\end{equation}
Since $(u_h)_{h\to0^+}$ solves \eqref{e:Schrodinger-PDE}, an integration by parts
in time yields
\begin{equation}\label{e:heisenberg-tilde-s}
\begin{aligned}
&\frac{\gamma_h^{m/2}}{\tau_h}
\int_\R \psi'(t)\,
\Big\langle
\Opgamma\!\big(\widetilde s_{\gamma_h,R}\big)
u_h(t\tau_h),u_h(t\tau_h)
\Big\rangle \dd t \\
&\qquad =
\int_\R \psi(t)\,
\Bigg\langle
\frac{i}{h}
\big[
\Opgamma\!\big(\gamma_h^{m/2}\widetilde s_{\gamma_h,R}\big),\widehat H_h
\big]
u_h(t\tau_h),
u_h(t\tau_h)
\Bigg\rangle \dd t .
\end{aligned}
\end{equation}
By Lemma~\ref{l:classe-de-s-tilde}, the family $\widetilde s_{\gamma_h,R}$ is bounded in $\mathscr{S}^0(T^*\T^2)$, uniformly in $h$ (for fixed $R$). Hence, by the Calder\'on--Vaillancourt theorem~\ref{t:Calderon-Vaillancourt} and the unitarity of the propagator, the left-hand side of \eqref{e:heisenberg-tilde-s} is
\begin{equation}\label{e:estimation-terme-e:heisenberg-tilde-s}
\mathcal O_R\!\left(\frac{\gamma_h^{m/2}}{\tau_h}\right) = o_R\!\left(\gamma_h^{m/2}\right),
\end{equation}
since $\tau_h\to+\infty$.

We now compute the commutator, using the fact that $\widetilde p_h \in \mathscr{S}^m$ and $\gamma_h^{m/2}\widetilde s_{\gamma_h,R} \in \mathscr{S}^{-m}$ by Lemma \ref{l:classe-de-s-tilde}. Applying the commutator formula \eqref{e:commutator} with semiclassical parameter $h/\sqrt{\gamma_h}$, one gets
\begin{equation}\label{e:commutator-delta}
\big[
\Opgamma\!\big(\gamma_h^\frac{m}{2}\widetilde s_{\gamma_h,R}\big),
\Opgamma(\widetilde p_h)
\big]
=
\frac{h}{i\sqrt{\gamma_h}}\,
\Opgamma\!\Big(
\big\{\gamma_h^\frac{m}{2}\widetilde s_{\gamma_h,R},\widetilde p_h\big\}
\Big)
+
\Opgamma(r_{h,R}),
\end{equation}
where the remainder symbol $r_{h,R}$ satisfies
\begin{equation}\label{e:rhR-estimate-final}
r_{h,R}= \mathcal{O}_{\mathscr{S}^0}\!\left(h^2\gamma_h^{(m-1)/2}\right).
\end{equation}
Using the commutator formula \eqref{e:commutator} in the rescaled setting, a direct application first gives that $r_{h,R}$ is $\mathcal{O}_{\mathscr{S}^0}\!\left(h^2\gamma_h^{(m-2)/2}\right)$, but this estimate can be improved. Indeed, the symbolic expansion consists of a finite sum of explicit terms, together with a remainder controlled by the same symbol seminorms. The explicit terms are products of derivatives of $\gamma_h^{m/2}\widetilde s_{\gamma_h,R}$ and derivatives of $\widetilde p_h$, multiplied by powers of $h/\sqrt{\gamma_h}$ of order at least $2$. By Lemma~\ref{l:classe-de-s-tilde}, all derivatives of \(\widetilde s_{\gamma_h,R}\) are uniformly bounded, and therefore all derivatives of $\gamma_h^{m/2}\widetilde s_{\gamma_h,R}$ are bounded by $\mathcal{O}_{\mathscr{S}^0}(\gamma_h^{m/2})$.

Moreover, for all multi-indices $\alpha$ and $\beta$ with \(|\alpha|\ge1\) and \(|\beta|\ge1\) one has
\[
\partial_\xi^\beta H(\sqrt{\gamma_h}\xi)
=
\gamma_h^{|\beta|/2}
(\partial^\beta H)(\sqrt{\gamma_h}\xi), \quad \text{and \quad}
\partial_x^\alpha\partial_\xi^\beta\!\big(hR_h(x,\sqrt{\gamma_h}\xi)\big)
=
h\,\gamma_h^{|\beta|/2}
(\partial_x^\alpha\partial_\xi^\beta R_h)(x,\sqrt{\gamma_h}\xi).
\]
On the support of \(\widetilde s_{\gamma_h,R}\) and of all its derivatives, the variable \(\sqrt{\gamma_h}\xi\) stays in a fixed compact set, since $a$ is compactly supported in the \(\xi\)-variable. Hence, on that support, every \(\xi\)-derivative of \(H(\sqrt{\gamma_h}\xi)\) is \(\mathcal{O}(\sqrt{\gamma_h})\), and every \(\xi\)-derivative of \(hR_h(x,\sqrt{\gamma_h}\xi)\) is \(\mathcal{O}(h\sqrt{\gamma_h})\).  The derivatives in $x$ of the perturbative
term do not give a factor $\sqrt{\gamma_h}$, but they still carry the factor
$h$, which is enough since $h/\sqrt{\gamma_h}\to 0$. This yields \eqref{e:rhR-estimate-final}.
By another use of Calder\'on--Vaillancourt,
\[
\left\|
\Opgamma(r_{h,R})
\right\|_{L^2\to L^2}
\le C_{a,R}h^2\gamma_h^{(m-1)/2}.
\]
Hence, one has
\begin{equation}\label{e:commutator-divided-lifted-flow}
\frac{i}{h}
\big[
\Opgamma\!\big(\gamma_h^{m/2}\widetilde s_{\gamma_h,R}\big),\widehat H_h
\big]
=
\Opgamma\!\left(
\gamma_h^{(m-1)/2}
\big\{\widetilde s_{\gamma_h,R},\widetilde p_h\big\}
\right)
+
\mathcal O_{L^2\to L^2}\!\left(h\gamma_h^{(m-1)/2}\right).
\end{equation}
We next split the Poisson bracket:
\[
\gamma_h^{(m-1)/2}
\big\{\widetilde s_{\gamma_h,R},\widetilde p_h\big\}
=
\gamma_h^{(m-1)/2}
\big\{\widetilde s_{\gamma_h,R},H(\sqrt{\gamma_h}\xi)\big\}
+
h\gamma_h^{(m-1)/2}
\big\{\widetilde s_{\gamma_h,R},R_h(x,\sqrt{\gamma_h}\xi)\big\}.
\]
For the principal term, since $H(\sqrt{\gamma_h}\xi)$ depends only on $\xi$, one has 
\begin{equation}\label{e:crochet-poisson-sgamma-H}
\gamma_h^{(m-1)/2}
\big\{\widetilde s_{\gamma_h,R},H(\sqrt{\gamma_h}\xi)\big\}
=
-\gamma_h^{m/2}\nabla H(\sqrt{\gamma_h}\xi)\cdot
\partial_x\big(\widetilde s_{\gamma_h,R}\big).
\end{equation}
Note that the rescaled quantization of the subprincipal term can be absorbed into the remainder $\mathcal O_{L^2\to L^2}\!\left(h\gamma_h^{(m-1)/2}\right)$ of \eqref{e:commutator-divided-lifted-flow} from the fact that $\widetilde s_{\gamma_h,R}$ is bounded in $\mathscr{S}^0$ and the fact that the derivatives of $R_h(x,\sqrt{\gamma_h}\xi)$ are defined on the support of $a$ which is compact, hence they are bounded. Combining \eqref{e:commutator-divided-lifted-flow} and \eqref{e:crochet-poisson-sgamma-H}, we obtain
\begin{equation}\label{e:final-commutator-estimate}
\frac{i}{h}
\big[
\Opgamma\!\big(\gamma_h^{m/2}\widetilde s_{\gamma_h,R}\big),\widehat H_h
\big]
=
-\gamma_h^{m/2}\Opgamma\!\big(\nabla H(\sqrt{\gamma_h}\xi)\cdot \partial_x(\widetilde s_{\gamma_h,R})\big)
+
\mathcal O_{L^2\to L^2}\!\left(h\gamma_h^{(m-1)/2}\right).
\end{equation}
Dividing \eqref{e:heisenberg-tilde-s} by $\gamma_h^{m/2}$, the estimate \eqref{e:estimation-terme-e:heisenberg-tilde-s} shows that the left-hand side of \eqref{e:heisenberg-tilde-s} is
\[
\mathcal O_R\!\left(\frac{1}{\tau_h}\right)=o_R(1),
\]
since $\tau_h\to+\infty$. Moreover, by \eqref{e:regime-pour-gamma}, one has $h/\sqrt{\gamma_h}\to0$. Hence, letting $h \to 0^+$ first and then $R \to \infty$ in \eqref{e:heisenberg-tilde-s}, we obtain
\begin{equation}\label{e:infinitesimal-invariance-flot}
\int_\R \psi(t)\,
\Big\langle
-\nabla H(\xi)\cdot \partial_x a(x,\xi,\eta),
\widetilde \nu_{t,\Lambda}^B
\Big\rangle h_\Lambda(t)\,\dd t
=0.
\end{equation}
Since \eqref{e:infinitesimal-invariance-flot} holds for all $\psi\in \mathcal{C}_c^\infty(\R)$, we infer the infinitesimal form of the invariance of $\widetilde \nu_{t,\Lambda}^B$ under the lifted flow $\widetilde\varphi_H^s$.
\end{proof}

By construction, the measure $\widetilde{\nu}_{t,\Lambda}^B$ is supported, in the $\xi$ variable, on $E_{\Lambda^\perp\setminus\{0\}}\cap\Omega_{E_1,E_2}$. Together with the invariance under the lifted geodesic flow and Lemma~\ref{l:anantharamanmacia}, this implies that, for every $k \notin \Lambda$, $\psi \in \mathcal{C}_c^\infty(\R)$ and $b \in \mathcal{C}_c^\infty(\R^2 \times \R)$,
\begin{center}
    $\displaystyle \int_\R \psi(t) \left\langle e^{2i\pi k\cdot x}b(\xi , \eta), \widetilde{\nu}_{t,\Lambda}^B \right\rangle h_\Lambda(t) \dd t= 0,$
\end{center}
which implies that 
\begin{equation}\label{e:tilde-nu-que-des-coef-en-Lambda}
    \displaystyle \widetilde{\nu}_{t,\Lambda}^B = \mathcal{I}_\Lambda(\widetilde{\nu}_{t,\Lambda}^B).
\end{equation}

\subsection{Effective dynamics for the two-microlocal lift}\label{ss:Effective-dynamics-for-the-two-microlocal-lift}
To formulate the extra propagation properties of the measure $\widetilde\nu_{t,\Lambda}^B$ in terms of an initial two-microlocal measure, we also introduce the auxiliary space--time distribution associated with the initial data. For $a\in \mathcal{C}_c^\infty( T^*\T^2\times\R)$, we define
\begin{equation}\label{e:WtildeB-def-donnee-initiale}
\langle \widetilde \nu_{0,h,R,\Lambda}^B,a\rangle
:=
\Big\langle
\Op\!\left( a\left(x,\xi,\eta_{\gamma_h}(\xi)\right)\chi\left(\frac{\nabla H(\xi)\cdot \frak{e}_\Lambda}{RL_\Lambda \sqrt{\gamma_h}}\right)\right)u_h^{(0)},u_h^{(0)}
\Big\rangle_{L^2(\T^2,L)}.
\end{equation}
Possibly after extracting a further subsequence and arguing exactly as in the proof of the compactness results for the two-microlocal distributions, the family $(\widetilde \nu_{0,h,R,\Lambda}^B)_{h\to0^+, R\to +\infty}$ converges to a nonnegative Radon measure $\widetilde \nu_{0,\Lambda}^B$ on $ T^*\T^2\times\R$. The two-microlocal measure satisfies additional transport/invariance constraints depending on the size of $\tau_h\sqrt{h}$. 
\begin{lemma}\label{l:three-regimes-2micro}
Let $X_\Lambda$ be the vector field 
\begin{equation}\label{e:champs-de-vecteur-X}
    X_\Lambda:=\eta\,\frac{\mathfrak e_\Lambda}{L_\Lambda}\partial_x -G_{\widehat B_0, \Lambda}(x,\xi)\partial_\eta, 
\end{equation}
where 
\begin{equation}\label{e:definition-G-B}
G_{\widehat B_0, \Lambda}(x,\xi):=\mathrm{Hess}(H)(\xi)\frac{\mathfrak e_\Lambda}{L_\Lambda}\cdot
\big(\nabla H(\xi)^\perp\,\widehat B_0+\partial_x\mathcal{I}_\Lambda(\mathscr{R})(x,\xi)\big),
\end{equation}
with the following convention $\begin{pmatrix} u_1 \\ u_2 \end{pmatrix}^\perp = \begin{pmatrix} -u_2 \\ u_1\end{pmatrix}$. We denote by $\phi^t_{X_\Lambda}$ the flow on $(x,\xi,\eta)$ generated by $X_\Lambda$, that is, 
\begin{equation}\label{e:definition-nouveau-flot}
    \dot x(t)=\eta(t)\frac{\mathfrak e_\Lambda}{L_\Lambda},\qquad \dot\xi(t)=0,\qquad
\dot\eta(t)=-G_{\widehat B_0, \Lambda}(x(t),\xi(t)).
\end{equation}

Then the following holds for $\widetilde\nu_{t,\Lambda}^B$:
\begin{enumerate}
\item If $\tau_h\ll h^{-1/2}$, then $\widetilde\nu_{t,\Lambda}^B$ is transported by the free flow
\begin{equation}\label{e:transport-tau_h-petit-sur-racinedeh}
\phi_0^t(x,\xi,\eta):=\Bigl(x+t\,\eta\,\tfrac{\mathfrak e_\Lambda}{L_\Lambda},\,\xi,\,\eta\Bigr),
\qquad\text{i.e.}\qquad
\widetilde\nu_{t,\Lambda}^B=(\phi_0^{t})_*\mathcal{I}_\Lambda(\widetilde\nu_{0,\Lambda}^B).
\end{equation}
\item If $\tau_h= h^{-1/2}$, then $\widetilde\nu_{t,\Lambda}^B$ is transported
by the flow $\phi^t_{X_\Lambda}$:
\begin{equation}\label{e:transport-tau_h-taille-racinedeh}
    \widetilde\nu_{t,\Lambda}^B = (\phi_{X_\Lambda}^t)_*\mathcal{I}_\Lambda(\widetilde\nu_{0,\Lambda}^B).
\end{equation}
\item If $\tau_h\gg h^{-1/2}$, then $\widetilde\nu_{t,\Lambda}^B$ is invariant under the flow $\phi_{X_\Lambda}^s$ generated by $X_\Lambda$, namely
\begin{equation}\label{e:extra-invariance-tau_h-grand-sur-racinedeh}
(\phi_{X_\Lambda}^s)_*\widetilde\nu_{t,\Lambda}^B=\widetilde\nu_{t,\Lambda}^B
\qquad \forall s\in\R.
\end{equation}
\end{enumerate}
\end{lemma}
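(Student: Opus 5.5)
The plan is to repeat the Heisenberg/commutator argument of Lemmas~\ref{l:dyn-split} and \ref{l:RN-2micro}, but to expand the commutator one order further, so that both the magnetic term and the subprincipal term of $\widehat H_h$ become visible at the two-microlocal scale.

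\textbf{Setup.} Fix $\psi\in\mathcal C_c^\infty(\R)$ and $a\in\mathcal C_c^\infty(T^*\T^2\times\R)$. By \eqref{e:tilde-nu-que-des-coef-en-Lambda}, it is enough to take $a=e^{2i\pi k\cdot x}b(\xi,\eta)$ with $k\in\Lambda$. We then use the rescaled symbol $\widetilde s_{\gamma_h,R}$ of Lemma~\ref{l:classe-de-s-tilde} and the time-differentiated identity \eqref{e:heisenberg-tilde-s}, with $\gamma_h$ fixed by Definition~\ref{d:def-gamma}. We keep all terms of the commutator $\frac ih[\Op(\mathfrak s_{h,R}),\widehat H_h]$ up to relative size $h/\sqrt{\gamma_h}$, using \eqref{e:commutator} at the parameter $h/\sqrt{\gamma_h}$. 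Three contributions appear.
\begin{itemize}
\item[(a)] \emph{The Poisson bracket with $H$.} It equals $-\nabla H(\xi)\cdot\partial_x a$. Decompose $\nabla H(\xi)=\sqrt{\gamma_h}\,\eta_{\gamma_h}(\xi)\,\frac{\mathfrak e_\Lambda}{L_\Lambda}+\frac{\nabla H\cdot\mathfrak e_\Lambda^\perp}{L_\Lambda^2}\,\mathfrak e_\Lambda^\perp$. Since $k\in\Lambda$ gives $\mathfrak e_\Lambda^\perp\cdot\partial_x a=0$, this term is exactly $-\sqrt{\gamma_h}\,\eta\,\frac{\mathfrak e_\Lambda}{L_\Lambda}\cdot\partial_x a$.
\item[(b)] \emph{The magnetic term $\frac{\widehat B_0 h}{i}\,(\partial_{\xi_2}\cdot\,\partial_{\xi_1}H-\partial_{\xi_1}\cdot\,\partial_{\xi_2}H)$.} By \eqref{e:refined-xi-derivative-s-tilde}, the $\xi$-derivative of $a(x,\xi,\eta_{\gamma_h}(\xi))$ is dominated by $\gamma_h^{-1/2}\,\mathrm{Hess}(H)\frac{\mathfrak e_\Lambda}{L_\Lambda}\,\partial_\eta a$. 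This produces a term of size $\frac{h}{\sqrt{\gamma_h}}\,\widehat B_0\,\mathrm{Hess}(H)\frac{\mathfrak e_\Lambda}{L_\Lambda}\cdot\nabla H^\perp\,\partial_\eta a$; the sign has to be checked against the convention for $\perp$.
\item[(c)] \emph{The subprincipal term $h\{a,R_h\}$.} Its leading part is $h\,\partial_x\mathscr R\cdot\partial_\xi a$, which again equals $\frac{h}{\sqrt{\gamma_h}}\,\mathrm{Hess}(H)\frac{\mathfrak e_\Lambda}{L_\Lambda}\cdot\partial_x\mathscr R\,\partial_\eta a$ plus lower-order terms.
\end{itemize}
In (c), the Fourier modes of $\mathscr R$ outside $\Lambda$, multiplied by $e^{2i\pi k\cdot x}$, give modes outside $\Lambda$. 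These pair to zero against $\widetilde\nu^B_{t,\Lambda}$ in the limit, by \eqref{e:tilde-nu-que-des-coef-en-Lambda}, so $\mathscr R$ can be replaced by $\mathcal I_\Lambda(\mathscr R)$. All other terms — the $\mathcal O(1/R)$ cutoff derivatives, the $\mathcal O(\sqrt{\gamma_h})$ corrections, and the higher odd terms in the expansion — are bounded exactly as in the proofs of Lemmas~\ref{l:noncompact-modes} and \ref{l:RN-2micro}, with the normalization by $\gamma_h^{m/2}$.

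\textbf{Resulting identity.} Multiplying \eqref{e:heisenberg-tilde-s} by $\tau_h$, we expect
\[
\int_\R\psi'(t)\langle \widetilde W_{h,R,\Lambda}^B(t),a\rangle\,\dd t=\tau_h\int_\R\psi(t)\Big\langle \Op\Big(\sqrt{\gamma_h}\,\eta\tfrac{\mathfrak e_\Lambda}{L_\Lambda}\cdot\partial_x a-\tfrac{h}{\sqrt{\gamma_h}}\,G_{\widehat B_0,\Lambda}\,\partial_\eta a\Big)u_h,u_h\Big\rangle\dd t+o(1)+\mathcal O(1/R),
\]
where both symbols are evaluated at $\eta=\eta_{\gamma_h}(\xi)$ with the cutoff $\chi(\eta/R)$.

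\textbf{The three regimes.} We then read off each case of the statement.
\begin{itemize}
\item If $\tau_h\ll h^{-1/2}$, then $\gamma_h=\tau_h^{-2}$. The first coefficient is $\tau_h\sqrt{\gamma_h}=1$ and the second is $h\tau_h^2\to0$. We obtain the weak transport equation $\partial_t\widetilde\nu=-\eta\frac{\mathfrak e_\Lambda}{L_\Lambda}\cdot\partial_x\widetilde\nu$, i.e. transport by $\phi_0^t$.
\item If $\tau_h=h^{-1/2}$, then $\gamma_h=h$ and both coefficients equal $1$, which gives the weak form of transport by $X_\Lambda$.
\item If $\tau_h\gg h^{-1/2}$, then $\gamma_h=h$. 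Dividing by $\tau_h\sqrt h\to\infty$ kills the left-hand side and gives $\langle X_\Lambda a,\widetilde\nu_{t,\Lambda}^B\rangle=0$ for a.e. $t$. This is the infinitesimal invariance, and it integrates to flow invariance because $X_\Lambda$ is complete: $\eta$ grows at most linearly since $G$ is bounded on the compact $\xi$-support.
\end{itemize}

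\textbf{Initial condition in the transport cases.} In cases 1 and 2 we also need the initial datum. Testing with $\psi$ approximating an indicator of $[0,t]$, or equivalently running the same computation on $F_h(t)=\langle\Op(\mathfrak s_{h,R})u_h(t\tau_h),u_h(t\tau_h)\rangle$ directly, identifies the measure at $t=0^+$ with $\widetilde\nu_{0,\Lambda}^B$. The projection $\mathcal I_\Lambda$ on the initial datum appears because only $\Lambda$-modes are tested: for $k\notin\Lambda$ on the resonant layer, the time average of the mode vanishes since $\tau_h\to\infty$, as in Lemma~\ref{l:RN-2micro}.

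\textbf{Main obstacle.} I expect the main difficulty to be the bookkeeping of orders in step (b). The magnetic part of \eqref{e:commutator} is a priori of order $h^2/\gamma_h$ relative to the operator. One must show that, after dividing by $h$ and using that $\xi$-derivatives of $H(\sqrt{\gamma_h}\xi)$ carry a factor $\sqrt{\gamma_h}$, it lands exactly at order $h/\sqrt{\gamma_h}$, with coefficient $\widehat B_0\,\mathrm{Hess}(H)\frac{\mathfrak e_\Lambda}{L_\Lambda}\cdot\nabla H^\perp$ and with the correct sign. At the same time, all other second-order terms must be $o(h/\sqrt{\gamma_h})$ uniformly for fixed $R$. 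If this turns out to be delicate, I would first test with symbols $b(\xi,\eta)$ independent of $x$, where only the magnetic and $\mathcal I_\Lambda(\mathscr R)$ terms survive, to fix the constant, and then treat general $k\in\Lambda$.
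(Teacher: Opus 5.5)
Your proposal is correct and follows essentially the same route as the paper: you expand the commutator of $\Opgamma(\gamma_h^{m/2}\widetilde s_{\gamma_h,R})$ with $\widehat H_h$ up to the magnetic second-order term, reduce the bracket with $H$ to $-\eta\,\frac{\mathfrak e_\Lambda}{L_\Lambda}\cdot\partial_x a$ using $\mathcal I_\Lambda(a)=a$, obtain the factor $\mathrm{Hess}(H)\frac{\mathfrak e_\Lambda}{L_\Lambda}$ from $\partial_\xi\eta_{\gamma_h}$ in both the magnetic and the subprincipal terms, and read off the three regimes from Definition~\ref{d:def-gamma}. Two bookkeeping slips remain: your displayed ``resulting identity'' has the opposite overall sign to your item (a), so taken literally it gives transport by $\phi^{-t}$ in cases 1--2 (the right-hand side should be $\tau_h\int\psi\langle\Op(-\sqrt{\gamma_h}\,\eta\frac{\mathfrak e_\Lambda}{L_\Lambda}\cdot\partial_x a+\frac{h}{\sqrt{\gamma_h}}G_{\widehat B_0,\Lambda}\partial_\eta a)u_h,u_h\rangle\,\dd t$), and its remainders are of size $\tau_h\sqrt{\gamma_h}\,\bigl(o(1)+\mathcal O(1/R)\bigr)$ rather than $o(1)+\mathcal O(1/R)$, which matters when $\tau_h\gg h^{-1/2}$ and is why the paper divides by $h\gamma_h^{(m+1)/2}$ instead of multiplying by $\tau_h$.
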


This lemma shows that the evolution of $\widetilde{\nu}_{t,\Lambda}^B$ is explicit in the subcritical and critical regimes $\tau_h\ll h^{-1/2}$ and $\tau_h= h^{-1/2}$, since in both cases the measure is completely determined by its initial data through the relevant transport flow. As a consequence, its projection $\nu_{t,\Lambda}^B:=(\pi_{x,\xi})_*\widetilde{\nu}_{t,\Lambda}^B$ onto $T^*\T^2$ given by \eqref{e:projection-x-xi-du-lift-2micro} is also entirely determined by the initial two-microlocal measure $\widetilde{\nu}_{0,\Lambda}^B$.

The proof of the lemma follows the same strategy as in Lemma~\ref{l:noncompact-modes}: we test the invariance identity through a commutator argument. The Schrödinger equation \eqref{e:Schrodinger-PDE} gives the integration by parts in time, while the magnetic symbolic commutator formula identifies the first-order term, namely the sum of the classical Poisson bracket and the magnetic correction.

As a consequence of Lemma \ref{l:classe-de-s-tilde}, $\Opgamma(\widetilde s_{\gamma_h,R})$ is an admissible pseudodifferential test operator, and we can repeat the commutator argument used in Lemma~\ref{l:noncompact-modes}. We emphasize that the zoom scaling $\gamma_h$ has been fixed (depending on the time regime of $\tau_h$) in Definition~\ref{d:def-gamma}.

\begin{proof}
Fix $\psi\in \mathcal{C}_c^\infty(\R)$ and $a\in \mathcal{C}_c^\infty(T^*\T^2\times\R)$. By \eqref{e:tilde-nu-que-des-coef-en-Lambda}, it is enough to assume that $\mathcal I_\Lambda(a)=a$. As in the proof of Lemma~\ref{l:noncompact-modes}, we consider the commutator
\begin{equation}\label{e:commutateur-total}
\int_\R \psi(t)
\left\langle
\left[
\Opgamma\!\left(\gamma_h^{m/2}\widetilde s_{\gamma_h,R}\right),
\Op\!\left(H(\xi)+hR_h(x,\xi)\right)
\right]
u_h(t\tau_h),u_h(t\tau_h)
\right\rangle\dd t.
\end{equation}
The symbols entering this commutator are uniformly bounded in the expected classes: by Lemma \ref{l:classe-de-s-tilde}, the family $\gamma_h^{m/2}\widetilde s_{\gamma_h,R}$ is uniformly bounded in $\mathscr{S}^{-m}$, whereas $H(\sqrt{\gamma_h}\xi)+hR_h(x,\sqrt{\gamma_h}\xi)$ is uniformly bounded in $\mathscr{S}^m$. Since $(u_h)_{h\to0^+}$ solves \eqref{e:Schrodinger-PDE}, an integration by parts in time shows that \eqref{e:commutateur-total} is equal to
\begin{equation}\label{e:ipp-commutateur-total}
-i\frac{h\gamma_h^{m/2}}{\tau_h}
\int_\R \psi'(t)
\left\langle
\Opgamma(\widetilde s_{\gamma_h,R})u_h(t\tau_h),
u_h(t\tau_h)
\right\rangle\,\dd  t.
\end{equation}
On the other hand, using the rescaling formula \eqref{e:rescaling-semiclassique}, we rewrite the commutator in the $\frac{h}{\sqrt{\gamma_h}}$-quantization and split the commutator into its Hamiltonian and perturbative contributions:
\begin{equation}\label{e:split-commutator-3regimes-proof}
\left[
\Opgamma\!\left(\gamma_h^{m/2}\widetilde s_{\gamma_h,R}\right),
\Opgamma\!\left(H(\sqrt{\gamma_h}\xi)+hR_h(x,\sqrt{\gamma_h}\xi)\right)
\right]
=
\mathcal C_h^{\rm Ham}+h\,\mathcal C_h^{\rm pert},
\end{equation}
where
\[
\mathcal C_h^{\rm Ham}
:=
\left[
\Opgamma\!\left(\gamma_h^{m/2}\widetilde s_{\gamma_h,R}\right),
\Opgamma\!\left(H(\sqrt{\gamma_h}\xi)\right)
\right],
\]
and
\[
\mathcal C_h^{\rm pert}
:=
\left[
\Opgamma\!\left(\gamma_h^{m/2}\widetilde s_{\gamma_h,R}\right),
\Opgamma\!\left(R_h(x,\sqrt{\gamma_h}\xi)\right)
\right].
\]

Since $\gamma_h^{m/2}\widetilde s_{\gamma_h,R}\in \mathscr{S}^{-m}$ uniformly and $H(\sqrt{\gamma_h}\xi)+hR_h(x,\sqrt{\gamma_h}\xi)$ is uniformly bounded in $\mathscr{S}^m$, the magnetic symbolic calculus in the $\frac{h}{\sqrt{\gamma_h}}$-quantization gives, for every integer $N$ large enough,
\begin{align}
\mathcal C_h^{\rm Ham}
&=
\frac{h}{i\sqrt{\gamma_h}}
\Opgamma\!\left(
\left\{\gamma_h^{m/2}\widetilde s_{\gamma_h,R},H(\sqrt{\gamma_h}\xi)\right\}
\right)
\label{e:ham-expansion-1-3regimes-proof}
\\
&\quad
+\frac{\widehat B_0 h^2}{i\gamma_h}
\Opgamma\!\left(
\partial_{\xi_2}\!\left(\gamma_h^{m/2}\widetilde s_{\gamma_h,R}\right)
\partial_{\xi_1}\!\left(H(\sqrt{\gamma_h}\xi)\right)
-
\partial_{\xi_1}\!\left(\gamma_h^{m/2}\widetilde s_{\gamma_h,R}\right)
\partial_{\xi_2}\!\left(H(\sqrt{\gamma_h}\xi)\right)
\right)
\label{e:ham-expansion-2-3regimes-proof}
\\
&\quad
+\Opgamma(\mathfrak {H}_{N,h,R})+ \Opgamma(\widetilde \rho_{N,h,R}),
\label{e:ham-expansion-3-3regimes-proof}
\end{align}
where the symbol $\mathfrak{H}_{N,h,R}$ collects the explicit odd terms of order $\ge 3$ in the symbolic expansion up to order N. Here, $\widetilde \rho_{N,h,R}$ denotes the symbolic remainder exactly as in the proof of Lemma~\ref{l:noncompact-modes} and satisfies for each fixed \(R>0\),
\begin{equation}\label{e:reste-partie-Hamiltonienne-non-compacte}
\widetilde \rho_{N,h,R}
=
\mathcal{O}_{\mathscr{S}^0,R}\!\left(
\left(\frac{h}{\sqrt{\gamma_h}}\right)^N
\right)
=
\mathcal{O}_{\mathscr{S}^0,R}\!\left(h^{N/2}\right),
\end{equation}
because Definition~\ref{d:def-gamma} implies \(\gamma_h\ge h\).

\medskip
\noindent
\textbf{1. Hamiltonian contribution.}
We begin with the Poisson bracket. Since $H(\sqrt{\gamma_h}\xi)$ is independent of $x$ and since $\mathcal I_\Lambda(a)=a$, all the $x$-Fourier modes of $a$ are parallel to $\mathfrak e_\Lambda$. Computing the Poisson bracket explicitly, we obtain 
\begin{equation}\label{e:crochet-poisson-H-et-s-compact-modes}
\left\{
\gamma_h^{m/2}\widetilde s_{\gamma_h,R},H(\sqrt{\gamma_h}\xi)
\right\}=
-\gamma_h^{\frac m2+1}
\eta_{\gamma_h}(\sqrt{\gamma_h}\xi)\,
\frac{\mathfrak e_\Lambda}{L_\Lambda}\cdot
\partial_x(a)\!\left(x,\sqrt{\gamma_h}\xi,\eta_{\gamma_h}(\sqrt{\gamma_h}\xi)\right)
\chi_{R\sqrt{\gamma_h},\Lambda}(\sqrt{\gamma_h}\xi),
\end{equation}
where we recall that the notation $\chi_{R\sqrt{\gamma_h},\Lambda}$ was introduced in \eqref{e:chi-resonant}.

We next compute the magnetic correction in \eqref{e:ham-expansion-2-3regimes-proof}. This is where the Hessian of $H$ appears. Using the estimate \eqref{e:refined-xi-derivative-s-tilde} of Lemma \ref{l:classe-de-s-tilde}, one has 
\begin{align}\label{e:estimation-derivee-s-tilde-preuve}
    \partial_{\xi_j}\widetilde s_{\gamma_h,R} 
    &= \mathrm{Hess}(H)(\sqrt{\gamma_h}\xi)\frac{\frak{e}_\Lambda}{L_\Lambda} \cdot e_j \partial_\eta(a)\left(x,\sqrt{\gamma_h}\xi,\eta_{\gamma_h}(\sqrt{\gamma_h}\xi)\right)
    \chi_{R\sqrt{\gamma_h},\Lambda}(\sqrt{\gamma_h}\xi) \\
    &+ \mathcal{O}_{\mathscr{S}^0,R}(\sqrt{\gamma_h}) +\mathcal{O}_{\mathscr{S}^0,R}\left( \frac{1}{R}\right), \nonumber
\end{align}
where $(e_1,e_2)$ denotes the canonical basis of $\R^2$. Since $\partial_{\xi_j}(H(\sqrt{\gamma_h}\xi)) = \sqrt{\gamma_h}\,\partial_{\xi_j}H(\sqrt{\gamma_h}\xi)$, and since $\sqrt{\gamma_h}\xi$ remains in a fixed compact set on the support of $\widetilde s_{\gamma_h,R}$, we infer from
\eqref{e:estimation-derivee-s-tilde-preuve} that
\begin{align}
&\partial_{\xi_2}\!\left(\gamma_h^{m/2}\widetilde s_{\gamma_h,R}\right)\,
\partial_{\xi_1}\!\left(H(\sqrt{\gamma_h}\xi)\right)
-
\partial_{\xi_1}\!\left(\gamma_h^{m/2}\widetilde s_{\gamma_h,R}\right)\,
\partial_{\xi_2}\!\left(H(\sqrt{\gamma_h}\xi)\right)
\notag\\
&\qquad=
\gamma_h^{\frac{m+1}{2}}
\Bigg[
\mathrm{Hess}(H)(\sqrt{\gamma_h}\xi)\frac{\mathfrak e_\Lambda}{L_\Lambda}
\cdot \nabla H(\sqrt{\gamma_h}\xi)^\perp
\Bigg]
\partial_\eta a\!\left(x,\sqrt{\gamma_h}\xi,\eta_{\gamma_h}(\sqrt{\gamma_h}\xi)\right)
\chi_{R\sqrt{\gamma_h},\Lambda}(\sqrt{\gamma_h}\xi)
\notag\\
&\qquad\quad
+\mathcal O_{\mathscr{S}^0,R}\left(\gamma_h^{\frac{m+2}{2}}\right)
+\mathcal O_{\mathscr{S}^0,R}\!\left(\frac{\gamma_h^{\frac{m+1}{2}}}{R}\right).
\label{e:magnetic-principal-after-weight-3regimes-proof}
\end{align}
It remains to control the remainder symbol $\widetilde{\mathfrak H}_{N,h,R}$. By definition, it collects all odd terms of order $2p+1\ge 3$ in the symbolic expansion of the commutator together with the symbolic remainder. Since $H(\sqrt{\gamma_h}\xi')$ is independent of $x'$, every surviving monomial in
\[
\Omega_{\frac{h}{\sqrt{\gamma_h}}B}
(D_x,D_\xi,D_{x'},D_{\xi'})^{2p+1}
\Big[
\gamma_h^{m/2}\widetilde s_{\gamma_h,R}(x,\xi)\,
H(\sqrt{\gamma_h}\xi')
\Big]
\]
contains at least $2p+1$ derivatives in the $\xi'$ variable falling on $H(\sqrt{\gamma_h}\xi')$. Hence it gains a factor $\gamma_h^{(2p+1)/2}$. Since $\widetilde s_{\gamma_h,R}$ and all its derivatives are uniformly bounded in $\mathscr{S}^0$ for each fixed $R>1$, we infer that, for every $p\ge 1$,
\[
\left(\frac{h}{\sqrt{\gamma_h}}\right)^{2p+1}
\Omega_{\frac{h}{\sqrt{\gamma_h}}B}
(D_x,D_\xi,D_{x'},D_{\xi'})^{2p+1}
\Big[
\gamma_h^{m/2}\widetilde s_{\gamma_h,R}(x,\xi)\,
H(\sqrt{\gamma_h}\xi')
\Big]
=
\mathcal O_{\mathscr{S}^0,R}\!\left(h^{2p+1}\gamma_h^{m/2}\right),
\]
where the constant involved depends on finitely many seminorms of $H$, $a$, and $\chi$. In particular, using \eqref{e:reste-partie-Hamiltonienne-non-compacte}, all odd terms of order $\ge 3$ are absorbed into
\begin{equation}\label{e:ham-remainder-symbol-3regimes-proof}
\widetilde{\mathfrak H}_{N,h,R} := \mathfrak{H}_{N,h,R}+\widetilde \rho_{N,h,R}
=
\mathcal O_{\mathscr{S}^0,R}\!\left(h^3\gamma_h^{m/2}\right)+\mathcal{O}_{\mathscr{S}^0,R}\!\left(h^{N/2}\right).
\end{equation}
Again, taking $N$ large enough leads to 
\begin{equation}\label{e:ham-remainder-symbol-3regimes-proof-final}
\widetilde{\mathfrak H}_{N,h,R} = \mathcal O_{\mathscr{S}^0,R}\!\left(h^3\gamma_h^{m/2}\right).
\end{equation}
Hence, using the Calderón--Vaillancourt Theorem and combining \eqref{e:crochet-poisson-H-et-s-compact-modes}, \eqref{e:magnetic-principal-after-weight-3regimes-proof} and \eqref{e:ham-remainder-symbol-3regimes-proof-final} all together in the Hamiltonian commutator, one gets
\begin{align}\label{e:commutateur-hamiltonien-H-et-s-tilde}
&\mathcal C_h^{\rm Ham}
=
-\gamma_h^{\frac{m+1}{2}}\frac{h}{i}
\Opgamma\!\left(
 \eta_{\gamma_h}(\sqrt{\gamma_h}\xi)\,\frac{\mathfrak e_\Lambda}{L_\Lambda}\cdot \partial_x(a)\!\left(x,\sqrt{\gamma_h}\xi,\eta_{\gamma_h}(\sqrt{\gamma_h}\xi)\right) \chi_{R\sqrt{\gamma_h},\Lambda}(\sqrt{\gamma_h}\xi) 
\right)
\\
&
+\gamma_h^{\frac{m-1}{2}}\!\frac{\widehat B_0 h^2}{i}
\Opgamma\!\left(
\!\Bigg[ \mathrm{Hess}(H)(\sqrt{\gamma_h}\xi)\frac{\mathfrak e_\Lambda}{L_\Lambda} \!\cdot\! \nabla H(\sqrt{\gamma_h}\xi)^\perp \!\Bigg] \partial_\eta a\!\left(x,\!\sqrt{\gamma_h}\xi,\eta_{\gamma_h}(\sqrt{\gamma_h}\xi)\right) \chi_{R\sqrt{\gamma_h},\Lambda}(\sqrt{\gamma_h}\xi) 
\right)
\nonumber
\\
&+ \mathcal O_{L^2 \to L^2}\left(h^2\gamma_h^{\frac{m}{2}}\right)
+\mathcal O_{L^2 \to L^2}\!\left(h^2\frac{\gamma_h^{\frac{m-1}{2}}}{R}\right) +\,\mathcal O_{L^2 \to L^2,R}\!\left(h^3\gamma_h^{m/2}\right).
\nonumber
\end{align}
\textbf{2. Perturbative contribution.}
We now argue similarly to treat the perturbative contribution, which is given by
\begin{align}
\mathcal C_h^{\rm pert}
&=
\frac{h}{i\sqrt{\gamma_h}}
\Opgamma\!\left(
\left\{\gamma_h^{m/2}\widetilde s_{\gamma_h,R},R_h(x,\sqrt{\gamma_h}\xi)\right\}
\right)
\label{e:pert-expansion-1-3regimes-proof}
\\
&
+\frac{\widehat B_0 h^2}{i\gamma_h}
\Opgamma\!\left(
\partial_{\xi_2}\!\left(\gamma_h^{m/2}\widetilde s_{\gamma_h,R}\right)
\partial_{\xi_1}\!\left(R_h(x,\sqrt{\gamma_h}\xi)\right)
-
\partial_{\xi_1}\!\left(\gamma_h^{m/2}\widetilde s_{\gamma_h,R}\right)
\partial_{\xi_2}\!\left(R_h(x,\sqrt{\gamma_h}\xi)\right)
\right)
\label{e:pert-expansion-2-3regimes-proof}
\\
&
+\Opgamma(\widetilde{\mathfrak R}_{N,h,R}),
\label{e:pert-expansion-3-3regimes-proof}
\end{align}
where $\widetilde{\mathfrak R}_{N,h,R}$ collects the odd terms of order at least $3$ together with the symbolic remainder involved in the composition formula \eqref{e:composition-rule}.
We first discuss the Poisson bracket in \eqref{e:pert-expansion-1-3regimes-proof} which will give the main contribution. One has
\[
\left\{
\widetilde s_{\gamma_h,R},R_h(x,\sqrt{\gamma_h}\xi)
\right\}
=
\partial_\xi\widetilde s_{\gamma_h,R}\cdot \partial_xR_h(x,\sqrt{\gamma_h}\xi)
-
\partial_x\widetilde s_{\gamma_h,R}\cdot
\partial_\xi\!\left(R_h(x,\sqrt{\gamma_h}\xi)\right).
\]
Using \eqref{e:estimation-derivee-s-tilde} and $\partial_\xi\!\left(R_h(x,\sqrt{\gamma_h}\xi)\right) = \sqrt{\gamma_h}\,\partial_\xi R_h(x,\sqrt{\gamma_h}\xi),$
we obtain
\begin{align}
&\left\{
 \gamma_h^\frac{m}{2}\widetilde s_{\gamma_h,R},  R_h(x,\sqrt{\gamma_h}\xi)
\right\} \nonumber\\
&\hspace{1cm}= \gamma_h^\frac{m}{2}\Bigg[
\mathrm{Hess}(H)(\sqrt{\gamma_h}\xi)\frac{\mathfrak e_\Lambda}{L_\Lambda}
\cdot \partial_xR_h(x,\sqrt{\gamma_h}\xi)
\Bigg]
\partial_\eta a\!\left(x,\sqrt{\gamma_h}\xi,\eta_{\gamma_h}(\sqrt{\gamma_h}\xi)\right)\chi_{R\sqrt{\gamma_h},\Lambda}(\sqrt{\gamma_h}\xi)
\notag\\
&\hspace{1cm}+\mathcal O_{\mathscr{S}^0}(\gamma_h^\frac{m+1}{2}) +\mathcal O_{\mathscr{S}^0}\!\left(\frac{\gamma_h^\frac{m}{2}}{R}\right).
\label{e:crochet-de-poisson-R-et-s-tilde}
\end{align}
We now turn to the magnetic correction in \eqref{e:pert-expansion-2-3regimes-proof}. Since $R_h$ is uniformly bounded in $\mathscr{S}^{m}$ and since, on the support of $\widetilde s_{\gamma_h,R}$ and of its derivatives, the variable $\sqrt{\gamma_h}\xi$ remains in a fixed compact set, one has
\[
\partial_{\xi_j}\!\left(R_h(x,\sqrt{\gamma_h}\xi)\right) = \mathcal O_{\mathscr S^0,R}(\sqrt{\gamma_h}).
\]
Moreover, Lemma~\ref{l:classe-de-s-tilde} implies that
\[
\partial_{\xi_j}\!\left(\gamma_h^{m/2}\widetilde s_{\gamma_h,R}\right) = \gamma_h^{m/2}\mathcal O_{\mathscr S^0,R}(1).
\]
Hence, one gets
\begin{equation}\label{e:estimation-term-magnetique-s-et-R}
\partial_{\xi_2}\!\left(\gamma_h^{m/2}\widetilde s_{\gamma_h,R}\right)
\partial_{\xi_1}\!\left(R_h(x,\sqrt{\gamma_h}\xi)\right)
-
\partial_{\xi_1}\!\left(\gamma_h^{m/2}\widetilde s_{\gamma_h,R}\right)
\partial_{\xi_2}\!\left(R_h(x,\sqrt{\gamma_h}\xi)\right)
=
\mathcal O_{\mathscr{S}^0,R}\!\left(\gamma_h^{\frac{m+1}{2}}\right).
\end{equation}

Finally, the higher odd terms and the symbolic remainder collected by $\widetilde{\mathfrak R}_{N,h,R}$ in the expansion of $\mathcal C_h^{\rm pert}$ are handled exactly as for $\widetilde{\mathcal R}_{N,h,R}$ in \eqref{e:terme-de-reste-s-et-R} and yield
\begin{equation}\label{e:pert-remainder-operator-3regimes-proof}
\widetilde{\mathfrak R}_{N,h,R}
=
\mathcal O_{\mathscr{S}^0,R}\!\left(
h^3\gamma_h^{\frac{m-3}{2}}
\right).
\end{equation}
Finally, combining \eqref{e:crochet-de-poisson-R-et-s-tilde}, \eqref{e:estimation-term-magnetique-s-et-R}, \eqref{e:pert-remainder-operator-3regimes-proof} and using the Calderón--Vaillancourt theorem in the perturbative commutator, we obtain 

\begin{align}\label{e:commutateur-perturbation-R-et-s-tilde}
\mathcal C_h^{\rm pert}
&=
\!\frac{h\gamma_h^{\frac{m-1}{2}}}{i}
\!\Opgamma\!\left(
\!\mathrm{Hess}(H)(\sqrt{\gamma_h}\xi)\frac{\mathfrak e_\Lambda}{L_\Lambda}
\!\cdot \!\partial_xR_h(x,\!\sqrt{\gamma_h}\xi)\,
\partial_\eta a\!\left(x,\!\sqrt{\gamma_h}\xi,\eta_{\gamma_h}(\!\sqrt{\gamma_h}\xi)\right)
\chi_{R\sqrt{\gamma_h},\Lambda}(\sqrt{\gamma_h}\xi)
\!\right)\\
&\quad
+\mathcal O_{L^2\to L^2}\!\left(h\gamma_h^{m/2}\right)
+\mathcal O_{L^2\to L^2}\!\left(h\frac{\gamma_h^{\frac{m-1}{2}}}{R}\right)
+\mathcal O_{L^2\to L^2}\!\left(h^2\gamma_h^{\frac{m-1}{2}}\right)
+\mathcal O_{L^2\to L^2,R}\!\left(h^3\gamma_h^{\frac{m-3}{2}}\right).
\notag
\end{align}

\medskip
\noindent
\textbf{3. Renormalized identity.}
Combining \eqref{e:commutateur-hamiltonien-H-et-s-tilde},
\eqref{e:commutateur-perturbation-R-et-s-tilde} together in \eqref{e:commutateur-total}, linking it to \eqref{e:ipp-commutateur-total}, and dividing by $\displaystyle \frac{h\gamma_h^\frac{m+1}{2}}{i}$, we obtain 
\begin{align}
&\frac{1}{\tau_h\sqrt{\gamma_h}}
\int_\R \psi'(t)
\left\langle
\Opgamma(\widetilde s_{\gamma_h,R})u_h(t\tau_h),
u_h(t\tau_h)
\right\rangle\,\dd  t
\notag\\
&=
\!\int_\R \!\psi(t)
\Bigg\langle
\!\Opgamma\!\Bigg(\!\!
-\eta_{\gamma_h}(\sqrt{\gamma_h}\xi)\,
\frac{\mathfrak e_\Lambda}{L_\Lambda}\!\cdot\!
\partial_x a\!\left(x,\sqrt{\gamma_h}\xi,\eta_{\gamma_h}(\sqrt{\gamma_h}\xi)\right)
\chi_{R\sqrt{\gamma_h},\Lambda}(\sqrt{\gamma_h}\xi)
\!\Bigg)
u_h(t\tau_h),u_h(t\tau_h)
\!\Bigg\rangle\dd  t
\notag\\
&
+\frac{h}{\gamma_h}
\int_\R \psi(t)
\Bigg\langle
\Opgamma\!\Bigg( \widetilde{G}_h(x,\sqrt{\gamma_h}\xi)
\partial_\eta a\!\left(x,\sqrt{\gamma_h}\xi,\eta_{\gamma_h}(\sqrt{\gamma_h}\xi)\right)
\chi_{R\sqrt{\gamma_h},\Lambda}(\sqrt{\gamma_h}\xi)
\Bigg)
u_h(t\tau_h),u_h(t\tau_h)
\Bigg\rangle\dd  t
\notag\\
&
+\mathcal O\!\left(\frac{h}{\sqrt{\gamma_h}}\right)
+\mathcal O\!\left(\frac{h}{\gamma_hR}\right)
+\mathcal O\!\left(\frac{h^2}{\gamma_h}\right)
+\mathcal{O}_R\!\left(\frac{h}{\sqrt{\gamma_h}} \right)
+\mathcal{O}_R\!\left(\frac{h^3}{\gamma_h^2} \right),
\label{e:identite-finale-avant-limite-compact-mode}
\end{align}
where 
\begin{equation}
    \widetilde{G}_h(x,\sqrt{\gamma_h}\xi)=\mathrm{Hess}(H)(\sqrt{\gamma_h}\xi)\frac{\mathfrak e_\Lambda}{L_\Lambda}
\cdot \Bigg[
\widehat B_0\,
\nabla H(\sqrt{\gamma_h}\xi)^\perp
+ \partial_xR_h(x,\sqrt{\gamma_h}\xi)
\Bigg].
\end{equation}
From the definition \eqref{e:definition-Rh} of $R_h$ and using that $\sqrt{\gamma_h}\xi$ remains in a compact set on the support of $a$, one can write
\begin{equation}
    \widetilde{G}_h(x,\sqrt{\gamma_h}\xi)=\mathrm{Hess}(H)(\sqrt{\gamma_h}\xi)\frac{\mathfrak e_\Lambda}{L_\Lambda}
\cdot \Bigg[
\widehat B_0\,
\nabla H(\sqrt{\gamma_h}\xi)^\perp
+ \partial_x (\mathscr{R})(x,\sqrt{\gamma_h}\xi) + \mathcal{O}_{\mathscr{S}^0}(h)
\Bigg].
\end{equation}
Hence, we finally write on the support of $a(x,\sqrt{\gamma_h}\xi)$
\begin{equation}\label{e:introduction-G-B}
    \widetilde{G}_h(x,\sqrt{\gamma_h}\xi) = G_{\widehat B_0, \Lambda}(x,\sqrt{\gamma_h}\xi) + \mathcal{O}_{\mathscr{S}^0}(h), 
\end{equation}
where
\[
G_{\widehat B_0, \Lambda}(x,\sqrt{\gamma_h}\xi) = \mathrm{Hess}(H)(\sqrt{\gamma_h}\xi)\frac{\mathfrak e_\Lambda}{L_\Lambda}
\cdot \Bigg[
\widehat B_0\,
\nabla H(\sqrt{\gamma_h}\xi)^\perp
+ \partial_x (\mathscr{R})(x,\sqrt{\gamma_h}\xi)
\Bigg].
\]
We may now let $h\to0^+$ and then $R\to+\infty$, using the fact that $a = \mathcal{I}_\Lambda(a)$, Definition \ref{d:def-gamma} of $(\gamma_h)_{h\to 0^+}$ and the definition of the two-microlocal measure $\widetilde\nu_{t,\Lambda}^B$.

\smallskip
\noindent
\emph{Case 1: $\tau_h\ll h^{-1/2}$.}
In this regime, $\gamma_h=\tau_h^{-2}$, so that
\[
\frac1{\tau_h\sqrt{\gamma_h}}=1,
\qquad
\frac{h}{\gamma_h}=h\tau_h^2\longrightarrow 0.
\]
Passing to the limit in \eqref{e:identite-finale-avant-limite-compact-mode},
we obtain
\[
\int_\R \psi'(t)\left\langle a , \widetilde\nu_{t,\Lambda}^B\right\rangle \,h_\Lambda(t)\dd  t = 
\int_\R \psi(t)\left\langle 
-\eta\,\frac{\mathfrak e_\Lambda}{L_\Lambda}\cdot\partial_x (a) ,
\widetilde\nu_{t,\Lambda}^B\right\rangle \,h_\Lambda(t)\dd  t .
\]
This is the weak formulation of \eqref{e:transport-tau_h-petit-sur-racinedeh},
hence
\[
\widetilde\nu_{t,\Lambda}^B
=
(\phi_0^{t})_*\mathcal{I}_\Lambda(\widetilde\nu_{0,\Lambda}^B).
\]

\smallskip
\noindent
\emph{Case 2: $\tau_h= h^{-1/2}$.}
Here $\gamma_h=h$, so that
\[
\frac{h}{\gamma_h}=1, \qquad \tau_h\sqrt{\gamma_h} = \tau_h\sqrt{h} \to 1.
\]
Passing to the limit in \eqref{e:identite-finale-avant-limite-compact-mode}, we obtain
\[
\int_\R \psi'(t)\left\langle a , \widetilde\nu_{t,\Lambda}^B\right\rangle h_\Lambda(t)\dd  t
= 
\int_\R \psi(t)
\left\langle 
-\eta\,\frac{\mathfrak e_\Lambda}{L_\Lambda}\cdot\partial_x a
+
G_{\widehat B_0, \Lambda}(x,\xi)\,\partial_\eta a,\widetilde\nu_{t,\Lambda}^B
\right\rangle\,h_\Lambda(t) \dd  t.
\]
This is exactly the weak formulation of \eqref{e:transport-tau_h-taille-racinedeh}, namely
\[
\widetilde\nu_{t,\Lambda}^B=(\phi_{X_\Lambda}^t)_*\mathcal{I}_\Lambda(\widetilde\nu_{0,\Lambda}^B).
\]

\smallskip
\noindent
\emph{Case 3: $\tau_h\gg h^{-1/2}$.}
Again $\gamma_h=h$, but now one has 
\[
\frac1{\tau_h\sqrt{\gamma_h}}
=
\frac1{\tau_h\sqrt h}
\longrightarrow 0,
\qquad
\frac{h}{\gamma_h}=1.
\]
Passing to the limit in \eqref{e:identite-finale-avant-limite-compact-mode}, the time derivative disappears and we are left with
\[
\int_\R \psi(t)
\left\langle 
-\eta\,\frac{\mathfrak e_\Lambda}{L_\Lambda}\cdot\partial_x a
+
G_{\widehat B_0, \Lambda}(x,\xi)\,\partial_\eta a,\widetilde\nu_{t,\Lambda}^B
\right\rangle\,h_\Lambda(t) \dd t =0.
\]
Since $\psi$ is arbitrary, this implies that for a.e.\ $t$,
\[
\left\langle
-\eta\,\frac{\mathfrak e_\Lambda}{L_\Lambda}\cdot\partial_x a
+
G_{\widehat B_0, \Lambda}(x,\xi)\,\partial_\eta a,\widetilde\nu_{t,\Lambda}^B
\right\rangle
=0
\qquad
\forall a\in \mathcal{C}_c^\infty(T^*\T^2\times\R).
\]
This is precisely the infinitesimal form of the invariance
\[
(\phi_{X_\Lambda}^s)_*\widetilde\nu_{t,\Lambda}^B=\widetilde\nu_{t,\Lambda}^B,
\qquad \forall s\in\R.
\]
This proves \eqref{e:extra-invariance-tau_h-grand-sur-racinedeh}.
\end{proof}

\section{Regularity of configuration-space quantum limits}\label{s:x-regularity}

The purpose of this section is to prove the configuration-space
equidistribution part of Theorem~\ref{t:intro-main-structure}. Combined with
Lemma~\ref{l:xi-marginal}, this will also yield
item~\textup{(i)} of that theorem. We shall further recover
Theorem~\ref{t:intro-simplified-equidistribution} and its specialization to
the magnetic Laplacian by verifying the geometric non-vanishing condition
along the periodic directions of the Hamiltonian flow.

Throughout this section, we assume that \(H\) and \(R_h\) satisfy the
standing assumptions of Section~\ref{ss:magnetic-operator}, that the initial
data satisfy the spectral localization condition
\eqref{e:spectral-projection}, and that
\[
\tau_h\gg h^{-1/2}.
\]
Let \(W^B\) be a time-dependent magnetic semiclassical measure associated
with the rescaled evolution \((u_h(t\tau_h))_{h\to0^+}\). More precisely,
\(W^B\) is a subsequential limit of the magnetic Wigner distributions
\(W_h^B(\tau_h)\) defined in \eqref{e:Wigner-magnetic-distrib}; see also
Section~\ref{s:semiclassicalmeasure}. By
Lemma~\ref{l:Lifting-QL}, it admits the time disintegration
\[
W^B(\dd t,\dd x,\dd\xi)
=
\nu_t^B(\dd x,\dd\xi)\otimes\dd t,
\]
where, for almost every \(t\in\R\), \(\nu_t^B\) is a probability measure
supported in $\T^2\times\Omega_{E_1,E_2}$. We denote its momentum marginal, introduced in Lemma~\ref{l:xi-marginal}, by
\[
\lambda_t:=(\pi_\xi)_*\nu_t^B.
\]
Our goal is to prove the product decomposition
\eqref{e:intro-product-form}, namely
\[
\nu_t^B(\dd x,\dd\xi)
=
\dd x\otimes\lambda_t(\dd\xi)
\qquad
\text{for almost every }t\in\R.
\]
Equivalently, we shall prove that all nonzero Fourier coefficients of
\(\nu_t^B\) in the configuration variable vanish.
\begin{remark}
The analysis of the compact two-microlocal measures carried out in
Subsection~\ref{ss:Effective-dynamics-for-the-two-microlocal-lift}, and
summarized in Lemma~\ref{l:three-regimes-2micro}, explains why
\(h^{-1/2}\) is the relevant threshold. Indeed, the first two assertions of
Lemma~\ref{l:three-regimes-2micro} show that, in the regimes
\[
\tau_h\ll h^{-1/2}
\qquad\text{and}\qquad
\tau_h=h^{-1/2},
\]
the compact two-microlocal measures are transported from their initial
two-microlocal data. Consequently, there is in general no mechanism forcing
their nonzero Fourier modes in the configuration variable to vanish. By
contrast, when
\[
\tau_h\gg h^{-1/2},
\]
the third assertion of Lemma~\ref{l:three-regimes-2micro} shows that these measures become invariant under an effective flow. This is the
supercritical regime considered throughout the present section.
\end{remark}

The strategy is the following. Since \(\tau_h\to+\infty\),
Lemma~\ref{l:Dynamical-regimes} implies that, for almost every \(t\in\R\),
the measure \(\nu_t^B\) is invariant under the Hamiltonian flow
\[
\varphi_H^s(x,\xi)
=
(x+s\nabla H(\xi),\xi).
\]
Using the resonant decomposition established in
Section~\ref{s:decomposition}, and more precisely
\eqref{e:nu_t^B=decomposition-direction-du-flot}, this invariance already
implies spatial equidistribution on the non-resonant region
\(\Omega_{\mathrm{nr}}\). Moreover,
Lemma~\ref{l:noncompact-modes} shows that the non-compact resonant
contributions carry only the zero Fourier mode in the configuration
variable. Consequently, the only possible obstruction to spatial
equidistribution is carried by the compact two-microlocal measures
introduced in Lemma~\ref{l:2micro}, associated with the resonant sets
\[
E_{\Lambda^\perp\setminus\{0\}},
\qquad
\Lambda\in\mathcal L_1.
\]

We first make this reduction precise in
Subsection~\ref{ss:reduction-compact-2micro}. We then use, in
Subsection~\ref{ss:supercritical-equidistribution}, the invariance of the
compact two-microlocal measures under the effective vector field
\(X_\Lambda\) introduced in Lemma~\ref{l:three-regimes-2micro}. Under the
non-vanishing condition
\[
G_{\widehat B_0,\Lambda}(x,\xi)\neq0,
\]
the corresponding dynamics escapes in the two-microlocal variable
\(\eta\), which rules out nonzero finite invariant compact measures. This
yields the spatial equidistribution result of
Theorem~\ref{t:x-equidistribution}. We conclude the section by applying
this criterion to the magnetic Laplacian.

\subsection{Reduction to the compact two-microlocal contribution}
\label{ss:reduction-compact-2micro}

We first prove that all nonzero Fourier modes in the configuration variable are carried by the compact two-microlocal contributions. Throughout this subsection, we only need to assume that \(\tau_h\to+\infty\). By Lemma~\ref{l:Dynamical-regimes}, for a.e. $t\in\R$, the measure $\nu_t^B$ is invariant under the Hamiltonian flow $\varphi_H^s$. We also work under the spectral localization assumption \eqref{e:spectral-projection}, and we assume that the corresponding energy window $[E_1,E_2]$ contains no critical value of $H$, see \eqref{e:energy-window}. Under these assumptions, we recall from \eqref{e:nu_t^B=decomposition-direction-du-flot} that, for a.e. \(t\in\R\), one has the following identity of finite measures
\begin{equation}\label{e:nu-decomposition-sec5}
\nu_t^B
=
\nu_t^B\big|_{\T^2\times\Omega_{\mathrm{nr}}}
+
\sum_{\Lambda\in\mathcal L_1}
\mathcal I_\Lambda(\nu_t^B)
\big|_{\T^2\times E_{\Lambda^\perp\setminus\{0\}}}.
\end{equation}
On the non-resonant region $\Omega_\mathrm{nr}$, the flow $x\mapsto x+s\nabla H(\xi)$ is dense in $\T^2$ for every $\xi\in\Omega_{\mathrm{nr}}$. Hence the invariance of $\nu_t^B$ forces the conditional measure in the $x$-variable to be the Haar measure. Equivalently,
\begin{equation}\label{e:nonresonant-product-sec5}
\nu_t^B\big|_{\T^2\times\Omega_{\mathrm{nr}}}
=
\dd x\otimes \lambda_t\big|_{\Omega_{\mathrm{nr}}},
\end{equation}
where the measure $\lambda_t$ is defined as $ \lambda_t:=(\pi_\xi)_*\nu_t^B$. Therefore, one gets 
\begin{equation}\label{e:nu-decomposition-product-sec5}
\nu_t^B
=
\dd x\otimes \lambda_t\big|_{\Omega_{\mathrm{nr}}}
+
\sum_{\Lambda\in\mathcal L_1}
\mathcal I_\Lambda(\nu_t^B)
\big|_{\T^2\times E_{\Lambda^\perp\setminus\{0\}}}.
\end{equation}
Thus the non-resonant part is already spatially equidistributed. It remains to analyze the resonant contributions.

\begin{lemma}\label{l:reduction-compact-sec5}
For a.e. $t\in\R$, the measure $\nu_t^B$ can be written in the form
\begin{equation}\label{e:global-reduction-sec5}
\nu_t^B
=
\dd x\otimes
\left(
\lambda_t\big|_{\Omega_{\mathrm{nr}}}
+
\sum_{\Lambda\in\mathcal L_1}
h^\Lambda(t)\beta_t^{\Lambda}
\right)
+
\sum_{\Lambda\in\mathcal L_1}
h_\Lambda(t)\,
(\pi_{x,\xi})_*\widetilde\nu_{t,\Lambda}^B
\big|_{\T^2\times E_{\Lambda^\perp\setminus\{0\}}},
\end{equation}
where, for each $\Lambda\in\mathcal L_1$, $\beta_t^{\Lambda}$ is a finite nonnegative measure supported on $E_{\Lambda^\perp\setminus\{0\}}$ and where $\widetilde\nu_{t,\Lambda}^B$ denotes the compact two-microlocal measure appearing in the time disintegration \eqref{e:2micro-disint}. 
\end{lemma}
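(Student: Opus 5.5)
Starting from the decomposition \eqref{e:nu-decomposition-product-sec5}, only the resonant pieces $\mathcal I_\Lambda(\nu_t^B)|_{\T^2\times E_{\Lambda^\perp\setminus\{0\}}}$ remain to be handled, one $\Lambda\in\mathcal L_1$ at a time. For fixed $\Lambda$ we use the splitting of Lemma~\ref{l:split-W}, $W^B=W_\Lambda^B+W^{B,\Lambda}$. Combined with the absolute continuity of the time marginals (Lemma~\ref{l:RN-split}), it gives for a.e. $t$
\[
\nu_t^B=h_\Lambda(t)\,\nu_{t,\Lambda}^B+h^\Lambda(t)\,\nu_t^{B,\Lambda}.
\]
Both sides are positive measures, so we may restrict to $\T^2\times E_{\Lambda^\perp\setminus\{0\}}$. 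The operator $\mathcal I_\Lambda$ is linear on finite measures, since it acts through Fourier coefficients in $x$. Using Corollary~\ref{l:split-invariant-fourier-structure}, which applies because $\tau_h\to+\infty$, we get
\[
\mathcal I_\Lambda(\nu_t^B)\big|_{\T^2\times E_{\Lambda^\perp\setminus\{0\}}}
=h_\Lambda(t)\,\nu_{t,\Lambda}^B\big|_{\T^2\times E_{\Lambda^\perp\setminus\{0\}}}
+h^\Lambda(t)\,\mathcal I_\Lambda(\nu_t^{B,\Lambda})\big|_{\T^2\times E_{\Lambda^\perp\setminus\{0\}}}.
\]
Here we used Corollary~\ref{l:split-invariant-fourier-structure} to replace $\nu_{t,\Lambda}^B$ by $\mathcal I_\Lambda(\nu_{t,\Lambda}^B)$ on the resonant set, and then back again.

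Next we treat the weakly resonant term. By Lemma~\ref{l:noncompact-modes}, for a.e. $t$ (with respect to $h^\Lambda(t)\dd t$, which is all we need since the term carries the factor $h^\Lambda(t)$), every coefficient $e^{2i\pi k\cdot x}$ with $k\in\Lambda\setminus\{0\}$ of $\nu_t^{B,\Lambda}$ vanishes. Hence $\mathcal I_\Lambda(\nu_t^{B,\Lambda})=\dd x\otimes \widehat\nu_t^{B,\Lambda}(0,\cdot)$, where the zero mode $\widehat\nu_t^{B,\Lambda}(0,\dd\xi)=(\pi_\xi)_*\nu_t^{B,\Lambda}$ is a finite nonnegative measure. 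We then set
\[
\beta_t^\Lambda:=\bigl((\pi_\xi)_*\nu_t^{B,\Lambda}\bigr)\big|_{E_{\Lambda^\perp\setminus\{0\}}}.
\]
Since the set $\T^2\times E_{\Lambda^\perp\setminus\{0\}}$ is a product in $x$, the restriction commutes with the product structure.

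For the compact term we invoke Lemma~\ref{l:2micro}, specifically \eqref{e:projection-x-xi-du-lift-2micro}, which gives $\nu_{t,\Lambda}^B=(\pi_{x,\xi})_*\widetilde\nu_{t,\Lambda}^B$ for $h_\Lambda(t)\dd t$-a.e. $t$. Summing over $\Lambda$, and inserting the result into \eqref{e:nu-decomposition-product-sec5}, yields \eqref{e:global-reduction-sec5}.

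The main technical point is the measure-theoretic bookkeeping. First, the a.e. statements hold with respect to different measures ($h_\Lambda\dd t$, $h^\Lambda\dd t$, $\dd t$); this is resolved by noting that each term vanishes where its weight vanishes. Second, there are countably many $\Lambda$, so we take a countable union of null sets. Third, we must check that restricting to the resonant set commutes with $\mathcal I_\Lambda$. The last point holds because $E_{\Lambda^\perp\setminus\{0\}}$ constrains only $\xi$, and $\mathcal I_\Lambda$ acts only in $x$, through multiplication by $\xi$-independent characters. Convergence of the sum over $\Lambda$ follows from positivity together with $\nu_t^B$ being a probability measure, since the sets $E_{\Lambda^\perp\setminus\{0\}}$ are pairwise disjoint.
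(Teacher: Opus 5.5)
Your proposal is correct and follows the paper's proof essentially step by step. You split $\nu_t^B=h_\Lambda\nu_{t,\Lambda}^B+h^\Lambda\nu_t^{B,\Lambda}$, apply $\mathcal I_\Lambda$ and restrict to the resonant set, use Lemma~\ref{l:noncompact-modes} to make the weakly resonant part $\dd x\otimes\beta_t^\Lambda$, and use \eqref{e:projection-x-xi-du-lift-2micro} to identify the compact part with $(\pi_{x,\xi})_*\widetilde\nu_{t,\Lambda}^B$. The one harmless difference is that you remove $\mathcal I_\Lambda$ from the compact term on the resonant set via Corollary~\ref{l:split-invariant-fourier-structure}, whereas the paper uses \eqref{e:tilde-nu-que-des-coef-en-Lambda}, the fact that the lift has only $\Lambda$-modes. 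Your measure-theoretic bookkeeping (different null sets, countably many $\Lambda$, disjointness of the resonant sets) is a welcome addition that the paper leaves implicit.
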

The preceding lemma shows that all terms in \eqref{e:global-reduction-sec5} are already independent of $x$, except possibly the weighted compact two-microlocal contributions
\[
h_\Lambda(t)(\pi_{x,\xi})_*\widetilde\nu_{t,\Lambda}^B.
\]
Thus the question of spatial equidistribution is reduced to the analysis of these terms.

\begin{proof}
Fix $\Lambda\in\mathcal L_1$. Using the microlocal splitting of Section~\ref{s:periodic-orbits} and the time disintegrations of Lemma~\ref{l:RN-split}, one has
\[
\nu_t^B\,\dd t
=
\nu_{t,\Lambda}^B\,h_\Lambda(t)\dd t
+
\nu_t^{B,\Lambda}\,h^\Lambda(t)\dd t.
\]
Therefore, for a.e. $t$, as finite measures on $T^*\T^2$, one has
\[
\nu_t^B
=
h_\Lambda(t)\nu_{t,\Lambda}^B
+
h^\Lambda(t)\nu_t^{B,\Lambda}.
\]
Applying $\mathcal I_\Lambda$ and restricting to $\T^2\times E_{\Lambda^\perp\setminus\{0\}}$, we obtain
\begin{equation}\label{e:I-Lambda-split-sec5}
\mathcal I_\Lambda(\nu_t^B)
\big|_{\T^2\times E_{\Lambda^\perp\setminus\{0\}}}
=
h^\Lambda(t)\,
\mathcal I_\Lambda(\nu_t^{B,\Lambda})
\big|_{\T^2\times E_{\Lambda^\perp\setminus\{0\}}}+
h_\Lambda(t)\,
\mathcal I_\Lambda(\nu_{t,\Lambda}^B)
\big|_{\T^2\times E_{\Lambda^\perp\setminus\{0\}}}.
\end{equation}
We first consider the non-compact term. By Lemma~\ref{l:noncompact-modes}, all nonzero $\Lambda$-Fourier modes of $\nu_t^{B,\Lambda}$ vanish on $E_{\Lambda^\perp\setminus\{0\}}$. Hence, one has 
\[
\mathcal I_\Lambda(\nu_t^{B,\Lambda})
\big|_{\T^2\times E_{\Lambda^\perp\setminus\{0\}}}
=
\widehat\nu_t^{B,\Lambda}(0,\dd\xi)
\big|_{\T^2\times E_{\Lambda^\perp\setminus\{0\}}}.
\]
Equivalently, there exists a finite nonnegative measure $\beta_t^{\Lambda}$ on $E_{\Lambda^\perp\setminus\{0\}}$ such that
\begin{equation}\label{e:noncompact-equi-sec5}
\mathcal I_\Lambda(\nu_t^{B,\Lambda})
\big|_{\T^2\times E_{\Lambda^\perp\setminus\{0\}}}
=
\dd x\otimes \beta_t^{\Lambda}.
\end{equation}
We now consider the compact term. By Lemma~\ref{l:2micro}, the compact component admits the two-microlocal lift $\widetilde\nu_{t,\Lambda}^B$, and by \eqref{e:projection-x-xi-du-lift-2micro} its $(x,\xi)$-marginal is precisely the compact resonant component:
\[
(\pi_{x,\xi})_*\widetilde\nu_{t,\Lambda}^B
=
\nu_{t,\Lambda}^B
\qquad
\text{for }h_\Lambda(t)\dd t\text{-a.e. }t.
\]
Moreover, by \eqref{e:tilde-nu-que-des-coef-en-Lambda}, this measure has only
$\Lambda$-Fourier modes in the $x$-variable. Therefore, one has
\begin{equation}\label{e:compact-contribution-sec5}
h_\Lambda(t)\,
\mathcal I_\Lambda(\nu_{t,\Lambda}^B)
=
h_\Lambda(t)\,
(\pi_{x,\xi})_*\widetilde\nu_{t,\Lambda}^B.
\end{equation}
Combining \eqref{e:I-Lambda-split-sec5}, \eqref{e:noncompact-equi-sec5} and \eqref{e:compact-contribution-sec5}, we get
\begin{equation}\label{e:resonant-reduction-sec5}
\mathcal I_\Lambda(\nu_t^B)
\big|_{\T^2\times E_{\Lambda^\perp\setminus\{0\}}}
=
\dd x\otimes h^\Lambda(t)\beta_t^{\Lambda}
+
h_\Lambda(t)
(\pi_{x,\xi})_*\widetilde\nu_{t,\Lambda}^B
\big|_{\T^2\times E_{\Lambda^\perp\setminus\{0\}}}.
\end{equation}
Inserting this identity into \eqref{e:nu-decomposition-product-sec5} and summing over $\Lambda\in\mathcal L_1$ gives \eqref{e:global-reduction-sec5}.
\end{proof}

\subsection{A geometric criterion for equidistribution}\label{ss:supercritical-equidistribution}

We now consider the regime
\[
\tau_h\gg h^{-1/2}.
\]
By Lemma~\ref{l:reduction-compact-sec5}, all nonzero Fourier modes of
\(\nu_t^B\) are carried by the weighted compact two-microlocal measures $h_\Lambda(t)\widetilde\nu_{t,\Lambda}^B$.
Thus, to prove spatial equidistribution, it is enough to prove that these
weighted measures vanish.
In this regime, the third assertion of Lemma~\ref{l:three-regimes-2micro}
implies that, for every $\Lambda\in\mathcal L_1$, the compact two-microlocal
measure $\widetilde\nu_{t,\Lambda}^B$ is invariant, for
$h_\Lambda(t)\dd t$-a.e. $t$, under the flow generated by
\begin{equation}\label{e:vector-field-X-Lambda-sec5}
X_\Lambda
=
\eta\,\frac{\mathfrak e_\Lambda}{L_\Lambda}\cdot \partial_x
-
G_{\widehat B_0,\Lambda}(x,\xi)\partial_\eta,
\end{equation}
where $G_{\widehat B_0,\Lambda}$ is defined in Lemma~\ref{l:three-regimes-2micro}.
We now give a criterion which prevents the existence of nonzero finite invariant
measures for this flow.
Recall that $E_1$ and $E_2$ were fixed so that
\[
\Omega_{E_1,E_2}:=\{\xi\in\R^2:\ E_1\le H(\xi)\le E_2\}
\]
contains no critical point of $H$. 

\begin{theorem}[Equidistribution above the two-microlocal scale]\label{t:x-equidistribution}
Let \((u_h)_{h\to0^+}\) be a family of solutions to \eqref{e:Schrodinger-PDE}. Assume that \(H\) and \(R_h\) satisfy the standing assumptions of Section~\ref{ss:magnetic-operator}, and that \((u_h^{(0)})_{h\to0^+}\) satisfies the spectral localization condition \eqref{e:spectral-projection}. Assume that
\[
\tau_h\gg h^{-1/2},
\]
and that, for every \((\Lambda,x,\xi)\in \mathcal L_1 \times \mathbb T^2\times \big( E_{\Lambda^\perp\setminus\{0\}}\cap\Omega_{E_1,E_2} \big) \), one has 
\begin{equation}\label{e:G-sign-sec5}
G_{\widehat B_0,\Lambda}(x,\xi)\neq 0.
\end{equation}
Then every time-dependent magnetic semiclassical measure is independent of the
configuration variable. More precisely, for a.e. $t\in\R$, one has
\begin{equation}\label{e:phase-space-equi-sec5}
\nu_t^B(\dd x,\dd\xi)
=
\dd x\otimes\lambda_t(\dd\xi),
\qquad
\lambda_t:=(\pi_\xi)_*\nu_t^B.
\end{equation}
Consequently, the associated configuration-space quantum limit is spatially
equidistributed:
\[
\nu_t=\dd x
\qquad\text{for a.e. }t.
\]
If, in addition, $\tau_h\ll h^{-1}$, then one has
\[
\lambda_t=\lambda_0:=(\pi_\xi)_*\nu_0^B
\qquad\text{for a.e. }t,
\]
and therefore
\[
\nu_t^B(\dd x,\dd\xi)
=
\dd x\otimes\lambda_0(\dd\xi).
\]
\end{theorem}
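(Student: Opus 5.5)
By Lemma~\ref{l:reduction-compact-sec5}, which applies since $\tau_h\to+\infty$, every term in \eqref{e:global-reduction-sec5} is already of the form $\dd x\otimes(\cdot)$ except the weighted compact two-microlocal contributions $h_\Lambda(t)(\pi_{x,\xi})_*\widetilde\nu_{t,\Lambda}^B$. It is therefore enough to show that, for each fixed $\Lambda\in\mathcal L_1$, the measure $\widetilde\nu_{t,\Lambda}^B$ restricted to $\T^2\times E_{\Lambda^\perp\setminus\{0\}}\times\R$ vanishes for $h_\Lambda(t)\dd t$-a.e.\ $t$. Once this holds, the remaining $\xi$-measure in \eqref{e:global-reduction-sec5} is identified with $\lambda_t$ by projecting with $\pi_\xi$, which gives \eqref{e:phase-space-equi-sec5}. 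Applying $(\pi_x)_*$ and using Lemma~\ref{l:Lifting-QL} then gives $\nu_t=\dd x$. The last claim, for $\tau_h\ll h^{-1}$, follows directly from item 3 of Lemma~\ref{l:xi-marginal}.

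To prove this vanishing I would use item 3 of Lemma~\ref{l:three-regimes-2micro}: for $\tau_h\gg h^{-1/2}$, $\widetilde\nu_{t,\Lambda}^B$ is invariant under the flow of $X_\Lambda$. Two facts make the problem essentially one-dimensional. First, $\xi$ is conserved along this flow. Second, $\widetilde\nu_{t,\Lambda}^B$ is supported in the compact set $\T^2\times(E_{\Lambda^\perp\setminus\{0\}}\cap\Omega_{E_1,E_2})\times\R$; the support in $E_{\Lambda^\perp}$ comes from the cutoff at scale $R\sqrt{\gamma_h}$ around $\nabla H\cdot\mathfrak e_\Lambda=0$. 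This set is compact because $\Omega_{E_1,E_2}$ is compact and $E_{\Lambda^\perp}$ is closed within it. By continuity and the hypothesis \eqref{e:G-sign-sec5}, there is $c_\Lambda>0$ such that $|G_{\widehat B_0,\Lambda}|\ge c_\Lambda$ on a neighborhood of this compact set. Since $\T^2\times(\text{compact})$ is connected in the $x$-direction for each fixed $\xi$, $G$ also has a constant sign for each fixed $\xi$.

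The escape argument runs as follows. Take a test function $a=\theta(\eta)$ multiplied by a $\xi$-cutoff $\kappa(\xi)$ equal to $1$ near the compact resonant set, with $\theta$ bounded and increasing (for instance $\theta=\arctan$, or a smooth $\theta$ with $\theta'\ge 0$ and $\theta'>0$ on a prescribed interval). Because $\theta$ depends only on $\eta$ and $\kappa$ only on $\xi$, we get $X_\Lambda a=-G\,\theta'(\eta)\kappa(\xi)$, and infinitesimal invariance gives $\int G\,\theta'(\eta)\,\dd\widetilde\nu_{t,\Lambda}^B=0$. Since $G$ is only of constant sign per fixed $\xi$, I would also multiply by an arbitrary nonnegative $\rho(\xi)$ supported where $G$ has a fixed sign. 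This is legitimate because $\rho$ commutes with $X_\Lambda$. The integrand then has a sign and is bounded below in absolute value by $c_\Lambda\theta'(\eta)\rho(\xi)$, so $\widetilde\nu_{t,\Lambda}^B$ gives zero mass to $\{\theta'>0\}\times\{\rho>0\}$. Letting $\theta'$ exhaust $\R$ and $\rho$ exhaust the resonant set shows that the measure vanishes.

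The main obstacle is rigor on two points. The test functions $\theta(\eta)$ are not compactly supported, so I would approximate with $\theta_n=\theta\,\chi(\eta/n)$. The error term is $\theta\,\chi'(\eta/n)/n$, which is controlled by finiteness of the measure. The infinitesimal identity from Lemma~\ref{l:three-regimes-2micro} must also be applied to test functions whose $x$-dependence is trivial, so that they satisfy $\mathcal I_\Lambda(a)=a$. An alternative route, if this approximation is awkward, is to use that $|\eta(s)-\eta(0)|\ge c_\Lambda|s|$ along every orbit in the support. A finite invariant measure then cannot give positive mass to any bounded $\eta$-window, since pushing forward by $\phi^s_{X_\Lambda}$ for large $s$ would move that mass into disjoint windows.
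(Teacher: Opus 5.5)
Your proposal is correct. The reduction via Lemma~\ref{l:reduction-compact-sec5}, the identification of the remaining $\xi$-measure with $\lambda_t$, the projection to $\nu_t=\dd x$, and the use of Lemma~\ref{l:xi-marginal} for $\tau_h\ll h^{-1}$ all match the paper. The difference lies in the key escape step.

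\textbf{The paper's route.} The paper proves Lemma~\ref{l:lemme-mesure-inv-par-XLambda} dynamically, and this is essentially your ``alternative route''. It splits the measure on the flow-invariant sets $\{G_{\widehat B_0,\Lambda}\gtrless0\}$ and uses the bound $\eta(s)\ge\eta(0)+c_\Lambda s$ along orbits. The flow $\phi^{s}_{X_\Lambda}$ then carries a window $\{|\eta|\le R_0\}$ of mass $\varepsilon$ into successive disjoint windows $\{2R_{k-1}\le|\eta|\le R_k\}$, which contradicts finiteness of the measure.

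\textbf{Your main route.} You use a bounded monotone escape function directly in the weak identity furnished by Lemma~\ref{l:three-regimes-2micro}. The test symbol is $\theta(\eta)\chi(\eta/n)\rho(\xi)$, where $\rho\ge0$ is localized where $G_{\widehat B_0,\Lambda}$ has a fixed sign and $|G_{\widehat B_0,\Lambda}|\ge c_\Lambda$. The identity reduces to $\int G_{\widehat B_0,\Lambda}\,\theta'(\eta)\rho(\xi)\,\dd\widetilde\nu_{t,\Lambda}^B=0$, up to an $\mathcal O(1/n)$ truncation error controlled by finiteness of the measure. This forces $\widetilde\nu_{t,\Lambda}^B=0$.

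\textbf{Comparison.} Your version works with exactly what the two-microlocal lemma provides. That lemma gives an infinitesimal identity for test symbols with $\mathcal I_\Lambda(a)=a$, and your $x$-independent symbols satisfy this trivially. Your version also needs only countably many test functions, so the exceptional $t$-set causes no trouble. It bypasses three steps that the paper leaves implicit:
\begin{itemize}
\item passing from the weak identity to genuine invariance under $\phi^s_{X_\Lambda}$, which requires completeness of the flow and uniqueness for the weak transport equation;
\item decomposing the measure along the invariant sets $\{G_{\widehat B_0,\Lambda}\gtrless0\}$;
\item the iterative construction of disjoint windows.
\end{itemize}
The paper's argument, in return, makes the physical mechanism explicit: the mass escapes to infinity in the two-microlocal variable $\eta$. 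Your sign handling per fixed $\xi$, via connectedness of $\T^2$ and localization in $\xi$, is equivalent to the paper's constant sign on connected components.
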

This theorem proves item~\textup{(i)} of Theorem~\ref{t:intro-main-structure}. The sufficient conditions stated in Theorem~\ref{t:intro-simplified-equidistribution}, as well as the specialization to the magnetic Laplacian, will be obtained below by verifying the geometric non-vanishing condition on
\(G_{\widehat B_0,\Lambda}\).
We first record the elementary escape argument on which the proof of Theorem \ref{t:x-equidistribution} relies.
\begin{lemma}\label{l:lemme-mesure-inv-par-XLambda}
Let \(\Lambda\in\mathcal L_1\), and let \(\mu\) be a finite nonnegative measure 
on
\[
\mathbb T^2\times
\big(E_{\Lambda^\perp\setminus\{0\}}\cap\Omega_{E_1,E_2}\big)
\times\mathbb R.
\]
Assume that \(\mu\) is invariant under the flow generated by \(X_\Lambda\), and
that
\begin{equation}\label{e:hyp-G-neq0}
    \forall (x,\xi) \in\mathbb T^2\times
\big(E_{\Lambda^\perp\setminus\{0\}}\cap\Omega_{E_1,E_2}\big), \qquad  G_{\widehat B_0,\Lambda}(x,\xi)\neq0.
\end{equation}
Then, one has \(\mu=0\).
\end{lemma}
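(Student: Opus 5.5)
The plan is to exhibit a function that increases strictly and at a uniform rate along the flow of \(X_\Lambda\); for a finite invariant measure this is impossible unless the measure vanishes. The natural candidate is the two-microlocal variable \(\eta\) itself. Along the flow \eqref{e:definition-nouveau-flot}, \(\xi\) is constant, so the \(\xi\)-support stays in the compact set \(K:=E_{\Lambda^\perp\setminus\{0\}}\cap\Omega_{E_1,E_2}\). The equation \(\dot\eta=-G_{\widehat B_0,\Lambda}(x,\xi)\) then shows that \(\eta\) changes at a rate governed by \(G\), which never vanishes on \(\T^2\times K\).

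\textbf{Step 1: uniform sign.} \(G_{\widehat B_0,\Lambda}\) is continuous on \(\T^2\times K\), which is compact because \(E_{\Lambda^\perp\setminus\{0\}}\cap\Omega_{E_1,E_2}\) is closed and bounded (\(\nabla H\neq0\) on \(\Omega_{E_1,E_2}\), so the condition \(\nabla H\cdot\mathfrak e_\Lambda=0\) is closed there). By \eqref{e:hyp-G-neq0}, \(|G|\) attains a minimum \(c>0\) on each connected component of \(\T^2\times K\), and \(G\) has constant sign on each component. Since \(\xi\) is preserved by the flow and \(\T^2\) is connected, the component containing a trajectory does not change. I would split \(\mu\) according to the finitely many components, or equivalently according to the sign of \(G\) on \(\{\xi\}\times\T^2\), which is a flow-invariant Borel partition. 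It then suffices to treat a piece on which \(G\ge c>0\); the case \(G\le -c\) is symmetric. Along trajectories in such a piece, \(\eta(s)\le\eta(0)-cs\). The flow is complete: \(\eta\) grows at most linearly because \(G\) is bounded, and \(x\) lives on the torus.

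\textbf{Step 2: escape contradicts invariance.} Fix \(M>0\) and set \(A_M:=\{|\eta|\le M\}\). For \(s>2M/c\), the sets \(\phi^{js}_{X_\Lambda}(A_M)\), \(j\in\N\), are pairwise disjoint, since \(\eta\) decreases by at least \(cs>2M\) between consecutive iterates. By invariance they all have the same mass \(\mu(A_M)\). Their union has mass at most \(\mu(\text{total})<\infty\), so infinitely many disjoint sets of equal mass force \(\mu(A_M)=0\). Letting \(M\to\infty\) gives \(\mu=0\).

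An equivalent route, closer to how the invariance is actually produced in Lemma~\ref{l:three-regimes-2micro}, uses the infinitesimal form \(\langle X_\Lambda a,\mu\rangle=0\) with test functions \(a=\theta(\eta)\) times a cutoff in \(\xi\). This gives \(\int G\,\theta'(\eta)\,d\mu=0\). Taking \(\theta'\ge0\) to approximate \(\mathbbm 1_{[-M,M]}\), with \(\theta\) bounded, yields \(c\,\mu(|\eta|\le M)\le 0\). This requires justifying test functions that are not compactly supported in \(\eta\), which follows by truncation using the finiteness of \(\mu\) and the boundedness of \(\theta\).

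The main obstacle is making the invariance precise. The invariance is stated through the weak identity for \(\mathcal C_c^\infty\) test functions, and passing from this to flow invariance (or directly using the bounded, non-compactly supported \(\theta\)) requires completeness of the flow and a truncation argument in \(\eta\). Controlling the truncation error \(\theta(\eta)\chi'(\eta/R)/R\) by \(\|\theta\|_\infty\|G\|_\infty\mu(\text{total})/R\) should handle it.
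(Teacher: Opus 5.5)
Your proposal is correct and follows essentially the same route as the paper: compactness gives a uniform bound $|G_{\widehat B_0,\Lambda}|\ge c>0$, and $\mu$ splits into flow-invariant pieces according to the sign of $G$. That sign depends only on $\xi$ because $\T^2$ is connected. On each piece, the escape $\eta(s)\le\eta(0)-cs$ then produces infinitely many disjoint sets of equal positive mass, which contradicts finiteness of $\mu$. Your iterates $\phi^{js}_{X_\Lambda}(\{|\eta|\le M\})$ are a slightly cleaner version of the paper's nested-annuli construction, and your infinitesimal variant (testing against $\theta(\eta)$ times a $\xi$-cutoff, with truncation in $\eta$) is also valid.
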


\begin{proof}
We argue by contradiction. Assume that \(\mu\neq0\). Let
\[
s\longmapsto (x(s),\xi(s),\eta(s))
\]
be an integral curve of $X_\Lambda$. Then, one has
\[
\dot x(s)
=
\eta(s)\frac{\mathfrak e_\Lambda}{L_\Lambda},
\qquad
\dot\xi(s)=0,
\qquad
\dot\eta(s)=-G_{\widehat B_0,\Lambda}(x(s),\xi(s)).
\]
Since $\T^2\times\big(E_{\Lambda^\perp\setminus\{0\}}\cap\Omega_{E_1,E_2}\big)$ is compact and \(G_{\widehat B_0,\Lambda}\) is continuous, the assumption \eqref{e:hyp-G-neq0} implies that there exists \(c_\Lambda>0\) such that
\[
\left|G_{\widehat B_0,\Lambda}(x,\xi)\right|\ge c_\Lambda
\]
on this set. Moreover, \(G_{\widehat B_0,\Lambda}\) has a constant sign on each connected component. We treat the restriction of the measure to a component on which \(G_{\widehat B_0,\Lambda}<0\); the argument for the case \(G_{\widehat B_0,\Lambda}>0\) is identical.
On such a component, one has
\[
G_{\widehat B_0,\Lambda}(x,\xi)\le -c_\Lambda.
\]
Therefore, along the flow of \(X_\Lambda\), for every \(s\ge0\),
\[
\eta(s)
=
\eta(0)
-
\int_0^s
G_{\widehat B_0,\Lambda}(x(\sigma),\xi)\,\dd\sigma
\ge
\eta(0)+c_\Lambda s,
\]
which implies that  
\begin{equation}\label{e:eta-tend-vers-infini}
     \eta(s)   \xrightarrow[s\to \infty]{} + \infty.
\end{equation}
One can set $\mu^-:=\mathbbm{1}_{\{G_{\widehat B_0,\Lambda}<0\}}\mu$ that is assumed to be a nonzero measure. Since $\mu$ is a nonzero finite positive measure and the set $\{G_{\widehat B_0,\Lambda}<0\}$ is invariant under the flow, then the measure $\mu^-$ is an invariant finite positive measure. Hence, there exists $R_0>0$ such that $\mu^-\left(\T^2 \times \R^2 \times \{ \lvert \eta \lvert \le R_0 \}\right)\ge \varepsilon >0 $. Moreover, as \eqref{e:eta-tend-vers-infini} holds, there exists $R_1 >0$ such that, on \(\operatorname{supp}\mu^-\),
\begin{center}
    $\displaystyle \phi_{X_\Lambda}^{s_1}\left( \{ \lvert \eta \lvert \le R_0 \}\right) \subset \{ 2R_0 \le \lvert \eta \lvert \le R_1 \}$ for $s_1$ large enough. 
\end{center}
From the invariance of $\mu^-$ under $\phi_{X_\Lambda}^{s}$, one has, 
\begin{center}
    $\displaystyle \mu^-\left(  \phi_{X_\Lambda}^{s_1}\left( \{ \lvert \eta \lvert \le R_0 \}\right) \right) = \mu^-\left( \{ \lvert \eta \lvert \le R_0 \}\right) \ge \varepsilon >0$. 
\end{center}
Hence, one has
\begin{center}
    $\displaystyle \mu^-\left(\T^2 \times \R^2 \times\{ \lvert \eta \lvert \le R_0 \} \sqcup \T^2 \times \R^2 \times\{ 2R_0 \le \lvert \eta \lvert \le R_1 \} \right) \ge 2 \varepsilon. $
\end{center}
In that manner, one can construct a sequence $(s_n, R_n)$ such that, 
\begin{center}
     $\displaystyle \mu^-\left( \T^2 \times \R^2 \times \{\lvert \eta \lvert \le R_0 \} \sqcup \bigsqcup_{k=1}^n \T^2 \times \R^2 \times \{ 2R_{k-1} \le \lvert \eta \lvert \le R_k \} \right) \ge (n+1)\varepsilon$. 
\end{center}
This contradicts the finiteness of $\mu^-$ on $\T^2\times \big( E_{\Lambda^\perp\setminus\{0\}}\cap\Omega_{E_1,E_2}\big) \times\R$ and leads to $\mu^-= 0$. The same argument applied to \(\mu^+ :=\mathbbm{1}_{\{G_{\widehat B_0,\Lambda}>0\}}\mu\) gives \(\mu^+=0\). Since
\(\mu=\mu^-+\mu^+\), we obtain \(\mu=0\). 
\end{proof}
We are now in position to prove Theorem \ref{t:x-equidistribution}.
\begin{proof}[Proof of Theorem \ref{t:x-equidistribution}]
By Lemma~\ref{l:reduction-compact-sec5}, all terms in the decomposition of
\(\nu_t^B\) are already independent of \(x\), except possibly the weighted
compact two-microlocal contributions
\[
h_\Lambda(t)(\pi_{x,\xi})_*\widetilde\nu_{t,\Lambda}^B.
\]
It is therefore enough to prove that, for every \(\Lambda\in\mathcal L_1\),
\[
h_\Lambda(t)\widetilde\nu_{t,\Lambda}^B=0
\qquad\text{for a.e. }t.
\]
Equivalently, for \(h_\Lambda(t)\dd t\)-a.e. \(t\), it is enough to show that
\(\widetilde\nu_{t,\Lambda}^B=0\).

Fix $\Lambda\in\mathcal L_1$. By Lemma~\ref{l:three-regimes-2micro}, for
$h_\Lambda(t)\dd t$-a.e. $t$, the measure
$\widetilde\nu_{t,\Lambda}^B$ is invariant under the flow generated by
$X_\Lambda$. Moreover, by the spectral localization assumption and by the
definition of the compact two-microlocal component, it is supported in
\[
\T^2\times
\big(
E_{\Lambda^\perp\setminus\{0\}}\cap\Omega_{E_1,E_2}
\big)
\times\R.
\]
By the assumption \eqref{e:G-sign-sec5}, the measure \(\widetilde\nu_{t,\Lambda}^B\) satisfies the hypotheses of Lemma~\ref{l:lemme-mesure-inv-par-XLambda} for \(h_\Lambda(t)\dd t\)-a.e.\(t\). Hence, one has
\[
\widetilde\nu_{t,\Lambda}^B=0
\qquad
\text{for }h_\Lambda(t)\dd t\text{-a.e. }t.
\]
Equivalently, one gets
\[
h_\Lambda(t)\widetilde\nu_{t,\Lambda}^B=0
\qquad
\text{for }\dd t\text{-a.e. }t.
\]
By Lemma~\ref{l:reduction-compact-sec5}, the only terms which could carry
nonzero Fourier modes in \(x\) were precisely the weighted compact
two-microlocal contributions. Since these terms vanish, all nonzero Fourier
modes of \(\nu_t^B\) vanish. Hence \(\nu_t^B\) is independent of \(x\). Since the \(\xi\)-marginal of \(\nu_t^B\) is \(\lambda_t\), we obtain
\[
\nu_t^B(\dd x,\dd\xi)
=
\dd x\otimes\lambda_t(\dd\xi),
\]
which proves \eqref{e:phase-space-equi-sec5}. Projecting onto the
configuration variable gives
\[
\nu_t=\dd x
\qquad\text{for a.e. }t.
\]

If \(\tau_h\ll h^{-1}\), Lemma~\ref{l:xi-marginal} gives
\[
\lambda_t=\lambda_0
\qquad\text{for a.e. }t,
\]
and therefore
\[
\nu_t^B(\dd x,\dd \xi)=\dd x\otimes\lambda_0(\dd\xi).
\]
\end{proof}
\begin{corollary}\label{c:x-equidistrib-corollary}
Let \((u_h)_{h\to0^+}\) be a family of solutions to \eqref{e:Schrodinger-PDE}. Assume that \(H\) and \(R_h\) satisfy the standing assumptions of Section~\ref{ss:magnetic-operator}, and that the initial data \((u_h^{(0)})_{h\to0^+}\) satisfy the spectral localization condition \eqref{e:spectral-projection}. Assume that
\[
\tau_h\gg h^{-1/2},
\]
and that, for every $\Lambda\in\mathcal L_1$ and every
\(
(x,\xi)\in \T^2\times \big( E_{\Lambda^\perp\setminus\{0\}} \cap \Omega_{E_1,E_2} \big)
\), one has
\begin{equation}\label{e:G-positive-corollaire-sec5}
G_{\widehat B_0,\Lambda}(x,\xi)\neq0.
\end{equation}
Then every time-dependent configuration-space quantum limit is spatially
equidistributed:
\[
\nu_t=\dd x
\qquad\text{for a.e. }t.
\]
In particular, for every \(a\in \mathcal{C}^\infty(\T^2)\) and every
\(\psi\in \mathcal{C}_c^\infty(\mathbb R)\), one has
\[
\lim_{h\to0^+}
\int_{\R \times \T^2}
\psi(t)a(x)\lvert u_h(t\tau_h)\lvert^2
\dd t \dd x 
=
\left(\int_{\mathbb R}
\psi(t) \dd t \right)
\left(
\int_{\mathbb T^2}
a(x)\dd x\right).
\]
\end{corollary}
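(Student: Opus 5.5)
This corollary follows almost directly from Theorem~\ref{t:x-equidistribution}, so the plan is to combine that theorem with the identification of configuration-space limits in Lemma~\ref{l:Lifting-QL}, and then use a subsequence argument to remove the dependence on extractions.

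\textbf{Step 1.} Fix an arbitrary subsequence along which $|u_h(t\tau_h,x)|^2\,\dd x\,\dd t$ converges to some configuration-space limit $\nu(\dd t,\dd x)=\nu_t(\dd x)\otimes\dd t$. We extract a further subsequence so that the magnetic Wigner distributions $W_h^B(\tau_h)$ also converge. Lemma~\ref{l:magnetic-wigner} gives this by boundedness, and the limit is $W^B=\nu_t^B\otimes\dd t$, as in Lemma~\ref{l:Lifting-QL}. That lemma also yields $(\pi_x)_*\nu_t^B=\nu_t$ for a.e.\ $t$.

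\textbf{Step 2.} The hypotheses of Theorem~\ref{t:x-equidistribution} hold verbatim: the standing assumptions, the spectral localization, $\tau_h\gg h^{-1/2}$, and the non-vanishing condition \eqref{e:G-positive-corollaire-sec5}. That theorem therefore gives $\nu_t^B=\dd x\otimes\lambda_t$ with $\lambda_t$ a probability measure. Projecting onto the configuration variable gives $\nu_t=\dd x$ for a.e.\ $t$.

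\textbf{Step 3.} Every accumulation point of the bounded family of Radon measures $|u_h(t\tau_h)|^2\dd x\dd t$ is thus equal to $\dd x\otimes\dd t$. Hence the whole family converges to $\dd x\otimes \dd t$ in the weak sense. Testing against $\psi(t)a(x)$ with $\psi\in\mathcal C_c^\infty(\R)$ and $a\in\mathcal C^\infty(\T^2)$ gives the stated limit. Compactness of $\T^2$ ensures that such test functions are admissible in \eqref{e:DefQL}.

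There is no genuine obstacle. The only point requiring care is the subsequence/uniqueness argument in Step 3, namely checking that uniqueness of the limit upgrades subsequential convergence to convergence of the full family. It also has to be checked that the a.e.-in-$t$ identification of $\nu_t$ suffices, and it does, since the limit is tested against $\dd t$-integrals.
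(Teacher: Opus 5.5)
Your proposal is correct and follows the paper's route: the corollary is an immediate consequence of Theorem~\ref{t:x-equidistribution}, whose proof already ends by projecting $\nu_t^B=\dd x\otimes\lambda_t$ onto the configuration variable via Lemma~\ref{l:Lifting-QL}. Your Step~3, the subsequence/uniqueness argument, makes explicit the upgrade from subsequential limits to convergence of the whole family, which the paper leaves implicit.
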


\begin{corollary}\label{c:simplified-x-equidistribution}
Let \((u_h)_{h\to0^+}\) be a family of solutions to
\eqref{e:Schrodinger-PDE}. Assume that \(H\) and \(R_h\) satisfy the
standing assumptions of Section~\ref{ss:magnetic-operator}, and that the
initial data \((u_h^{(0)})_{h\to0^+}\) satisfy the spectral localization
condition \eqref{e:spectral-projection}. Assume that
\[
\tau_h\gg h^{-1/2},
\]
and that \(\operatorname{Hess}(H)\) is positive definite on \(\Omega_{E_1,E_2}\). 
Then there exists an explicit constant $C_{H,\widehat B_0,E_1,E_2}>0$, depending only on \(H\), \(\widehat B_0\), \(E_1\), and \(E_2\), such that,
if
\begin{equation}\label{e:petitesse-norm-deriveR}
   \|\partial_x\mathscr R\|_
{L^\infty(\T^2\times\Omega_{E_1,E_2})}
\le
C_{H,\widehat B_0,E_1,E_2}, 
\end{equation}
then, for every \(a\in\mathcal C^\infty(\T^2)\) and every
\(\psi\in\mathcal C_c^\infty(\R)\), one has
\[
\lim_{h\to0^+}
\int_{\R}
\psi(t)
\int_{\T^2}
a(x)|u_h(t\tau_h,x)|^2
\,\dd x\,\dd t
=
\left(
\int_{\R}\psi(t)\,\dd t
\right)
\left(
\int_{\T^2}a(x)\,\dd x
\right).
\]
\end{corollary}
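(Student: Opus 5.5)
The plan is to reduce Corollary \ref{c:simplified-x-equidistribution} to Corollary \ref{c:x-equidistrib-corollary}. Everything except the non-vanishing condition \eqref{e:G-positive-corollaire-sec5} is already assumed, so it suffices to exhibit an explicit $C_{H,\widehat B_0,E_1,E_2}>0$ such that \eqref{e:petitesse-norm-deriveR} forces $G_{\widehat B_0,\Lambda}(x,\xi)\neq0$ for every $\Lambda\in\mathcal L_1$ and every $(x,\xi)\in\T^2\times(E_{\Lambda^\perp\setminus\{0\}}\cap\Omega_{E_1,E_2})$. We split $G_{\widehat B_0,\Lambda}$ as in \eqref{e:definition-G-B} into the magnetic term
\[
M_\Lambda(\xi):=\widehat B_0\,\mathrm{Hess}(H)(\xi)\frac{\mathfrak e_\Lambda}{L_\Lambda}\cdot\nabla H(\xi)^\perp
\]
and the perturbative term $P_\Lambda(x,\xi):=\mathrm{Hess}(H)(\xi)\frac{\mathfrak e_\Lambda}{L_\Lambda}\cdot\partial_x\mathcal I_\Lambda(\mathscr R)(x,\xi)$. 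We then bound $|M_\Lambda|$ from below and $|P_\Lambda|$ from above, uniformly in $\Lambda$.

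\textbf{Lower bound on $M_\Lambda$.} On $E_{\Lambda^\perp\setminus\{0\}}$ we have $\nabla H(\xi)\cdot\mathfrak e_\Lambda=0$ with $\nabla H(\xi)\neq0$. Hence $\nabla H(\xi)^\perp$ is parallel to $\mathfrak e_\Lambda$, that is, $\nabla H(\xi)^\perp=\pm|\nabla H(\xi)|\,\mathfrak e_\Lambda/L_\Lambda$. It follows that
\[
|M_\Lambda(\xi)|=|\widehat B_0|\,|\nabla H(\xi)|\,\Big(\mathrm{Hess}(H)(\xi)\tfrac{\mathfrak e_\Lambda}{L_\Lambda}\cdot\tfrac{\mathfrak e_\Lambda}{L_\Lambda}\Big)\ge |\widehat B_0|\,g_0\,\kappa_0 .
\]
Here $g_0:=\min_{\Omega_{E_1,E_2}}|\nabla H|>0$, by compactness (ellipticity) and \eqref{e:energy-window}. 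The constant $\kappa_0:=\min_{\xi\in\Omega_{E_1,E_2}}\lambda_{\min}(\mathrm{Hess}(H)(\xi))>0$ is positive by positive definiteness and compactness. This bound is independent of $\Lambda$, which is the point of using the quadratic form in the direction $\mathfrak e_\Lambda/L_\Lambda$.

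\textbf{Upper bound on $P_\Lambda$.} We have $|P_\Lambda|\le \kappa_1\,\|\partial_x\mathcal I_\Lambda(\mathscr R)\|_{L^\infty}$ with $\kappa_1:=\max_{\Omega_{E_1,E_2}}\|\mathrm{Hess}(H)\|$. The main obstacle is controlling $\partial_x\mathcal I_\Lambda(\mathscr R)$ by $\partial_x\mathscr R$ uniformly in $\Lambda$, since $\mathcal I_\Lambda$ is a Fourier projection. We use the ergodic-average representation \eqref{e:moyenne-le-long-des-orbites-periodiques}. For a direction $v\in\Lambda^\perp\setminus\{0\}$,
\[
\mathcal I_\Lambda(f)(x)=\lim_{T\to\infty}\frac1T\int_0^Tf(x+sv)\,\dd s ,
\]
and this average is exact over one period. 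Differentiation in $x$ commutes with this average, so $\partial_x\mathcal I_\Lambda(\mathscr R)=\mathcal I_\Lambda(\partial_x\mathscr R)$. Hence $\|\partial_x\mathcal I_\Lambda(\mathscr R)\|_{L^\infty}\le\|\partial_x\mathscr R\|_{L^\infty}$ pointwise in $\xi$, uniformly in $\Lambda$. The representation holds for any periodic direction, so we may apply it at fixed $\xi$ with the $\xi$-independent vector $v=\mathfrak e_\Lambda^\perp$.

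\textbf{Conclusion.} Set $C_{H,\widehat B_0,E_1,E_2}:=\tfrac12|\widehat B_0|\,g_0\kappa_0/\kappa_1$. Under \eqref{e:petitesse-norm-deriveR} we then get $|G_{\widehat B_0,\Lambda}|\ge|M_\Lambda|-|P_\Lambda|\ge\tfrac12|\widehat B_0|g_0\kappa_0>0$ for all $\Lambda$. The non-vanishing hypothesis of Corollary \ref{c:x-equidistrib-corollary} (equivalently Theorem \ref{t:x-equidistribution}) therefore holds, and it yields the stated limit. The constant depends only on $H$, $\widehat B_0$, $E_1$ and $E_2$, and $\widehat B_0\neq0$ by \eqref{e:flux}.
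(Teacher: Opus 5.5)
Your proposal is correct and follows essentially the same route as the paper: you split $G_{\widehat B_0,\Lambda}$ into the $\widehat B_0$-term and the $\partial_x\mathcal I_\Lambda(\mathscr R)$-term, bound them by $|\widehat B_0|c_H\kappa_H$ from below and $M_H\|\partial_x\mathscr R\|_{L^\infty}$ from above, take the same constant $|\widehat B_0|c_H\kappa_H/(2M_H)$, and conclude via Corollary~\ref{c:x-equidistrib-corollary}. You also supply two justifications that the paper's proof only asserts: that $\nabla H(\xi)^\perp$ is parallel to $\mathfrak e_\Lambda$ on the resonant set, and that $\|\partial_x\mathcal I_\Lambda(\mathscr R)\|_{L^\infty}\le\|\partial_x\mathscr R\|_{L^\infty}$ uniformly in $\Lambda$, which you obtain via the exact one-period average.
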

\begin{proof}
Set
\[
c_H:=\min_{\xi\in\Omega_{E_1,E_2}}|\nabla H(\xi)|,
\quad
\kappa_H:=
\min_{\substack{\xi\in\Omega_{E_1,E_2}\\ |v|=1}}
\operatorname{Hess}(H)(\xi)v\cdot v \text{ and }
M_H:=
\max_{\xi\in\Omega_{E_1,E_2}}
\|\operatorname{Hess}(H)(\xi)\|.
\]
By the regularity of the energy window \eqref{e:energy-window} and the positive definiteness of
\(\operatorname{Hess}(H)\), one has
\[
c_H>0,
\qquad
\kappa_H>0,
\qquad
M_H<+\infty.
\]
Fix \(\Lambda\in\mathcal L_1\) and $\xi\in
E_{\Lambda^\perp\setminus\{0\}}
\cap\Omega_{E_1,E_2}$. Hence, one has
\[
\left|
\widehat B_0\,
\operatorname{Hess}(H)(\xi)\frac{\mathfrak e_\Lambda}{L_\Lambda}
\cdot\nabla H(\xi)^\perp
\right|
\ge
|\widehat B_0|c_H\kappa_H.
\]
Moreover, using \eqref{e:petitesse-norm-deriveR}, one has 
\[
\left|
\operatorname{Hess}(H)(\xi)\frac{\mathfrak e_\Lambda}{L_\Lambda}
\cdot
\partial_x\mathcal I_\Lambda(\mathscr R)(x,\xi)
\right|
\le
M_H\|\partial_x\mathscr R\|_{L^\infty}
\le
M_HC_{H,\widehat B_0,E_1,E_2}.
\]
Therefore, defining 
\begin{equation}\label{e:definition-constante-petit-norme-deriveR}
    C_{H,\widehat B_0,E_1,E_2} := \frac{|\widehat B_0|c_H\kappa_H}{2M_H}, 
\end{equation}
leads to 
\[
|G_{\widehat B_0,\Lambda}(x,\xi)|
\ge
\frac{|\widehat B_0|c_H\kappa_H}{2}
>0.
\]
The conclusion follows from Corollary~\ref{c:x-equidistrib-corollary}.
\end{proof}

\subsection{Application to the magnetic Laplacian}
We now explain how the preceding criterion applies to the magnetic Laplacian
introduced in \eqref{e:mag-laplacian}. By the torus-adapted magnetic Weyl
quantization recalled in \eqref{e:quantization-of-magnetic-laplacian}, the magnetic Laplacian can be quantized as
\[
h^2(\mathcal L^{B}+V)
=
\Op\left(|\xi|^2+h\,\xi\cdot c_1(x)+h^2(c_0(x)+V(x)\right),
\]
where, in the notation of \cite[\S3.3]{MorinRiviere2025},
\[
c_1(x)=-2A^{\mathrm{per}}(x),
\qquad
c_0(x)=|A^{\mathrm{per}}(x)|^2 .
\]
Thus the magnetic Laplacian fits into the general definition of $\widehat H_h$; see \eqref{e:magnetic-operators}, 
with
\[
H(\xi)=|\xi|^2,
\qquad
R_h(x,\xi)=\xi\cdot c_1(x)+h (c_0(x)+V(x)).
\]
With the notation \eqref{e:definition-Rh}, the leading subprincipal contribution is therefore
\[
\mathscr{R}(x,\xi):=\xi\cdot c_1(x)
=
-2\,\xi\cdot A^{\mathrm{per}}(x).
\]
Using the fact that $\xi \in E_{\Lambda^\perp\setminus\{0\}}$ in the definition of \(G_{\widehat B_0,\Lambda}\) gives
\[
G_{\widehat B_0,\Lambda}(x,\xi)
=
-4 \left\langle \xi, \frac{\frak{e}_\Lambda^\perp}{L_\Lambda}\right\rangle \left( \widehat B_0+
 \frac{\frak{e}_\Lambda}{L_\Lambda} \cdot \partial_x\mathcal I_\Lambda\left(\frac{\frak{e}_\Lambda^\perp}{L_\Lambda} \cdot A^\mathrm{per}(x)\right) \right).
\]
One can use \eqref{e:B=dA} to compute the right term, which leads to 
\[
\frac{\frak{e}_\Lambda}{L_\Lambda} \cdot \partial_x\mathcal I_\Lambda\left(\frac{\frak{e}_\Lambda^\perp}{L_\Lambda} \cdot A^\mathrm{per}(x)\right) = \operatorname{rot}\mathcal{I}_\Lambda(A^\mathrm{per}) =\mathcal{I}_\Lambda(B-\widehat B_0) .
\]
Since $\xi \in E_{\Lambda^\perp\setminus\{0\}}$, one has 
\begin{equation}\label{e:G-laplacien}
    G_{\widehat B_0,\Lambda}(x,\xi) = -4 \left\langle \xi, \frac{\frak{e}_\Lambda^\perp}{L_\Lambda}\right\rangle \mathcal{I}_\Lambda(B)(x) = \mp 4\lvert \xi \lvert \mathcal{I}_\Lambda(B)(x).
\end{equation}
Considering the magnetic Schrödinger equation given by 
\begin{equation}\label{e:magnetic-Schrodinger}
\left\{
\begin{aligned}
   i h \partial_t u_{h}(t,x) &= h^2(\mathcal{L}^B+V) u_{h}(t,x), \\[0.3em]
   u_{h}(t=0,x) &= u_{h}^{(0)}(x),
\end{aligned}
\right.
\end{equation}
under the normalization
\[
\|u_{h}^{(0)}\|_{L^2(\T^2,L)}=1.
\]
Since \(H(\xi)=|\xi|^2\), the only critical value of \(H\) is \(0\). Hence any
energy window \(0<E_1<E_2\) automatically satisfies the non-criticality
assumption used in Theorem~\ref{t:x-equidistribution}. We therefore obtain the
following corollary.
\begin{corollary}\label{c:corollaire-laplacien-magnetic}
    Let \((u_h)_{h\to 0^+}\) be a family of solutions of \eqref{e:magnetic-Schrodinger} satisfying the spectral localization assumption \eqref{e:spectral-projection}. Assume that
\[
\tau_h\gg h^{-1/2},
\]
and that, for every \((x,\Lambda)\in \T^2 \times \mathcal L_1\), 
\begin{equation}\label{e:champs-positif}
\mathcal{I}_\Lambda(B)(x)\neq 0.
\end{equation}
Then every time-dependent magnetic semiclassical measure is independent of the
configuration variable. More precisely, for a.e. $t\in\R$,
\begin{equation}\label{e:phase-space-equi-laplacien-sec5}
\nu_t^B(\dd x,\dd\xi)
=
\dd x\otimes\lambda_t(\dd\xi),
\qquad
\lambda_t:=(\pi_\xi)_*\nu_t^B.
\end{equation}
Consequently, for every \((a,\psi) \in\mathcal C^\infty(\T^2)\times \mathcal C_c^\infty(\R)\), one has
\begin{equation}\label{e:configuration-equi-laplacien-sec5}
\lim_{h\to0^+}
\int_{\R}
\psi(t)
\int_{\T^2}
a(x)\lvert u_h(t\tau_h,x)\rvert^2
\,\dd x\,\dd t
=
\left(
\int_{\R}\psi(t)\,\dd t
\right)
\left(
\int_{\T^2}a(x)\,\dd x
\right).
\end{equation}
\end{corollary}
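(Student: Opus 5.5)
The plan is to deduce Corollary~\ref{c:corollaire-laplacien-magnetic} directly from Theorem~\ref{t:x-equidistribution}. By \eqref{e:quantization-of-magnetic-laplacian}, the operator $h^2(\mathcal L^B+V)$ has the form \eqref{e:magnetic-operators} with $H(\xi)=|\xi|^2$ and $R_h(x,\xi)=\xi\cdot c_1(x)+h(c_0(x)+V(x))$. So it suffices to check three things: the standing assumptions, the non-vanishing condition \eqref{e:G-sign-sec5}, and the passage from the phase-space statement to \eqref{e:configuration-equi-laplacien-sec5}.

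\textbf{Standing assumptions.} With $m=2$, $H(\xi)=|\xi|^2$ is elliptic of order $2$ and lies in $\mathscr S^2$. Its only critical point is $\xi=0$, so every window $0<E_1<E_2$ contains no critical value. Each level set $\SE$ is a circle of radius $\sqrt E$, hence compact and connected. The perturbation $R_h$ is real valued, belongs to $\mathscr S^2$ uniformly in $h$, and has principal part $\mathscr R(x,\xi)=-2\xi\cdot A^{\mathrm{per}}(x)$, so \eqref{e:definition-Rh} holds. Proposition~\ref{prop:esa} then applies.

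\textbf{The non-vanishing condition.} Fix $\Lambda\in\mathcal L_1$ and $\xi\in E_{\Lambda^\perp\setminus\{0\}}\cap\Omega_{E_1,E_2}$. Since $\nabla H=2\xi\in\Lambda^\perp$, we have $\operatorname{Hess}(H)=2\,\mathrm{Id}$ and $\xi=\pm(|\xi|/L_\Lambda)\,\mathfrak e_\Lambda^\perp$. I would then recompute $G_{\widehat B_0,\Lambda}$ to arrive at \eqref{e:G-laplacien}:
\[
G_{\widehat B_0,\Lambda}(x,\xi)=\mp4|\xi|\,\mathcal I_\Lambda(B)(x).
\]
Two points need care here. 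First, $\mathcal I_\Lambda$ commutes with $\partial_x$ and with multiplication by functions of $\xi$. Second, and this is the delicate one, the identity
\[
\frac{\mathfrak e_\Lambda}{L_\Lambda}\cdot\partial_x\mathcal I_\Lambda\Bigl(\frac{\mathfrak e_\Lambda^\perp}{L_\Lambda}\cdot A^{\mathrm{per}}\Bigr)=\mathcal I_\Lambda(\operatorname{rot}A^{\mathrm{per}})
\]
must hold. It does, because for $\Lambda$-modes $k\in\Lambda$ the derivative in the direction $\mathfrak e_\Lambda^\perp$ vanishes, so the rotational of $\mathcal I_\Lambda(A^{\mathrm{per}})$ reduces to the single term along $\mathfrak e_\Lambda$. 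Combined with $\operatorname{rot}A^{\mathrm{per}}=B-\widehat B_0$ from \eqref{e:B=dA} and $\mathcal I_\Lambda(\widehat B_0)=\widehat B_0$, this gives $\widehat B_0+\mathcal I_\Lambda(B-\widehat B_0)=\mathcal I_\Lambda(B)$. The sign conventions for $\perp$ need checking, but they do not matter since only non-vanishing is used. Because $|\xi|\ge\sqrt{E_1}>0$ on $\Omega_{E_1,E_2}$, the hypothesis \eqref{e:champs-positif} gives $G_{\widehat B_0,\Lambda}\neq0$ on $\T^2\times(E_{\Lambda^\perp\setminus\{0\}}\cap\Omega_{E_1,E_2})$ for every $\Lambda$.

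\textbf{Conclusion.} Theorem~\ref{t:x-equidistribution} now yields \eqref{e:phase-space-equi-laplacien-sec5} for every subsequential limit. Projecting with Lemma~\ref{l:Lifting-QL} gives $\nu_t=\dd x$, which identifies the limit in \eqref{e:DefQL} as $\dd x\otimes\dd t$ for every subsequence. Since the family is bounded and every accumulation point is the same, the whole family converges, which is \eqref{e:configuration-equi-laplacien-sec5}. I expect the main obstacle to be the Fourier computation identifying $G_{\widehat B_0,\Lambda}$ with $\mathcal I_\Lambda(B)$; the remaining steps are bookkeeping.
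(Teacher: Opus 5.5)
Your proposal is correct and follows essentially the paper's route: identify $H(\xi)=|\xi|^2$ and $\mathscr R=-2\xi\cdot A^{\mathrm{per}}$, reduce $G_{\widehat B_0,\Lambda}$ to $\mp4|\xi|\,\mathcal I_\Lambda(B)$ via $\operatorname{rot}\mathcal I_\Lambda(A^{\mathrm{per}})=\mathcal I_\Lambda(B-\widehat B_0)$, and apply Theorem~\ref{t:x-equidistribution}; your uniqueness-of-accumulation-points argument also makes explicit the passage to a full limit, which the paper leaves implicit. One small precision on your remark about $\perp$ conventions: only the overall sign of $G_{\widehat B_0,\Lambda}$ is convention-free, whereas the relative sign between $\widehat B_0$ and the $A^{\mathrm{per}}$ contribution depends on taking $\mathfrak e_\Lambda^\perp$ to be $\mathfrak e_\Lambda$ rotated by $+\pi/2$ consistently, both in the formula for $G_{\widehat B_0,\Lambda}$ and in your $\operatorname{rot}$ identity (an inconsistent choice would give $2\widehat B_0-\mathcal I_\Lambda(B)$ instead of $\mathcal I_\Lambda(B)$).
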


\begin{remark}
If the magnetic field satisfies $B>0$, then \eqref{e:champs-positif} holds. The case of the magnetic Laplacian therefore recovers exactly the same geometric control condition on $B$ as in~\cite{MorinRiviere2025}. Similar geometric assumptions on magnetic fields already appeared in a classical kinetic setting in the work of Glass and Han-Kwan~\cite{Glass-Han-Kwan-2012}.
\end{remark}

\begin{remark}[Connection with observability]
The equidistribution result above has a direct interpretation in terms of
observability. Indeed, let \(\omega\subset\T^2\) be a nonempty open set and let
\(\psi\in \mathcal{C}_c^\infty(\R)\) be nonnegative and not identically zero. Under the
assumptions of Theorem~\ref{t:x-equidistribution}, the conclusion
\(\nu_t=\dd x\), combined with the Portmanteau Theorem \cite[Chapter~1]{Billingsley1999} applied to the open set \(\R \times \omega \), implies
\[
\left(\int_{\R}\psi(t)\,\dd t\right)
|\omega|\le \liminf_{h\to0^+}
\int_{\R}\psi(t)
\int_{\omega}
|u_h(t\tau_h,x)|^2\,\dd x\,\dd t.
\]
Moreover, if \(|\partial\omega|=0\), then the preceding \(\liminf\) is a limit and one has equality. In particular, no normalized spectrally localized sequence can become asymptotically invisible on \(\omega\) at the time scale \(\tau_h\gg h^{-1/2}\). By the usual contradiction argument based on semiclassical measures, this yields a semiclassical observability estimate in this regime. Such observability statements are, in turn, closely related to controllability properties of the corresponding Schrödinger equation through the HUM method introduced by Lions~\cite{Lions-1988}. We refer for instance to the works of Burq--Zworski~\cite{Burq-Zworski-2004}, Maci\`a~\cite{Macia-2010} and Anantharaman--Maci\`a~\cite{Anantharaman-Macia-2014} for related semiclassical approaches to observability and control for Schrödinger equations. For electromagnetic Schrödinger operators on \(\T^2\), we also mention the recent work of Le Balc'h, Niu and Sun~\cite{LeBalc'h}, where a geometric condition involving the magnetic field is shown to play a decisive role in observability.
\end{remark}

\section{Beyond the Heisenberg time}\label{s:long-time-propagation}
We now turn to longer time scales and study the remaining dynamics of the limiting measures in the momentum variable. This will conclude the proof of Theorem \ref{t:intro-main-structure}. Throughout this section, we assume that
\[
\tau_h \gg h^{-1/2},
\]
and that the hypotheses of Theorem~\ref{t:x-equidistribution} are satisfied. In particular, the configuration-space component of the limiting measure is equidistributed. Hence there exists a measurable family \((\lambda_t)_{t\in\R}\) of probability measures on \(\Omega_{E_1,E_2}\) such that
\begin{equation}\label{e:lambda-disintegration-sec6}
\nu_t^B(\dd x,\dd \xi)=\dd x\otimes \lambda_t(\dd \xi),
\end{equation}
where \(\lambda_t=(\pi_\xi)_*\nu_t^B\). Let us first recall the behavior below the Heisenberg time scale. By Theorem \ref{t:x-equidistribution}, if $h^{-1/2}\ll \tau_h\ll h^{-1}$, the momentum marginal does not evolve: for almost every \(t\in\R\),
\begin{equation}\label{e:lambda-disintegration-tau-petit-devant-1surh}
\nu_t^B(\dd x,\dd \xi)=\dd x\otimes \lambda_0(\dd \xi).
\end{equation}
Hence, after spatial equidistribution has occurred, one cannot expect to observe a non-trivial evolution in the momentum variable before the Heisenberg time scale \(\tau_h= h^{-1}\).

The purpose of this section is to describe this remaining evolution of \((\lambda_t)_{t\in\R}\) according to the relative size of \(\tau_h\) with respect to \(h^{-1}\). In the very long-time regime, we will further identify the conditional measures of \(\lambda_t\) on the energy levels of \(H\). The first step is the following lemma, which gives the effective dynamics of the momentum marginal. It is useful to observe that the effective momentum vector field
\[
\widetilde X_H
:=
-\widehat B_0\,\nabla H^\perp\cdot\partial_\xi
\]
satisfies
\[
\widetilde X_H H
=
-\widehat B_0\nabla H^\perp\cdot\nabla H
=
0,
\]
so that $\widetilde X_H$ is tangent to the energy levels of $H$. Consequently, the associated flow \(\Upsilon_H^t\) preserves each energy level \(\mathbb S_E =\{H=E\}\).

At this stage, Theorem~\ref{t:x-equidistribution} already yields the product structure \eqref{e:lambda-disintegration-sec6} and proves item~\textup{(i)} of Theorem~\ref{t:intro-main-structure}. The next lemma proves item~\textup{(ii)} and provides the invariance property needed to establish item~\textup{(iii)}.

\begin{lemma}\label{l:lambda-regularity}
Assume that the hypotheses of Theorem~\ref{t:x-equidistribution} are satisfied. Then the measure $\lambda_t$ satisfies the following dynamical properties. 
\begin{enumerate}
\item If $\tau_h= h^{-1}$, then the measure $\lambda_t$ satisfies the following transport equation:
\begin{equation}\label{e:xi-transport}
\displaystyle \partial_t(\lambda_t) = \widehat{B}_0\nabla H^\perp(\xi) \cdot \partial_\xi(\lambda_t).
\end{equation}
More precisely, if \(\Upsilon_H^t\) denotes the flow on \(\R^2\) defined by
\begin{equation}\label{e:def-flot-upsilon}
\dot \xi(t)=-\widehat B_0\,\nabla H(\xi(t))^\perp ,
\end{equation}
then
\begin{equation}\label{e:lambda-transported}
\lambda_t=(\Upsilon_H^{t})_*\lambda_0
\end{equation}
for almost every \(t\in\R\).

\item If $ \tau_h \gg h^{-1}$, then, for almost every \(t\in\R\), the measure \(\lambda_t\) is invariant under the flow \(\Upsilon_H^s\), namely
\begin{equation}\label{e:xi-invariance}
\lambda_t=(\Upsilon_H^{s})_*\lambda_t,
\qquad \forall s\in\R .
\end{equation}
\end{enumerate}
\end{lemma}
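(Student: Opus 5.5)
We will test the Heisenberg identity against symbols depending only on the momentum variable, taking the commutator expansion \eqref{e:commutator} one order further than in the proof of Lemma~\ref{l:xi-marginal}, and then use the spatial equidistribution \eqref{e:lambda-disintegration-sec6} to dispose of the $x$-dependent subprincipal term. Fix $\psi\in\mathcal C_c^\infty(\R)$ and $b\in\mathcal C_c^\infty(\R^2)$, and set $F_h(t)=\langle\Op(b)u_h(t\tau_h),u_h(t\tau_h)\rangle$. As in \eqref{e:derivative-Fh}, integration by parts in time gives
\[
\int_\R\psi'(t)F_h(t)\,\dd t=\tau_h\int_\R\psi(t)\Big\langle\tfrac{i}{h}[\Op(b),\widehat H_h]u_h(t\tau_h),u_h(t\tau_h)\Big\rangle\dd t .
\]

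We then expand $\frac ih[\Op(b),\Op(H+hR_h)]$ with \eqref{e:commutator}, keeping all terms of order $h$. The Poisson bracket $\{b,H\}$ vanishes because both symbols depend only on $\xi$. The magnetic correction coming from $[\Op(b),\Op(H)]$ gives
\[
\widehat B_0h\,(\partial_{\xi_2}b\,\partial_{\xi_1}H-\partial_{\xi_1}b\,\partial_{\xi_2}H)=-\widehat B_0h\,\nabla H^\perp\cdot\partial_\xi b .
\]
The perturbative commutator contributes $h\{b,R_h\}=h\,\partial_\xi b\cdot\partial_x\mathscr R+\mathcal O(h^2)$. The remaining terms are $\mathcal O_{\mathscr S^0}(h^2)$, because $b$ is compactly supported and the third-order terms of $[\Op(b),\Op(H)]$ only involve $\xi$-derivatives. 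One has to check that no further $\mathcal O(h^2)$ contribution is hidden in the magnetic $\Omega_{hB}$ operator at order three; since the commutator keeps only odd powers of $\Omega$, this is $\mathcal O(h^3)$ before division by $h$. Altogether we obtain
\[
\frac1{h\tau_h}\int_\R\psi'(t)F_h(t)\,\dd t=\int_\R\psi(t)\big\langle\Op\big(\partial_\xi b\cdot\partial_x\mathscr R-\widehat B_0\nabla H^\perp\cdot\partial_\xi b\big)u_h(t\tau_h),u_h(t\tau_h)\big\rangle\,\dd t+\mathcal O(h).
\]

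Next we pass to the limit. The symbol on the right-hand side is in $\mathcal C_c^\infty(T^*\T^2)$, so the right-hand side converges to $\int\psi(t)\langle\partial_\xi b\cdot\partial_x\mathscr R-\widehat B_0\nabla H^\perp\cdot\partial_\xi b,\nu_t^B\rangle\,\dd t$. This is the step that uses Theorem~\ref{t:x-equidistribution}, and it is the one place where care is needed. The theorem gives $\nu_t^B=\dd x\otimes\lambda_t$, and $\int_{\T^2}\partial_x\mathscr R(x,\xi)\,\dd x=0$ by periodicity. Hence the subprincipal contribution disappears and the limit reduces to $-\widehat B_0\int\psi(t)\langle\nabla H^\perp\cdot\partial_\xi b,\lambda_t\rangle\,\dd t$. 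Without spatial equidistribution this term would not vanish and would obstruct a closed momentum dynamics, so this is the main point of the argument.

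In the case $\tau_h=h^{-1}$ the left-hand side converges to $\int\psi'(t)\langle b,\lambda_t\rangle\,\dd t$. The identity thus becomes the weak form of $\partial_t\lambda_t=\widehat B_0\nabla H^\perp\cdot\partial_\xi\lambda_t$, which is \eqref{e:xi-transport}, i.e. the transport equation for $\widetilde X_H=-\widehat B_0\nabla H^\perp\cdot\partial_\xi$. The vector field is smooth and tangent to the compact level sets $\SE$, so its flow $\Upsilon_H^t$ is complete and preserves $\Omega_{E_1,E_2}$. Uniqueness of weak measure-valued solutions of a transport equation with a smooth vector field (for instance via $\frac{\dd}{\dd t}\langle b\circ\Upsilon_H^{-t},\lambda_t\rangle=0$ in the distributional sense) then gives $\lambda_t=(\Upsilon_H^t)_*\lambda_{t_0}$. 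The initial condition is identified with $\lambda_0$ by a continuity argument at $t=0$: the estimate $|F_h(t)-F_h(0)|\le C|t|h\tau_h=C|t|$ from \eqref{e:Fh-close-initial} shows that $t\mapsto\langle b,\lambda_t\rangle$ is Lipschitz with value $\langle b,\lambda_0\rangle$ at $0$. This proves \eqref{e:lambda-transported}.

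In the case $\tau_h\gg h^{-1}$ the left-hand side is $\mathcal O\big((h\tau_h)^{-1}\big)\to0$. We then get $\langle\nabla H^\perp\cdot\partial_\xi b,\lambda_t\rangle=0$ for a.e. $t$ and all $b$, after restricting to a countable dense family of $b$ so that the null set of times does not depend on $b$. This is the infinitesimal invariance under $\Upsilon_H^s$, which is \eqref{e:xi-invariance}.
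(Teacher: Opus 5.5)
Your proposal follows the paper's proof essentially step for step. You test the Heisenberg identity on $\xi$-only symbols, use the commutator expansion \eqref{e:commutator} to extract the $\widehat B_0h^2$ term, and remove the $\partial_x\mathscr R$ contribution via $\nu_t^B=\dd x\otimes\lambda_t$ and periodicity. You then read off transport for $h\tau_h=1$ and invariance for $h\tau_h\to+\infty$. Your extra steps are correct and supply details the paper leaves implicit: uniqueness for the measure-valued transport equation, identification of the initial value through the Lipschitz bound \eqref{e:Fh-close-initial}, and a countable family of test functions.

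There is, however, a sign slip that affects the direction of the flow in item~1.

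\begin{itemize}
\item With $\nabla H^\perp=(-\partial_{\xi_2}H,\partial_{\xi_1}H)$ one has $\partial_{\xi_2}b\,\partial_{\xi_1}H-\partial_{\xi_1}b\,\partial_{\xi_2}H=+\nabla H^\perp\cdot\partial_\xi b$, not $-\nabla H^\perp\cdot\partial_\xi b$.
\item So the symbol in your master identity should be $\partial_\xi b\cdot\partial_x\mathscr R+\widehat B_0\nabla H^\perp\cdot\partial_\xi b$. Its limit is then exactly \eqref{e:weak-xi-transport}.
\item As written, your limiting identity $\int\psi'(t)\langle b,\lambda_t\rangle\,\dd t=-\widehat B_0\int\psi(t)\langle\nabla H^\perp\cdot\partial_\xi b,\lambda_t\rangle\,\dd t$ is the weak form of $\partial_t\lambda_t=-\widehat B_0\nabla H^\perp\cdot\partial_\xi\lambda_t$. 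That is transport by $\Upsilon_H^{-t}$, not $\Upsilon_H^{t}$.
\item You reach \eqref{e:xi-transport} only because you then misidentify that weak form, so the two errors cancel.
\end{itemize}

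Once the first sign is fixed, the argument is consistent. Item~2 is unaffected either way.
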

\begin{proof}
Let $\psi\in \mathcal{C}_c^\infty(\R)$ and $a=a(\xi)\in \mathcal{C}_c^\infty(\R^2)$. As in the previous sections, we study the commutator appearing in the following identity 
\begin{equation}\label{e:IBP-time-xi}
\displaystyle -\frac{ih}{\tau_h} \int_\R \psi'(t) \langle \Op(a(\xi)) u_h(t\tau_h) , u_h(t\tau_h )\rangle \dd t  = \int_\R \psi(t) \langle \left[ \Op(a) , \Op(H + h R_h)\right] u_h(t\tau_h) , u_h(t\tau_h) \rangle \dd t .
\end{equation}
The computation is simpler than in the previous two-microlocal analysis, because the observable \(a\) depends only on the momentum variable. By the magnetic commutator expansion \eqref{e:commutator}, we obtain
\begin{equation}\label{e:commutator-expansion-xi}
[\Op(a),\Op(H+hR_h)]
=\frac{h}{i}\,\Op(\{a,H+hR_h\})
+\frac{\widehat B_0h^2}{i}\,\Op(\nabla_\xi(H+hR_h)^\perp\cdot\nabla_\xi a)
+h^3\,\Op(r_h),
\end{equation}
where $r_h\in \mathscr{S}^0$ is bounded uniformly in $h$ (and depends on finitely many seminorms of
$a,H,R_h$). Since $a=a(\xi)$, we have $\{a,H\}=0$ and $\{a,hR_h\}=-h\,\nabla_\xi a\cdot\partial_xR_h$,
so \eqref{e:commutator-expansion-xi} becomes
\begin{equation}\label{e:commutator-expansion-xi-simplified}
[\Op(a),\Op(H+hR_h)]
=-\frac{h^2}{i}\,\Op(\nabla_\xi a\cdot\partial_xR_h)
+\frac{\widehat B_0h^2}{i}\,\Op(\nabla_\xi H(\xi)^\perp\cdot\nabla_\xi a)
+h^3\,\Op(\widetilde r_h),
\end{equation}
with $\widetilde r_h\in \mathscr{S}^0$ uniformly bounded. Plugging \eqref{e:commutator-expansion-xi-simplified} into \eqref{e:IBP-time-xi} and dividing by $-ih^2$ gives
\begin{align}\label{e:master-identity-xi}
\frac{1}{h\tau_h}\int_\R \psi'(t)\,
\big\langle \Op(a)\,u_h(t \tau_h),u_h(t \tau_h)\big\rangle\,\dd t 
&= -  \int_\R \psi(t)\,
\big\langle \Op(\nabla_\xi a\cdot\partial_xR_h)\,u_h(t \tau_h),u_h(t \tau_h)\big\rangle\,\dd t 
\nonumber\\
&\quad +\widehat B_0  \int_\R \psi(t)\,
\big\langle \Op(\nabla_\xi H^\perp\cdot\nabla_\xi a)\,u_h(t \tau_h),u_h(t \tau_h)\big\rangle\,\dd t 
\nonumber\\
&\quad + h\,\mathcal O(1),
\end{align}
where $\mathcal O(1)$ is uniform in $h$ (for fixed $a,\psi$) by the Calderón--Vaillancourt Theorem \ref{t:Calderon-Vaillancourt}. Recall that from the definition \eqref{e:definition-Rh} of $R_h$, one has
\[
R_h(x,\xi) = \mathscr{R}(x,\xi) + \mathcal{O}_{\mathscr{S}^{m}}(h). 
\]
Since \(a\) is compactly supported, the symbol \(\nabla_\xi a\cdot\partial_xR_h\) is uniformly bounded in \(\mathscr{S}^0\). Therefore, by the Calderón--Vaillancourt theorem,
\begin{align}\label{e:integrale-avec-Rh}
    \int_\R \psi(t)\big\langle \Op(\nabla_\xi a\cdot\partial_xR_h)\,u_h(t \tau_h),u_h(t \tau_h)\big\rangle\,\dd t  = &\int_\R \psi(t)\big\langle \Op(\nabla_\xi a\cdot\partial_x\mathscr{R})\,u_h(t \tau_h),u_h(t \tau_h)\big\rangle\,\dd t \nonumber\\
    &+ h\mathcal{O}(1).
\end{align}
Finally, combining \eqref{e:integrale-avec-Rh} and \eqref{e:master-identity-xi} leads to 
\begin{align}\label{e:master-identity-xi-final}
\frac{1}{h\tau_h}\int_\R \psi'(t)\,
\big\langle \Op(a)\,u_h(t \tau_h),u_h(t \tau_h)\big\rangle\,\dd t 
&= -  \int_\R \psi(t)\,
\big\langle \Op(\nabla_\xi a\cdot\partial_x\mathscr{R})\,u_h(t \tau_h),u_h(t \tau_h)\big\rangle\,\dd t 
\nonumber\\
&\quad +\widehat B_0  \int_\R \psi(t)\,
\big\langle \Op(\nabla_\xi H^\perp\cdot\nabla_\xi a)\,u_h(t \tau_h),u_h(t \tau_h)\big\rangle\,\dd t 
\nonumber\\
&\quad + h\,\mathcal O(1).
\end{align}
Recall that we place ourselves in the regime $\tau_h \gg h^{-1/2}$ and that from Theorem \ref{t:x-equidistribution} one knows exactly the $x$-marginal of the measure $\nu_t^B$ that is the Lebesgue measure. Moreover, one has 
\[
\int_\R \psi'(t)
\big\langle \Op(a)u_h(t\tau_h),u_h(t\tau_h)\big\rangle\,\dd t \xrightarrow[h\to 0^+]{} \int_\R \psi'(t)\int_{\R^2}a(\xi)\,\lambda_t(\dd\xi)\,\dd t.
\]

Therefore, if \(\tau_h= h^{-1}\), so that \(h\tau_h\to1\), the whole left-hand side of \eqref{e:master-identity-xi-final} converges to this limit. If \(\tau_h\gg h^{-1}\), then \(h\tau_h\to+\infty\), and the whole left-hand side converges to \(0\). Moreover, since $\nu_t^B$ is Lebesgue in $x$, the term involving $\partial_x\mathscr{R}$ vanishes in the limit, because $\mathscr{R}$ is periodic in $x$ and therefore $\int_{\T^2}\partial_x\mathscr{R}(x,\xi)\,\dd x=0$. Hence, the right-hand side of \eqref{e:master-identity-xi-final} converges to 
\[
\widehat B_0 \int_\R \psi(t)\left[\int_{\T^*\T^2} \nabla_\xi H^\perp(\xi) \cdot \nabla_\xi a(\xi) \dd x \lambda_t(\dd \xi)\right] \dd t  = \widehat B_0 \int_\R \psi(t)\left[\int_{\R^2} \nabla_\xi H^\perp(\xi) \cdot \nabla_\xi a(\xi) \lambda_t(\dd \xi)\right] \dd t. 
\]
In the regime \(\tau_h= h^{-1}\), passing to the limit in \eqref{e:master-identity-xi-final} gives
\begin{equation}\label{e:weak-xi-transport}
\int_\R \psi'(t) \int_{\R^2}a(\xi)\,\lambda_t(\dd\xi)\dd t = \widehat B_0 \int_\R \psi(t) \int_{\R^2} \nabla_\xi H(\xi)^\perp\cdot\nabla_\xi a(\xi) \,\lambda_t(\dd\xi)\dd t .
\end{equation}
This is exactly the weak formulation of \eqref{e:xi-transport}.

In the regime \(\tau_h\gg h^{-1}\), the left-hand side of \eqref{e:master-identity-xi-final} converges to zero. Hence, for almost every
\(t\in\R\),
\[
\int_{\R^2}
\nabla_\xi H(\xi)^\perp\cdot\nabla_\xi a(\xi)
\,\lambda_t(\dd\xi)
=0,
\qquad
\forall a\in \mathcal{C}_c^\infty(\R^2).
\]
This is the infinitesimal invariance of \(\lambda_t\) under
\(-\widehat B_0\nabla H^\perp\cdot\partial_\xi\). Equivalently, \(\lambda_t\) is invariant under the associated flow, which gives \eqref{e:xi-invariance}.
\end{proof}

We now combine Lemma~\ref{l:lambda-regularity} with the energy disintegration obtained in Lemma~\ref{l:xi-marginal}. Recall that there exists a probability measure $\sigma^B$ on $[E_1,E_2]$, depending only on the initial data, such that for almost every $t\in\R$ one can disintegrate
\[
\nu_t^B(\dd x,\dd\xi)
=
\nu_{t,\xi,E}^B(\dd x)\otimes \lambda_{t,E}(\dd\xi)\otimes \sigma^B(\dd E),
\]
where $\lambda_{t,E}$ is supported on the energy level
\[
\mathbb S_E:=\{\xi\in\R^2:\ H(\xi)=E\}.
\]
Recall from Lemma \ref{l:xi-marginal} that the marginal $\sigma^B$ depends only on the initial data for every time scale. Since we are working under the assumptions of Theorem~\ref{t:x-equidistribution}, we know that the configuration-space variable is already equidistributed. In other words, for $\sigma^B$-almost every $E$, for $\lambda_{t,E}$-almost every $\xi\in\mathbb S_E$, and for almost every $t\in\R$, one has
\[
\nu_{t,\xi,E}^B(\dd x)=\dd x.
\]
Consequently, one has 
\begin{equation}\label{e:nu-energy-disintegration-after-x-equi}
\nu_t^B(\dd x,\dd\xi)
=
\dd x\otimes \lambda_{t,E}(\dd\xi)\otimes \sigma^B(\dd E).
\end{equation}

We now consider the regime $\tau_h\gg h^{-1}$. By Lemma~\ref{l:lambda-regularity}, the momentum marginal $\lambda_t$ is invariant under the flow generated by
\[
\widetilde X_H(\xi)
:=
-\widehat B_0\nabla H(\xi)^\perp\cdot\partial_\xi,
\]
whose associated flow $\Upsilon_H^t$ preserves each energy level $\SE$. Therefore, by uniqueness of the disintegration with respect to the energy variable, the conditional measures $\lambda_{t,E}$ satisfy, for $\sigma^B$-almost every $E$ and almost every $t\in\R$,
\begin{equation}\label{e:lambda-tE-invariance}
(\Upsilon_E^s)_*\lambda_{t,E}=\lambda_{t,E},
\qquad \forall s\in\R,
\end{equation}
where $\Upsilon_E^s$ denotes the restriction to $\mathbb S_E$ of the flow $\Upsilon_H^s$ generated by $\widetilde X_H$.

Recall that \eqref{e:flux} implies $\widehat B_0\neq0$. Since the levels $\mathbb S_E$ are regular, $\nabla H$ does not vanish on $\mathbb S_E$, and hence $\widetilde X_H$ is a nonvanishing tangent vector field on $\mathbb S_E$. Since $\mathbb S_E$ is diffeomorphic to a circle, the flow of $\widetilde X_H$ on $\mathbb S_E$ is periodic. Let $L_E>0$ denote its minimal period. Choosing a base point $\xi_E\in\mathbb S_E$, we define
\begin{equation}\label{e:def-psi-E}
\psi_E:\R/L_E\Z\longrightarrow \mathbb S_E,
\qquad
\psi_E(\theta):=\Upsilon_E^\theta(\xi_E).
\end{equation}
Then $\psi_E$ is a smooth diffeomorphism and conjugates the translation flow on $\R/L_E\Z$ with the flow $\Upsilon_E^s$ on $\mathbb S_E$.
\begin{lemma}\label{l:flow-coordinate-energy-level}
For every $E\in[E_1,E_2]$, the map $\psi_E$ satisfies
\[
(\psi_E)_*(\partial_\theta)=\widetilde X_H
\qquad\text{on }\mathbb S_E.
\]
Consequently, the flow $\Upsilon_E^s$ is conjugated by $\psi_E$ to the translation flow $\theta\mapsto \theta+s$ on $\R/L_E\Z$.
\end{lemma}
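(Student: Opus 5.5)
The plan is to verify the pushforward identity directly from the group property of the flow, then deduce the conjugacy by uniqueness of integral curves.

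\textbf{Pushforward identity.} Fix $\theta_0\in\R/L_E\Z$. Differentiating the curve $\theta\mapsto\psi_E(\theta)=\Upsilon_E^\theta(\xi_E)$ at $\theta_0$, the group property gives
\[
\psi_E(\theta_0+s)=\Upsilon_E^{s}\big(\Upsilon_E^{\theta_0}(\xi_E)\big)=\Upsilon_E^s(\psi_E(\theta_0)).
\]
Hence
\[
\dd\psi_E(\theta_0)\,\partial_\theta=\frac{\dd}{\dd s}\Big|_{s=0}\Upsilon_E^s(\psi_E(\theta_0))=\widetilde X_H(\psi_E(\theta_0)),
\]
which is exactly $(\psi_E)_*\partial_\theta=\widetilde X_H$ on $\mathbb S_E$.

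\textbf{Well-posedness of $\psi_E$.} Before using the identity, I would check the facts it presupposes, some of which the text asserts just before the lemma.
\begin{itemize}
\item $\widetilde X_H$ is tangent to $\mathbb S_E$, since $\widetilde X_H H=0$. It is also nonvanishing there, because $\widehat B_0\neq0$ and $\nabla H\neq0$ on $\Omega_{E_1,E_2}$.
\item Since $\mathbb S_E$ is compact, the flow is complete.
\item Since $\mathbb S_E\simeq\mathbb S^1$ is connected and the field has no zeros, the orbit of $\xi_E$ is open (a local diffeomorphism image) and closed (compact), so it is all of $\mathbb S_E$.
\item The orbit is periodic, and $\psi_E$ is well defined on $\R/L_E\Z$ with $L_E$ the minimal period.
\item $\psi_E$ is injective on $\R/L_E\Z$ by minimality of $L_E$. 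It is an immersion because $\widetilde X_H\neq0$, and it is surjective.
\end{itemize}
A bijective immersion between compact one-manifolds is a diffeomorphism.

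\textbf{Conjugacy.} The curves $s\mapsto\psi_E(\theta+s)$ and $s\mapsto\Upsilon_E^s(\psi_E(\theta))$ are both integral curves of $\widetilde X_H$ with the same initial point. By uniqueness for ODEs they coincide, which gives $\psi_E\circ\tau_s=\Upsilon_E^s\circ\psi_E$, where $\tau_s(\theta)=\theta+s$. Equivalently, this follows from the pushforward identity by naturality of flows under diffeomorphisms.

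\textbf{Main obstacle.} There is no real difficulty here. The only point requiring care is the surjectivity and transitivity on the connected circle $\mathbb S_E$, which I would handle by the open–closed orbit argument above.
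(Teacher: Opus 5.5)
Your proposal is correct and follows essentially the paper's route: write $\psi_E(\theta_0+\theta)=\Upsilon_E^{\theta}(\psi_E(\theta_0))$ and differentiate at $\theta=0$ to obtain $\widetilde X_H(\psi_E(\theta_0))$, after which the conjugacy is immediate from the group property of the flow (the paper does not prove that $\psi_E$ is a diffeomorphism but asserts it just before the lemma, so your well-posedness checks go beyond what it does). One small repair in those checks: the orbit of $\xi_E$ is not a priori compact, so justify its closedness differently, by noting that its complement is a union of other orbits, each of which is open by the same local-diffeomorphism argument.
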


\begin{proof}
Let $\xi\in\mathbb S_E$. By definition of $\psi_E$, there exists $\theta_0\in\R/L_E\Z$ such that
\[
\xi=\psi_E(\theta_0)=\Upsilon_E^{\theta_0}(\xi_E).
\]
Then, one has
\[
\big[(\psi_E)_*\partial_\theta\big](\xi)
=
\left.\frac{\dd}{\dd\theta}\right|_{\theta=0}
\psi_E(\theta_0+\theta)
=
\left.\frac{\dd}{\dd\theta}\right|_{\theta=0}
\Upsilon_E^{\theta_0+\theta}(\xi_E)
=
\widetilde X_H(\Upsilon_E^{\theta_0}(\xi_E))
=
\widetilde X_H(\xi).
\]
This proves the first statement. The conjugation of the flows follows immediately from the definition of $\psi_E$.
\end{proof}
\begin{theorem}\label{t:energy-level-equidistribution}
Assume that the hypotheses of Theorem~\ref{t:x-equidistribution} are satisfied. Assume that $\tau_h\gg h^{-1}$. Then, on the regular energy window $[E_1,E_2]$, every time-dependent semiclassical measure satisfies, for almost every $t\in\R$,
\begin{equation}\label{e:final-energy-level-equi}
\nu_t^B(\dd x,\dd\xi)
=
\dd x\otimes
\int_{E_1}^{E_2}
(\psi_E)_*\left(\frac{\dd\theta}{L_E}\right)
\,\sigma^B(\dd E),
\end{equation}
where \(\sigma^B\), introduced in Lemma~\ref{l:xi-marginal}, depends only on the initial data. Equivalently, for every $\psi\in \mathcal{C}_c^\infty(\R)$ and every $a\in \mathcal{C}^\infty(\T^2\times\Omega_{E_1,E_2})$, one has
\begin{equation}\label{e:final-energy-level-equi-test}
\lim_{h\to0^+} \int_\R \psi(t) \left\langle \Op(a)u_h(t\tau_h),u_h(t\tau_h) \right\rangle \dd t = \int_\R \psi(t)\dd t \int_{E_1}^{E_2} \int_0^{L_E} \int_{\T^2} a(x,\psi_E(\theta)) \dd x\frac{\dd\theta}{L_E} \sigma^B(\dd E).
\end{equation}
In particular, in this regime, the limiting measure $\nu_t^B$ is independent of $t$ on the regular energy window.
\end{theorem}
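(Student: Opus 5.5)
The plan is to combine the $x$-equidistribution of Theorem~\ref{t:x-equidistribution}, the energy disintegration of Lemma~\ref{l:xi-marginal} (written after $x$-equidistribution as \eqref{e:nu-energy-disintegration-after-x-equi}), and the invariance of $\lambda_t$ under $\Upsilon_H^s$ from item 2 of Lemma~\ref{l:lambda-regularity}. Since $\tau_h\gg h^{-1}\gg h^{-1/2}$, all of these apply. Hence, for a.e.\ $t$,
\[
\nu_t^B=\dd x\otimes\int_{E_1}^{E_2}\lambda_{t,E}\,\sigma^B(\dd E),
\]
with $\sigma^B=H_*\lambda_0$ independent of $t$. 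The whole statement then reduces to identifying $\lambda_{t,E}$ for $\sigma^B$-a.e.\ $E$.

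\textbf{Step 1: invariance of the conditionals.} From $(\Upsilon_H^s)_*\lambda_t=\lambda_t$, and because $\Upsilon_H^s$ preserves each $\mathbb S_E$, we get two disintegrations of the same measure over $H$:
\[
\lambda_t=\int(\Upsilon_E^s)_*\lambda_{t,E}\,\sigma^B(\dd E)=\int\lambda_{t,E}\,\sigma^B(\dd E).
\]
Uniqueness of disintegration then gives $(\Upsilon_E^s)_*\lambda_{t,E}=\lambda_{t,E}$ for $\sigma^B$-a.e.\ $E$, but a priori only for each fixed $s$, with an exceptional set depending on $s$. To fix this, take a countable dense set of $s$ (say $s\in\mathbb Q$), intersect the full-measure sets, and use the continuity of $s\mapsto(\Upsilon_E^s)_*\lambda_{t,E}$ in the weak topology. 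This yields invariance for all $s$ outside a single null set of $E$. This is \eqref{e:lambda-tE-invariance}.

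\textbf{Step 2: unique ergodicity on each level.} By Lemma~\ref{l:flow-coordinate-energy-level}, pulling back by $\psi_E$ turns $\lambda_{t,E}$ into a probability measure $m$ on $\R/L_E\Z$ invariant under all translations $\theta\mapsto\theta+s$. Its Fourier coefficients satisfy $\widehat m(n)=e^{2i\pi ns/L_E}\widehat m(n)$ for all $s$, so $\widehat m(n)=0$ for $n\neq0$, and therefore $m=\dd\theta/L_E$. Consequently $\lambda_{t,E}=(\psi_E)_*(\dd\theta/L_E)=\mu_E$. This uses that $\widehat B_0\neq0$ and $\nabla H\neq0$ on $\mathbb S_E$, which make $\widetilde X_H$ nonvanishing, and that $\mathbb S_E$ is connected, which makes it a single periodic orbit.

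\textbf{Step 3: conclusion.} Substituting the result of Step 2 gives \eqref{e:final-energy-level-equi}. The right-hand side does not depend on $t$ because $\sigma^B$ does not. For the test-function formulation \eqref{e:final-energy-level-equi-test}, $a$ is only smooth on $\T^2\times\Omega_{E_1,E_2}$, so I would multiply it by a cutoff $\kappa(H)$, extended to a function in $\mathcal C_c^\infty$, equal to $1$ near $[E_1,E_2]$. By Lemma~\ref{l:support-de-W^B} the limit is unaffected, and the limit equals $\int\psi(t)\langle\nu_t^B,a\rangle\,\dd t$ by definition of $W^B$ and the disintegration \eqref{e:W^B=nu_t dt}. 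The final identity with $\lim\langle f(\widehat H_h)u_h^{(0)},u_h^{(0)}\rangle$ is already contained in Lemma~\ref{l:xi-marginal}.

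The main obstacle is the measure-theoretic bookkeeping in Step 1: passing from invariance of $\lambda_t$ to invariance of a.e.\ conditional simultaneously for all $s$, with null sets uniform in $s$ and measurable jointly in $t$. I would settle this with the countable dense set of times plus continuity, as described. A secondary point is smooth dependence of $L_E$ and $\psi_E$ on $E$, needed only for measurability of $E\mapsto\mu_E$. I would obtain it from the implicit function theorem applied to the return time of the flow, using that $[E_1,E_2]$ contains no critical value.
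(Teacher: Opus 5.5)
Your proposal is correct and follows the paper's proof essentially step by step. Invariance of $\lambda_t$ under $\Upsilon_H^s$, combined with uniqueness of the energy disintegration, gives invariance of the conditionals $\lambda_{t,E}$; the conjugacy $\psi_E$ to the translation flow on $\R/L_E\Z$ then forces $\lambda_{t,E}=(\psi_E)_*(\dd\theta/L_E)$. The extra bookkeeping you add fills in details the paper leaves implicit without changing the argument: the countable dense set of $s$ with weak continuity, the Fourier-coefficient proof that the invariant measure is unique, extending $a$ off $\Omega_{E_1,E_2}$ before quantizing, and measurability of $E\mapsto\mu_E$.
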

Together with Theorem~\ref{t:x-equidistribution}, Lemma~\ref{l:xi-marginal}, and Lemma~\ref{l:lambda-regularity}, this theorem completes the proof of
Theorem~\ref{t:intro-main-structure}. Indeed, item~\textup{(i)} follows from the time-independence of the momentum marginal below the Heisenberg scale, item~\textup{(ii)} follows from the transport statement in Lemma~\ref{l:lambda-regularity}, and the present theorem proves item~\textup{(iii)}.
\begin{proof}
By \eqref{e:lambda-tE-invariance}, for \(\sigma^B\)-almost every \(E\in[E_1,E_2]\) and almost every \(t\in\R\), the probability measure \(\lambda_{t,E}\) is invariant under the flow \(\Upsilon_E^s\). Fix such an energy $E$. By Lemma~\ref{l:flow-coordinate-energy-level}, the map $\psi_E:\R/L_E\Z\to\mathbb S_E$ conjugates the flow $\Upsilon_E^s$ to the translation flow on the circle $\R/L_E\Z$. Hence the pullback measure
\[
\mu_{t,E}:=(\psi_E^{-1})_*\lambda_{t,E}
\]
is a probability measure on $\R/L_E\Z$ invariant under all translations. The only probability measure with this property is the normalized Lebesgue measure. Thus, one gets
\[
\mu_{t,E}=\frac{\dd\theta}{L_E},
\]
or equivalently,
\[
\lambda_{t,E}
=
(\psi_E)_*\left(\frac{\dd\theta}{L_E}\right).
\]
Substituting this identity into the energy disintegration gives
\[
\nu_t^B(\dd x,\dd\xi)
=
\dd x\otimes
\int_{E_1}^{E_2}
(\psi_E)_*\left(\frac{\dd\theta}{L_E}\right)
\sigma^B(\dd E),
\]
which is \eqref{e:final-energy-level-equi}. The formulation \eqref{e:final-energy-level-equi-test} follows by testing this identity against $a$ and integrating in time against $\psi$.
\end{proof}

\begin{remark}
The measure $(\psi_E)_*(\dd\theta/L_E)$ is the normalized invariant measure of the flow generated by $\widetilde X_H$ on $\mathbb S_E$. If $\dd\ell_E$ denotes the Euclidean arclength measure on $\mathbb S_E$, then this measure can also be written as
\[
(\psi_E)_*\left(\frac{\dd\theta}{L_E}\right)
=
\frac{\displaystyle \frac{\dd\ell_E}{|\nabla H(\xi)|}}
{\displaystyle \int_{\mathbb S_E}\frac{\dd\ell_E}{|\nabla H(\xi)|}}.
\]
Thus the equidistribution is uniform in the flow parameter, or equivalently with respect to the Liouville measure induced on the energy curve. It is not, in general, the normalized arclength measure unless $|\nabla H|$ is constant on
$\mathbb S_E$.
\end{remark}
We conclude the section, and the article, by spelling out the consequence of the previous theorem in the model case of the magnetic Laplacian. In that case the principal Hamiltonian is $H(\xi)=|\xi|^2$, and the regular energy levels are the Euclidean circles
\[
\mathbb S_E=\{\xi\in\R^2:\ |\xi|^2=E\}.
\]
The effective vector field of Lemma~\ref{l:lambda-regularity} is then
\[
\widetilde X_H(\xi)
=
-\widehat B_0\nabla H(\xi)^\perp\cdot\partial_\xi
=
-2\widehat B_0\xi^\perp\cdot\partial_\xi,
\]
so its flow is the rotation flow on each circle $\mathbb S_E$. Hence the invariant probability measure on $\mathbb S_E$ is the normalized angular
measure.
\begin{corollary}[Magnetic Laplacian]\label{c:magnetic-laplacian-final}
Assume that the hypotheses of Corollary~\ref{c:corollaire-laplacien-magnetic} are satisfied. If \(\tau_h\gg h^{-1}\), then for every \(\psi\in \mathcal{C}_c^\infty(\R)\) and every
\(a\in \mathcal{C}^\infty(\T^2\times\Omega_{E_1,E_2})\), one has
\begin{align}\label{e:magnetic-laplacian-final-test}
\lim_{h\to0^+}
\int_\R
&\psi(t)
\left\langle
\Op(a)u_h(t\tau_h),u_h(t\tau_h)
\right\rangle
\dd t \nonumber\\
&=
\int_\R \psi(t)\,\dd t
\int_{E_1}^{E_2}
\int_{\T^2}
\int_0^{2\pi}
a\big(x,\sqrt E(\cos\theta,\sin\theta)\big)
\,\frac{\dd\theta}{2\pi}\,\dd x\,
\sigma^B(\dd E),
\end{align}
where \(\sigma^B\) is the probability measure defined by 
\[
\forall f\in\mathcal C_c^\infty(\R), \qquad \lim_{h\to0^+}
\left\langle
f\bigl(h^2(\mathcal L^B+V)\bigr)u_h^{(0)},
u_h^{(0)}
\right\rangle = \int_{E_1}^{E_2} f(E)\sigma^B(\dd E).
\]
\end{corollary}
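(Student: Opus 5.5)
The corollary for the magnetic Laplacian will follow by specializing Theorem~\ref{t:energy-level-equidistribution} to $H(\xi)=|\xi|^2$, so the plan is to check that its hypotheses hold and then compute the invariant measures explicitly.

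\textbf{Hypotheses.} By \eqref{e:quantization-of-magnetic-laplacian}, the operator $h^2(\mathcal L^B+V)$ has the form \eqref{e:magnetic-operators} with $H(\xi)=|\xi|^2$, which is elliptic of order $m=2>1$. The perturbation is $R_h(x,\xi)=\xi\cdot c_1(x)+h(c_0+V)$, whose leading part is $\mathscr R=-2\xi\cdot A^{\mathrm{per}}$. Any window $0<E_1<E_2$ is non-critical, since the only critical value of $H$ is $0$. Each level set $\mathbb S_E$ is a circle, hence compact and connected. By \eqref{e:G-laplacien}, on $E_{\Lambda^\perp\setminus\{0\}}\cap\Omega_{E_1,E_2}$ one has $G_{\widehat B_0,\Lambda}=\mp4|\xi|\,\mathcal I_\Lambda(B)(x)$. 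Since $|\xi|\ge\sqrt{E_1}>0$ there, assumption \eqref{e:champs-positif} gives condition \eqref{e:G-sign-sec5}. So the hypotheses of Theorem~\ref{t:x-equidistribution}, which are those of Corollary~\ref{c:corollaire-laplacien-magnetic}, are met. Together with $\tau_h\gg h^{-1}$, Theorem~\ref{t:energy-level-equidistribution} applies and yields \eqref{e:final-energy-level-equi-test}.

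\textbf{Identifying the invariant measures.} Here $\widetilde X_H=-2\widehat B_0\,\xi^\perp\cdot\partial_\xi$. On $\mathbb S_E$, parametrize by $\xi=\sqrt E(\cos\phi,\sin\phi)$. Then $\xi^\perp\cdot\partial_\xi=\partial_\phi$, so the flow is the uniform rotation $\phi\mapsto\phi-2\widehat B_0 s$. Its period is $L_E=\pi/|\widehat B_0|$, and $\psi_E(\theta)=\sqrt E\,(\cos(\phi_0-2\widehat B_0\theta),\sin(\phi_0-2\widehat B_0\theta))$. The pushforward $(\psi_E)_*(\dd\theta/L_E)$ is therefore the normalized angular measure $\dd\phi/2\pi$, by the affine change of variables $\theta\mapsto\phi$; equivalently, it is the unique rotation-invariant probability on the circle. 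Substituting this into \eqref{e:final-energy-level-equi-test} gives \eqref{e:magnetic-laplacian-final-test}.

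\textbf{Characterization of $\sigma^B$.} Lemma~\ref{l:xi-marginal} gives $\sigma^B=H_*\lambda_0$. The stated formula for $\sigma^B$ then follows from the functional calculus (Corollary~\ref{c:functional-calculus}): $\Op(f\circ H)=f(\widehat H_h)+\mathcal O(h)$, with $\widehat H_h=h^2(\mathcal L^B+V)$, and one passes to the limit in $\langle \Op(f\circ H)u_h^{(0)},u_h^{(0)}\rangle$ using the definition of $\lambda_0$.

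\textbf{Where care is needed.} There is no real obstacle here. The only points to watch are bookkeeping: the sign and orientation in $\psi_E$, which are irrelevant because the resulting measure is rotation invariant, and confirming that $\sigma^B$ is determined by the full sequence of initial data, which holds only up to extraction, as in the statement of Theorem~\ref{t:intro-main-structure}.
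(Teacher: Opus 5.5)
Your proposal is correct and follows the paper's route: you specialize Theorem~\ref{t:energy-level-equidistribution} to $H(\xi)=|\xi|^2$, verify the non-vanishing condition via \eqref{e:G-laplacien} together with $\mathcal I_\Lambda(B)\neq0$ and $|\xi|\ge\sqrt{E_1}$, and note that $\widetilde X_H=-2\widehat B_0\,\xi^\perp\cdot\partial_\xi$ generates rotations on each circle $\mathbb S_E$, so the invariant probability there is the normalized angular measure. Your explicit period $L_E=\pi/|\widehat B_0|$ and your identification of $\sigma^B=H_*\lambda_0$ through Corollary~\ref{c:functional-calculus} (along the extracted subsequence) only spell out details the paper leaves implicit.
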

In particular, in the very long-time regime \(\tau_h\gg h^{-1}\), the limiting measure is independent of \(t\), Lebesgue in the position variable, and uniformly distributed in the angular momentum variable on each energy circle.
Under the hypotheses of Corollary~\ref{c:corollaire-laplacien-magnetic}, one has \(\mathcal I_\Lambda(B)\neq0\) for every \(\Lambda\in\mathcal L_1\). Therefore Theorem~\ref{t:energy-level-equidistribution} applies. This corollary thus completes the description of the successive long-time evolution of the limiting measures associated with admissible initial data for the magnetic Laplacian on the flat two-torus: first, the measure becomes Lebesgue in the position variable; then, beyond the Heisenberg time scale, the remaining momentum component becomes uniformly distributed on each regular energy circle.

\appendix
\section{Appendix}\label{s:appendix}

\subsection{Functional calculus}\label{ss:functional-calculus}
In the first part of the appendix, we review the elliptic, spectral and functional-calculus
properties of the magnetic pseudodifferential operators
\[
\widehat H_h:=\Op(H+hR_h)
\]
that are used throughout the paper. In the usual nonmagnetic semiclassical setting, these facts are standard; we refer for instance to Dimassi--Sj\"ostrand~\cite[Chapters~7--8]{DimassiSjostrand1999} and Zworski~\cite[\S4.7, \S14.3 and Appendix~C]{Zworski2012}. The point of this appendix is to record that the same arguments apply to the torus-adapted magnetic Weyl quantization introduced in Section~\ref{s:Magnetic field and quantization}. In the proof, the standard adjoint formula, composition formula and Calder\'on--Vaillancourt theorem need to be replaced by their magnetic counterparts recalled in Morin--Rivi\`ere~\cite[Proposition~3.4 and Theorems~3.5, 3.7]{MorinRiviere2025} but the overall arguments are the same. For comparison, related elliptic and spectral properties for magnetic pseudodifferential operators in $\R^n$ are established in Iftimie--M\u{a}ntoiu--Purice~\cite{Iftimie-Mantoiu-Purice-2007}. We shall use the Helffer--Sj\"ostrand formula in the form presented in Dimassi--Sj\"ostrand~\cite[Chapter~8]{DimassiSjostrand1999} and Zworski~\cite[\S14.3.2]{Zworski2012}. The original reference is Helffer--Sj\"ostrand~\cite{HelfferSjostrand1989}.

\medskip

We first fix the Sobolev conventions used below. For \(s\geq 0\), we define
\[
\|u\|_{\mathcal H^s(\T^2;L)}
:=
\|\Opsansh(\langle\xi\rangle^s)u\|_{L^2}
+
\|u\|_{L^2},
\]
and
\[
\|u\|_{\mathcal H_h^s(\T^2;L)}
:=
\|\Op(\langle\xi\rangle^s)u\|_{L^2}
+
\|u\|_{L^2}.
\]
The corresponding spaces are the spaces of distributions for which these norms are finite and are denoted by \(\mathcal H^s(\T^2;L)\) and \(\mathcal H_h^s(\T^2;L)\). Equivalently, one may use any elliptic symbol of order \(s\). For each fixed \(h>0\), the spaces \(\mathcal H_h^s(\T^2;L)\) and \(\mathcal H^s(\T^2;L)\) coincide as topological vector spaces, although the equivalence between the corresponding norms is not uniform as \(h\to0^+\).

Recall that, if \(a_h\in\mathscr S^\ell\) is uniformly bounded in \(\mathscr S^\ell\), then
\[
\Op(a_h):\mathcal H_h^s(\T^2;L)\to
\mathcal H_h^{s-\ell}(\T^2;L)
\]
is uniformly bounded. This follows directly from the magnetic composition formula and the magnetic Calder\'on--Vaillancourt Theorem \ref{t:Calderon-Vaillancourt}. For even nonnegative integers \(s=2k\), this definition is equivalent to the one obtained from powers of the magnetic Laplacian used in Section \ref{s:Magnetic field and quantization}.
\medskip

Throughout the appendix, we assume that $m>0$ and
\begin{equation}\label{e:appendix-symbols}
H\in \mathscr{S}^m(\R^2),
\qquad
R_h\in \mathscr{S}^{m}(\T^2\times\R^2),
\end{equation}
where the seminorms of $(R_h)_{0<h\le h_0}$ are uniformly bounded. We also assume that $H$ and $R_h$ are real-valued and that $H$ is elliptic of order $m$, namely there exist constants $C>0$ and $M_0>0$ such that
\begin{equation}\label{e:H-elliptic}
|H(\xi)|
\ge
C\langle\xi\rangle^m,
\qquad
|\xi|\ge M_0.
\end{equation}

\begin{theorem}\label{t:selfadjoint-domain}
Let $m>0$. Let $H\in \mathscr{S}^m(\R^2)$ be real-valued and elliptic of order $m$,
and let
\[
R_h\in \mathscr{S}^{m}(\T^2\times\R^2)
\]
be real-valued with seminorms uniformly bounded for
$h\in(0,h_0]$. Set
\[
a_h=H+hR_h,
\qquad
\widehat H_h:=\Op(a_h).
\]
Then, after possibly reducing $h_0>0$, the following properties hold for every
$0<h\le h_0$:
\begin{enumerate}
\item The operator $\widehat H_h$ is essentially self-adjoint on
$\mathcal{C}^\infty(\T^2;L)$.
\item Its unique self-adjoint extension has domain
\[
\operatorname{Dom}(\widehat H_h)=\mathcal H^m(\T^2;L).
\]
\item For each fixed $h>0$, the graph norm
\[
\|u\|_{L^2}+\|\widehat H_hu\|_{L^2}
\]
is equivalent to the standard $\mathcal H^m$-norm.
\item For every $z\in\C\setminus\R$, the resolvent
\[
(\widehat H_h-z)^{-1}:L^2(\T^2;L)\longrightarrow \mathcal H^m(\T^2;L)
\]
is well defined.
\item For every compact set $K\subset\C\setminus\R$, there exist
$h_K>0$ and $C_K>0$ such that
\[
\|(\widehat H_h-z)^{-1}\|_{L^2\to \mathcal H_h^m}
\le
C_K,
\qquad
0<h\le h_K,\quad z\in K.
\]
\item Since $\widehat H_h$ is self-adjoint, one also has the standard resolvent bound
\[
\|(\widehat H_h-z)^{-1}\|_{L^2\to L^2}
\le
\frac{1}{|\Im z|},
\qquad
z\in\C\setminus\R.
\]
\end{enumerate}
\end{theorem}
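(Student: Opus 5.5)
The plan is the standard elliptic parametrix argument of Dimassi--Sj\"ostrand and Zworski, transposed to the torus-adapted calculus via Morin--Rivi\`ere's adjoint formula \eqref{e:formal-adjoint}, composition rule \eqref{e:a-star-b} and Calder\'on--Vaillancourt Theorem~\ref{t:Calderon-Vaillancourt}.

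The first step is a semiclassical parametrix uniform in $h$. Fix a compact $K\subset\C\setminus\R$ and $z\in K$. Since $a_h=H+hR_h$ is real, we have $|a_h-z|\ge|\Im z|$. For $|\xi|\ge M_0$, ellipticity \eqref{e:H-elliptic} together with $|hR_h|\le Ch\langle\xi\rangle^m$ gives $|a_h-z|\ge c\langle\xi\rangle^m$ once $h$ is small, with a constant uniform in $z\in K$. Hence $b_z:=(a_h-z)^{-1}\in\mathscr S^{-m}$, and the bounds are uniform in $h$ and in $z\in K$. By \eqref{e:premier-ordre-composition},
\[
\Op(b_z)\Op(a_h-z)=\mathrm{Id}+h\,\Op(r_{z,h}),\qquad \Op(a_h-z)\Op(b_z)=\mathrm{Id}+h\,\Op(r'_{z,h}),
\]
with $r_{z,h},r'_{z,h}\in\mathscr S^0$ uniformly. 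Calder\'on--Vaillancourt shows that the error terms have $L^2$ norm $O(h)$. For $h\le h_K$ a Neumann series then inverts $\mathrm{Id}+h\Op(r)$. This yields a two-sided inverse of $\widehat H_h-z$, defined first on $\mathcal C^\infty(\T^2;L)$ or on distributions, and given by $(\mathrm{Id}+h\Op(r))^{-1}\Op(b_z)$.

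The second step establishes the mapping properties. $\Op(b_z)$ maps $L^2$ to $\mathcal H_h^m$ uniformly, because it composes with $\Op(\langle\xi\rangle^m)$ to give an order-zero operator. This gives item~5. For fixed $h$, $\widehat H_h$ maps $\mathcal H^m$ to $L^2$ boundedly, and the parametrix identity
\[
u=\Op(b_z)(\widehat H_h-z)u-h\Op(r_{z,h})u
\]
shows that every $u\in L^2$ with $\widehat H_hu\in L^2$ (taken in the distributional sense) lies in $\mathcal H^m$. This gives elliptic regularity and the graph-norm equivalence of item~3. The $h$-dependence is harmless here, since $\mathcal H^m$ and $\mathcal H^m_h$ are equivalent for fixed $h$.

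The third step handles self-adjointness. By \eqref{e:formal-adjoint}, $\widehat H_h$ is symmetric on $\mathcal C^\infty(\T^2;L)$. Its maximal domain (its adjoint's domain) is $\{u\in L^2:\widehat H_hu\in L^2\}$, which equals $\mathcal H^m$ by elliptic regularity. Density of $\mathcal C^\infty(\T^2;L)$ in $\mathcal H^m$ then shows that the closure equals the adjoint, which gives essential self-adjointness and items~1--2. Items~4 and~6 follow from the spectral theorem. Compact resolvent, needed for Proposition~\ref{prop:esa}, follows from the compact embedding $\mathcal H^m\hookrightarrow L^2$ with $m>0$, which is Rellich on a compact base (by local trivialization or by a Fourier description of twisted sections). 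For a fixed $h$ not small, one can first work with $h'$ small and use the rescaling \eqref{e:rescaling-semiclassique}; alternatively, it suffices to state the result for $h\le h_0$, as the theorem does.

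The main obstacle is the density of $\mathcal C^\infty(\T^2;L)$ in $\mathcal H^m$ and the fact that the distributional definition of $\widehat H_h$ on twisted distributions agrees with the adjoint. I would take $\mathcal H^m$ as the domain of the closure of $\Op(\langle\xi\rangle^m)$ and mollify with $\Op(\chi(\varepsilon\xi))$, $\varepsilon\to0$. These operators map into $\mathcal C^\infty(\T^2;L)$, are uniformly bounded on every $\mathcal H^s$ by the composition rule, and converge strongly to the identity. The argument then proceeds as in Zworski's proof, with all composition steps replaced by the magnetic rule \eqref{e:a-star-b}.
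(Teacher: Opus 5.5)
Your proposal is essentially the paper's argument: uniform ellipticity of $a_h-z$, a parametrix built from the magnetic composition rule and Calder\'on--Vaillancourt, a Neumann series for small $h$, and symmetry from the adjoint formula. The one difference is that you conclude self-adjointness via ``maximal domain $=\mathcal H^m(\T^2;L)$ plus density'' rather than the paper's range criterion $\operatorname{Ran}(\widehat H_h\pm i)=L^2$; your two-sided inverse delivers that criterion as well.

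One step needs tightening. In this calculus the error $h\,\Op(r_{z,h})$ has order zero, with no symbolic gain. So the identity $u=\Op(b_z)(\widehat H_h-z)u-h\,\Op(r_{z,h})u$ does not by itself place $u$ in $\mathcal H^m$. Either of two fixes works:
\begin{itemize}
\item write $u=(\mathrm{Id}+h\,\Op(r_{z,h}))^{-1}\Op(b_z)(\widehat H_h-z)u$ and run the Neumann series on $\mathcal H^m_h$, where $\Op(r_{z,h})$ is uniformly bounded;
\item compare $u$ with the right-inverse solution and use the injectivity given by the left parametrix.
\end{itemize}
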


\begin{proof}
The proof follows the standard elliptic argument for self-adjoint pseudodifferential operators on compact manifolds. Namely, the ellipticity of \(H\), together with the uniform boundedness of \(R_h\) in \(\mathscr S^m\), implies that \(a_h=H+hR_h\) is elliptic of order \(m\), uniformly for
\(0<h\le h_0\), after reducing \(h_0\) if necessary. The usual elliptic parametrix construction gives the graph norm equivalence, the closedness of the realization on \(\mathcal H^m\), and the uniform
resolvent estimate away from the real axis. The adjoint formula gives symmetry for real-valued symbols, and the range criterion for closed symmetric operators then gives self-adjointness.

These are the standard arguments in the semiclassical calculus; see Dimassi--Sj\"ostrand~\cite[Chapters~7--8]{DimassiSjostrand1999} and Zworski~\cite[\S4.7, \S14.3 and Appendix~C]{Zworski2012}. In the present magnetic setting, the same proof applies with the magnetic composition formula, adjoint formula and Calder\'on--Vaillancourt Theorem recalled in Section~\ref{s:Magnetic field and quantization}.
\end{proof}

\begin{corollary}\label{c:discrete-spectrum}
Under the same hypotheses as in Theorem~\ref{t:selfadjoint-domain}, the
operator \(\widehat H_h=\Op(a_h)\) has compact resolvent on \(L^2(\T^2;L)\). Hence its
spectrum is real and purely discrete. More precisely, there exist a sequence
of real eigenvalues \((\lambda_j(h))_{j\ge1}\), counted with multiplicity and
satisfying \(|\lambda_j(h)|\to\infty\), and an orthonormal basis of
eigenfunctions \((u_j(h))_{j\ge1}\) of \(L^2(\T^2;L)\).
\end{corollary}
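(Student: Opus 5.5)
The statement is Corollary \ref{c:discrete-spectrum}. The approach is to deduce everything from Theorem~\ref{t:selfadjoint-domain} together with a compact Sobolev embedding for twisted sections, and then apply the spectral theorem for self-adjoint operators with compact resolvent.

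\textbf{Step 1: factor the resolvent.} Fix $h\in(0,h_0]$ and $z\in\C\setminus\R$. By item 4 of Theorem~\ref{t:selfadjoint-domain}, $(\widehat H_h-z)^{-1}$ maps $L^2(\T^2;L)$ into $\mathcal H^m(\T^2;L)$. By item 3 this map is bounded, since the graph norm controls the $\mathcal H^m$-norm for fixed $h$. We may therefore write
\[
(\widehat H_h-z)^{-1}=\iota\circ(\widehat H_h-z)^{-1},
\]
where $\iota:\mathcal H^m(\T^2;L)\hookrightarrow L^2(\T^2;L)$ is the inclusion. Compactness of the resolvent then reduces to compactness of $\iota$ for $m>0$.

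\textbf{Step 2: compactness of $\iota$ (the main point).} The natural route is a Rellich-type argument adapted to the twisted setting. Since $\Opsansh(\langle\xi\rangle^{-m})$ is bounded from $L^2$ to $\mathcal H^m$ by the calculus, it suffices to show it is compact on $L^2$. For this, take a cutoff $\chi_R(\xi)=\chi(|\xi|/R)$ and split
\[
\langle\xi\rangle^{-m}=\chi_R\langle\xi\rangle^{-m}+(1-\chi_R)\langle\xi\rangle^{-m}.
\]
\begin{itemize}
\item The second piece has $\mathscr S^0$ seminorms of size $O(R^{-m})$. By Calder\'on--Vaillancourt (Theorem~\ref{t:Calderon-Vaillancourt}) its quantization has operator norm $O(R^{-m})$.
\item The first piece is a compactly supported symbol. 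I would show that $\Opsansh(b)$ with $b\in\mathcal C_c^\infty(\T^2\times\R^2)$ is compact, in fact trace class. Its Schwartz kernel is smooth on $\T^2\times\T^2$ when written in a local trivialization (equivalently, after pairing with the magnetic translations), because $\mathcal F(b)$ decays rapidly in both $\eta$ and $\zeta$ in \eqref{e:OpB-def}. A smooth kernel on a compact manifold gives a Hilbert--Schmidt operator.
\end{itemize}
Then $\Opsansh(\langle\xi\rangle^{-m})$ is a norm limit of compact operators, hence compact.

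An alternative route avoids kernels: an explicit orthonormal basis of $L^2(\T^2;L)$ made of eigenfunctions of the constant-field magnetic Laplacian (Landau levels $|\widehat B_0|(2n+1)$, each of finite multiplicity $|\widehat B_0|/2\pi$) diagonalizes $h^2\mathcal L^{\widehat B_0}=\Op(|\xi|^2)$. This shows its resolvent is compact, and the comparison of $\mathcal H^m$ norms then transfers the result to general $m$.

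\textbf{Step 3: conclusion.} The resolvent is compact, and by item 1 the operator $\widehat H_h$ is self-adjoint. The spectral theorem for self-adjoint operators with compact resolvent on a separable Hilbert space then gives three facts:
\begin{itemize}
\item the spectrum is real and consists of isolated eigenvalues of finite multiplicity;
\item these eigenvalues accumulate only at infinity, so $|\lambda_j(h)|\to\infty$;
\item the corresponding eigenvectors form an orthonormal basis of $L^2(\T^2;L)$.
\end{itemize}

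I expect Step 2 to be the only step needing care, since the usual Rellich theorem must be justified on twisted sections rather than on ordinary periodic functions.
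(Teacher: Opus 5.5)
Your overall route is the paper's. You factor the resolvent through $\mathcal H^m(\T^2;L)$ using Theorem~\ref{t:selfadjoint-domain}, invoke compactness of $\iota:\mathcal H^m(\T^2;L)\hookrightarrow L^2(\T^2;L)$ (which the paper simply attributes to Rellich--Kondrachov), and conclude with the spectral theorem. Steps~1 and~3 are fine.

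The problem is the reduction at the start of Step~2. Knowing that $A:=\Opsansh(\langle\xi\rangle^{-m})$ is bounded $L^2\to\mathcal H^m$ and compact on $L^2$ does not imply that $\iota$ is compact. What you need is that $\iota$ factors through $A$, i.e. $u=A(Bu)$ for $u\in\mathcal H^m$ with $B:\mathcal H^m\to L^2$ bounded. The natural candidate $B=\Opsansh(\langle\xi\rangle^{m})$ only gives $AB=\Opsansh(\langle\xi\rangle^{-m}\star_1\langle\xi\rangle^{m})$. At $h=1$ the calculus recalled in Section~\ref{s:Magnetic field and quantization} only tells you this symbol is $1+\mathcal O_{\mathscr S^0}(1)$, because the classes $\mathscr S^m$ gain nothing under differentiation. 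So the remainder is neither small nor of negative order, and the argument does not close.

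The repair is to work at a small auxiliary parameter $h'$. This is legitimate because, for each fixed $h'$, $\mathcal H^m_{h'}(\T^2;L)=\mathcal H^m(\T^2;L)$ with equivalent norms. By \eqref{e:premier-ordre-composition},
\[
\mathsf{Op}_{h'}^{B}(\langle\xi\rangle^{-m})\,\mathsf{Op}_{h'}^{B}(\langle\xi\rangle^{m})=I+h'\,\mathsf{Op}_{h'}^{B}(r_{h'}),\qquad r_{h'}=\mathcal O_{\mathscr S^0}(1),
\]
so for $h'$ small the right-hand side is invertible on $L^2$ by a Neumann series. Hence
\[
\iota=\bigl(I+h'\,\mathsf{Op}_{h'}^{B}(r_{h'})\bigr)^{-1}\,\mathsf{Op}_{h'}^{B}(\langle\xi\rangle^{-m})\,\mathsf{Op}_{h'}^{B}(\langle\xi\rangle^{m})
\]
is bounded $\circ$ compact $\circ$ bounded. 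Compactness of the middle factor is exactly your cutoff argument: the $\mathcal O(R^{-m})$ Calder\'on--Vaillancourt bound on the tail and the Hilbert--Schmidt kernel for the compactly supported part go through verbatim with $h'$ in place of $1$.

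Your Landau-level alternative also works. However, for $0<m<2$ it additionally requires identifying $\mathcal H^m(\T^2;L)$ with $\operatorname{Dom}\bigl((1+\mathcal L^{\widehat B_0})^{m/2}\bigr)$, which does not follow immediately from the definition via $\Opsansh(\langle\xi\rangle^m)$.
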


\begin{proof}
By Theorem~\ref{t:selfadjoint-domain}, the resolvent
\[
(\widehat H_h-i)^{-1}:L^2(\T^2;L)\longrightarrow \mathcal H^m(\T^2;L)
\]
is bounded. Since \(m>0\), the embedding
\[
\mathcal H^m(\T^2;L)\hookrightarrow L^2(\T^2;L)
\]
is compact by the Rellich--Kondrachov theorem. Hence \((\widehat H_h-i)^{-1}\) is a
compact operator on \(L^2(\T^2;L)\).

The conclusion follows from the spectral theorem for self-adjoint operators
with compact resolvent; see for instance Reed--Simon
\cite[Chapter~VIII, \S3]{ReedSimonI}.
\end{proof}

By the spectral theorem, for every \(u\in \operatorname{Dom}(\widehat H_h)\), one has
\begin{equation}\label{e:decomposition-operator}
\widehat H_hu
=
\sum_{j=1}^\infty
\lambda_j(h)\,
\langle u,u_j(h)\rangle_{L^2}\,
u_j(h),
\end{equation}
with convergence in \(L^2(\T^2;L)\). More generally, for every
\(f\in L^\infty(\R)\), we define the bounded operator
\(f(\widehat H_h):L^2(\T^2;L)\to L^2(\T^2;L)\) by
\begin{equation}\label{e:f-of-Op}
f(\widehat H_h)u
:=
\sum_{j=1}^\infty
f(\lambda_j(h))\,
\langle u,u_j(h)\rangle_{L^2}\,
u_j(h),
\qquad
u\in L^2(\T^2;L).
\end{equation}

%-------------------------------------------------------------
\subsubsection{The Helffer--Sj\"ostrand formula}

We now recall the Helffer--Sj\"ostrand formula in the form needed in the article. This
part is purely functional-analytic and does not use any specific property of
the magnetic quantization beyond the self-adjointness of \(\widehat H_h\).

Let \(f\in \mathcal{C}_c^\infty(\R)\). An almost-analytic extension of \(f\) is a
function \(\widetilde f\in \mathcal{C}_c^\infty(\C)\) such that
\[
\widetilde f|_{\R}=f
\]
and, for every \(N\in\N\),
\[
\partial_{\bar z}\widetilde f(z)
=
\mathcal O_N(|\Im z|^N)
\qquad
\text{as } \Im z\to0,
\]
where
\[
\partial_{\bar z}
:=
\frac12(\partial_x+i\partial_y),
\qquad
z=x+iy.
\]
Such extensions are standard; see for instance Zworski~\cite[\S14.3.2]{Zworski2012}.

\begin{proposition}[Helffer--Sj\"ostrand formula]
\label{p:helffer-sjostrand}
Let \(f\in \mathcal{C}_c^\infty(\R)\), and let \(\widetilde f\) be an almost-analytic
extension of \(f\). Then
\begin{equation}\label{e:helffer-sjostrand}
f(\widehat H_h)
=
\frac{1}{i\pi}
\int_{\C}
\partial_{\bar z}\widetilde f(z)\,(\widehat H_h-z)^{-1}\,L(\dd z),
\end{equation}
where \(L(\dd z)\) denotes the Lebesgue measure on \(\C\). The integral
converges in the operator norm on \(\mathcal L(L^2(\T^2;L))\).
\end{proposition}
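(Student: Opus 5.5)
The plan is to derive \eqref{e:helffer-sjostrand} first for scalar functions, then lift it to operators through the spectral decomposition \eqref{e:f-of-Op}. No property of the magnetic quantization is needed, only self-adjointness, the discreteness from Corollary~\ref{c:discrete-spectrum}, and the resolvent bound of item 6 of Theorem~\ref{t:selfadjoint-domain}.

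\textbf{Step 1: scalar identity.} For every real $\lambda$ we show
\[
f(\lambda)=\frac{1}{i\pi}\int_\C \partial_{\bar z}\widetilde f(z)\,(\lambda-z)^{-1}\,L(\dd z).
\]
The integral converges absolutely. Indeed, $\widetilde f$ is compactly supported, so the support is a compact set $K$. Moreover, $|\partial_{\bar z}\widetilde f(z)|\le C|\Im z|$ while $|\lambda-z|^{-1}\le |\Im z|^{-1}$, so the integrand is bounded on $K$.

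To evaluate the integral, remove the strip $\{|\Im z|<\varepsilon\}$ and apply Stokes' formula (the Cauchy–Pompeiu formula) on the remaining region. We use that $(\lambda-z)^{-1}$ is holomorphic off $\R$, so $\partial_{\bar z}$ falls only on $\widetilde f$. This reduces the integral to boundary terms on the two lines $\Im z=\pm\varepsilon$. Expanding $\widetilde f(x\pm i\varepsilon)=f(x)+\mathcal O(\varepsilon)$, these boundary terms give
\[
\frac{1}{2\pi i}\int f(x)\Big[(x-i\varepsilon-\lambda)^{-1}-(x+i\varepsilon-\lambda)^{-1}\Big]\dd x+o(1).
\]
The bracket equals $2i\varepsilon/((x-\lambda)^2+\varepsilon^2)$, so this is $\int f(x)\,P_\varepsilon(x-\lambda)\,\dd x$, where $P_\varepsilon$ is the Poisson kernel. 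It therefore tends to $f(\lambda)$ as $\varepsilon\to0$. The sign conventions have to be tracked carefully so that the constant comes out as $1/(i\pi)$, which is the standard normalization in Zworski~\cite[\S14.3.2]{Zworski2012}.

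\textbf{Step 2: operator norm convergence.} The map $z\mapsto(\widehat H_h-z)^{-1}$ is norm-continuous on $\C\setminus\R$. By item 6 of Theorem~\ref{t:selfadjoint-domain}, $\|(\widehat H_h-z)^{-1}\|\le|\Im z|^{-1}$. Combined with $|\partial_{\bar z}\widetilde f|=\mathcal O(|\Im z|)$ on the compact support, this shows the integrand is bounded. The right-hand side of \eqref{e:helffer-sjostrand} is therefore a norm-convergent Bochner integral. It is also the norm limit of the integrals over $\{|\Im z|\ge\varepsilon\}$.

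\textbf{Step 3: identification with $f(\widehat H_h)$.} Let $T$ denote the integral. We pair $T$ with eigenfunctions $u_j(h)$ from Corollary~\ref{c:discrete-spectrum}. Since $(\widehat H_h-z)^{-1}u_j=(\lambda_j-z)^{-1}u_j$ and the Bochner integral commutes with the bounded functional $v\mapsto\langle v,u_k\rangle$, we get $\langle Tu_j,u_k\rangle=\delta_{jk}$ times the scalar integral of Step 1 at $\lambda=\lambda_j(h)$. That scalar integral equals $f(\lambda_j(h))$. Both $T$ and $f(\widehat H_h)$, defined by \eqref{e:f-of-Op}, are bounded and agree on an orthonormal basis, so they coincide.

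The only delicate point is Step 1, namely the Stokes computation and its sign and constant conventions. Everything else is routine once the bound $|\Im z|^{-1}$ on the resolvent is used.
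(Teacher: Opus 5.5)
Your three steps (scalar Cauchy--Pompeiu identity, Bochner convergence from $|\partial_{\bar z}\widetilde f|=\mathcal O(|\Im z|)$ and $\|(\widehat H_h-z)^{-1}\|\le|\Im z|^{-1}$, identification on the eigenbasis) form the standard argument. The paper does not give it; it only cites it, adding the same convergence remark as your Step~2. However, Step~1 is false as you state it, precisely at the point you chose not to check.

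With $\partial_{\bar z}=\frac12(\partial_x+i\partial_y)$ one has $\dd(g\,\dd z)=2i\,\partial_{\bar z}g\,\dd x\wedge\dd y$. Stokes' formula on $\{\Im z>\varepsilon\}$ and $\{\Im z<-\varepsilon\}$ then gives
\[
\int_{|\Im z|>\varepsilon}\partial_{\bar z}\widetilde f(z)\,(\lambda-z)^{-1}\,L(\dd z)
=\frac{1}{2i}\int_\R\Big[\widetilde f(x+i\varepsilon)(\lambda-x-i\varepsilon)^{-1}-\widetilde f(x-i\varepsilon)(\lambda-x+i\varepsilon)^{-1}\Big]\dd x
\xrightarrow[\varepsilon\to0]{}\pi f(\lambda).
\]
Hence $f(\lambda)=\frac{1}{\pi}\int_\C\partial_{\bar z}\widetilde f(z)(\lambda-z)^{-1}L(\dd z)$. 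Your displayed Poisson-kernel expression is, up to $o(1)$, $\frac1\pi$ times this boundary term, not $\frac{1}{i\pi}$ times it. With the prefactor $\frac{1}{i\pi}$ the boundary terms converge to $-i f(\lambda)$, and your Step~3 then yields $T=-i\,f(\widehat H_h)$.

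No choice of conventions repairs this. For real $f$, replace $\widetilde f$ by $\frac12\bigl(\widetilde f(z)+\overline{\widetilde f(\bar z)}\bigr)$, which is still an almost-analytic extension. Then $\overline{\partial_{\bar z}\widetilde f(z)}=(\partial_{\bar z}\widetilde f)(\bar z)$, and the substitution $z\mapsto\bar z$ shows that $\int_\C\partial_{\bar z}\widetilde f(z)(\widehat H_h-z)^{-1}L(\dd z)$ is self-adjoint. Multiplied by the purely imaginary constant $\frac{1}{i\pi}$ it becomes skew-adjoint, so it cannot equal the self-adjoint operator $f(\widehat H_h)$ unless both vanish.

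So the $\frac{1}{i\pi}$ in the proposition is a misprint for $\frac1\pi$, equivalently $f(P)=-\frac1\pi\int_\C\partial_{\bar z}\widetilde f(z)(z-P)^{-1}L(\dd z)$. This is the real normalization of the cited references, so your attribution of $\frac{1}{i\pi}$ to Zworski is mistaken. The misprint is harmless for the rest of the paper, which uses the formula only through the cited functional calculus. Still, a proof should expose it rather than assert that careful bookkeeping produces it.

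With $\frac1\pi$ in place, your argument is a complete proof. The $\mathcal O(\varepsilon)$ error in $\widetilde f(x\pm i\varepsilon)-f(x)$ is negligible because $\int_K|x-\lambda\mp i\varepsilon|^{-1}\dd x=\mathcal O(\log\frac1\varepsilon)$ on compact sets $K$. Using the eigenbasis of Corollary~\ref{c:discrete-spectrum} is legitimate here; the spectral measure would give the same identification for any self-adjoint operator.
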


\begin{proof}
This is the standard Helffer--Sj\"ostrand formula for self-adjoint operators.
We refer to Helffer--Sj\"ostrand~\cite{HelfferSjostrand1989},
Dimassi--Sj\"ostrand~\cite[Chapter~8]{DimassiSjostrand1999} or
Zworski~\cite[\S14.3.2]{Zworski2012}. The convergence follows from the
resolvent estimate
\[
\|(\widehat H_h-z)^{-1}\|_{L^2\to L^2}
\le
|\Im z|^{-1}
\]
and from the almost-analytic decay of
\(\partial_{\bar z}\widetilde f(z)\) near the real axis.
\end{proof}

\begin{corollary}\label{c:functional-calculus}
Let \(\widehat H_h:=\Op(a_h)\), with
\[
a_h(x,\xi)=H(\xi)+hR_h(x,\xi),
\]
under the assumptions of Theorem~\ref{t:selfadjoint-domain}. Let
\(f\in \mathcal{C}_c^\infty(\R)\). Then the following holds
\begin{equation}\label{e:functional-calculus-a_h}
f(\widehat H_h)
=
\Op(f\circ a_h)
+
\mathcal O_{L^2\to L^2}(h).
\end{equation}
Moreover, one has 
\begin{equation}\label{e:functional-calculus-H}
f(\widehat H_h)
=
f(\Op(H))
+
\mathcal O_{L^2\to L^2}(h)
\quad \text{and} \quad 
\Op(f\circ a_h)
=
f(\Op(H))
+
\mathcal O_{L^2\to L^2}(h).
\end{equation}
\end{corollary}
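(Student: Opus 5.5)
The plan is to use the Helffer--Sj\"ostrand formula (Proposition~\ref{p:helffer-sjostrand}) together with a semiclassical parametrix for the resolvent $(\widehat H_h-z)^{-1}$ in the magnetic calculus, and to track how the error depends on $|\Im z|^{-1}$. For $z\notin\R$ I will work with the symbol $(a_h-z)^{-1}$. On $\operatorname{supp}\widetilde f$ this symbol is bounded in $\mathscr S^{-m}$, and each derivative costs at most one extra power of $|\Im z|^{-1}$, uniformly for $z$ in a compact set. This holds because $a_h$ is real and elliptic of order $m$, and $\widetilde f$ has compact support.

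By the first-order composition formula \eqref{e:premier-ordre-composition}, applied in the quantitative form \eqref{e:a-star-b} with $N=1$ and a remainder controlled by finitely many seminorms, one gets
\[
\Op\big((a_h-z)^{-1}\big)(\widehat H_h-z)=\mathrm{Id}+h\,\Op(r_h(z)).
\]
Here $r_h(z)$ is bounded in $\mathscr S^0$ by $C|\Im z|^{-N_1}$ for some fixed $N_1$. Multiplying on the right by $(\widehat H_h-z)^{-1}$ and using $\|(\widehat H_h-z)^{-1}\|\le|\Im z|^{-1}$ together with Calder\'on--Vaillancourt gives
\[
(\widehat H_h-z)^{-1}=\Op\big((a_h-z)^{-1}\big)+\mathcal O_{L^2\to L^2}\big(h|\Im z|^{-N_1-1}\big).
\]

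Next I insert this into \eqref{e:helffer-sjostrand}. The remainder contributes $\mathcal O(h)$, because $\partial_{\bar z}\widetilde f=\mathcal O(|\Im z|^{N})$ absorbs any fixed negative power of $|\Im z|$. For the main term, I interchange quantization and the $z$-integral; this is justified since $\Op$ is linear and continuous from $\mathscr S^0$ and the integrand is integrable in the seminorms. The resulting symbol is
\[
\frac{1}{i\pi}\int_\C\partial_{\bar z}\widetilde f(z)\,(a_h(x,\xi)-z)^{-1}L(\dd z)=f(a_h(x,\xi)),
\]
by the scalar Cauchy--Pompeiu identity. This proves \eqref{e:functional-calculus-a_h}.

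The main obstacle I expect is the remainder bound in the composition formula. I need it uniform in $z$ with only polynomial loss in $|\Im z|^{-1}$. Two points matter here. First, the remainder of \eqref{e:a-star-b} depends on finitely many seminorms of both factors. Second, the seminorms of $a_h-z$ in $\mathscr S^m$ are uniform in $z$ on compacts. If the black-box statement from \cite{MorinRiviere2025} is not explicit about this, I will instead argue with $a_h$ localised by a cutoff $\chi(\xi)$ equal to $1$ on $\{|a_h|\le C\}$. Away from that region, $(a_h-z)^{-1}$ is smooth and bounded independently of $\Im z$, so the loss only occurs on a compact set.

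For \eqref{e:functional-calculus-H}, I first apply \eqref{e:functional-calculus-a_h} with $R_h\equiv0$, which gives $f(\Op(H))=\Op(f\circ H)+\mathcal O(h)$. Then I observe that
\[
f\circ a_h-f\circ H=h\,R_h\int_0^1f'(H+shR_h)\,\dd s.
\]
By ellipticity, this is $h$ times a symbol bounded in $\mathscr S^0$: its $\xi$-support lies in a fixed compact set where $R_h$ is bounded. Calder\'on--Vaillancourt then gives $\Op(f\circ a_h)=\Op(f\circ H)+\mathcal O(h)$. Combining these two facts with \eqref{e:functional-calculus-a_h} yields both identities in \eqref{e:functional-calculus-H}.
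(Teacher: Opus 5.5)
Your proposal is correct and follows the paper's route: the Helffer--Sj\"ostrand formula combined with a symbolic resolvent parametrix in the magnetic calculus gives the first identity, and the Taylor expansion of $f\circ a_h-f\circ H$ together with the case $R_h\equiv0$ gives the second. You in fact supply the details the paper only cites, namely the first-order parametrix with polynomial loss in $|\Im z|^{-1}$ and the scalar Cauchy--Pompeiu identity. Your remainder is an $\mathcal O_{L^2\to L^2}(h)$ operator rather than the paper's claimed form $h\,\Op(r_h)$ with $r_h\in\mathscr S^0$, and that weaker form is all the statement requires.
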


\begin{proof}
The first estimate is the standard semiclassical functional calculus for
elliptic self-adjoint pseudodifferential operators. In the nonmagnetic case,
it follows from the Helffer--Sj\"ostrand formula and the
parameter-dependent symbolic construction of the resolvent; see
Dimassi--Sj\"ostrand~\cite[Chapter~8]{DimassiSjostrand1999} and
Zworski~\cite[\S14.3.2, Theorem~14.9]{Zworski2012}. The same proof applies
here with the magnetic Weyl product \(\star_h\), using the magnetic
composition formula and the magnetic Calder\'on--Vaillancourt Theorem recalled
in Section~\ref{s:Magnetic field and quantization}. It gives
\[
f(\widehat H_h)
=
\Op(f\circ a_h)
+
h\,\Op(r_h),
\]
where \((r_h)_{0<h\le h_0}\) is bounded in \(\mathscr{S}^0(\T^2\times\R^2)\). Hence,
by the magnetic Calder\'on--Vaillancourt theorem,
\[
f(\widehat H_h)
=
\Op(f\circ a_h)
+
\mathcal O_{L^2\to L^2}(h).
\]
This proves \eqref{e:functional-calculus-a_h}.

It remains to compare \(f\circ a_h\) with \(f\circ H\). Since
\(a_h=H+hR_h\), Taylor's formula gives
\[
f(a_h(x,\xi))-f(H(\xi))
=
hR_h(x,\xi)
\int_0^1
f'\big(H(\xi)+shR_h(x,\xi)\big)\,\dd s.
\]
Set
\[
q_h(x,\xi)
:=
R_h(x,\xi)
\int_0^1
f'\big(H(\xi)+shR_h(x,\xi)\big)\,\dd s.
\]
Since \(H+s h R_h\) is elliptic of order \(m\), uniformly for \(s\in[0,1]\) and \(0<h\le h_0\), the condition \(H(\xi)+s hR_h(x,\xi)\in \operatorname{supp} f'\) forces \(\xi\) to remain in a fixed compact set. Hence all derivatives of \(f'(H+s hR_h)\) are uniformly bounded, and \(q_h\) is bounded in \(\mathscr S^0\) and
\[
f\circ a_h
=
f\circ H+hq_h.
\]
By the magnetic Calder\'on--Vaillancourt theorem,
\begin{equation}\label{e:compare-f-ah-f-H}
\Op(f\circ a_h)
=
\Op(f\circ H)
+
\mathcal O_{L^2\to L^2}(h).
\end{equation}

Applying the first part of the proof to the \(h\)-independent symbol \(H(\xi)\)
also gives
\[
f(\Op(H))
=
\Op(f\circ H)
+
\mathcal O_{L^2\to L^2}(h).
\]
Combining this estimate with
\eqref{e:functional-calculus-a_h} and \eqref{e:compare-f-ah-f-H} proves
\eqref{e:functional-calculus-H}.
\end{proof}

%-------------------------------------------------------------
\subsection{Disintegration of Radon measures}
\label{ss:disintegration}

We shall also use the following standard disintegration theorem. We recall it
for completeness in the form needed in the paper. We refer to
Baccelli--B{\l}aszczyszyn--Karray~\cite[Appendix~14.D, Theorem~14.D.10]{BaccelliBlaszczyszynKarray2020}
for a proof, and also to Kallenberg~\cite[Chapter~5]{Kallenberg2002} for a
classical account of conditioning and disintegration.

\begin{theorem}[Disintegration of measures]
\label{t:desintegration}
Let \(X\) and \(Y\) be locally compact Polish spaces and let \(W\) be a
positive Radon measure on \(X\times Y\). Assume that the marginal
\[
\omega:=(\pi_Y)_*W
\]
is \(\sigma\)-finite. Then there exists an \(\omega\)-measurable family
\[
(\nu_y)_{y\in Y}
\]
of probability measures on \(X\) such that, for every nonnegative Borel
function \(F:X\times Y\to[0,+\infty]\),
\begin{equation}\label{e:disintegration-formula}
\int_{X\times Y}F(x,y)\,W(\dd x,\dd y)
=
\int_Y
\left(
\int_X F(x,y)\,\nu_y(\dd x)
\right)
\omega(\dd y).
\end{equation}
Equivalently,
\begin{equation}\label{e:disintegration-measure}
W(\dd x,\dd y)=\nu_y(\dd x)\,\omega(\dd y).
\end{equation}
Moreover, the family \((\nu_y)_{y\in Y}\) is unique for
\(\omega\)-almost every \(y\).
\end{theorem}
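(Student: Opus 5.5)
The plan is to reduce to a finite measure, build the conditional measures through Radon--Nikodym derivatives against a countable family of test functions, and then recover genuine probability measures by the Riesz representation theorem and a tightness argument.

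\textbf{Reduction.} Since $\omega$ is $\sigma$-finite, write $Y=\bigsqcup_n Y_n$ with Borel sets $Y_n$ satisfying $\omega(Y_n)<\infty$. Consider the finite measures $W_n:=W|_{X\times Y_n}$, whose $Y$-marginal is $\omega|_{Y_n}$. If each $W_n$ admits a disintegration $(\nu^n_y)_{y\in Y_n}$, then setting $\nu_y:=\nu^n_y$ for $y\in Y_n$ gives the family for $W$. Indeed, both sides of \eqref{e:disintegration-formula} are countably additive in the decomposition, by monotone convergence. So from now on I assume that $W$ is a finite measure.

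\textbf{Construction.} Fix a countable set $\mathcal D\subset\mathcal C_c(X)$ which is dense in $\mathcal C_0(X)$ for the sup norm and is a $\mathbb Q$-vector space. Such a set exists because $X$ is locally compact and separable. For $f\in\mathcal D$ with $f\ge0$, the measure $A\mapsto\int f(x)\mathbbm 1_A(y)\,W(\dd x,\dd y)$ is dominated by $\|f\|_\infty\,\omega$. The Radon--Nikodym theorem then gives a density $g_f\in L^1(\omega)$ with $0\le g_f\le\|f\|_\infty$. Extend $f\mapsto g_f$ by linearity to all of $\mathcal D$.

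Since $\mathcal D$ is countable, I can discard a single $\omega$-null set $N$ outside of which $f\mapsto g_f(y)$ has three properties: it is $\mathbb Q$-linear on $\mathcal D$, it is positive, and it satisfies $|g_f(y)|\le\|f\|_\infty$. For $y\notin N$, this map therefore extends by density to a positive bounded linear functional on $\mathcal C_0(X)$. The Riesz representation theorem then yields a Radon subprobability measure $\nu_y$ with $\int f\,\dd\nu_y=g_f(y)$. For $y\in N$, set $\nu_y:=\delta_{x_0}$ for some fixed $x_0\in X$.

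\textbf{Main obstacle: full mass.} The step I expect to be delicate is showing that $\nu_y(X)=1$ for $\omega$-a.e.\ $y$, that is, ruling out escape of mass to infinity in $X$. I would argue by tightness. The marginal $(\pi_X)_*W$ is a finite Borel measure on the Polish space $X$, hence tight. So there are compact sets $K_j\uparrow$ in $X$ with $W\big((X\setminus K_j)\times Y\big)<1/j$. Choose $f_j\in\mathcal C_c(X)$ with $\mathbbm 1_{K_j}\le f_j\le1$, which by density can be approximated from $\mathcal D$. Then
\[
\int\nu_y(X)\,\omega(\dd y)\ \ge\ \int\Big(\int f_j\,\dd\nu_y\Big)\,\omega(\dd y)\ \ge\ \omega(Y)-\tfrac1j
\qquad\text{for every } j.
\]
Combined with $\nu_y(X)\le1$, this forces $\nu_y(X)=1$ for $\omega$-a.e.\ $y$. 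On the remaining null set I again redefine $\nu_y:=\delta_{x_0}$.

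\textbf{Extension and uniqueness.} By construction, \eqref{e:disintegration-formula} holds for $F(x,y)=f(x)\mathbbm 1_A(y)$ with $f\in\mathcal D$ and $A$ Borel. It extends to $f\in\mathcal C_0(X)$ by dominated convergence, then to indicators of open sets by monotone limits. A monotone class (Dynkin $\pi$--$\lambda$) argument then extends it to indicators of all Borel rectangles, and to all Borel subsets of $X\times Y$. The same argument shows that $y\mapsto\nu_y(B)$ is $\omega$-measurable for every Borel $B\subset X$. Finally, passing to simple functions and monotone limits gives \eqref{e:disintegration-formula} for all nonnegative Borel $F$.

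For uniqueness, suppose two families both satisfy \eqref{e:disintegration-formula}. Testing against $f(x)\mathbbm 1_A(y)$ shows that $\int f\,\dd\nu_y=\int f\,\dd\nu'_y$ for $\omega$-a.e.\ $y$, for each $f\in\mathcal D$. Since $\mathcal D$ is countable, one common null set works for all $f\in\mathcal D$ simultaneously. As $\mathcal D$ determines Radon measures on $X$, this gives $\nu_y=\nu'_y$ for $\omega$-a.e.\ $y$.
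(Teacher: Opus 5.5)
The paper does not prove this statement. It records it as a standard fact and points to Baccelli--B{\l}aszczyszyn--Karray (Theorem 14.D.10) and Kallenberg.

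Your argument is a correct, self-contained proof along the functional-analytic route: Radon--Nikodym densities $g_f$ for a countable family of test functions, pointwise Riesz representation in $y$, a separate check that no mass escapes to infinity, and a monotone-class extension. The textbook probabilistic route (as in Kallenberg) instead builds regular conditional distributions on a Borel space, reducing to conditional distribution functions at rational points. That route needs no local compactness of $X$. Yours genuinely uses it, through the Riesz theorem on $\mathcal C_0(X)$, and pays for it with the full-mass step. In exchange you get Radon conditional measures directly under exactly the stated hypotheses, which is all the paper ever uses. Your tightness step could also be replaced by monotone convergence along $\varphi_n\in\mathcal C_c(X)$ with $0\le\varphi_n\uparrow 1$, which exist since $X$ is $\sigma$-compact.

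One detail needs tightening: positivity on an arbitrary dense $\mathbb Q$-subspace $\mathcal D\subset\mathcal C_c(X)$ does not pass to the extension. For instance, take a countable dense $\mathbb Q$-subspace of $\{f\in\mathcal C_c(\R):\int f\,\dd x=0\}$, which is itself dense in $\mathcal C_0(\R)$. It contains no nonzero nonnegative function, so \emph{positive on $\mathcal D$} is an empty condition there. Likewise, \emph{extending by linearity from the nonnegative elements} presupposes that these span $\mathcal D$.

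The fix is as follows:
\begin{itemize}
\item Take $\mathcal D$ to be the $\mathbb Q$-span of a countable set $\mathcal D_0$ dense in $\mathcal C_c(X)_+$, hence dense in $\mathcal C_0(X)_+$.
\item Define $g_f$ for every $f\in\mathcal D$ by the signed Radon--Nikodym theorem.
\item Impose positivity only on $\mathcal D_0$, off the null set $N$.
\end{itemize}
Continuity then gives positivity on all of $\mathcal C_0(X)_+$, and the rest of your argument goes through unchanged.
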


\begin{remark}
The measurability of the family \((\nu_y)_{y\in Y}\) means that, for every
bounded Borel function \(\varphi:X\to\R\), the map
\[
y\longmapsto \int_X \varphi(x)\,\nu_y(\dd x)
\]
is Borel measurable. In the paper, the theorem is applied only to
finite-dimensional manifolds or Borel subsets thereof, so the above
assumptions are automatically satisfied.
\end{remark}

\end{document}